\DeclareMathSymbol{A}{\mathalpha}{operators}{`A}
\DeclareMathSymbol{B}{\mathalpha}{operators}{`B}
\DeclareMathSymbol{C}{\mathalpha}{operators}{`C}
\DeclareMathSymbol{D}{\mathalpha}{operators}{`D}
\DeclareMathSymbol{E}{\mathalpha}{operators}{`E}
\DeclareMathSymbol{F}{\mathalpha}{operators}{`F}
\DeclareMathSymbol{G}{\mathalpha}{operators}{`G}
\DeclareMathSymbol{H}{\mathalpha}{operators}{`H}
\DeclareMathSymbol{I}{\mathalpha}{operators}{`I}
\DeclareMathSymbol{J}{\mathalpha}{operators}{`J}
\DeclareMathSymbol{K}{\mathalpha}{operators}{`K}
\DeclareMathSymbol{L}{\mathalpha}{operators}{`L}
\DeclareMathSymbol{M}{\mathalpha}{operators}{`M}
\DeclareMathSymbol{N}{\mathalpha}{operators}{`N}
\DeclareMathSymbol{O}{\mathalpha}{operators}{`O}
\DeclareMathSymbol{P}{\mathalpha}{operators}{`P}
\DeclareMathSymbol{Q}{\mathalpha}{operators}{`Q}
\DeclareMathSymbol{R}{\mathalpha}{operators}{`R}
\DeclareMathSymbol{S}{\mathalpha}{operators}{`S}
\DeclareMathSymbol{T}{\mathalpha}{operators}{`T}
\DeclareMathSymbol{U}{\mathalpha}{operators}{`U}
\DeclareMathSymbol{V}{\mathalpha}{operators}{`V}
\DeclareMathSymbol{W}{\mathalpha}{operators}{`W}
\DeclareMathSymbol{X}{\mathalpha}{operators}{`X}
\DeclareMathSymbol{Y}{\mathalpha}{operators}{`Y}
\DeclareMathSymbol{Z}{\mathalpha}{operators}{`Z}
\newcommand{\fr}{\mathfrak}
\newcommand{\cal}{\mathscr}
\newcommand{\op}{\operatorname}
\newcommand{\tn}{\textnormal}
\newcommand{\Spc}{\mathrm{Spc}}
\newcommand{\Sptr}{\mathrm{Sptr}}
\newcommand{\Maps}{\mathrm{Maps}}
\newcommand{\Vect}{\mathrm{Vect}}
\newcommand{\Perf}{\mathrm{Perf}}
\DeclareMathOperator*\colim{colim}
\newcommand{\Hom}{\op{Hom}}
\newcommand{\Ext}{\op{Ext}}
\newcommand{\Spec}{\op{Spec}}
\newcommand{\aff}{\mathrm{aff}}
\newcommand{\Frob}{\mathrm{Fr}}
\newcommand{\Mon}{\mathrm{Mon}}
\newcommand{\Grp}{\mathrm{Grp}}
\newcommand{\Quad}{\mathrm{Quad}}
\newcommand{\der}{\mathrm{der}}
\newcommand{\mot}{\mathrm{mot}}
\newcommand{\loo}[1]{(\!(#1)\!)}
\newcommand{\arc}[1]{[\![#1]\!]}
\newcommand{\GL}{\mathrm{GL}}
\newcommand{\SL}{\mathrm{SL}}
\newcommand{\Rad}{\mathrm{Rad}}
\newcommand{\Gr}{\mathrm{Gr}}
\newcommand{\Bun}{\mathrm{Bun}}
\newcommand{\Ran}{\mathrm{Ran}}
\newcommand{\ad}{\mathrm{ad}}
\newcommand{\pr}{\mathrm{pr}}
\newcommand{\id}{\mathrm{id}}
\newcommand{\act}{\mathrm{act}}
\newcommand{\exch}{\mathrm{exch}}
\newcommand{\Hec}{\mathrm{Hec}}
\newcommand{\super}{\mathrm{super}}
\newcommand{\Sq}{\mathrm{Sq}}
\newcommand{\Tate}{\mathrm{Tate}}
\newcommand{\rank}{\mathrm{rank}}
\newcommand{\tame}{\mathrm{tame}}
\newcommand{\fact}{\mathrm{fact}}
\newcommand{\Fl}{\mathrm{Fl}}
\newcommand{\inner}{\mathrm{int}}
\newcommand{\spin}{\mathrm{spin}}
\newcommand{\Pf}{\mathrm{Pf}}
\newcommand{\Ad}{\mathrm{Ad}}
\newcommand{\deloop}{\textnormal{\textsf{B}}}
\newcommand{\Ktheory}{\underline K}
\newcommand{\Pic}{\textnormal{\textsf{Pic}}}
\newtheorem{thm}[subsubsection]{Theorem}
\newtheorem*{thm*}{Theorem}
\newtheorem{thmx}{Theorem}
\newtheorem{corx}[thmx]{Corollary}
\newtheorem{prop}[subsubsection]{Proposition}
\newtheorem{lem}[subsubsection]{Lemma}
\newtheorem{cor}[subsubsection]{Corollary}
\theoremstyle{definition}
\newtheorem*{rem*}{Remark}
\newtheorem{eg}[subsubsection]{Example}
\newtheorem{rem}[subsubsection]{Remark}
\numberwithin{equation}{section}
\newtheorem{untitledsubsubsection}[subsubsection]{}
\renewcommand{\@secnumfont}{\bfseries}
\newenvironment{void}
{\begin{untitledsubsubsection}}
{\end{untitledsubsubsection}}
\title{Half-integral levels}
\dedicatory{To James Tao}
\author{Yifei Zhao}
\date{\today}
\begin{document}

\thanks{
The project was funded by the Deutsche Forschungsgemeinschaft (DFG, German Research Foundation) Project-ID 427320536 -- SFB 1442, as well as under Germany's Excellence Strategy EXC 2044 390685587, Mathematics Münster: Dynamics--Geometry--Structure.
}

\begin{abstract}
We construct equivalences among four notions associated to a reductive group scheme $G$: factorization super central extensions of the loop group of $G$ by $\mathbb G_m$ subject to a condition on the commutator, factorization super line bundles on the affine Grassmannian of $G$, rigidified sections of a quotient of $2$-truncated $\textnormal K$-theory over the Zariski classifying stack of $G$, and combinatorial data defined by Brylinski and Deligne in a conjectural extension of their classification theorem.
\end{abstract}

\maketitle


\setcounter{tocdepth}{2}
\tableofcontents

\setlength\epigraphwidth{.5\textwidth}
\setlength\epigraphrule{0pt}
\epigraph{
\itshape ``Is this what it feels like to end?''

``I do not know, for this is not our end.''
}{---Kindred}




\section*{Introduction}

Let $k\loo{t}$ denote the field of formal Laurent series with coefficients in a field $k$. Tate \cite{MR227171} discovered a remarkable central extension of its group of units $k\loo{t}^{\times}$ by $k^{\times}$:
\begin{equation}
\label{eq-tate-central-extension-first}
1 \rightarrow k^{\times} \rightarrow \cal G_{\Tate} \rightarrow k\loo{t}^{\times} \rightarrow 1.
\end{equation}
The preimage of $a \in k\loo{t}^{\times}$ in $\cal G_{\Tate}$ consists of nonzero elements of the relative determinant line $\det(ak\arc{t} \mid k\arc{t})$, where $k\arc{t} \subset k\loo{t}$ is the lattice of formal Taylor series.

If we think of $k\loo{t}^{\times}$ as the $k\loo{t}$-points of the algebraic group $\mathbb G_m$, then the following question arises: for a reductive group $G$, what are the ``natural'' central extensions of $G(k\loo{t})$ by $k^{\times}$?

Brylinski and Deligne \cite{MR1896177} parametrized a large class of central extensions of $G(k\loo{t})$ by $k^{\times}$ using $\tn K$-theory, as follows. Denote by $\Ktheory{}_2$ the Zariski sheafification of the second algebraic $\tn K$-group. Starting with a central extension on the big Zariski site of $\Spec(k)$:
\begin{equation}
\label{eq-brylinski-deligne-extension}
1 \rightarrow \Ktheory{}_2 \rightarrow E \rightarrow G \rightarrow 1,
\end{equation}
evaluating at $\Spec(k\loo{t})$ and pushing out along the tame symbol $\Ktheory{}_2(k\loo{t}) \rightarrow k^{\times}$, we find a central extension of $G(k\loo{t})$ by $k^{\times}$. They went on to give a complete classification of central extensions \eqref{eq-brylinski-deligne-extension}, valid over any regular base scheme of finite type over a field \cite[Theorem 7.2]{MR1896177}. However, no central extension of $\mathbb G_m$ by $\Ktheory{}_2$ produces Tate's central extension \eqref{eq-tate-central-extension-first}. This led Brylinski and Deligne to pose \cite[Questions 12.13(iii)]{MR1896177}:

\medskip

\emph{``For $V = k\arc{t}$, not all natural central extensions by $k^{\times}$ are captured by 12.8. \tn{[...]} We expect that `natural' central extensions of $G(K)$ by $k^{\times}$ are attached to data as follows: a Weyl group and Galois group invariant integer-valued symmetric bilinear form \tn{[...]}''}

\medskip

Our first goal is to find an enlargement of the groupoid of central extensions of $G$ by $\Ktheory{}_2$ and prove that it meets the expectation of Brylinski and Deligne. To this end, we introduce a Zariski sheaf of connective spectra $\Ktheory{}_{[1, 2]}^{\super}$ and establish the following result.

\begin{thmx}[Theorem \ref{thm-classification-super}]
\label{thmx-brylinski-deligne}
Let $X$ be a regular scheme of finite type over a field and $G$ be a reductive group $X$-scheme equipped with a maximal torus $T$. The following Picard groupoids are canonically equivalent:
\begin{enumerate}
	\item rigidified sections of $\Ktheory{}_{[1, 2]}^{\super}$ over the Zariski classifying stack $BG$;
	\item triples $(b, \widetilde{\Lambda}, \varphi)$ defined in \cite[Questions 12.13(iii)]{MR1896177}.
\end{enumerate}
\end{thmx}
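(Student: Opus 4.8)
The plan is to construct the equivalence as a restriction functor to a maximal torus and to the rank-one subgroups, and then to verify it is an equivalence by a d\'evissage parallel to that of \cite[Theorem 7.2]{MR1896177}: reduce to a split group over a field, treat the torus, and descend from the torus to the reductive group. A rigidified section $\cal L$ of $\underline K{}_{[1,2]}^{\super}$ over $BG$ restricts along $BT\to BG$ to a rigidified section over $BT$; since conjugation by $N_G(T)$ is canonically homotopic to the identity on $BG$, this restriction carries a compatibility with the $W$-action on $BT$ (and with the Galois action on $X$, which is carried along throughout the \'etale-local argument). Combining the Postnikov filtration of $\underline K{}_{[1,2]}^{\super}$ with the simplicial presentation of $BT$, one extracts, from the $\underline K{}_2$-layer, a symmetric bilinear form $b$ on the cocharacter lattice $\Lambda$ --- equivalently a quadratic form $Q(\lambda)=\tfrac12 b(\lambda,\lambda)$ valued in $\tfrac12\mathbb Z$, the ``super'' modification being precisely what relaxes $Q$ from $\mathbb Z$-valued (as in \cite{MR1896177}) to $\tfrac12\mathbb Z$-valued, with the $\mathbb Z/2$-grading of $\cal L$ recording $b(\lambda,\lambda)\bmod 2$ --- and, from the $\underline K{}_1=\mathbb G_m$-layer, the extension $\widetilde\Lambda$ of $\Lambda$, whose commutator pairing is dictated by $b$. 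Restricting $\cal L$ further along the $\SL_2$-homomorphisms attached to the simple roots produces the compatibility $\varphi$ identifying $\widetilde\Lambda$ on the coroots with the extension canonically determined by $b$ and the root datum. This defines the candidate functor $(1)\to(2)$; it is canonical and monoidal, and the theorem is the assertion that it induces bijections on isomorphism classes and on automorphism groups.

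First I would reduce to $X=\Spec(k)$ with $k$ a field, and then compute over a separable closure while tracking the Galois group. Both sides are stacks of Picard groupoids for the \'etale topology on $X$ --- side (1) because $\underline K{}_{[1,2]}^{\super}$ is a sheaf of spectra, side (2) because the triples are built from \'etale-local data on $X$ --- and for general regular $X$ of finite type over a field one descends to the local rings and their residue fields, which is the only place where regularity is used (homotopy invariance of the $\underline K$-sheaves and Gersten-type resolutions, as in \cite{MR1896177}). Over $k^{\mathrm{sep}}$ the $W$- and Galois-equivariance in $(b,\widetilde\Lambda,\varphi)$ becomes the descent datum reconstructing the object over $k$. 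For the torus $G=T$ the functor above is an equivalence by the direct computation of rigidified sections of $\underline K{}_{[1,2]}^{\super}$ over $BT$ (now with $\varphi$ vacuous): the simplicial presentation of $BT$ and the Postnikov filtration split the answer into the $\underline K{}_1$- and $\underline K{}_2$-layers just described, matching the torus case of the triples of \cite[Questions 12.13(iii)]{MR1896177}, which refines \cite[\S3]{MR1896177}.

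It then remains to pass from $T$ to $G$. The restriction functor onto $W$- and Galois-equivariant sections over $BT$ is fully faithful: $G$, as a Zariski sheaf of groups, is generated by $T$ and the root subgroups $U_\alpha\cong\mathbb G_a$, and rigidified sections of $\underline K{}_{[1,2]}^{\super}$ over $B\mathbb G_a$ vanish by homotopy invariance of the $\underline K$-sheaves on the regular base, so presenting $BG$ by a simplicial diagram built from copies of $BT$ and $B\mathbb G_a$ --- for instance via the big cell $U^-\times T\times U\hookrightarrow G$ --- reduces the comparison of isomorphism classes and automorphisms to these building blocks. The essential image consists of those $(b,\widetilde\Lambda)$ satisfying the coroot-compatibility $\varphi$, and the content is showing that every such triple is realized. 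I would do the simply connected case first: there $b$ alone, being Weyl-invariant, pins down a section over $BG^{\mathrm{sc}}$ up to unique isomorphism --- the ``basic'' super line bundle --- whose existence amounts to lifting the braid and commutator relations in $G^{\mathrm{sc}}$ with the signs dictated by $b$ along the coroots; it is clarifying to argue here through super line bundles on the affine Grassmannian $\Gr_{G^{\mathrm{sc}}}$, where the classification is transparent. The general reductive case then follows by gluing the simply connected and torus cases along the central isogeny $G^{\mathrm{sc}}\times Z(G)^{\circ}\twoheadrightarrow G$, with $\widetilde\Lambda$ and $\varphi$ providing the descent across the isogeny kernel, together with the adjoint and radical d\'evissage of \cite[\S7]{MR1896177}.

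The hard part will be this essential surjectivity, and within it the simply connected case: building the section over $BG^{\mathrm{sc}}$ out of $b$ and checking that every sign is consistent. This is exactly where ``half-integral'' levels genuinely arise --- in the non-super setting the analogous statement forces the associated quadratic form to be $\mathbb Z$-valued, i.e.\ $b$ to be even (the condition of \cite{MR1896177}), and it is only the $\mathbb Z/2$-grading carried by $\cal L$ and by $\widetilde\Lambda$ that absorbs the discrepancy, so the verification comes down to an identity relating this grading to $b(\alpha^\vee,\alpha^\vee)$ along the coroots. I expect it to be carried out either by an explicit sign-decorated Steinberg/Matsumoto cocycle computation in $G^{\mathrm{sc}}$, or by an obstruction argument showing that the relevant groups of sections and automorphisms over the simplicial presentation of $BG^{\mathrm{sc}}$ vanish in the requisite degrees; by comparison, the descent reductions, the torus computation, and full faithfulness are essentially formal.
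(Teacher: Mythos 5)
Your route is genuinely different from the paper's: you propose to redo the Brylinski--Deligne d\'evissage (torus computation, big-cell full faithfulness over $BT$, simply connected case, isogeny gluing) directly for $\underline K{}_{[1,2]}^{\super}$, whereas the paper never reopens that machinery. It instead describes $\underline K{}_{[1,2]}$ explicitly as the extension of $B\underline K{}_1$ by $B^2\underline K{}_2$ classified by the product pairing, quotes \cite[Theorem 7.2]{MR1896177} as a black box for the $B^2\underline K{}_2$-layer and Proposition \ref{prop-central-extension-multiplicative} for the $B\underline K{}_1$-layer to get the $\underline K{}_{[1,2]}$-classification, and then passes to the super version by comparing the defining cofiber sequence $B\underline K{}_1\xrightarrow{\Sq}\underline K{}_{[1,2]}\to\underline K{}_{[1,2]}^{\super}$ with the sequence $\Hom(\pi_1G,\mathbb Z)\to\vartheta_G^{\mathbb Z}(\Lambda)\to\vartheta_G^{\super}(\Lambda)$. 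Your approach, if completed, would give an independent proof of the BD theorem along the way; the paper's is much shorter precisely because it does not.

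There are two concrete gaps. First, your reduction to $X=\Spec(k)$ and then to $k^{\mathrm{sep}}$ rests on the claim that side (1) satisfies \'etale descent ``because $\underline K{}_{[1,2]}^{\super}$ is a sheaf of spectra.'' But $\underline K{}_{[1,2]}^{\super}$ is only a Zariski sheaf and $BG$ is the Zariski classifying stack; \'etale descent of $\underline{\Gamma}{}_e(BG,\underline K{}_{[1,2]}^{\super})$ is exactly what the paper says it does \emph{not} know a priori, and it is only obtained as a corollary of the theorem. The paper circumvents this by first proving the statement when $\pi_1G$ is the cocharacter sheaf of a quasi-trivial torus (where the relevant mod-$2$ reduction map is Zariski-locally surjective because $H^1(\Gamma,M)=0$) and then descending along a flasque resolution $1\to T_1\to\widetilde G\to G\to 1$ of \cite{MR3085137}; some such device is needed in your plan too. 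Second, you have located the hard part in the wrong place. For $G$ simply connected the super theory adds nothing: the definition of $\vartheta_G^{\super}(\Lambda)$ requires $b(\lambda,\lambda)\in2\mathbb Z$ on $\Lambda_{\mathrm{sc}}$, so over $BG_{\mathrm{sc}}$ one is exactly in the situation of \cite[Theorem 7.2]{MR1896177} and there is no new sign identity to verify (and invoking $\Gr_{G_{\mathrm{sc}}}$ is unavailable here, since Theorem A has no curve in its hypotheses). The genuine new difficulty is the passage from the cofiber sequence of sheaves to a cofiber sequence of sections over $BG$ when $\pi_1G$ has torsion or nontrivial Galois action --- equivalently, lifting the grading character $\epsilon:\pi_1G\to\mathbb Z/2$ to an integral character locally --- which your plan does not address.
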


Given a $k\loo{t}$-point of $X$, it is straightforward to produce from a rigidified section of $\Ktheory{}_{[1, 2]}^{\super}$ over $BG$ a central extension of $G(k\loo{t})$ by $k^{\times}$. In fact, the result will carry a canonical $\mathbb Z/2$-grading, hence a ``super central extension''. This includes Tate's central extension in the special case $G = \mathbb G_m$.

The second goal of this article is to prove that this passage from $\tn K$-theory to super central extensions of the loop group is \emph{reversible} if one remembers an additional piece of structure called ``factorization''.

As observed by Beilinson and Drinfeld \cite{MR2058353, MR2181808}, the construction of $\cal G_{\Tate}$ globalizes, over any smooth curve $X$, to a \emph{factorization super central extension} of the formal loop group $\cal L\mathbb G_m$. Intuitively speaking, this additional structure describes the behavior of $\cal G_{\Tate}$ as one formal loop on $X$ ``factorizes'' into two. The following result shows that factorization super central extensions of the loop group $\cal LG$, subject to a ``tame commutator'' condition which is automatic in characteristic zero, admit a parametrization parallel to Theorem \ref{thmx-brylinski-deligne}.

\begin{thmx}[Theorem \ref{thm-factorization-super-central-extension-classification}]
\label{thmx-loop-group}
Let $X$ be a smooth curve over a field and $G$ be a reductive group $X$-scheme. The following Picard groupoids are canonically equivalent:
\begin{enumerate}
	\item factorization super central extensions of $\cal LG$ by $\mathbb G_m$ with tame commutator;
	\item[(1')] factorization super line bundles over the affine Grassmannian $\Gr_G$;
	\item triples $(b, \widetilde{\Lambda}_+, \varphi)$ defined in \cite[Questions 12.13(iii)]{MR1896177} up to a ``twist''---if $G$ is equipped with a maximal torus.
\end{enumerate}
\end{thmx}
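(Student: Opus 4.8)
The plan is to deduce the three equivalences from two largely independent inputs: a \emph{geometric} comparison $(1)\simeq(1')$, valid for an arbitrary reductive $G$, and a \emph{combinatorial} classification of factorization super line bundles on $\Gr_G$, obtained by reduction to a torus and then matched against the Picard groupoid computed in Theorem~\ref{thmx-brylinski-deligne}. For the first input: given a factorization super central extension $1\to\mathbb G_m\to\widetilde{\cal LG}\to\cal LG\to1$ with tame commutator, the tameness hypothesis forces the restriction $\widetilde{\cal LG}|_{\cal L^+G}$ to carry an essentially unique trivialization as a super central extension — morally because $\cal L^+G$ retracts onto $G$ through pro-unipotent kernels and a reductive group admits no nontrivial rigidified super central extension by $\mathbb G_m$ once the commutator pairing is known to be tame. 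Hence $\widetilde{\cal LG}$ descends along $\cal LG\to\cal LG/\cal L^+G=\Gr_G$ to a super line bundle, and its factorization structure over $\Ran X$ transports along. Conversely, a factorization super line bundle $\cal M$ on $\Gr_G$ produces, via the $\cal LG$-action on $\Gr_G$, the central extension of $\cal LG$ classifying $\cal LG$-equivariant structures on $\cal M$ extending the canonical $\cal L^+G$-equivariant one; its commutator is automatically tame, and the factorization structure of $\cal M$ supplies that of the extension. Checking that these are mutually inverse equivalences of Picard groupoids, compatibly with $\mathbb Z/2$-gradings, is bookkeeping once the Koszul sign conventions are fixed.

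For the second input I would first treat $G=T$ a split torus. Over $\Ran X$ the Grassmannian $\Gr_T$ is a union of components indexed by the cocharacter lattice $\Lambda$, with an evident addition law, and a factorization super line bundle on it is equivalent to a datum of Beilinson--Drinfeld type: a symmetric bilinear form $b\colon\Lambda\times\Lambda\to\mathbb Z$ measuring the failure of multiplicativity (the ``level''), a $\mathbb Z/2$-grading which the super structure pins down to $\lambda\mapsto b(\lambda,\lambda)\bmod 2$, and a multiplicative rigidification $\widetilde\Lambda_+$ refining $b$, where each component additionally carries a power of $\omega_X$ prescribed by $b$, coming from the determinant-of-cohomology line bundle on $\Gr_T$. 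This last $\omega_X$-contribution is precisely the ``twist'' in item (3); I would package the whole correspondence as an explicit functor from Brylinski--Deligne triples for $T$ to factorization super line bundles on $\Gr_T$.

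For general $G$ with maximal torus $T$, the inclusion $T\hookrightarrow G$ induces $\Gr_T\to\Gr_G$, and I would show that restriction identifies factorization super line bundles on $\Gr_G$ with the $W$-invariant data on $\Gr_T$ satisfying one further constraint recorded by $\varphi$ — a trivialization along the affine lines swept out by the simple-coroot $\SL_2$'s. This is the familiar rank-one reduction: compute $\Pic$ of $\Gr_{\SL_2}$ and $\Gr_{\PGL_2}$ directly, glue over the simple roots, and impose $W$-invariance, in parallel with the assembly of $(b,\widetilde\Lambda,\varphi)$ in \cite{MR1896177}. Together with the torus case this presents factorization super line bundles on $\Gr_G$ as $W$-invariant triples $(b,\widetilde\Lambda_+,\varphi)$ carrying the $\omega_X$-twist above; comparing the resulting bookkeeping with the description of rigidified $\underline K{}_{[1, 2]}^{\super}$-sections on $BG$ furnished by Theorem~\ref{thmx-brylinski-deligne} — the same combinatorial datum but with $\widetilde\Lambda$ in place of $\widetilde\Lambda_+$ and no twist — yields item (3), the passage $\widetilde\Lambda\leftrightarrow\widetilde\Lambda_+$ being the restriction to the dominant cone already present in loc.\ cit. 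Composing with $(1)\simeq(1')$ completes the proof.

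The main obstacle I anticipate is the positive-characteristic content encoded by the tame-commutator condition. In characteristic zero every (continuous) bimultiplicative pairing $\cal L\mathbb G_m\times\cal L\mathbb G_m\to\mathbb G_m$ is tame, so the condition is vacuous and the trivialization over $\cal L^+G$ above is automatic; in characteristic $p$ one genuinely needs tameness to kill the $d\log$-type contributions, both to obtain that trivialization and to guarantee that the reconstructed extension has commutator equal to the tame symbol paired against $b$ — equivalently, that the constructions land in, and exhaust, the $\tn K$-theoretic world, which by nature produces only tame symbols; this ``reversibility'' is the crux of the theorem. A secondary difficulty is propagating the $\mathbb Z/2$-grading and its signs coherently through all the steps, and establishing fppf descent for factorization super line bundles so that the split-torus computation globalizes to non-split $T$ and hence to $G$.
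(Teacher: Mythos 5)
Your proposal has the right objects in play but two of its load-bearing steps are gaps, and in both cases the difficulty you elide is exactly where the paper's work lies. First, the equivalence $(1)\simeq(1')$ cannot be established for arbitrary reductive $G$ as a self-contained "geometric" step. The forward functor does exist, via canonical triviality of factorization super central extensions of $\cal L^+G$ — but note that this triviality holds with \emph{no} tameness hypothesis and is \emph{not} a consequence of $\cal L^+G$ retracting onto $G$ (central extensions of $G$ itself by $\mathbb G_m$ over $X$ need not be trivial); it uses the factorization structure essentially, via restriction of the factorization isomorphism to the diagonal of $X^2$ to produce $\cal G_1\cong\cal G_1^{\otimes 2}$, plus normality/codimension-$2$ extension arguments. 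Your inverse functor — "the central extension classifying $\cal LG$-equivariant structures extending the canonical $\cal L^+G$-equivariant one" — is not well defined: there is no canonical $\cal L^+G$-equivariance structure on a factorization super line bundle over $\Gr_G$, and in positive characteristic such structures are genuinely non-unique. The paper exhibits a factorization central extension of $\cal L\mathbb G_m$ with cocycle $(a,b)\mapsto\langle\Frob(a),b\rangle$ whose commutator is \emph{not} tame; it induces the same line bundle on $\Gr_{\mathbb G_m}$ as a tame one, so your claim that the reconstructed extension "automatically" has tame commutator is false, and tameness must be \emph{imposed} to pin down the equivariance datum (for tori it forces the equivariance pairing to be $\langle\cdot,\cdot\rangle_{b_1}$ with $b_1=b$ via compatibility with convolution). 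Proving that this structure then exists and is unique is the content of the torus and simply-connected cases, and for general $G$ it is not done directly at all.

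Second, your combinatorial input — that restriction to $\Gr_T$ plus a rank-one reduction identifies factorization super line bundles on $\Gr_G$ with $W$-invariant triples — only yields \emph{full faithfulness} of the functor $\Gamma_{\fact}(\Gr_G,\Pic^{\super})\to\widetilde{\vartheta}^{\super}_{G,+}(\Lambda)$ (this is the content of the earlier work the paper cites). Essential surjectivity — constructing a factorization super line bundle on $\Gr_G$ from a triple $(b,\widetilde\Lambda_+,\varphi)$ when $G$ is neither a torus nor simply connected — is precisely the open point that previous approaches only handled for split $G$ with $N_G$ invertible, and your proposal offers no mechanism for it. The paper's route is to exploit the group structure of side (1): it uses the central extension $1\to T_{\mathrm{sc}}\to G_{\mathrm{sc}}\rtimes T\to G\to 1$, shows $\cal L(G_{\mathrm{sc}}\rtimes T)\to\cal LG$ is surjective in a suitable topology, classifies extensions of the semidirect product as pairs $(\cal G_{\mathrm{sc}},\cal T)$ via a unique compatible $\cal LT$-action on $\cal G_{\mathrm{sc}}$, and translates the normality of the induced splitting over $\cal LT_{\mathrm{sc}}$ into Weyl-invariance of $b$ via a commutator computation with the Contou-Carr\`ere symbol. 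The equivalences $(1)\simeq(1')$ and $(1')\simeq(2)$ are thus obtained only jointly and in iteration, not as the two independent inputs your plan assumes.
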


Upon choosing a $\vartheta$-characteristic, \emph{i.e.}~a square root $\omega^{1/2}$ of the canonical line bundle of the smooth curve $X$, the ``twist'' mentioned in (2) disappears. The Picard groupoids in Theorem \ref{thmx-brylinski-deligne} then become canonically equivalent to those in Theorem \ref{thmx-loop-group}, forming a commutative diagram:
\begin{equation}
\label{eq-intro-all-equivalences}
\begin{tikzcd}
	A(1) \ar[r, "\cong"]\ar[dd, "\cong"] & B(1) \ar[d, "\cong"] \\
	& B(1') \ar[d, "\cong"] \\
	A(2) \ar[r, "\cong"] & B(2)
\end{tikzcd}
\end{equation}

In fact, the equivalences in \eqref{eq-intro-all-equivalences} are the ``half-integral'' generalizations of a family of equivalences which are valid without the choice of a $\vartheta$-characteristic.

\begin{corx}[Corollary \ref{cor-factorization-central-extension-classification}]
\label{corx-no-spin}
Let $X$ be a smooth curve over a field and $G$ be a reductive group $X$-scheme. The following Picard groupoids are canonically equivalent:
\begin{enumerate}
	\item central extensions of $G$ by $\Ktheory{}_2$ on the big Zariski site of $X$;
	\item factorization central extensions of $\cal LG$ by $\mathbb G_m$ with tame commutator;
	\item factorization line bundles over $\Gr_G$;
	\item triples $(Q, \widetilde{\Lambda}, \varphi)$ in \cite[Theorem 7.2]{MR1896177}---if $G$ is equipped with a maximal torus.
\end{enumerate}
\end{corx}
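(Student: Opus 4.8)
The plan is to deduce Corollary~\ref{corx-no-spin} from Theorems~\ref{thmx-brylinski-deligne} and~\ref{thmx-loop-group} together with the diagram~\eqref{eq-intro-all-equivalences}, by passing on every vertex to the full sub-Picard-groupoid of \emph{even} (non-super) objects. First I would record that each of the four notions in the corollary is such a full subgroupoid of the corresponding notion in Theorems~\ref{thmx-brylinski-deligne}--\ref{thmx-loop-group}: (i) a factorization super central extension of $\cal LG$ by $\mathbb G_m$ (resp.\ super line bundle on $\Gr_G$) whose $\mathbb Z/2$-grading is trivial is precisely a factorization central extension (resp.\ line bundle), and the graded $\Hom$ between two even objects coincides with the ordinary one; (ii) a Brylinski--Deligne triple $(b,\widetilde\Lambda,\varphi)$ of \cite[Questions 12.13(iii)]{MR1896177} is even exactly when $b$ is the symmetric bilinear form $b_Q(\lambda,\mu)=Q(\lambda+\mu)-Q(\lambda)-Q(\mu)$ attached to a Weyl- and Galois-invariant integer-valued quadratic form $Q$, in which case $(b,\widetilde\Lambda,\varphi)$ carries the same information as the triple $(Q,\widetilde\Lambda,\varphi)$ of \cite[Theorem 7.2]{MR1896177}; and (iii) the map $\underline K{}_{[1,2]}\to\underline K{}_{[1,2]}^{\super}$ built into the construction of the latter identifies rigidified sections of $\underline K{}_{[1,2]}$ over $BG$ with the even rigidified sections of $\underline K{}_{[1,2]}^{\super}$, while a direct unwinding of the definitions identifies the former with central extensions of $G$ by $\underline K{}_2$ on the big Zariski site of $X$, the $\underline K{}_1=\mathbb G_m$ contribution being absorbed by the rigidification.

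Next I would check that the equivalences of Theorems~\ref{thmx-brylinski-deligne} and~\ref{thmx-loop-group} are compatible with the $\mathbb Z/2$-gradings, hence restrict to the even parts: the grading of a factorization super line bundle on $\Gr_G$ corresponds under Theorem~\ref{thmx-loop-group} to the ``parity'' of the combinatorial datum --- the class measuring the failure of $b$ to be even --- and the same parity is detected on the $\tn K$-theory side by whether a rigidified section of $\underline K{}_{[1,2]}^{\super}$ lifts along $\underline K{}_{[1,2]}\to\underline K{}_{[1,2]}^{\super}$; thus even objects correspond to even objects under every functor in~\eqref{eq-intro-all-equivalences}. Restricting that diagram to even parts then yields a commuting square of equivalences among the groupoids (1), (2)$=$(3) and (4) of Corollary~\ref{corx-no-spin}, with the arrow ``(2)$\leftrightarrow$(3)'' being the even part of ``$B(1)\leftrightarrow B(1')$''. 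Finally I would observe that the dependence of~\eqref{eq-intro-all-equivalences} on the $\vartheta$-characteristic disappears here: a $\vartheta$-characteristic is used only to trivialize the ``twist'' relating $\widetilde\Lambda_+$ to $\widetilde\Lambda$ and, with it, to compare $A(1)$ with $B(1)$; that twist is by the line bundle whose exponent on a coweight $\lambda$ is $b(\lambda,\lambda)/2$, and when $b=b_Q$ is even one has $b(\lambda,\lambda)/2=Q(\lambda)\in\mathbb Z$, so the twist is already by an honest line bundle (a power of $\omega$) and no square root of $\omega$ is needed. Hence~\eqref{eq-intro-all-equivalences} descends, canonically and without any choice, to the asserted chain (1)$\leftrightarrow$(2)$\leftrightarrow$(3)$\leftrightarrow$(4), and the induced equivalence (1)$\leftrightarrow$(4) agrees with \cite[Theorem 7.2]{MR1896177} --- a compatibility pinned down by comparing the two constructions on the underlying extension of $T(k\loo{t})$.

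The main obstacle I anticipate is step (iii) of the first paragraph together with the twist bookkeeping of the second: one must verify precisely that superization adds exactly a $\mathbb Z/2$ in the correct homological degree --- so that an ``even section of $\underline K{}_{[1,2]}^{\super}$'' is \emph{literally} a $\underline K{}_2$-extension of $G$ and nothing more --- and that the half-integral twist in Theorem~\ref{thmx-loop-group} has exponent $b(\lambda,\lambda)/2$, so that evenness of $b$ is exactly the condition making the twist integral. Once these two points are nailed down, the remainder is a diagram chase: restrict~\eqref{eq-intro-all-equivalences} to even parts and invoke \cite[Theorem 7.2]{MR1896177} to name the combinatorial vertex.
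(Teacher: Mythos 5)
Your strategy is the same as the paper's: each of the four super notions carries a canonical grading functor to $\Hom(\pi_1G,\mathbb Z/2)$, the equivalences of Theorems \ref{thmx-brylinski-deligne} and \ref{thmx-loop-group} commute with these functors, and the corollary is obtained by restricting to the fibers over $0$. The paper also makes precisely your final observation: on the even part the $\omega^{1/2}$-shift \eqref{eq-super-theta-data-omega-shift} becomes $\lambda\mapsto\nu(\lambda)\otimes\omega_X^{Q(\lambda)}$ with $Q(\lambda)=b(\lambda,\lambda)/2\in\mathbb Z$, so no $\vartheta$-characteristic is needed.

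One intermediate claim in your step (iii) is wrong, though the conclusion you extract from it is correct. The map $\Gamma_e(BG,\underline K{}_{[1,2]})\to\Gamma_e(BG,\underline K{}_{[1,2]}^{\super})$ does \emph{not} identify the source with the even part of the target: by construction its fiber is $\Gamma_e(BG,B\underline K{}_1)\cong\Hom(\pi_1G,\mathbb Z)$ mapping in via $\Sq$, so the map is surjective onto the even part but not faithful on $\pi_0$. Correspondingly, rigidified sections of $\underline K{}_{[1,2]}$ over $BG$ are \emph{not} central extensions of $G$ by $\underline K{}_2$ --- the $\underline K{}_1$-contribution is not absorbed by the rigidification. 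By Proposition \ref{prop-classification-integral} they form $\vartheta_G^{\mathbb Z}(\Lambda)$, which carries the extra character $x:\pi_1G\to\mathbb Z$; already for $G=\mathbb G_m$ the group of isomorphism classes is $\mathbb Z^2$, not $\mathbb Z$. The statement you actually need is read off from the bottom row of \eqref{eq-super-k-spectrum-fiber-sequences}: the fiber of $\underline K{}_{[1,2]}^{\super}\to B(\underline K{}_1)/2$ is $B^2\underline K{}_2$, hence the fiber of the grading functor on $\Gamma_e(BG,\underline K{}_{[1,2]}^{\super})$ is $\Gamma_e(BG,B^2\underline K{}_2)$, i.e.\ central extensions of $G$ by $\underline K{}_2$. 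With that correction your two misstatements cancel and the remainder of the argument proceeds exactly as in the paper.
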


This corollary already improves the current state of knowledge. Indeed, an equivalence between the Picard groupoids (1) and (3) was conjectured in Gaitsgory--Lysenko \cite{MR3769731} and established in \cite{MR4117995, MR4322626} under the additional assumptions that $G$ is split and a certain integer $N_G$ is invertible in the ground field. The equivalence supplied by Corollary \ref{corx-no-spin} is valid for any reductive group $X$-scheme.

In the literature on covering groups in the equicharacteristic setting, the existence of factorization (super) line bundles over the affine Grassmannian $\Gr_G$ with favorable properties is sometimes stated as an assumption, see \emph{e.g.}~\cite{lysenko2016twisted, MR3626614} and \cite[\S14]{MR3787407}. The combination of Theorems \ref{thmx-brylinski-deligne} and \ref{thmx-loop-group} produces them unconditionally.

It is worth mentioning that Theorem \ref{thmx-loop-group} is nontrivial already for $G = \mathbb G_m$. Indeed, fibers of the affine Grassmannian $\Gr_{\mathbb G_m}$ over geometric points of $X$ are highly nonreduced formal schemes. The groupoid of (super) line bundles over $\Gr_{\mathbb G_m}$ does not appear to have a clean description, but the equivalence (1') $\cong$ (2) of Theorem \ref{thmx-loop-group} shows that the factorization ones do. Moreover, the equivalence (1) $\cong$ (1') shows that factorization (super) line bundles over $\Gr_G$ have canonical multiplicative structures over $\cal LG$. Unless $G$ is simply connected, this assertion is not an obvious consequence of existing results.

From a differential geometric perspective, one could trace the conceptual origin of Corollary \ref{corx-no-spin} to works on Chern--Simons theory. Indeed, Dijkgraaf and Witten \cite{MR1048699} first recognized that the quantization parameter, or integral ``level'', of Chern--Simons theory for a compact Lie group $G$ is best understood as an element of the reduced cohomology group $H^4_e(BG, \mathbb Z)$. Suitably categorified, such an element transgresses to a central extension of the loop group of $G$ by $U(1)$. A recent theorem of Waldorf \cite{MR3648501} showed that this transgression procedure is reversible if one remembers the ``fusion'' structure of the target.

In the algebraic context, $H^4_e(BG, \mathbb Z)$ should be replaced by the reduced weight-$2$ motivic cohomology group of $BG$, which classifies central extensions of $G$ by $\Ktheory{}_2$ via the isomorphism of \cite{MR1460391} (see also \cite[Theorem 6.3.5]{MR4117995}):
\begin{equation}
\label{eq-eklv}
H^4_e(BG, \mathbb Z_{\mot}(2)) \xrightarrow{\simeq} H^2_e(BG, \Ktheory{}_2).
\end{equation}
Hence, a central extension of $G$ by $\Ktheory{}_2$ can be thought of as the algebraic notion of an \emph{integral level} and Corollary \ref{corx-no-spin} provides four equivalent descriptions of it.\footnote{This algebraic notion is naturally associated to the \emph{chiral} Wess--Zumino--Witten (WZW) model. We also mention that Henriques \cite{MR3709708} proposed a definition of integral levels for the chiral WZW model via vertex algebras, while our notion is more directly related to chiral algebras, see \cite{MR2058353, rozenblyum2021connections}.} The equivalence (1) $\cong$ (2) of Corollary \ref{corx-no-spin} is a direct analogue of Waldorf's theorem.

With this understanding, we propose to encode the algebraic notion of a \emph{half-integral level} by the Picard groupoids in \eqref{eq-intro-all-equivalences}. In fact, Dijkgraaf and Witten \cite[\S5]{MR1048699} already observed that on \emph{spin} manifolds, formally dividing a class in $H^4_e(BG, \mathbb Z)$ by $2$ sometimes leads to physically meaningful quantities. To interpret these ``half-integral characteristic classes'' as rigidified sections of $\Ktheory{}_{[1, 2]}^{\super}$ over $BG$, we note that the natural inclusion of abelian groups below has a $2$-torsion cokernel:
\begin{equation}
\label{eq-short-exact-sequence-levels}
H^2_e(BG, \Ktheory{}_2) \subset \pi_0\Gamma_e(BG, \Ktheory{}_{[1, 2]}^{\super}).
\end{equation}

In the example of Tate's central extension, we have the equality $2 \cdot [\Tate] = [c_1]^2$, where $[c_1]$ denotes the first Chern class of the universal line bundle over $B\mathbb G_m$, so $[c_1]^2$ generates the abelian group $H^2_e(B\mathbb G_m, \Ktheory{}_2)$, while $[\Tate]$ is half-integral.

Another example is the ``critical level'', \emph{i.e.}~Beilinson and Drinfeld's Pfaffian \cite[\S4]{beilinson1991quantization}, representing half of $[c_2]$ of the adjoint bundle over $BG$. It is half-integral precisely when the half sum of positive roots $\check{\rho}$ is not an integral weight. Indeed, one of the applications of Theorem \ref{thmx-brylinski-deligne} is that it gives a new construction of the Pfaffian line bundle on the moduli stack of $G$-bundles on a spin curve (see \S\ref{sec-examples}).

Half-integral levels in our sense give rise to super conformal blocks on spin curves, as predicted by \cite{MR1048699}, although we do not attempt to fully develop this notion here.

Let us now explain the structure of this article and comment on the proofs.

\subsection*{Structure of the article}

This article is divided into two parts which can be read independently. The first part proves Theorem \ref{thmx-brylinski-deligne} and the second part proves Theorem \ref{thmx-loop-group}.

In \S\ref{sec-k-theory-super}, we define $\Ktheory{}_{[1, 2]}^{\super}$ using a small but essential amount of homotopy theory. Namely, it is set to be the cofiber of a morphism of Zariski sheaves of connective spectra:
\begin{equation}
\label{eq-squaring-map-intro}
\Sq : B\Ktheory{}_1 \rightarrow \Ktheory{}_{[1, 2]}.
\end{equation}
Here, $\Ktheory{}_1$ and $\Ktheory{}_{[1, 2]}$ are the Zariski sheafified truncations of the $\tn K$-theory spectrum. We also explain how to integrate sections of $\Ktheory{}_{[1, 2]}^{\super}$ over a global spin curve.

In \S\ref{sec-brylinski-deligne}, we prove Theorem \ref{thmx-brylinski-deligne}. The proof combines \cite[Theorem 7.2]{MR1896177} with our description of $\Ktheory{}_{[1, 2]}$ obtained in \S\ref{sec-k-theory-super}.

In \S\ref{sect-factorization}, we formulate Theorem \ref{thmx-loop-group}. To define the notion of ``tame commutator'', we make essential use of the Contou-Carr\`ere symbol over the Ran space, as constructed in Campbell--Hayash \cite{campbell2021geometric}. One of the phenomena we observe here is that the condition of having ``tame commutator'' is automatic in characteristic zero. This fact turns out to be equivalent to a new universality statement for the Contou-Carr\`ere symbol.

\begin{corx}[Corollary \ref{cor-contou-carrere-universal}]
\label{corx-tame-symbol}
Let $X$ be a smooth curve over a field $k$ with $\mathrm{char}(k) = 0$. Then any pairing $\cal L\mathbb G_m \otimes \cal L\mathbb G_m \rightarrow \mathbb G_m$ compatible with factorization is an integral power of the Contou-Carr\`ere symbol.
\end{corx}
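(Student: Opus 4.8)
The plan is to classify all factorization-compatible bimultiplicative maps $\cal L\mathbb{G}_m \otimes \cal L\mathbb{G}_m \to \mathbb{G}_m$; these form an abelian group $\mathsf{P}$, and the Contou-Carr\`ere symbol over the Ran space (Campbell--Hayashi \cite{campbell2021geometric}) defines a homomorphism $\mathbb Z \to \mathsf{P}$, $n \mapsto \langle\, -, -\,\rangle^{n}$, which we wish to prove is an isomorphism; as explained above, this is essentially the assertion that the ``tame commutator'' condition of Theorem \ref{thm-factorization-super-central-extension-classification} is automatic in characteristic zero, since every commutator pairing is in particular a factorization pairing. Injectivity is immediate: evaluating the $n$-th power of the symbol on a constant unit $c$ and a local coordinate $t$ gives $c^{n}$, which is nontrivial for $n \neq 0$. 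The content is surjectivity, and the strategy is a d\'evissage of $\cal L\mathbb{G}_m$ along its canonical factorization filtration by the arc group $\cal L^{+}\mathbb{G}_m$ and the congruence subgroup $\cal L^{++}\mathbb{G}_m = \ker(\cal L^{+}\mathbb{G}_m \to \mathbb{G}_m)$, with successive quotients $\cal L^{+}\mathbb{G}_m/\cal L^{++}\mathbb{G}_m = \mathbb{G}_m$ (evaluation at the marked point) and $\cal L\mathbb{G}_m/\cal L^{+}\mathbb{G}_m = \Gr_{\mathbb{G}_m}$, whose underlying reduced ind-scheme is the constant lattice $\mathbb Z$.

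Given a factorization pairing $\phi$, bimultiplicativity decomposes it into ``blocks'' indexed by the pieces of this filtration. The blocks in which both variables lie among the diagonalizable piece $\mathbb{G}_m$ and the discrete lattice $\mathbb Z$ are governed by $\Hom$-groups of diagonalizable and constant group schemes: $\Hom(\mathbb{G}_m \otimes \mathbb{G}_m, \mathbb{G}_m) = 0$, while the lattice--torus and lattice--lattice blocks are, respectively, free of rank at most one and annihilated by $2$, and all of them are visibly realized by powers of the Contou-Carr\`ere symbol, whose restrictions here are $(c, m) \mapsto c^{m}$ and $(m, n) \mapsto (-1)^{mn}$. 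Moreover the factorization constraints over the symmetric powers of $X$ force these blocks --- together with the ``residue'' block discussed below --- to be controlled by a single integer rather than by several independent ones, because one and the same residue pairing underlies all of them. Subtracting the corresponding power of the symbol, we may assume that $\phi$ vanishes on every block involving a diagonalizable or a discrete variable and on the residue block, reducing to the case where $\phi$ is nontrivial only in the pro-unipotent directions.

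This last step is where $\op{char}(k) = 0$ enters, and it is the main obstacle. In characteristic zero the logarithm identifies the congruence subgroup $\cal L^{++}\mathbb{G}_m$ --- and, more delicately, the pro-unipotent formal directions inside the highly nonreduced ind-scheme $\Gr_{\mathbb{G}_m}$ --- with (pro-)affine-space factorization groups, for instance realizing $\cal L^{++}\mathbb{G}_m$ as an $\omega$-twisted pro-vector group; it therefore suffices to classify factorization bilinear pairings among such vector factorization groups valued in $\mathbb{G}_m$, and a direct analysis shows these to be exactly the integral multiples of the residue pairing between the positive and negative congruence directions. As our normalization has already removed that pairing --- it is the Contou-Carr\`ere symbol in these directions --- we conclude $\phi = 0$, finishing the proof. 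The two points demanding genuine care are making the logarithm argument work at the level of factorization ind-schemes, where one works over the Ran space with the Campbell--Hayashi model and reduces to finite-order jets, and the bookkeeping that confines the free parameter to a single copy of $\mathbb Z$. Finally, the hypothesis $\op{char}(k) = 0$ cannot be dropped: in characteristic $p$ neither the logarithm nor this reduction is available --- $\cal L^{++}\mathbb{G}_m$ carries Witt-vector and Frobenius structure --- and the additional factorization pairings this produces (Artin--Schreier-type symbols) are not integral powers of the Contou-Carr\`ere symbol, which is precisely why the ``tame commutator'' condition of Theorem \ref{thm-factorization-super-central-extension-classification} is a nontrivial restriction in positive characteristic.
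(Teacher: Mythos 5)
Your overall architecture is reasonable, but the decisive step is asserted rather than proved, and the mechanism you offer for it does not close the gap. The problem is your treatment of the pro-unipotent/formal directions. Taking logarithms (where $\mathrm{char}(k)=0$ is indeed needed) only converts the question into one about factorization biadditive pairings $V\otimes W\rightarrow \mathbb G_m$ for certain pro- and ind-vector groups; it does not classify them. Fiberwise, after choosing a coordinate, there are \emph{many} such pairings beyond the residue pairing: for instance $(f,g)\mapsto \exp\bigl(\res(\log f\cdot\theta(\log g)\,d\log t)\bigr)$ for $\theta$ any power of $t\partial_t$, giving the bilinear forms $\sum_i i^k a_i b_{-i}$ on coefficients. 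These are exact characteristic-zero analogues of the Frobenius twist of Example \ref{eg-central-extension-wild}, and they live precisely in the directions you propose to handle by ``direct analysis.'' Ruling them out requires showing that no such coordinate-dependent twist is compatible with factorization over the Ran space, and this is the entire content of the corollary; nothing in your write-up addresses it. A secondary issue: your block decomposition of $\cal L\mathbb G_m$ into $\mathbb Z\times\mathbb G_m\times(\text{congruence parts})$ is only available fiberwise after a choice of coordinate and does not globalize canonically over $\Ran$, so even setting up the d\'evissage compatibly with factorization needs care you have not supplied.

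The paper's proof runs differently at exactly this point. First, instead of a block decomposition it uses that any factorization pairing $\cal LT_1\otimes\cal LT_2\rightarrow\mathbb G_{m,\Ran}$ is canonically determined by an induced pairing $\cal L^+\mathbb G_m\otimes\Gr_{\mathbb G_m}\rightarrow\mathbb G_{m,\Ran}$ (because $\cal L^+T\rightarrow\Ran$ is pro-smooth with connected fibers while $\Gr_T\rightarrow\Ran$ is ind-proper and $\cal L^+T$ is pro-affine, so the ``wrong-way'' components vanish, see \S\ref{void-bilinear-pairing-induces-skew-pairing}). Second, and crucially, the only use of $\mathrm{char}(k)=0$ is through Tao's theorem \cite[Proposition 5.1.5]{tao2021mathrmgrg}: every $S$-point of $\Gr_{\mathbb G_m}$ factors through a \emph{reduced} scheme. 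This kills all pairings supported on the formal directions at once, reducing the computation to field-valued points, where the combinatorial model $\Gr_{\mathbb G_m}^{\tn{comb}}=\colim_{(I,\lambda^I)}X^I$ applies and the answer is visibly a single copy of $\underline{\mathbb Z}$. If you want to avoid Tao's theorem, you must supply a genuine argument that factorization compatibility (equivalently, equivariance under changes of formal coordinate as the points move along $X$) excludes the higher-residue twists above; as it stands, your proof has no substitute for this input.
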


We deduce this corollary from a surprising theorem of Tao \cite{tao2021mathrmgrg}, which asserts that the presheaf $\Gr_{\mathbb G_m}$ over the Ran space is \emph{reduced} in a suitable sense, provided $\mathrm{char}(k) = 0$. The assertion of Corollary \ref{corx-tame-symbol} is false if $\mathrm{char}(k) > 0$. We do not use it in the proof of Theorem \ref{thmx-loop-group}, which is valid in arbitrary characteristics.

In \S\ref{sect-factorization-proofs}, we prove Theorem \ref{thmx-loop-group}. Our strategy is to first construct functors among the Picard groupoids in Theorem \ref{thmx-loop-group}:
\begin{equation}
\label{eq-factorization-strategy}
(1) \rightarrow (1') \rightarrow (2).
\end{equation}
Our previous work \cite{MR4322626} shows that the second functor is fully faithful. Here, we prove that the composition \eqref{eq-factorization-strategy} is an equivalence by exploiting the group structure inherent in (1). In our approach, each of the equivalences (1) $\cong$ (2), (1') $\cong$ (2) is established using special cases of the other in iteration, so we do not obtain one without the other.

Finally, we mention a shortcoming of this article: the top horizontal functor appearing in \eqref{eq-intro-all-equivalences} is defined \emph{ad hoc} as the composition of the other functors. It should have a conceptually transparent description as a ``transgression'' along the space of formal loops:
\begin{equation}
\label{eq-transgression-intro}
\int_{(\mathring D, \omega^{1/2})} : \Gamma_e(BG, \Ktheory{}_{[1, 2]}^{\super}) \rightarrow \Hom(\cal LG, \Pic^{\super}),
\end{equation}
as in the differential geometric context, but we are unable to find such a description. One difficulty seems to be that we do not understand the behavior of Zariski-sheafified $\tn K$-groups over singular spaces such as $\cal LG$. An attempt at defining \eqref{eq-transgression-intro} as a ``transgression'' in the integral case, \emph{i.e.}~for sections of $\Ktheory{}_2[2]$ over $BG$, was made in Kapranov--Vasserot \cite{MR2332353}, but it relies on \cite[Proposition 4.2.1]{MR2332353} which is false as stated. A different strategy was carried out in Gaitsgory \cite{MR4117995}, but it requires the hypothesis that $N_G$ be invertible in the ground field, which we wish to avoid.

\subsection*{Acknowledgements}

James Tao has made the most important contibution to all problems considered in this article. This includes not only his published works on this topic \cite{tao2021mathrmgrg, MR4198527, MR4322626}, but also numerous ideas communicated to me during our collaboration. It is with profound gratitude and humility that I dedicate this article to him.

I thank Michael Finkelberg for fruitful e-mail exchanges and for his interest in Theorem \ref{thmx-loop-group} in relation to \cite{braverman2022coulomb}. I thank Jo\~{a}o Louren\c{c}o for the proof of Lemma \ref{lem-schubert-variety-properties}.

Finally, I thank Dennis Gaitsgory for initiating me into factorization structures and for teaching me many things along the way.

\part{$\tn K$-theory}

\section{$\Ktheory{}_{[1,2]}^{\super}$}
\label{sec-k-theory-super}

The main goal of this section is to introduce the Zariski sheaf of connective spectra $\Ktheory{}_{[1, 2]}^{\super}$. The first section \S\ref{sec-connective-k-theory} reviews necessary notions concerning algebraic $K$-theory. In \S\ref{sec-truncated-k-theory}, we give a ``hands-on'' description of the truncation $\Ktheory{}_{[1, 2]}$. Using this description, we are able to define $\Ktheory{}_{[1, 2]}^{\super}$ in \S\ref{sec-super-k-theory}. The material of \S\ref{sec-integration-on-curves} is not needed in the sequal: its goal is to show that sections of $\Ktheory{}_{[1, 2]}^{\super}$ can be integrated over a global spin curve relative to a regular base scheme $S$ to yield a super line bundle over $S$.

\subsection{Connective $\tn K$-theory}
\label{sec-connective-k-theory}

\begin{void}
Let $\Spc$ denote the $\infty$-category of spaces. It is a symmetric monoidal $\infty$-category under the Cartesian product.

Write $\Mon_{\mathbb E_{\infty}}(\Spc)$ for the $\infty$-category of $\mathbb E_{\infty}$-monoids in $\Spc$. It contains a full subcategory $\Grp_{\mathbb E_{\infty}}(\Spc)$ consisting of grouplike $\mathbb E_{\infty}$-monoids. The forgetful functor $\Grp_{\mathbb E_{\infty}}(\Spc) \rightarrow \Mon_{\mathbb E_{\infty}}(\Spc)$ admits a left adjoint, called \emph{group completion}:
\begin{equation}
\label{eq-group-completion}
	\Omega \deloop : \Mon_{\mathbb E_{\infty}}(\Spc) \rightarrow \Grp_{\mathbb E_{\infty}}(\Spc).
\end{equation}

Let $\Sptr$ denote the $\infty$-category of spectra. We use homotopical grading and denote by $\Sptr_{\ge 0}$ the full subcategory of connective spectra.

There is a canonical equivalence of $\infty$-categories \cite[Remark 5.2.6.26]{lurie2017higher}:
\begin{equation}
\label{eq-symmetric-monoid-to-spectra}
\Grp_{\mathbb E_{\infty}}(\Spc) \cong \Sptr_{\ge 0}.
\end{equation}

We shall also use without explicit mention the equivalence between Picard groupoids and $1$-truncated connective spectra.
\end{void}

\begin{void}
Let $R$ be a commutative ring. Denote by $\Vect(R)$ the category of finitely generated projective $R$-modules and $\Vect(R)^{\cong}$ its maximal subgroupoid. The operation of direct sum equips $\Vect(R)^{\cong}$ with a symmetric monoidal structure. Its image under \eqref{eq-group-completion} is by definition the \emph{connective $\tn K$-theory} $K(R)$ of $R$.

We shall view $K(R)$ either as a grouplike $\mathbb E_{\infty}$-monoid or as a connective spectrum, using the canonical equivalence \eqref{eq-symmetric-monoid-to-spectra}.

Note that the unit of the adjunction between \eqref{eq-group-completion} and the forgetful functor supplies a morphism of $\mathbb E_{\infty}$-monoids:
\begin{equation}
\label{eq-class-of-perfect-complexes}
\Vect(R)^{\cong} \rightarrow K(R),\quad \cal E \mapsto [\cal E].
\end{equation}
\end{void}

\begin{void}
We equip $\Mon_{\mathbb E_{\infty}}(\Spc)$ and $\Grp_{\mathbb E_{\infty}}(\Spc)$ with the canonical symmetric monoidal structure of \cite[Theorem 5.1]{MR3450758}. With respect to these symmetric monoidal structures, \eqref{eq-group-completion} is symmetric monoidal. Hence it lifts to a functor of $\mathbb E_{\infty}$-monoids:
\begin{equation}
\label{eq-group-completion-algebra}
\Omega \deloop : \Mon_{\mathbb E_{\infty}}(\Mon_{\mathbb E_{\infty}}(\Spc)) \rightarrow \Mon_{\mathbb E_{\infty}}(\Grp_{\mathbb E_{\infty}}(\Spc)).
\end{equation}

The right adjoint of \eqref{eq-group-completion}, being lax symmetric monoidal, also lifts to a functor of $\mathbb E_{\infty}$-monoids and supplies the right adjoint of \eqref{eq-group-completion-algebra}, see \cite[Lemma 3.6]{MR3450758}.

The operation of tensor product upgrades $\Vect(R)^{\cong}$ into an $\mathbb E_{\infty}$-monoid in $\Mon_{\mathbb E_{\infty}}(\Spc)$. Thus $K(R)$ acquires an $\mathbb E_{\infty}$-monoid structure in $\Grp_{\mathbb E_{\infty}}(\Spc)$ (\emph{i.e.}~$K(R)$ is a connective $\mathbb E_{\infty}$-spectrum) such that the unit \eqref{eq-class-of-perfect-complexes} is symmetric monoidal.
\end{void}

\begin{rem}
Informally, the symmetric monoidal structure on \eqref{eq-class-of-perfect-complexes} says that for each pair of objects $\cal E_1, \cal E_2\in\Vect(R)^{\cong}$, $[\cal E_1\otimes \cal E_2]$ is canonically equivalent to $[\cal E_1]\cdot [\cal E_2]$, together with the homotopy coherence data.
\end{rem}

\begin{void}
For any integer $a$, we write $K_{\ge a}(R)$ (resp.~$K_{\le a}(R)$) for the truncation $\tau_{\ge a}K(R)$ (resp.~$\tau_{\le a}K(R)$). For a pair of integers $a\le b$, we write $K_{[a, b]} := \tau_{\ge a}\tau_{\le b}K(R)$. We also use $K_a(R)$ to denote $\Omega^aK_{[a, a]}(R) \cong \pi_aK(R)$.

The association $S = \Spec(R)\mapsto K(R)$ defines a presheaf $K$ of connective $\mathbb E_{\infty}$-spectra on the category of affine schemes. Let $\Ktheory$ denote its sheafification in the Zariski topology.

Zariski sheafification of the truncated presheaves above define $\Ktheory{}_{\ge a}$, $\Ktheory{}_{\le a}$, $\Ktheory{}_{[a, b]}$, and $\Ktheory{}_a$. Since sheafification is $t$-exact, the forgetful functor from presheaves of spectra to sheaves of spectra is left $t$-exact. Hence $\Ktheory{}_{\le a}$ is $a$-truncated as a presheaf of spectra, \emph{i.e.}~its value at any $R$ has vanishing homotopy groups above degree $a$.
\end{void}

\begin{rem}
\label{rem-determinant-via-k-theory}
For example, the map sending $\cal E\in\Vect(R)^{\cong}$ to its determinant line bundle $\det(\cal E)$ induces an isomorphism of sheaves of Picard groupoids:
$$
\Ktheory{}_{[0, 1]} \xrightarrow{\simeq} \Pic^{\mathbb Z},
$$
where $\Pic^{\mathbb Z}$ sends $R$ to the Picard groupoid of $\mathbb Z$-graded line bundles on $\Spec(R)$, see \cite[Proposition 12.18]{MR3674218}.
\end{rem}

\subsection{The sheaf $\Ktheory{}_{[1, 2]}$}
\label{sec-truncated-k-theory}

\begin{void}
The goal of this subsection is to give an explicit description of $\Ktheory{}_{[1, 2]}$.

More precisely, we consider the fiber sequence defined by truncation:
$$
\deloop^2K_2(R) \rightarrow K_{[1, 2]}(R) \rightarrow \deloop K_1(R)
$$
for each ring $R$, which induces a fiber sequence of Zariski sheaves of connective spectra:
\begin{equation}
\label{eq-truncated-k-theory-fiber-sequence}
	\deloop^2\Ktheory{}_2 \rightarrow \Ktheory{}_{[1,2]} \rightarrow \deloop \Ktheory{}_1.
\end{equation}

Our description will be that of the fiber sequence \eqref{eq-truncated-k-theory-fiber-sequence}.
\end{void}

\begin{void}
\label{void-truncated-k-theory-section}
Denote by $\Pic(R)\subset \Vect(R)^{\cong}$ the full subcategory of line bundles over $\Spec(R)$.

The map \eqref{eq-class-of-perfect-complexes} induces a morphism of the underlying \emph{pointed spaces} $\Pic(R) \rightarrow K(R)$ sending $\cal L$ to $[\cal L] - [\cal O]$. Thus, we obtain a morphism of Zariski sheaves of pointed spaces, without changing the same notation:
\begin{equation}
\label{eq-section-of-k-theory-from-pic}
\Pic \rightarrow \Ktheory,\quad \cal L \mapsto [\cal L] - [\cal O].
\end{equation}

Since the class of $[\cal L] - [\cal O]$ in $K_0(R)$ vanishes Zariski locally on $\Spec(R)$, the morphism \eqref{eq-section-of-k-theory-from-pic} factors through $\Ktheory{}_{\ge 1}$ and we may compose it with the truncation map $\Ktheory{}_{\ge 1} \rightarrow \Ktheory{}_{[1,2]}$ to obtain a map of Zariski sheaves of pointed spaces:
\begin{equation}
\label{eq-section-of-truncated-k-theory-from-pic}
	s : \Pic \rightarrow \Ktheory{}_{[1, 2]}.
\end{equation}

The description of $\Ktheory{}_{[0, 1]}$ via determinant (Remark \ref{rem-determinant-via-k-theory}) shows that \eqref{eq-section-of-truncated-k-theory-from-pic} is a section of \eqref{eq-truncated-k-theory-fiber-sequence} on the underlying sheaves of pointed spaces, \emph{i.e.}~the composition of \eqref{eq-section-of-truncated-k-theory-from-pic} with the truncation map $\Ktheory{}_{[1,2]} \rightarrow \deloop\Ktheory{}_1$ is the canonical isomorphism $\Pic \xrightarrow{\simeq} \deloop\Ktheory{}_1$.
\end{void}

\begin{void}
\label{void-fiber-sequence-classification}
Let $\cal C$ be a site and $A_1$, $A_2$ be sheaves of abelian groups over $\cal C$.

Consider the following two groupoids:
\begin{enumerate}
	\item the groupoid of extensions $A$ of $\deloop(A_1)$ by $\deloop^2(A_2)$ as sheaves of connective spectra, equipped with a section $s : \deloop(A_1) \rightarrow A$ of the underlying sheaves of pointed spaces;
	\item the (discrete) groupoid of anti-symmetric pairings $A_1 \otimes A_1 \rightarrow A_2$.
\end{enumerate}
Let us construct a functor from (1) to (2):
\begin{equation}
\label{eq-fiber-sequence-classification}
	\begin{Bmatrix}
	\begin{tikzcd}[column sep = 1.5em]
	& & \deloop (A_1) \ar[d, "\id"]\ar[dl, swap, "\textnormal{pointed}"] \\
	\deloop^2(A_2) \ar[r] & A \ar[r] & \deloop(A_1)
	\end{tikzcd}
	\end{Bmatrix}
	\rightarrow
	\begin{Bmatrix}
	\textnormal{anti-symmetric}\\
	A_1\otimes A_1 \rightarrow A_2
	\end{Bmatrix}.
\end{equation}

Indeed, the section $s$ defines a ``cocycle'' morphism of sheaves of spaces:
\begin{equation}
\label{eq-cocycle-formula}
\deloop(A_1) \times \deloop(A_1) \rightarrow \deloop^2(A_2),\quad (x, y) \mapsto s(x + y) - s(x) - s(y).
\end{equation}
The morphism \eqref{eq-cocycle-formula} is \emph{bi-rigidified} in the following sense: it is equipped with trivializations along $\deloop(A_1) \times e$ and $e\times \deloop(A_1)$, which are isomorphic over $e\times e$.

To such a morphism, we may apply the loop space functor in the first, then the second factor, to obtain a map:
\begin{equation}
\label{eq-loop-space-pairing}
A_1 \times A_1 \rightarrow A_2.
\end{equation}
\end{void}

\begin{lem}
\label{lem-fiber-sequence-classification}
The map \eqref{eq-loop-space-pairing} is bilinear and anti-symmetric. The resulting functor \eqref{eq-fiber-sequence-classification} is an equivalence of groupoids.
\end{lem}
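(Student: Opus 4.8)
Here is my plan for proving Lemma \ref{lem-fiber-sequence-classification}.

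The plan is to route everything through the smash product $X \wedge X$ of the underlying sheaf of pointed spaces $X := B(A_1)$ with itself, together with a connectivity argument; write $Z := B^2(A_2)$ for the underlying pointed space of the target, so $X$ is connected with $\pi_1 X \cong A_1$ and $Z$ is an Eilenberg--MacLane sheaf with homotopy concentrated in degree $2$, equal to $A_2$. A bi-rigidified morphism $c\colon X \times X \to Z$ --- a pointed morphism with nullhomotopies along $X \times e$ and $e \times X$ agreeing over $e \times e$ --- is the same datum as a pointed morphism $\tilde c\colon X \wedge X \to Z$, since $X \vee X \hookrightarrow X \times X$ is a cofibration with quotient $X \wedge X$. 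By the Hurewicz theorem and the Künneth formula for reduced homology sheaves in the $\infty$-topos of sheaves on $\cal C$, $X \wedge X$ is $1$-connected with $\pi_2(X \wedge X) \cong A_1 \otimes A_1$; since $Z$ has homotopy concentrated in degree $2$, obstruction theory then yields a natural isomorphism $\pi_0 \Maps(X \wedge X, Z) \cong \Hom(A_1 \otimes A_1, A_2)$, and the mapping sheaf is discrete because $\pi_1 \Maps(X \wedge X, Z) \cong H^1(X \wedge X; A_2) = 0$. A routine unwinding shows that the composite of the two looping operations in \eqref{eq-loop-space-pairing} computes precisely the morphism induced by $\tilde c$ on $\pi_2$; hence \eqref{eq-loop-space-pairing} is the bilinear form on $A_1$ classified by $\tilde c$, and the groupoid of bi-rigidified cocycles is discrete and identified with bilinear forms $A_1 \otimes A_1 \to A_2$.

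Anti-symmetry comes from a sign. The cocycle $c(x, y) = s(x + y) - s(x) - s(y)$ of \eqref{eq-cocycle-formula} is symmetric, as addition in $A_1$ is commutative, so $\tilde c$ is invariant under the flip automorphism $\tau$ of $X \wedge X$. But $\tau$ acts on $\pi_2(X \wedge X) \cong A_1 \otimes A_1$ by $a \otimes b \mapsto -\,b \otimes a$: the minus sign is the Koszul sign produced by the flip $S^1 \wedge S^1 \to S^1 \wedge S^1$, which has degree $-1$, appearing through the Eilenberg--Zilber map that computes $\pi_2$ via $H_2$. Hence the form $\beta$ classified by $\tilde c$ satisfies $\beta(a, b) = -\beta(b, a)$; conversely, a bilinear form is anti-symmetric if and only if its $\tilde c$ is $\tau$-invariant, if and only if its bi-rigidified $c$ is symmetric. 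This identifies symmetric bi-rigidified cocycles with anti-symmetric pairings, both as discrete groupoids, and in particular shows that \eqref{eq-loop-space-pairing} is bilinear and anti-symmetric.

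To show that \eqref{eq-fiber-sequence-classification} is an equivalence onto the discrete groupoid of anti-symmetric pairings it suffices, since the functor is evidently additive for Baer sum, to verify: (a) each $(A, s)$ has trivial automorphism group; (b) essential surjectivity; (c) the kernel is trivial. For (a): an automorphism of the extension $A$ equals $\id + \iota \circ \delta \circ \pi$ for a unique morphism $\delta\colon B(A_1) \to B^2(A_2)$ of sheaves of connective spectra, where $\pi, \iota$ are the structure maps; compatibility with the section $s$ forces $\Omega^{\infty}(\iota \circ \delta) \simeq 0$, and since $\iota$ induces an injection $[X, Z] \hookrightarrow [X, \Omega^{\infty} A]$ ($X$ being connected) while $\Omega^{\infty}$ is injective on homotopy classes $[B(A_1), B^2(A_2)]_{\Sptr} \to [X, Z]$, this gives $\delta = 0$. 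For (b): from an anti-symmetric $\beta$ the previous paragraph produces a symmetric bi-rigidified cocycle $c_\beta$, which I would use to twist the grouplike $\mathbb E_{\infty}$-structure on $X \times Z$; the obstructions to promoting $c_\beta$ to the full coherence data (associativity and all higher homotopies) are classes in $H^2$ of multi-rigidified products that factor through the $(n-1)$-connected sheaves $X^{\wedge n}$ with $n \ge 3$, hence vanish, and the coherences are moreover unique because $H^1(X^{\wedge n}; A_2) = 0$ for $n \ge 3$; transporting the resulting grouplike $\mathbb E_{\infty}$-sheaf across \eqref{eq-symmetric-monoid-to-spectra} gives a pair $(A_\beta, s_\beta)$, with $s_\beta$ the tautological section, mapping to $\beta$. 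For (c): if $(A, s)$ has associated pairing $0$ then $c$ is bi-rigidified-nullhomotopic, by a homotopically unique nullhomotopy, and the same connectivity argument uniquely promotes $s$ to a section of $\Omega^{\infty} A \to X$ in $\Grp_{\mathbb E_{\infty}}(\Spc)$; across \eqref{eq-symmetric-monoid-to-spectra} this splits $A$ as $B(A_1) \oplus B^2(A_2)$ compatibly with $s$, so $(A, s) \cong 0$.

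The conceptual heart --- and what forces the pairing to be \emph{anti}-symmetric rather than symmetric --- is the first two paragraphs: the identification of bi-rigidified cocycles with maps out of $X \wedge X$, and the degree $-1$ flip of $S^1 \wedge S^1$. The main obstacle I expect is purely technical: carrying out this obstruction theory internally to the $\infty$-topos of sheaves on $\cal C$. Concretely, one must establish the sheafy Hurewicz/Künneth identity $\pi_2(X \wedge X) \cong A_1 \otimes A_1$, the vanishing $H^1(X^{\wedge n}; A_2) = H^2(X^{\wedge n}; A_2) = 0$ for $n \ge 3$ (which both constructs and rigidifies all the higher $\mathbb E_{\infty}$-coherences), and the injectivity of $\Omega^{\infty}$ on $[B(A_1), B^2(A_2)]_{\Sptr} \to H^2(X; A_2)$. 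None of these is difficult in isolation; organizing the higher coherence data so that exactly these inputs suffice is the delicate part.
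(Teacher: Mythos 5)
Your route is genuinely different from the paper's. The paper first loops the whole problem down: groupoid (1) is replaced by extensions $A'$ of $A_1$ by $B(A_2)$ with an $\mathbb E_1$-monoidal section, i.e.\ by symmetric monoidal refinements of the split associative monoid $B(A_2)\times A_1$; since everything is now $1$-truncated, these are literally commutativity constraints, and the hexagon and inverse axioms hand over bilinearity and anti-symmetry with no obstruction theory (\S\ref{void-fiber-sequence-classification-alternative}). The only real work is then the comparison of that braiding-pairing with the double-looped cocycle \eqref{eq-loop-space-pairing}, done by an explicit homotopy on the square (\S\ref{void-monoid-loop-spaces}). You instead stay delooped: bi-rigidified cocycles become pointed maps out of $X\wedge X$, and anti-symmetry falls out of the degree $-1$ flip on $S^1\wedge S^1$. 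That is a clean conceptual counterpart of the paper's braiding computation, and your vanishing $\pi_1\Maps_*(X\wedge X, B^2(A_2))=0$ is exactly why the target of \eqref{eq-fiber-sequence-classification} is discrete. The price is paid in essential surjectivity, where you must assemble a grouplike $\mathbb E_\infty$-structure on $X\times Z$ from a single cocycle and supply all higher coherences by connectivity of $X^{\wedge n}$; the paper's looping trick avoids this entirely.

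There is a concrete gap in your step (a). A morphism in groupoid (1) is a pair consisting of an equivalence $\phi$ of extensions \emph{and} a homotopy $h:\phi\circ s\simeq s$ of pointed maps; the latter is data, not a property. Your argument only shows that $\phi$ is homotopic to the identity. But the set of homotopy classes of homotopies $s\simeq s$ is a torsor under $\pi_1\Maps_*(B(A_1), B^2(A_2))\cong H^1(B(A_1);A_2)\cong\Hom(A_1,A_2)$, which is nonzero in general, so \emph{a priori} the pairs $(\id,h)$ give nontrivial automorphisms of $(A,s)$. They are killed only because $\pi_1$ of the space of self-equivalences of the extension, namely $\pi_1\Maps_{\Sptr}(B(A_1), B^2(A_2))\cong\Hom(A_1,A_2)$, maps isomorphically onto this group, so each such $h$ is absorbed by a $2$-automorphism of $\phi=\id$. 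Without this cancellation the discreteness of groupoid (1) is not established. A related point hides in (c): extensions of $B(A_1)$ by $B^2(A_2)$ as sheaves of spectra are classified by $\pi_0\Maps_{\Sptr}(B(A_1), B^3(A_2))$, which contains stable operations and not merely $\Ext^2(A_1,A_2)$; so ``splits as $B(A_1)\oplus B^2(A_2)$'' carries real content, and you should say explicitly that your obstruction theory on the $X^{\wedge n}$ is what kills this $k$-invariant once the pairing vanishes.
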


\begin{void}
\label{void-fiber-sequence-classification-alternative}
The proof of Lemma \ref{lem-fiber-sequence-classification} proceeds by first giving an alternative definition of the functor \eqref{eq-fiber-sequence-classification} which is evidently an equivalence, and then showing that it coincides with the construction above.

First, we observe that groupoid (1) is equivalent to the groupoid (1') of extensions $A'$ of $A_1$ by $\deloop(A_2)$ as sheaves of connective spectra, equipped with an $\mathbb E_1$-monoidal section $A_1 \rightarrow A'$. The equivalence is given by the functors $\Omega$ and $\deloop$.

Put differently, the groupoid (1') consists of symmetric monoidal structures on the sheaf of associative monoids $\deloop(A_2) \times A_1$ such that the inclusion and projection functors:
$$
\deloop(A_2) \subset \deloop(A_2) \times A_1 \rightarrow A_1
$$
are symmetric monoidal.

Such symmetric monoidal structures are in turn given by commutativity constraints on $\deloop(A_2)\times A_1$ vanishing on $\deloop(A_2)$, which are anti-symmetric bilinear pairings:
\begin{equation}
\label{eq-fiber-sequence-classification-braiding}
A_1 \otimes A_1 \rightarrow A_2.
\end{equation}
Indeed, the commutativity constraint for two objects $x, y \in \deloop(A_2)\times A_1$ is an isomorphism $x\otimes y\xrightarrow{\simeq} y\otimes x$ which depends only on the classes of $x, y$ in $A_1$. Such an isomorphism is the multiplication by a unique element of $A_2$. The hexagon and inverse axioms translate to the bilinearity and anti-symmetry of the resulting map $A_1\times A_1 \rightarrow A_2$.

The procedure above establishes an equivalence between the groupoid (1) and such pairings. Hence, it remains to prove that the pairing \eqref{eq-fiber-sequence-classification-braiding} extracted from any object of the groupoid (1) concides with \eqref{eq-loop-space-pairing}. This follows from the observation in topology below.
\end{void}

\begin{void}
\label{void-monoid-loop-spaces}
Let $M$, $N$ be $\mathbb E_1$-monoids in $\Spc$ and let $s : M \rightarrow N$ be a morphism of pointed spaces. Then we may construct two maps of spaces:
\begin{equation}
\label{eq-monoid-loop-spaces}
\begin{tikzcd}
\Omega(M) \times \Omega(M)\ar[r, shift left = 0.5ex, "s_1"]\ar[r, shift left = -0.5ex, swap, "s_2"] & \Omega^2(N).
\end{tikzcd}
\end{equation}

The map $s_1$ is given by applying $\Omega$ to the first, then the second factor, to the bi-rigidifed map of spaces:
\begin{equation}
\label{eq-pointed-morphism-cocycle}
M\times M \rightarrow N,\quad (x, y) \mapsto s(x + y) - s(x) - s(y).
\end{equation}

The map $s_2$ uses the $\mathbb E_1$-monoidal morphism $\Omega_s : \Omega(M) \rightarrow \Omega(N)$ induced from $s$ and sends two loops $a, b \in \Omega(M)$ to the following loop in $\Omega(N)$:
\begin{align}
\notag
\mathbf 1 &\xrightarrow{\simeq} \Omega_s(a + b) - (\Omega_s(a) + \Omega_s(b)) \\
\label{eq-loop-by-braiding}
&\xrightarrow{\simeq} \Omega_s(b + a) - (\Omega_s(b) + \Omega_s(a)) \xrightarrow{\simeq} \mathbf 1.
\end{align}
where the first and last isomorphisms are defined by the $\mathbb E_1$-monoid structure on $\Omega_s$ and the middle one is defined by the braidings\footnote{We use the term ``braiding'' to refer to the isomorphism $a \otimes b\xrightarrow{\simeq} b \otimes a$ in an $\mathbb E_2$-monoid and the term ``commutativity constraint'' to refer to the same isomorphism in an $\mathbb E_{\infty}$-monoid.} on $\Omega(M)$ and $\Omega(N)$.

\emph{Claim}: there is a homotopy equivalence $s_1 \xrightarrow{\simeq} s_2$.

To prove the claim, we choose as our model for ``spaces'' topological spaces having the homotopy type of a CW complex, see \cite[012Z]{kerodon} for its equivalence with the standard model using Kan complexes.

The monoidal operation on $\Omega(M)$ can be viewed as concatenation of loops and the braiding is given as follows. For two loops $a, b\in\Omega(M)$ we find a morphism:
\begin{equation}
\label{eq-braiding-loop-space}
[0, 1]\times [0, 1] \rightarrow M,\quad (t_1, t_2) \mapsto a(t_1) + b(t_2),
\end{equation}
where the sum is the $\mathbb E_1$-monoid product on $M$. Then \eqref{eq-braiding-loop-space} can be viewed as a homotopy from its restriction to $([0, 1]\times\{0\}) \cup (\{1\}\times [0, 1])$ to its restriction to $(\{0\}\times[0, 1])\cup ([0,1]\times\{1\})$, exhibiting the braiding $a + b \xrightarrow{\simeq} b + a$ in $\Omega(M)$.

The same description holds for the braiding in $\Omega(N)$.

Now we come to the morphism $s_1$. It carries $a, b$ to the element of $\Omega^2(N)$ which is represented by a map $S^2 \rightarrow N$ fitting into the following diagram:
\begin{equation}
\label{eq-double-loop-homotopy}
\begin{tikzcd}[column sep = 1.5em]
	{[0, 1]}\times {[0,1]} \ar[r, "{(a, b)}"]\ar[d] & M \times M \ar[d, "\eqref{eq-pointed-morphism-cocycle}"] \\
	S^2 \ar[r] & N
\end{tikzcd}
\end{equation}
where the left vertical map collapses the outer edges of the square.

Reading \eqref{eq-double-loop-homotopy} as a homotopy from its restriction to $([0, 1]\times\{0\}) \cup (\{1\}\times [0, 1])$ to its restriction to $(\{0\}\times[0, 1])\cup ([0,1]\times\{1\})$ (both equivalent to the trivial loop in $N$), we see that it is precisely the loop \eqref{eq-loop-by-braiding} defined by the braidings on $\Omega(M)$ and $\Omega(N)$.

The proof of the claim, and thus Lemma \ref{lem-fiber-sequence-classification}, is concluded.
\end{void}

\begin{void}
We shall now use the equivalence of Lemma \ref{lem-fiber-sequence-classification} to classify the fiber sequence \eqref{eq-truncated-k-theory-fiber-sequence} together with the distinguished section $s$ \eqref{eq-section-of-truncated-k-theory-from-pic}.

Namely, its image under the functor \eqref{eq-fiber-sequence-classification} is an anti-symmetric form:
\begin{equation}
\label{eq-truncated-k-theory-classification}
\Ktheory{}_1 \otimes \Ktheory{}_1 \rightarrow \Ktheory{}_2.
\end{equation}
\end{void}

\begin{prop}
\label{prop-truncated-k-theory-classification}
The map \eqref{eq-truncated-k-theory-classification} equals the product pairing $x, y\mapsto \{x, y\}$.
\end{prop}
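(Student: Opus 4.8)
The plan is to compute the anti-symmetric form \eqref{eq-truncated-k-theory-classification} directly from its definition in \S\ref{void-fiber-sequence-classification}, namely by evaluating the ``cocycle'' morphism \eqref{eq-cocycle-formula} attached to the section $s : \Pic \to \underline K{}_{[1,2]}$ of \eqref{eq-section-of-truncated-k-theory-from-pic} and then looping twice. The key is that $s$ is built from the very concrete map $\cal L \mapsto [\cal L] - [\cal O]$ of \eqref{eq-section-of-k-theory-from-pic}, whose symmetric monoidal refinement (the Remark after \eqref{eq-group-completion-algebra}) says $[\cal L_1 \otimes \cal L_2] \simeq [\cal L_1]\cdot[\cal L_2]$ coherently. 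So the cocycle $(x,y)\mapsto s(x+y) - s(x) - s(y)$, evaluated on line bundles $\cal L_1, \cal L_2$, becomes the canonical identification
$$
\bigl([\cal L_1\otimes \cal L_2] - [\cal O]\bigr) - \bigl([\cal L_1] - [\cal O]\bigr) - \bigl([\cal L_2] - [\cal O]\bigr) \;\simeq\; [\cal L_1] \cdot [\cal L_2] - [\cal L_1] - [\cal L_2] + [\cal O]
$$
inside $\underline K{}_{[1,2]}$, i.e.\ the class of $([\cal L_1] - [\cal O])\cdot([\cal L_2] - [\cal O])$ in the multiplicative structure. The upshot is that the bi-rigidified map \eqref{eq-cocycle-formula} is identified, under $\Pic \simeq B\underline K{}_1$, with the composite $B\underline K{}_1 \times B\underline K{}_1 \to B^2(\underline K{}_1 \cdot \underline K{}_1) \to B^2 \underline K{}_2$, where the first arrow is the ``cup product'' $B A \times BA \to B^2(A\otimes A)$ and the second is induced by the multiplication $\underline K{}_1 \otimes \underline K{}_1 \to \underline K{}_2$, $x\otimes y \mapsto \{x,y\}$, coming from the $\mathbb E_\infty$-ring structure on $\underline K$.

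The next step is to recall the standard fact that applying $\Omega$ in each factor to the cup-product map $BA \times BA \to B^2(A\otimes A)$ returns the identity $A \times A \to A\otimes A$ (up to the evident bilinear structure); this is the double-loop computation underlying \S\ref{void-monoid-loop-spaces}, and it is exactly the statement that cup product transgresses to the tautological pairing. Composing with $\underline K{}_1 \otimes \underline K{}_1 \to \underline K{}_2$ then yields precisely $(x,y)\mapsto \{x,y\}$, which is the claim. Thus the argument has two genuine inputs: (a) the symmetric monoidality of $[-]$ identifies the $K$-theoretic cocycle with a $K_1$-multiplicative cup product, and (b) the topological transgression lemma, which in this paper is available in the form proved in \S\ref{void-monoid-loop-spaces} (one takes $M = N$ there with $s$ the cup-product-type map, or argues directly).

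The main obstacle I anticipate is bookkeeping at the level of \emph{$t$-structure truncations and Zariski sheafification}: the cocycle identity lives a priori in $\underline K{}_{\ge 1}$ (or in $\underline K$ itself), and one must check that passing to $\underline K{}_{[1,2]}$ — i.e.\ killing $\pi_{\ge 3}$ — does not disturb the identification, and that the multiplication $\underline K{}_1 \otimes \underline K{}_1 \to \underline K{}_2$ one extracts is genuinely the product pairing $\{-,-\}$ on Milnor/Quillen $K_2$ and not some twist of it. For the latter, the cleanest route is to reduce to the universal case: both \eqref{eq-truncated-k-theory-classification} and $\{-,-\}$ are natural anti-symmetric pairings $\underline K{}_1 \otimes \underline K{}_1 \to \underline K{}_2$, and by the Zariski-local description $\underline K{}_1 = \mathbb G_m$ together with the known computation of $\pi_2$ of $K$-theory of fields (Matsumoto's theorem), any such pairing is determined by its value on a pair of units of, say, $\mathbb Z[x^{\pm 1}, y^{\pm 1}]$; one then checks $s$ produces the symbol $\{x,y\}$ on that universal pair, which is forced by the multiplicativity in (a) since $[\mathcal O(1)]$ on $\mathbb P^1$ (or the relevant line bundle encoding a unit) cup-squares to the generator. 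I would present (a) and (b) as the two lemmas and spend the bulk of the write-up on this reduction-to-the-universal-case step.
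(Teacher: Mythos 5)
Your proposal follows essentially the same route as the paper: evaluate the cocycle $s(\cal L_1\otimes\cal L_2)-s(\cal L_1)-s(\cal L_2)$, use the symmetric monoidality of $\cal E\mapsto[\cal E]$ to identify it with $([\cal L_1]-[\cal O])\cdot([\cal L_2]-[\cal O])$, and conclude that the resulting bi-rigidified map $B\underline K{}_1\times B\underline K{}_1\to B^2\underline K{}_2$ is the one induced by the product pairing (the paper phrases your step (b) as the uniqueness of a bi-rigidified morphism compatible with the multiplication $\underline K{}_{\ge 1}\otimes\underline K{}_{\ge 1}\to\underline K{}_{\ge 2}$, since such a map is determined by its effect on $\pi_2$). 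The reduction to the universal case via Matsumoto's theorem at the end is not needed — once (a) is established, the connectivity/truncation bookkeeping is already handled by the observation that $\Pic\to\underline K{}_{\ge 1}$ agrees with the truncation map $B\underline K{}_1\to\underline K{}_{\ge 1}$.
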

\begin{proof}
Let $\cal L_1$, $\cal L_2$ be sections of $\deloop(\Ktheory{}_1)\cong \deloop\mathbb G_m$. The image of $(\cal L_1, \cal L_2)$ under the ``cocycle'' morphism, \emph{i.e.}~the special case of \eqref{eq-cocycle-formula} for $A_1 = \Ktheory{}_1$, $A_2 = \Ktheory{}_2$:
\begin{equation}
\label{eq-k-theory-cocycle-morphism}
\deloop(\Ktheory{}_1) \times \deloop(\Ktheory{}_1) \rightarrow \deloop^2(\Ktheory{}_2)
\end{equation}
is the section in $\deloop^2(\Ktheory{}_2)$ obtained from:
\begin{equation}
\label{eq-cocycle-K-theory}
([\cal L_1\otimes\cal L_2] - [\cal O]) - ([\cal L_1] - [\cal O]) - ([\cal L_2] - [\cal O])
\end{equation}
under the truncation map $\Ktheory{}_{\ge 2} \rightarrow \deloop^2(\Ktheory{}_2)$.

Using the fact that \eqref{eq-class-of-perfect-complexes} is symmetric monoidal, the section \eqref{eq-cocycle-K-theory} is equivalent to $([\cal L_1] - [\cal O])\cdot ([\cal L_2] - [\cal O])$, where $\cdot$ denotes multiplication on $\Ktheory$.

The map $\Pic \rightarrow \Ktheory{}_{\ge 1}$ induced from $\cal L \mapsto [\cal L] - [\cal O]$ coincides with the map $\deloop(\Ktheory{}_1) \rightarrow \Ktheory{}_{\ge 1}$ defined by truncation. Thus, \eqref{eq-k-theory-cocycle-morphism} renders the following diagram commutative:
$$
\begin{tikzcd}[column sep = 1.5em]
	\deloop(\Ktheory{}_1) \times \deloop(\Ktheory{}_1) \ar[r, "\eqref{eq-k-theory-cocycle-morphism}"]\ar[d] & \deloop^2(\Ktheory{}_2) \ar[d] \\
	\Ktheory{}_{\ge 1} \otimes \Ktheory{}_{\ge 1} \ar[r, "\cdot"] & \Ktheory{}_{\ge 2}
\end{tikzcd}
$$
Here, the bottom horizontal arrow is the multiplicative structure on $\Ktheory$ and the vertical maps are defined by truncations. However, the only bi-rigidified morphism $\deloop(\Ktheory{}_1) \times \deloop(\Ktheory{}_1) \rightarrow \deloop^2(\Ktheory{}_2)$ rendering this diagram commutative is the product pairing.
\end{proof}

\begin{rem}
\label{rem-k-theory-pairing}
Let us view the map defined by the product pairing $x, y\mapsto \{x, y\}$ as a morphism of Zariski sheaves of connective spectra:
\begin{equation}
\label{eq-k-theory-pairing}
\Pic \otimes \Pic \rightarrow \deloop^2(\Ktheory{}_2),\quad (\cal L_1,\cal L_2) \mapsto \{\cal L_1, \cal L_2\}.
\end{equation}

Then Proposition \ref{prop-truncated-k-theory-classification} exihibits a canonical isomorphism of sections of $\Ktheory{}_{[1, 2]}$:
\begin{equation}
\label{eq-section-quadratic-relation}
s(\cal L_1\otimes \cal L_2) - s(\cal L_1) - s(\cal L_2) \cong \{\cal L_1, \cal L_2\},
\end{equation}
where $s : \Pic \rightarrow \Ktheory{}_{[1, 2]}$ is the morphism \eqref{eq-section-of-truncated-k-theory-from-pic} of sheaves of pointed spaces. One may therefore view $s$ as a ``quadratic refinement'' of the product pairing \eqref{eq-k-theory-pairing}.
\end{rem}

\begin{void}
\label{void-truncated-k-theory-loop-space}
Combining Proposition \ref{prop-truncated-k-theory-classification} with the alternative construction of the pairing given in \S\ref{void-fiber-sequence-classification-alternative}, we obtain an explicit description of the loop space of \eqref{eq-truncated-k-theory-fiber-sequence}:
\begin{equation}
\label{eq-truncated-k-theory-loop-space}
	\deloop\Ktheory{}_2 \rightarrow \Omega(\Ktheory{}_{[1,2]}) \rightarrow \Ktheory{}_1.
\end{equation}

Namely, it splits as sheaves of $\mathbb E_1$-monoids, given by \eqref{eq-section-of-truncated-k-theory-from-pic}. The commutativity constraint on $\Omega(\Ktheory{}_{[1, 2]})$ is described by the anti-symmetric pairing:
$$
\Ktheory{}_1\otimes\Ktheory{}_1 \rightarrow \Ktheory{}_2,\quad x, y\mapsto \{x, y\}.
$$
\end{void}

\begin{void}
\label{void-multiplication-by-line-bundle}
Let $R$ be a ring and $\cal M$ be a line bundle over $S := \Spec(R)$. We shall temporarily work over the big Zariski site of $\Spec(R)$.

Multiplication by the object $[\cal M] \in K(R)$ induces a morphism of sheaves of connective spectra $\cdot [\cal M] : \Ktheory \rightarrow \Ktheory$, hence a morphism:
\begin{equation}
\label{eq-multiplication-by-line-bundle}
	\cdot [\cal M] : \Ktheory{}_{[1, 2]} \rightarrow \Ktheory{}_{[1, 2]}.
\end{equation}

Since the image of $[\cal M]$ in $\Gamma(\Spec(R), \Ktheory{}_0)$ is the multiplicative unit, multiplication by $[\cal M]$ induces the identity map on the homotopy sheaves $\Ktheory{}_n$ for each $n\ge 0$. In particular, we obtain an automorphism of the triangles \eqref{eq-truncated-k-theory-fiber-sequence}:
\begin{equation}
\label{eq-multiplication-by-line-bundle-shearing}
\begin{tikzcd}
	\deloop^2\Ktheory{}_2 \ar[r]\ar[d, "\id"] & \Ktheory{}_{[1, 2]} \ar[r]\ar[d, "{\cdot [\cal M]}"] & \deloop\Ktheory{}_1 \ar[d, "\id"] \\
	\deloop^2\Ktheory{}_2 \ar[r] & \Ktheory{}_{[1, 2]} \ar[r] & \deloop\Ktheory{}_1
\end{tikzcd}
\end{equation}

\emph{Claim}: \eqref{eq-multiplication-by-line-bundle} is the sum of the identity on $\Ktheory{}_{[1, 2]}$ with the shearing map $\deloop\Ktheory{}_1 \cong \Pic \rightarrow \deloop^2\Ktheory{}_2$ defined by $\cal L\mapsto \{\cal L, \cal M\}$ (in the notation \eqref{eq-k-theory-pairing}).

To see this, we may lift a section $\cal L$ of $\deloop\Ktheory{}_1$ to $s(\cal L) = [\cal L] - [\cal O]$ of $\Ktheory{}_{[1, 2]}$. The section $s(\cal L)\cdot[\cal M] \cong [\cal L \otimes \cal M] - [\cal M]$ is then the sum of $s(\cal L)$ with $\{\cal L, \cal M\}$ by \eqref{eq-section-quadratic-relation}.
\end{void}

\subsection{The sheaf $\Ktheory{}_{[1,2]}^{\super}$}
\label{sec-super-k-theory}

\begin{void}
Let $\Pic^{\super}$ denote the Zariski sheaf of super (\emph{i.e.}~$\mathbb Z/2$-graded) line bundles. As a sheaf of connective spectra, it coincides with the cofiber of the map:
\begin{equation}
\label{eq-evenly-graded-trivial-lines}
\underline{\mathbb Z} \rightarrow \Pic^{\mathbb Z},\quad n \mapsto (\cal O, 2n).
\end{equation}

We could suggestively denote $\Pic^{\super}$ by $\Ktheory{}_{[0, 1]}^{\super}$, viewing \eqref{eq-evenly-graded-trivial-lines} as a morphism $\Ktheory{}_0 \rightarrow \Ktheory{}_{[0, 1]}$ lifting the squaring map on $\Ktheory{}_0$.

The goal of this subsection is to introduce the Zariski sheaf $\Ktheory{}_{[1, 2]}^{\super}$ of connective spectra, defined as the cofiber of a map lifting the squaring map on $\deloop\Ktheory{}_1$:
\begin{equation}
\label{eq-evenly-graded-trivial-k-classes}
\Sq : \deloop\Ktheory{}_1 \rightarrow \Ktheory{}_{[1, 2]}.
\end{equation}
\end{void}

\begin{void}
Let us first define \eqref{eq-evenly-graded-trivial-k-classes} as a morphism of sheaves of \emph{pointed spaces}. To do so, we interpret $\deloop\Ktheory{}_1$ as $\Pic$ and define \eqref{eq-evenly-graded-trivial-k-classes} by the formula:
\begin{equation}
\label{eq-duplication-section}
\Pic \rightarrow \Ktheory{}_{[1,2]},\quad \cal L \mapsto [\cal L] - [\cal L^{-1}].
\end{equation}
More precisely, the formula $\cal L\mapsto [\cal L] - [\cal L^{-1}]$ defines a map $\Pic(R) \rightarrow K(R)$, which induces \eqref{eq-duplication-section} upon Zariski sheafification and truncation as in \S\ref{void-truncated-k-theory-section}.

We argue that the structure of a morphism of connective spectra on \eqref{eq-duplication-section} is unique, if it exists. Indeed, since \eqref{eq-duplication-section} has $1$-connective source and target, it is equivalent to a morphism of sheaves of $\mathbb E_1$-monoids:
\begin{equation}
\label{eq-duplication-section-loop}
	f : \mathbb G_m \rightarrow \Omega (\Ktheory{}_{[1,2]}).
\end{equation}
Since $\Omega(\Ktheory{}_{[1,2]})$ is $1$-truncated, an $\mathbb E_{\infty}$-monoid structure on \eqref{eq-duplication-section-loop} is equivalent to the \emph{condition} that it preserves the commutativity constraint.

In particular, the following assertion involves no additional structure.
\end{void}

\begin{prop}
\label{prop-duplication-section-spectra}
The morphism of sheaves of pointed spaces \eqref{eq-duplication-section} lifts to a morphism of sheaves of connective spectra.
\end{prop}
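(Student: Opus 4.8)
The plan is to verify the single condition isolated in the discussion preceding the statement, namely that the morphism of sheaves of $\mathbb E_1$-monoids $f\colon \mathbb G_m \to \Omega(\underline K{}_{[1,2]})$ obtained by looping \eqref{eq-duplication-section} preserves the commutativity constraint; since $\Omega(\underline K{}_{[1,2]})$ is $1$-truncated this is a property, and once it holds $f$ — hence \eqref{eq-duplication-section} — is a morphism of connective spectra. I would make this concrete via \S\ref{void-monoid-loop-spaces}, applied with $M = \Pic$, $N = \underline K{}_{[1,2]}$ and the pointed morphism \eqref{eq-duplication-section}: because $\mathbb G_m = \Omega M$ is discrete (its braiding is trivial), the obstruction to $f$ preserving the commutativity constraint is a pairing $\beta\colon \mathbb G_m \otimes \mathbb G_m \to \Omega^2(\underline K{}_{[1,2]}) \cong \underline K{}_2$ which, by the Claim of \S\ref{void-monoid-loop-spaces}, equals the loop-space pairing — apply $\Omega$ in the first, then the second factor — of the bi-rigidified cocycle $(\cal L_1, \cal L_2) \mapsto \Sq(\cal L_1 \otimes \cal L_2) - \Sq(\cal L_1) - \Sq(\cal L_2)$. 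So everything reduces to showing $\beta \equiv 0$.

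To compute $\beta$, write $\Sq = s - s \circ \iota$, where $s$ is the section \eqref{eq-section-of-truncated-k-theory-from-pic}, $\iota\colon \Pic \to \Pic$ is $\cal L \mapsto \cal L^{-1}$ — that is, multiplication by $-1$ on $\Pic \cong B\mathbb G_m$, in particular a morphism of sheaves of $\mathbb E_\infty$-groups — and the difference is taken in the spectrum $\underline K{}_{[1,2]}$; this is just the tautology $[\cal L] - [\cal L^{-1}] = ([\cal L] - [\cal O]) - ([\cal L^{-1}] - [\cal O])$. The operation sending a pointed morphism to the loop-space pairing of its bi-rigidified cocycle is additive and intertwines precomposition by $\iota$ with precomposition by $\Omega\iota$, which is inversion on $\mathbb G_m$; hence $\beta(x, y) = p(x, y) - p(x^{-1}, y^{-1})$, where $p$ is the loop-space pairing of the cocycle of $s$. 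By Proposition \ref{prop-truncated-k-theory-classification} (equivalently \eqref{eq-section-quadratic-relation}) $p$ is the product pairing $x, y \mapsto \{x, y\}$, which is bilinear by Lemma \ref{lem-fiber-sequence-classification}; therefore $\{x^{-1}, y^{-1}\} = \{x, y\}$ and $\beta \equiv 0$, as needed.

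I expect the difficulty to lie not in any computation but in threading the argument correctly through the existing infrastructure. The identification of the braiding obstruction with the concrete loop-space pairing $\beta$ is exactly the topological Claim of \S\ref{void-monoid-loop-spaces}, and the step from $\beta \equiv 0$ to an honest $\mathbb E_\infty$-structure on $f$ relies on the $1$-truncatedness of $\Omega(\underline K{}_{[1,2]})$, as recorded before the statement. It is worth stressing that $\beta \equiv 0$ is a genuine assertion, not a formality: by \S\ref{void-truncated-k-theory-loop-space} the braiding in $\Omega(\underline K{}_{[1,2]})$ between $f(x)$ and $f(y)$ is the generally nontrivial symbol $\{x^2, y^2\}$, so the content is that the $\mathbb E_1$-structure of $f$ reproduces this symbol on the nose. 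The one bookkeeping point that needs checking is that the cocycle of $\Sq$ takes values, compatibly with its bi-rigidification, in $B^2 \underline K{}_2 \subset \underline K{}_{[1,2]}$ — so that $\beta$ lands in $\Omega^2(\underline K{}_{[1,2]}) \cong \underline K{}_2$ — which holds because $\Sq$ composed with the projection $\underline K{}_{[1,2]} \to B\underline K{}_1 \cong \Pic$ is $\id - \iota$, i.e.\ multiplication by $2$ on $\Pic$, a morphism of spectra whose cocycle is canonically null.
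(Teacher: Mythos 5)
Your proof is correct, but it follows a genuinely different route from the paper's. The paper first identifies the two components of the $\mathbb E_1$-map $f = \Omega(\Sq)$ explicitly (Lemma \ref{lem-duplication-section-loop}): the $\underline K{}_1$-component is squaring, and the $B(\underline K{}_2)$-component is the trivial pointed map with $\mathbb E_1$-structure $1\mapsto 2\cdot\{x,y\}$, the latter extracted from the relation $\Sq(\cal L)\cong s(\cal L^2)-2\cdot\{\cal L,\cal L\}$ together with the external input \cite[Theorem 2.5]{MR2915483} computing the loop of the self-pairing $\cal L\mapsto\{\cal L,\cal L\}$; the braiding condition then reduces to the identity $2\{x,y\}=\{x^2,y^2\}+2\{y,x\}$. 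You instead transfer the braiding obstruction to the double loop of the bi-rigidified cocycle of $\Sq$ via the Claim of \S\ref{void-monoid-loop-spaces} (the same $s_1\simeq s_2$ comparison the paper establishes for Lemma \ref{lem-fiber-sequence-classification}), and then exploit the decomposition $\Sq = s - s\circ\iota$ with $\iota$ a spectrum map, so that additivity and functoriality of the double-looped cocycle give the answer immediately: $\{x,y\}-\{x^{-1},y^{-1}\}=0$ by bilinearity. This is arguably cleaner: it bypasses the citation of \cite{MR2915483} entirely and never restricts a bilinear pairing along the diagonal, which is exactly where the subtle correction terms live (and why, as the paper remarks, the ``obvious'' lift $\cal L\mapsto 2\cdot s(\cal L)$ fails — its cocycle double-loops to $2\{x,y\}\neq 0$). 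The trade-off is that your argument does not yield the explicit description of Lemma \ref{lem-duplication-section-loop}; since that lemma is used only inside the paper's proof of the Proposition (later sections rely on \eqref{eq-section-via-pairing} instead, e.g.\ in Lemma \ref{lem-classification-squaring-map}), nothing downstream is lost. Your closing bookkeeping point is correct but not strictly needed: $\Omega^2(\underline K{}_{[1,2]})\cong\underline K{}_2$ already, so the double-looped cocycle lands in $\underline K{}_2$ regardless of whether one first lifts the cocycle to $B^2(\underline K{}_2)$.
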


\begin{void}
The proof of Proposition \ref{prop-duplication-section-spectra} proceeds by explicitly identifying the morphism \eqref{eq-duplication-section-loop} using the description of $\Omega(\Ktheory{}_{[1, 2]})$ in \S\ref{void-truncated-k-theory-loop-space}.

Namely, under the $\mathbb E_1$-monoidal splitting $\Omega(\Ktheory{}_{[1, 2]}) \cong \deloop(\Ktheory{}_2) \times \Ktheory{}_1$, \eqref{eq-duplication-section-loop} corresponds to two $\mathbb E_1$-monoidal morphisms:
\begin{align*}
	f_1 &: \mathbb G_m \rightarrow \Ktheory{}_1,\\
	f_2 &: \mathbb G_m \rightarrow \deloop(\Ktheory{}_2).
\end{align*}
\end{void}

\begin{lem}
\label{lem-duplication-section-loop}
The following statements hold:
\begin{enumerate}
	\item $f_1$ is the squaring map $x\mapsto x^2$;
	\item $f_2$ is trivial as a morphism of sheaves of \emph{pointed spaces}, and its $\mathbb E_1$-monoid structure is defined by the automorphism of the trivial $\Ktheory{}_2$-torsor:
	$$
	f_2(x) \otimes f_2(y) \xrightarrow{\simeq} f_2(xy),\quad 1\mapsto 2\cdot \{x, y\},
	$$
	for each $x, y \in \mathbb G_m$.
\end{enumerate}
\end{lem}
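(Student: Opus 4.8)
The claim concerns the two $\mathbb E_1$-monoidal components $f_1,f_2$ of the morphism $f\colon \mathbb G_m \to \Omega(\underline K{}_{[1,2]})$ that corresponds, under loop-space/delooping, to the section $\cal L\mapsto [\cal L]-[\cal L^{-1}]$ of $\underline K{}_{[1,2]}$. The strategy is to compute $f$ directly on the underlying sheaf of pointed spaces, then separately identify the $\mathbb E_1$-structure on $f_2$, using the explicit $\mathbb E_1$-splitting $\Omega(\underline K{}_{[1,2]})\cong B(\underline K{}_2)\times \underline K{}_1$ and the description of its commutativity constraint from \S\ref{void-truncated-k-theory-loop-space}.

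\medskip

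\emph{Step 1 (identify $f_1$).} Under delooping, the composite $\mathbb G_m \xrightarrow{f} \Omega(\underline K{}_{[1,2]}) \to \underline K{}_1$ corresponds to the composite $\Pic \xrightarrow{\eqref{eq-duplication-section}} \underline K{}_{[1,2]} \to B\underline K{}_1 \cong \Pic$. By Remark \ref{rem-determinant-via-k-theory}, the truncation $\underline K{}_{[1,2]}\to B\underline K{}_1$ is computed on $\Pic$ by the determinant, so it sends $[\cal L]-[\cal L^{-1}]$, i.e.\ the class of the virtual bundle $\cal L\ominus\cal L^{-1}$, to $\det(\cal L)\otimes\det(\cal L^{-1})^{-1}=\cal L^{\otimes 2}$. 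Hence $f_1$ is the squaring map $x\mapsto x^2$. This is the ``topological shadow'' of $f$ and involves no homotopy-coherence subtlety.

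\medskip

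\emph{Step 2 ($f_2$ is pointwise trivial).} As a morphism of sheaves of \emph{pointed spaces}, $f$ factors through the section $s\colon \Pic\to\underline K{}_{[1,2]}$ of \eqref{eq-section-of-truncated-k-theory-from-pic}: indeed $[\cal L]-[\cal L^{-1}] = ([\cal L]-[\cal O]) - ([\cal L^{-1}]-[\cal O]) = s(\cal L) - s(\cal L^{-1})$, and since $s$ is only a pointed map (not yet $\mathbb E_1$) this is a perfectly good identity of pointed maps $\Pic\to\underline K{}_{[1,2]}$, functorially split along the $\mathbb E_1$-splitting. The $B\underline K{}_2$-component of $s(\cal L)$ is trivial as a pointed map by construction of $s$, so the same holds for $f$; passing to loop spaces, $f_2\colon \mathbb G_m \to B\underline K{}_2$ is trivial as a map of pointed spaces. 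What remains of $f_2$ is therefore pure $\mathbb E_1$-structure: an automorphism of the trivial $\underline K{}_2$-torsor over each pair $(x,y)$, i.e.\ an antisymmetric bilinear map $\mathbb G_m\otimes\mathbb G_m\to\underline K{}_2$.

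\medskip

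\emph{Step 3 (compute the coefficient).} This is the crux. The $\mathbb E_1$-structure on $f$ is forced by its value as a pointed map together with the commutativity/associativity data of the two factors, via the same mechanism as in \S\ref{void-monoid-loop-spaces}: for a pointed map $s\colon M\to N$ of $\mathbb E_1$-monoids, the discrepancy between the ``cocycle'' map $x,y\mapsto s(x+y)-s(x)-s(y)$ and the splitting is governed by the braidings on $\Omega M$ and $\Omega N$. Here we write $f = s(-)\cdot(-)^{-1}$ applied fiberwise and use that the cocycle of $\cal L\mapsto s(\cal L)$ is $\{\cal L_1,\cal L_2\}$ by \eqref{eq-section-quadratic-relation} (Proposition \ref{prop-truncated-k-theory-classification}), while the cocycle of $\cal L\mapsto s(\cal L^{-1})$ contributes with a sign that, after the substitution $\cal L_i\mapsto\cal L_i^{-1}$ and bilinearity, \emph{adds} rather than cancels. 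Concretely, applying \eqref{eq-section-quadratic-relation} to both terms,
\[
\bigl(s(\cal L_1)-s(\cal L_1^{-1})\bigr)\otimes\bigl(s(\cal L_2)-s(\cal L_2^{-1})\bigr)\ \text{vs.}\ s(\cal L_1\cal L_2)-s((\cal L_1\cal L_2)^{-1})
\]
differs by $\{\cal L_1,\cal L_2\} - \{\cal L_1^{-1},\cal L_2\} - \{\cal L_1,\cal L_2^{-1}\} + \{\cal L_1^{-1},\cal L_2^{-1}\} = 4\cdot\{\cal L_1,\cal L_2\}$ naively, but the two ``mixed'' terms are identifications \emph{between} the splitting and the cocycle map rather than contributions to a single fiber, so after carefully tracking which isomorphisms are composed (exactly as in the $s_1\simeq s_2$ computation of \S\ref{void-monoid-loop-spaces}) the net automorphism is multiplication by $2\cdot\{x,y\}$ on passing to $\Omega$. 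I would carry this out by the same CW-model picture as \S\ref{void-monoid-loop-spaces}: represent $f_2(x)\otimes f_2(y)\to f_2(xy)$ by a square $[0,1]^2\to\underline K{}_{[1,2]}$ built from $(t_1,t_2)\mapsto ([\cal L_1(t_1)]-[\cal L_1(t_1)^{-1}])+([\cal L_2(t_2)]-[\cal L_2(t_2)^{-1}])$, read off the two boundary paths, and recognize their difference as the loop $\{x,y\}+\{x,y\}$ given by the braidings.

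\medskip

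\emph{Main obstacle.} The genuinely delicate point is Step 3: getting the \emph{coefficient} $2$ (rather than $1$, $4$, or $\pm$ something) correct, which requires honest bookkeeping of the braiding-induced loops for both the $[\cal L]$ and the $[\cal L^{-1}]$ halves and verifying their contributions reinforce. The conceptual reason the answer is $2\{x,y\}$ and not $0$ is precisely that inversion is orientation-reversing on the ``$B\underline K{}_1$''-circle, so the two braidings co-rotate rather than cancel; but confirming this is where the proof must be careful, and it is best done in the explicit topological model of \S\ref{void-monoid-loop-spaces} rather than abstractly. Everything else—Steps 1 and 2, and the abstract fact that $f_2$'s residual data is an antisymmetric bilinear pairing—is formal given Proposition \ref{prop-truncated-k-theory-classification} and Lemma \ref{lem-fiber-sequence-classification}.
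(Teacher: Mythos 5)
Your Steps 1 and 2 coincide with the paper's treatment of statement (1) and of the pointwise triviality of $f_2$, and are fine. The gap is exactly where you flag it: Step 3 does not actually establish the coefficient $2\cdot\{x,y\}$. Two problems. First, the mechanism of \S\ref{void-monoid-loop-spaces} ($s_1\simeq s_2$) relates the double loop of the \emph{cocycle} $\cal L_1,\cal L_2\mapsto \Sq(\cal L_1\otimes\cal L_2)-\Sq(\cal L_1)-\Sq(\cal L_2)$ to the \emph{braiding-commutator} of the $\mathbb E_1$-map $f$, not to its $\mathbb E_1$-structure itself. Here the cocycle of $\Sq$ is canonically trivialized (by \eqref{eq-section-quadratic-relation} it equals $\{\cal L_1,\cal L_2\}-\{\cal L_1^{-1},\cal L_2^{-1}\}$, and bilinearity identifies the two terms), so that mechanism only yields the relation $c(x,y)-c(y,x)=-\{x^2,y^2\}$ for the $\mathbb E_1$-structure $c$ of $f_2$; this is consistent with $c=2\{\cdot,\cdot\}$ but equally with $c=2\{\cdot,\cdot\}$ plus any symmetric pairing, so the coefficient is not pinned down. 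Second, the intermediate bookkeeping you write down (the ``naive $4\{\cal L_1,\cal L_2\}$'' with mixed terms, corrected by fiat to $2\{x,y\}$) is not a computation: the four-term expression does not arise from applying \eqref{eq-section-quadratic-relation} to the two halves of $[\cal L]-[\cal L^{-1}]$, and no argument is given for why the answer is $2$ rather than $0$ or $4$.

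The paper closes this gap differently. Applying \eqref{eq-section-quadratic-relation} to the pairs $(\cal L,\cal L^{-1})$ and $(\cal L,\cal L)$ and subtracting yields the identity $[\cal L]-[\cal L^{-1}]\cong s(\cal L^2)-2\cdot\{\cal L,\cal L\}$ (this is \eqref{eq-section-via-pairing}, which you never derive). Since $\Omega s$ is the chosen $\mathbb E_1$-splitting, the entire $B(\underline K{}_2)$-component of $f$, with its $\mathbb E_1$-structure, is $(-2)$ times the loop space of the self-pairing $\cal L\mapsto\{\cal L,\cal L\}$. The $\mathbb E_1$-structure of the loop of a cup-square is a genuine external input: \cite[Theorem 2.5]{MR2915483} identifies it as $1\mapsto -\{x,y\}$, whence the coefficient $2$. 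Your ``conceptual reason'' (inversion is orientation-reversing, so the two braidings co-rotate) is the right intuition for why the answer is nonzero, but to make it a proof you would either have to reprove the content of \cite[Theorem 2.5]{MR2915483} in the CW model, or reduce to it via the identity \eqref{eq-section-via-pairing} as the paper does.
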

\begin{proof}
As the section \eqref{eq-section-of-truncated-k-theory-from-pic} lifts the identity map on $\Pic\cong \deloop(\Ktheory{}_1)$, statement (1) follows from the isomorphism $\cal L \otimes (\cal L^{-1})^{-1} \cong \cal L^{\otimes 2}$.

For statement (2), we first apply the isomorphism \eqref{eq-section-quadratic-relation} to the pairs of sections $\cal L, \cal L^{-1} \in \Pic$ and $\cal L, \cal L \in\Pic$ to obtain isomorphisms in $\Ktheory{}_{[1,2]}$:
\begin{align*}
s(\cal L) + s(\cal L^{-1}) &\cong -\{\cal L, \cal L^{-1}\}, \\
2\cdot s(\cal L) &\cong s(\cal L^2) - \{\cal L, \cal L\},
\end{align*}
where $s$ denotes the section \eqref{eq-section-of-truncated-k-theory-from-pic}.

Their difference yields an isomorphism in $\Ktheory{}_{[1, 2]}$:
\begin{equation}
\label{eq-section-via-pairing}
[\cal L] - [\cal L^{-1}] \cong s(\cal L^2) - 2\cdot \{\cal L, \cal L\}.
\end{equation}
In particular, this shows that $f_2$ is given by $(-2)$ times the loop space of the self-pairing:
\begin{equation}
\label{eq-self-cup-product-pairing}
\Pic \rightarrow \deloop^2(\Ktheory{}_2),\quad \cal L\mapsto \{\cal L, \cal L\}.
\end{equation}

According to \cite[Theorem 2.5]{MR2915483}, the loop space of \eqref{eq-self-cup-product-pairing} is the map $\mathbb G_m \rightarrow \deloop(\Ktheory{}_2)$ which is trivial as a morphism of sheaves of pointed spaces, with $\mathbb E_1$-monoid structure given, for any $x, y\in\mathbb G_m$, by the automorphism $1\mapsto -\{x, y\}$ of the trivial $\Ktheory{}_2$-torsor. The desired conclusion follows.
\end{proof}

\begin{proof}[Proof of Proposition \ref{prop-duplication-section-spectra}]
It suffices to prove that \eqref{eq-duplication-section-loop} preserves the commutativity constraint. In other words, given $x, y \in \mathbb G_m$, we must show that the following diagram of sections of $\Omega(\Ktheory{}_{[1,2]})$ commutes:
\begin{equation}
\label{eq-duplication-section-braiding}
\begin{tikzcd}[column sep = 1.5em]
	f(x)\otimes f(y) \ar[r, "\simeq"]\ar[d, "c_{f(x), f(y)}"] & f(xy) \ar[d, "f(c_{x, y})"] \\
	f(y)\otimes f(x) \ar[r, "\simeq"] & f(yx)
\end{tikzcd}
\end{equation}
Here, the horizontal morphisms are given by the $\mathbb E_1$-monoid structure of $f$ and the vertical morphisms are the commutativity constraints of $\Omega(\Ktheory{}_{[1, 2]})$, respectively $\mathbb G_m$ (identity).

By Lemma \ref{lem-duplication-section-loop}, the commutativity of \eqref{eq-duplication-section-braiding} is equivalent to the following equality of sections of $\Ktheory{}_2$:
$$
2\cdot \{x, y\} = \{x^2, y^2\} + 2\cdot \{y, x\}.
$$
This follows at once from the bilinearity and anti-symmetry of the pairing.
\end{proof}

\begin{rem}
From the proof of Proposition \ref{prop-duplication-section-spectra}, we see that if we replace \eqref{eq-duplication-section} by the ``obvious'' lift of the squaring map $\cal L\mapsto 2\cdot s(\cal L)$, it would \emph{not} define a morphism of sheaves of connective spectra.
\end{rem}

\begin{void}
\label{void-super-k-spectrum-definition}
Having constructed \eqref{eq-duplication-section}, thus $\Sq$ \eqref{eq-evenly-graded-trivial-k-classes}, as a morphism of sheaves of connective spectra, we define $\Ktheory{}_{[1, 2]}^{\super}$ to be the cofiber of $\Sq$.

The following diagram summarizes four cofiber sequences of Zariski sheaves of connective spectra relevant for us:
\begin{equation}
\label{eq-super-k-spectrum-fiber-sequences}
\begin{tikzcd}[column sep = 1.5em]
	& \deloop(\Ktheory{}_1) \ar[r, "\id"]\ar[d, "\Sq"] & \deloop(\Ktheory{}_1) \ar[d, "\cdot 2"] \\
	\deloop^2(\Ktheory{}_2) \ar[r]\ar[d, "\id"] & \Ktheory{}_{[1, 2]} \ar[r]\ar[d] & \deloop(\Ktheory{}_1) \ar[d] \\
	\deloop^2(\Ktheory{}_2) \ar[r] & \Ktheory{}_{[1,2]}^{\super} \ar[r] & \deloop(\Ktheory{}_1)/2
\end{tikzcd}
\end{equation}
\end{void}

\subsection{Integration on curves}
\label{sec-integration-on-curves}

\begin{void}
Given a quasi-compact and quasi-separated scheme $S$, we write $\mathbf K(S)$ for the \emph{non-connective} $\tn K$-theory spectrum of the stable $\infty$-category $\Perf(S)$ \cite[\S9]{MR3070515}. The association $S\mapsto \mathbf K(S)$ is a Zariski sheaf of spectra \cite[Theorem 8.1]{MR1106918}.

If $S$ is regular, then the restriction of $\mathbf K$ to the small Zariski site of $S$ takes values in connective spectra, so $\mathbf K(S)$ coincides with $\Gamma(S, \Ktheory)$ \cite[Proposition 6.8]{MR1106918}.
\end{void}

\begin{void}
\label{void-relative-curve}
Let $S$ be a regular affine scheme of finite type over a field. This assumption guarantees that each Zariski sheaf $\Ktheory{}_n$ over $S$ has cohomological amplitude $\le n$ by the Gersten resolution.

Let $p : X_S \rightarrow S$ be a smooth, proper morphism of relative dimension $1$ with connected geometric fibers.

The functor $\Perf(X_S) \rightarrow \Perf(S)$, $\cal E \mapsto Rp_*\cal E$ induces a morphism of spectra $\mathbf K(X_S) \rightarrow \mathbf K(S)$. By regularity, this amounts to a morphism of connective spectra:
\begin{equation}
\label{eq-integration-curves-full-spectra}
	\Gamma(X_S, \Ktheory) \rightarrow \Gamma(S, \Ktheory).
\end{equation}
\end{void}

\begin{lem}
\label{lem-integration-curves-truncation}
For $n= 0, 1$, the morphism \eqref{eq-integration-curves-full-spectra} fits into a commutative diagram:
\begin{equation}
\label{eq-integration-curves-truncation-spectra}
\begin{tikzcd}[column sep = 1em]
	\Gamma(X_S, \Ktheory) \ar[r, "\eqref{eq-integration-curves-full-spectra}"]\ar[d] & \Gamma(S, \Ktheory) \ar[d] \\
	\Gamma(X_S, \Ktheory{}_{\le n+1}) \ar[r] & \Gamma(S, \Ktheory{}_{\le n})
\end{tikzcd}
\end{equation}
where the vertical arrows are defined by truncation on $\Ktheory$.
\end{lem}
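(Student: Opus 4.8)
The plan is to build the dashed bottom arrow of \eqref{eq-integration-curves-truncation-spectra} by a universal-property argument and then to identify it.

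\emph{Step 1 (reduction).} The functor $\Gamma(X_S,-)$, being a limit, carries the fiber sequence $\underline K{}_{\ge n+2}\to\underline K\to\underline K{}_{\le n+1}$ to a fiber sequence, so the left vertical map of \eqref{eq-integration-curves-truncation-spectra} has fiber $\Gamma(X_S,\underline K{}_{\ge n+2})$ and thereby exhibits $\Gamma(X_S,\underline K{}_{\le n+1})$ as $\mathrm{cofib}\big(\Gamma(X_S,\underline K{}_{\ge n+2})\to\Gamma(X_S,\underline K)\big)$. In a stable $\infty$-category a map out of $A$ factors through $\mathrm{cofib}(F\to A)$ exactly when its composite with $F\to A$ is nullhomotopic, and a choice of nullhomotopy gives the factorization. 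Hence it suffices to show that
\[
\Gamma(X_S,\underline K{}_{\ge n+2})\longrightarrow\Gamma(X_S,\underline K)\xrightarrow{\ \eqref{eq-integration-curves-full-spectra}\ }\Gamma(S,\underline K)\longrightarrow\Gamma(S,\underline K{}_{\le n})
\]
is nullhomotopic. Since $Rp_*$ is natural in Zariski opens $U\subseteq S$ and each $p^{-1}(U)$ is regular --- so that $\Gamma(p^{-1}(U),\underline K)=\mathbf K(p^{-1}(U))$ --- the morphism \eqref{eq-integration-curves-full-spectra} is $\Gamma(S,-)$ of a morphism of Zariski sheaves of spectra $\phi\colon Rp_*\underline K\to\underline K$ on $S$, and the displayed composite is $\Gamma(S,-)$ applied to $Rp_*\underline K{}_{\ge n+2}\to Rp_*\underline K\xrightarrow{\phi}\underline K\to\tau_{\le n}\underline K$. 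It is therefore enough to nullify this last morphism of sheaves on $S$.

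\emph{Step 2 (the pushforward lowers weight by one).} For $n=0$ this is elementary: $\underline K_0$ is discrete, so $\Gamma(S,\underline K_0)=H^0(S,\bb Z)$ is a set, the square lives entirely on $\pi_0$, and the bottom arrow is the relative Euler characteristic $(\cal L,m)\mapsto\deg_{X_S/S}(\cal L)+m\cdot\chi(\cal O_{X_S/S})$, well defined by Riemann--Roch on the fibres. For $n=1$, $\Gamma(S,\underline K{}_{\le 1})=\Gamma(S,\Pic^{\bb Z})$ by Remark \ref{rem-determinant-via-k-theory}, and the bottom arrow is the determinant of cohomology $\cal E\mapsto\det(Rp_*\cal E)$ of Knudsen--Mumford--Deligne, which is multiplicative and hence descends along $K(X_S)=\Gamma(X_S,\underline K)$. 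To check commutativity, i.e.\ (by Step 1) that $Rp_*\underline K{}_{\ge n+2}\to\tau_{\le n}\underline K$ is null, I would use the Gersten/Bloch--Ogus resolution of $\underline K$ on the regular scheme $X_S$, under which $\underline K{}_{\ge n+2}$ is the ``weight $\ge n+2$'' part, together with the fact that proper pushforward along the relative curve $p$ respects the resulting weight filtration while dropping the weight by one --- on Gersten complexes $Rp_*$ is given by the finite transfers $K_\ast(k(x))\to K_\ast(k(p(x)))$ for points $x$ whose closure is generically finite over its image (the remaining, ``vertical'', points contributing in strictly higher codimension), and transfers carry weight $q$ to weight $q-1$, the $K$-theoretic trace of $H^i_{\mot}(X_S,\bb Z(q))\to H^{i-2}_{\mot}(S,\bb Z(q-1))$. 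Thus $\phi$ lands $Rp_*\underline K{}_{\ge n+2}$ in the ``weight $\ge n+1$'' part, which the ``weight $\le n$'' truncation annihilates; as a map of filtered spectra out of a weight-$\ge n+1$ object into a weight-$\le n$ one it is canonically null, not merely trivial on homotopy sheaves. Transporting a nullhomotopy back through Step 1 produces the bottom arrow.

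\emph{Main obstacle.} The crux is the input to Step 2: upgrading the assertion ``proper pushforward along a relative curve lowers the weight by one'' --- on $K_0$ of a variety over a field it is Grothendieck--Riemann--Roch --- to a statement about the morphism $\phi$ of Zariski sheaves of connective spectra over the regular base $S$, and matching that weight filtration with the Postnikov truncations $\underline K{}_{\le n}$ via Bloch--Ogus. The delicate points are the contribution of the vertical points of $X_S$, which must be shown to land one codimension deeper on $S$, and the bookkeeping of the filtration --- for which one uses that the Gersten resolution of $\underline K_q$ has length $\le q$, so each $\underline K_q$ has bounded Zariski cohomological dimension.
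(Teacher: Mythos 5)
Your Step~1 is exactly the paper's first reduction: one must nullify the composite
$\Gamma(X_S,\underline K{}_{\ge n+2})\to\Gamma(X_S,\underline K)\to\Gamma(S,\underline K)\to\Gamma(S,\underline K{}_{\le n})$,
and the problem is Zariski-local on $S$. The divergence, and the gap, is in Step~2. The input you need there --- that $\phi\colon Rp_*\underline K\to\underline K$ can be promoted to a morphism of \emph{filtered} sheaves of spectra in which the Postnikov filtration of $\underline K$ is identified with a Gersten/weight filtration that proper pushforward along a relative curve shifts by one --- is not a bookkeeping matter but is essentially the hard ``transgression on $\tn K$-theory'' problem that the paper explicitly says it does not know how to solve (see the discussion around \eqref{eq-transgression-intro} in the introduction). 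Knowing that the transfers act correctly on the $E_1$-page of the coniveau spectral sequence gives you the vanishing on homotopy sheaves, but a map of spectra out of a $1$-connective object into a coconnective one is not determined by its effect on homotopy sheaves; to get a \emph{canonical} nullhomotopy (which is needed, since the resulting bottom arrow is what defines $\int_{X_S}$ and must later be compared for $n=0,1$ in \eqref{eq-integration-along-curves-degree}) you would have to construct the filtered refinement of $\phi$ itself, including the contribution of the vertical points, and you acknowledge you have not done this. As written, the proof is incomplete at its central step.

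The paper avoids all of this by exploiting that the target is $n$-truncated, so only a very small range of obstruction groups can be nonzero. For $n=1$: since $\underline K{}_{\le 1}\cong\Pic^{\mathbb Z}$ and $S$ is regular, a section of $\Gamma(S,\underline K{}_{\le 1})$ is trivialized once it is trivialized away from codimension $\ge 2$, so one may replace $S$ by the spectrum of a discrete valuation ring $R$ with fraction field $F$. Then $X_S$ has Krull dimension $2$, so each $\Gamma(X_S,\underline K{}_i[i])$ lives in cohomological degrees $\le -i+2$ and the only obstruction to a (necessarily unique up to contractible choice) nullhomotopy is the single map $H^2(X_S,\underline K{}_3)\to H^0(S,\underline K{}_1)\cong R^{\times}$. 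This map is compatible with restriction to the generic fiber $X_F$, which has dimension $1$, so $H^2(X_F,\underline K{}_3)=0$; since $R^{\times}\hookrightarrow F^{\times}$, the obstruction vanishes. No weight filtration, no transfers, no Gersten resolution. If you want to salvage your approach you would need to supply the filtered enhancement of $Rp_*$ on $\tn K$-theory sheaves of spectra; otherwise I recommend replacing Step~2 by the low-degree obstruction analysis above.
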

\begin{proof}
We treat the case $n = 1$, as the case $n = 0$ is similar but simpler.

For $n = 1$, it suffices to trivialize the composition of maps of connective spectra:
\begin{equation}
\label{eq-integration-curves-full-spectra-composition}
\begin{tikzcd}[column sep = 1.5em]
\Gamma(X_S, \Ktheory{}_{\ge 3}) \ar[r] & \Gamma(X_S, \Ktheory) \ar[r, "\eqref{eq-integration-curves-full-spectra}"] & \Gamma(S, \Ktheory) \ar[r] & \Gamma(S, \Ktheory{}_{\le 1}),
\end{tikzcd}
\end{equation}
where the last arrow is defined by truncation on $\Ktheory$.

Since $\Ktheory{}_{\le 1} \cong \Pic^{\mathbb Z}$ (Remark \ref{rem-determinant-via-k-theory}) and $S$ is regular, a section of $\Gamma(S, \Ktheory{}_{\le 1})$ is trivialized once it is trivialized away from codimension $\ge 2$. Hence we may replace $S$ by the spectrum of a discrete valuation ring $R$ with field of fractions $F$.

In this case, $X_S$ is Noetherian of Krull dimension 2, so for each $i\ge 0$, the complex $\Gamma(X_S, \Ktheory{}_i[i])$ is concentrated in cohomological degrees $\le -i+2$. Triviality of \eqref{eq-integration-curves-full-spectra-composition} thus amounts to the \emph{condition} that its induced map on $H^{-1}$ below vanishes:
\begin{equation}
\label{eq-integration-curves-full-spectra-composition-cohomology}
H^2(X_S, \Ktheory{}_3) \rightarrow H^0(S, \Ktheory{}_1) \cong R^{\times}.
\end{equation}

However, the formation of \eqref{eq-integration-curves-full-spectra-composition} is of Zariski local nature on $S$, so \eqref{eq-integration-curves-full-spectra-composition-cohomology} fits into the commutative diagram:
$$
\begin{tikzcd}[column sep = 1em]
	H^2(X_S, \Ktheory{}_3) \ar[r]\ar[d] & R^{\times} \ar[d] \\
	H^2(X_F, \Ktheory{}_3) \ar[r] & F^{\times}
\end{tikzcd}
$$
Here, $X_F := X\times_S\Spec(F)$ has Krull dimension $1$, so $H^2(X_F, \Ktheory{}_3) = 0$ and \eqref{eq-integration-curves-full-spectra-composition-cohomology} vanishes.
\end{proof}

\begin{void}
We may now define a morphism:
\begin{equation}
\label{eq-integration-along-curves}
\int_{X_S} : \Gamma(X_S, \Ktheory{}_{[1, 2]}) \rightarrow \Gamma(S, \Pic^{\mathbb Z}),
\end{equation}
to be the composition:
$$
\Gamma(X_S, \Ktheory{}_{[1, 2]}) \rightarrow \Gamma(X_S, \Ktheory{}_{\le 2}) \rightarrow \Gamma(S, \Ktheory{}_{\le 1}) \cong \Gamma(S, \Pic^{\mathbb Z}).
$$
where the first morphism comes from the inclusion $\Ktheory{}_{[1, 2]} \rightarrow \Ktheory{}_{\le 2}$, and the second morphism is the bottom arrow of \eqref{eq-integration-curves-truncation-spectra} for $n = 1$.

Comparing the cases $n=1$ and $n = 0$ in \eqref{eq-integration-curves-truncation-spectra} shows that \eqref{eq-integration-along-curves} induces a morphism of fiber sequences:
\begin{equation}
\label{eq-integration-along-curves-degree}
\begin{tikzcd}[column sep = 1em]
	\Gamma(X_S, \deloop^2\Ktheory{}_2) \ar[r]\ar[d, "\int_{X_S}"] & \Gamma(X_S, \Ktheory{}_{[1, 2]}) \ar[r]\ar[d, "\int_{X_S}"] & \Gamma(X_S, \deloop\Ktheory{}_1) \ar[d, "\int_{X_S}"] \\
	\Gamma(S, \Pic) \ar[r] & \Gamma(S, \Pic^{\mathbb Z}) \ar[r] & \Gamma(S, \underline{\mathbb Z})
\end{tikzcd}
\end{equation}
Here, the rightmost vertical arrow has the following explict description: it associates to a line bundle over $X_S$ its degree, viewed as a locally constant function over $S$.
\end{void}

\begin{rem}
The first vertical functor in \eqref{eq-integration-along-curves-degree} is constructed by Gaitsgory in \cite[\S2.4]{MR4117995} using the Gersten resolution of $\Ktheory{}_2$.
\end{rem}

\begin{void}
Let us now define the ``super'' variant of \eqref{eq-integration-along-curves}, which requires a \emph{spin structure} over $X_S$. From now on, we fix a square root $\omega^{1/2}$ of the relative canonical bundle $\omega_{X_S/S}$.

Define the morphism:
\begin{equation}
\label{eq-integration-along-curves-omega-twist}
\int_{(X_S, \omega^{1/2})} : \Gamma(X_S, \Ktheory{}_{[1, 2]}) \rightarrow \Gamma(S, \Pic^{\mathbb Z})
\end{equation}
to be the composition of \eqref{eq-integration-along-curves} with the multiplication $\cdot[\omega^{1/2}] : \Ktheory{}_{[1, 2]} \rightarrow \Ktheory{}_{[1, 2]}$ (\emph{i.e.}~the morphism \eqref{eq-multiplication-by-line-bundle} for $\cal M := \omega^{1/2}$).

The following observation shows that the $\omega^{1/2}$-twisted integration morphism \eqref{eq-integration-along-curves-omega-twist} intertwines the squaring maps \eqref{eq-evenly-graded-trivial-lines} and \eqref{eq-evenly-graded-trivial-k-classes}.
\end{void}

\begin{lem}
The following diagram is canonically commutative:
\begin{equation}
\label{eq-integration-along-curves-omega-twist-square}
\begin{tikzcd}[column sep =3em]
	\Gamma(X_S, \deloop\Ktheory{}_1) \ar[d, "\Sq"] \ar[r, "\int_{X_S}"] & \Gamma(S, \underline{\mathbb Z}) \ar[d, "{n\mapsto(\cal O, 2n)}"] \\
	\Gamma(X_S, \Ktheory{}_{[1, 2]}) \ar[r, "\int_{(X_S, \omega^{1/2})}"] & \Gamma(S, \Pic^{\mathbb Z})
\end{tikzcd}
\end{equation}
\end{lem}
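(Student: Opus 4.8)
The plan is to unwind both composites in the diagram and reduce to a statement about line bundles. Let $\mathcal N$ be a line bundle on $X_S$ of locally constant degree $n$, so that $\Sq(\mathcal N) = [\mathcal N] - [\mathcal N^{-1}]$ in $\Gamma(X_S, \underline K{}_{[1,2]})$. Going down-then-right, I must compute $\int_{(X_S, \omega^{1/2})}([\mathcal N] - [\mathcal N^{-1}])$, which by definition is $\int_{X_S}$ applied to $([\mathcal N] - [\mathcal N^{-1}])\cdot[\omega^{1/2}] = [\mathcal N\otimes\omega^{1/2}] - [\mathcal N^{-1}\otimes\omega^{1/2}]$; and $\int_{X_S}$ on a class $[\mathcal E]$ with $\mathcal E\in\Vect(X_S)^{\cong}$ is, by the construction preceding \eqref{eq-integration-along-curves}, the determinant of $Rp_*\mathcal E$, viewed as a $\mathbb Z$-graded line bundle on $S$ (the grading being $\chi(\mathcal E)$ on fibers). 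Going right-then-down, I get the $\mathbb Z$-graded trivial line bundle $(\mathcal O_S, 2n)$. So the content of the lemma is a canonical isomorphism of $\mathbb Z$-graded line bundles on $S$:
\[
\det Rp_*(\mathcal N\otimes\omega^{1/2}) \otimes \bigl(\det Rp_*(\mathcal N^{-1}\otimes\omega^{1/2})\bigr)^{-1} \;\cong\; (\mathcal O_S, 2n),
\]
naturally in $\mathcal N$ and compatibly with the homotopies packaged in the morphisms of spectra.

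First I would settle the $\mathbb Z$-grading (underlying degree) part, using the rightmost square of \eqref{eq-integration-along-curves-degree}: the degree of $\det Rp_*\mathcal E$ is the relative Euler characteristic $\chi(\mathcal E)$, and by Riemann--Roch on a relative curve, $\chi(\mathcal N\otimes\omega^{1/2}) - \chi(\mathcal N^{-1}\otimes\omega^{1/2}) = \deg(\mathcal N\otimes\omega^{1/2}) - \deg(\mathcal N^{-1}\otimes\omega^{1/2}) = 2\deg\mathcal N = 2n$, since the $\omega^{1/2}$-contributions cancel and $\deg(\mathcal N^{-1}) = -n$. For the line-bundle part, the key input is relative Serre duality: $Rp_*(\mathcal N^{-1}\otimes\omega^{1/2}) \cong \bigl(Rp_*(\mathcal N\otimes\omega^{1/2})^{\vee}\bigr)[-1]$, because $(\mathcal N^{-1}\otimes\omega^{1/2})\cong (\mathcal N\otimes\omega^{1/2})^{\vee}\otimes\omega_{X_S/S}$ — this is exactly where the choice of square root $\omega^{1/2}$ is used, making the self-dual symmetry manifest. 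Taking determinants and using $\det(C^{\vee}[-1]) \cong (\det C)^{-1}$ (up to the sign/shift bookkeeping on gradings) yields $\det Rp_*(\mathcal N^{-1}\otimes\omega^{1/2}) \cong (\det Rp_*(\mathcal N\otimes\omega^{1/2}))^{-1}$, so the displayed expression becomes $(\det Rp_*(\mathcal N\otimes\omega^{1/2}))^{\otimes 2}$; and the square of any line bundle is canonically the trivial line bundle in $\Pic^{\super}$, i.e.\ maps to $0$ in the cofiber, which is precisely the image of $(\mathcal O_S, 2n)$ under \eqref{eq-evenly-graded-trivial-lines}. To make this a genuine commutative square of morphisms of sheaves of spectra rather than an objectwise statement, I would phrase Serre duality functorially: the functor $\mathcal E\mapsto Rp_*(\mathcal E^{\vee}\otimes\omega_{X_S/S})[1]$ is naturally equivalent to $\mathcal E \mapsto (Rp_*\mathcal E)^{\vee}$ on $\Perf$, an equivalence of functors $\Perf(X_S)\to\Perf(S)$, and track it through the truncations $\underline K \to \underline K{}_{\le 2}\to\underline K{}_{\le 1}\cong\Pic^{\mathbb Z}$, exactly as \eqref{eq-integration-curves-full-spectra} was defined.

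The main obstacle is coherence: the maps $\Sq$ and $\int_{(X_S,\omega^{1/2})}$ are morphisms of sheaves of connective spectra, so commutativity of \eqref{eq-integration-along-curves-omega-twist-square} is not just an isomorphism of line bundles but a filler living in the space of such maps, and I must exhibit the homotopy compatibly with all the structure already built — the $\mathbb E_1$-monoidal splitting of $\Omega\underline K{}_{[1,2]}$ from \S\ref{void-truncated-k-theory-loop-space}, the identification of $\Sq$ via Lemma \ref{lem-duplication-section-loop}, and the shearing description of $\cdot[\omega^{1/2}]$ from \S\ref{void-multiplication-by-line-bundle}. I expect that, just as in the proof of Proposition \ref{prop-duplication-section-spectra}, the cleanest route is to pass to loop spaces and reduce the coherence to a bilinear identity among symbols $\{-,-\}$ — here the relevant identity will express that the correction term $2\cdot\{\mathcal N, \omega^{1/2}\}$ coming from the shearing is exactly absorbed by the $2\cdot\{\mathcal N,\mathcal N\}$-twist in the $\mathbb E_1$-structure of $f_2$, so that on loop spaces the square commutes on the nose. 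Once the loop-space square is shown to commute as $\mathbb E_1$-monoids, delooping recovers \eqref{eq-integration-along-curves-omega-twist-square}, and the Riemann--Roch computation above pins down the $\mathbb Z$-grading.
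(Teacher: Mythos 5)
Your overall route coincides with the paper's: evaluate the lower circuit on a line bundle $\cal L$, and conclude by Grothendieck--Serre duality plus Riemann--Roch. But there is a sign error at the decisive step, and it changes the conclusion. From $\cal L^{-1}\otimes\omega^{1/2} \cong (\cal L\otimes\omega^{1/2})^{\vee}\otimes\omega_{X_S/S}$, relative duality gives $Rp_*(\cal L^{-1}\otimes\omega^{1/2}) \cong (Rp_*(\cal L\otimes\omega^{1/2}))^{\vee}[-1]$, as you say; but since $\det(C^{\vee})\cong(\det C)^{-1}$ and $\det(D[-1])\cong(\det D)^{-1}$, one has $\det(C^{\vee}[-1])\cong\det C$, \emph{not} $(\det C)^{-1}$. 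Hence $\det Rp_*(\cal L^{-1}\otimes\omega^{1/2}) \cong \det Rp_*(\cal L\otimes\omega^{1/2})$, so the lower circuit yields the \emph{trivial} line bundle, placed in degree $2\deg(\cal L)$ by Riemann--Roch --- which is exactly what commutativity of \eqref{eq-integration-along-curves-omega-twist-square} in $\Gamma(S,\Pic^{\mathbb Z})$ requires, and is the content of the paper's (very short) proof.

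Your attempted rescue of the wrong intermediate answer $(\det Rp_*(\cal L\otimes\omega^{1/2}))^{\otimes 2}$ does not work and would leave the lemma unproved. First, the lemma is a statement in $\Pic^{\mathbb Z}$, prior to passing to cofibers: it is precisely this commutativity that permits defining \eqref{eq-integration-along-curves-super} on the cofibers afterwards, so ending up with the square of a possibly nontrivial line bundle instead of $(\cal O_S, 2n)$ would mean the diagram simply fails to commute. Second, even after passing to $\Pic^{\super}$ the square of a line bundle is \emph{not} canonically trivial: the cofiber of \eqref{eq-evenly-graded-trivial-lines} only identifies the trivial line bundle in even degrees with the unit; it does not trivialize $\cal M^{\otimes 2}$ for general $\cal M$ (the underlying line bundle of an object of $\Pic^{\super}$ is still meaningful). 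Once the sign is corrected, the rest of your argument --- the Riemann--Roch degree count, and phrasing Serre duality as a natural equivalence of functors $\Perf(X_S)\to\Perf(S)$ and tracking it through the truncations to obtain the homotopy coherently --- is sound and is the right way to make the paper's one-line proof precise; the loop-space/symbol analysis in your final paragraph is then unnecessary.
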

\begin{proof}
By construction, the lower circuit of \eqref{eq-integration-along-curves-omega-twist-square} sends a line bundle $\cal L$ over $X_S$ to the $\mathbb Z$-graded line bundle:
$$
\det(Rp_*(\cal L \otimes \omega^{1/2})) \otimes \det(Rp_*(\cal L^{-1}\otimes\omega^{1/2}))^{-1},
$$
This is the trivial line bundle by Grothendieck--Serre duality. It is placed in degree $2\deg(\cal L)$ by the Riemann--Roch formula.
\end{proof}

\begin{void}
Since $X_S$ is regular, the Zariski cohomology group $H^2(X_S, \mathbb G_m)$ vanishes. Thus, taking cofibers of the vertical arrows in \eqref{eq-integration-along-curves-omega-twist-square} yields a morphism:
\begin{equation}
\label{eq-integration-along-curves-super}
\int_{(X_S,\omega^{1/2})} : \Gamma(X_S, \Ktheory{}_{[1, 2]}^{\super}) \rightarrow \Gamma(S, \Pic^{\super}).
\end{equation}

The morphisms of fiber sequences \eqref{eq-multiplication-by-line-bundle-shearing} and \eqref{eq-integration-along-curves-degree} induce a morphism of fiber sequences:
\begin{equation}
\label{eq-integration-along-curves-degree-super}
\begin{tikzcd}[column sep = 1em]
	\Gamma(X_S, \deloop^2\Ktheory{}_2) \ar[r]\ar[d, "\int_{X_S}"] & \Gamma(X_S, \Ktheory{}_{[1, 2]}^{\super}) \ar[r]\ar[d, "\int_{(X_S, \omega^{1/2})}"] & \Gamma(X_S, \deloop(\Ktheory{}_1/2)) \ar[d, "\int_{X_S}"] \\
	\Gamma(S, \Pic) \ar[r] & \Gamma(S, \Pic^{\super}) \ar[r] & \Gamma(S, \underline {\mathbb Z}/2)
\end{tikzcd}
\end{equation}
Here, the term $\Gamma(X_S, \deloop(\Ktheory{}_1/2))$ is identified with the cofiber of the multiplication by $2$ map on $\Gamma(X_S, \deloop\Ktheory{}_1)$, and the rightmost vertical arrow has the following description: it sends a line bundle over $X_S$ to its degree mod $2$.
\end{void}

\begin{void}
\label{void-super-conformal-blocks}
We now explain how sections of $\Ktheory{}_{[1, 2]}^{\super}$ over the Zariski classifying stack of a split reductive group scheme define super conformal blocks, at least in the vacuum case.

Let $\cal M^{\spin}$ denote the moduli stack of spin curves. Namely, an $S$-point of $\cal M^{\spin}$ consists of a morphism $p : X_S\rightarrow S$ of smooth, proper morphism of relative dimension $1$ with connected geometric fibers together with a square root $\omega^{1/2}$ of the relative canonical bundle.

Given an affine group scheme $G$, denote by $\deloop G$ the stack classifying \emph{Zariski} locally trivial $G$-torsors.\footnote{This is in accordance with the notation $B$ used elsewhere in this section, but our $BG$ is different from the usual stack classifying \'etale or fppf locally trivial $G$-torsors.}

Denote by $\Bun_G$ the stack over $\cal M^{\spin}$ whose $S$-points are triples $(X_S, \omega^{1/2}, P)$, where $(X_S, \omega^{1/2})$ is an $S$-point of $\cal M^{\spin}$ and $P$ is a $G$-bundle over $X_S$.

If $G$ is \emph{split} reductive, we shall define a functor:
\begin{equation}
\label{eq-construction-super-line-global}
\Gamma(\deloop G, \Ktheory{}_{[1, 2]}^{\super}) \rightarrow \Gamma(\Bun_G, \Pic^{\super}).
\end{equation}
Given a section $\kappa$ of $\Ktheory{}_{[1, 2]}^{\super}$ over $\deloop G$, we define the $\mathbb Z/2$-graded quasi-coherent sheaf $\mathbb V_{\kappa}$ over $\cal M^{\spin}$ to be the pushforward along $\Bun_G \rightarrow \cal M^{\spin}$ of the image of $\kappa$ along \eqref{eq-construction-super-line-global}.
\end{void}

\begin{proof}[Construction of \eqref{eq-construction-super-line-global}]
Consider an $S$-point $(X_S, \omega^{1/2}, P)$ of $\Bun_G$ where $S$ is regular and affine. \'Etale locally over $S$, we may assume that $P$ is Zariski locally trivial \cite[Theorem 2]{MR1362973}, so it defines a morphism $P : X_S \rightarrow \deloop G$.

Pulling back along $P$ and applying \eqref{eq-integration-along-curves-super} yields a functor:
\begin{equation}
\label{eq-construction-super-line-global-test}
\Gamma(\deloop G, \Ktheory{}_{[1, 2]}^{\super}) \rightarrow \Gamma(S, \Pic^{\super}).
\end{equation}
Since $\Bun_G\rightarrow\Spec(\mathbb Z)$ is smooth, a super line bundle over $\Bun_G$ is equivalent to a compatible system of super line bundles over $S$, for all regular affine schemes $S$ over $\Bun_G$. Using functoriality of \eqref{eq-construction-super-line-global-test} in $S$, we obtain \eqref{eq-construction-super-line-global}.
\end{proof}

\begin{rem}
Let us work over an algebraically closed field $k$.

If $G$ is split simple and simply connected with a split maximal torus $T\subset G$, the Picard groupoid of sections of $\deloop^2\Ktheory{}_2$ over $\deloop G$ rigidified along the base point $e : \Spec(k) \rightarrow \deloop G$ is discrete and isomorphic to the abelian group of Weyl-invariant quadratic forms on the cocharacter lattice $\Lambda$ of $T$ \cite[Theorem 4.7]{MR1896177}.

This abelian group has a canonical generator: the Weyl-invariant quadratic form $Q$ with $Q(\alpha) = 1$ at any short coroot $\alpha$. To each integer $\kappa$, the quadratic form $\kappa\cdot Q$ thus defines a section of $\deloop^2\Ktheory{}_2$ over $\deloop G$.

The quasi-coherent sheaf $\mathbb V_{\kappa}$ defined by this section, via the construction of \S\ref{void-super-conformal-blocks}, is identified with the space of (vacuum) conformal blocks at level $\kappa$ in the usual sense, see \cite{MR1289330}. They are known to be finite locally free if $\tn{char}(k) = 0$ \cite{MR1048605}.

We have not undertaken a serious investigation of $\mathbb V_{\kappa}$ in the generality of \S\ref{void-super-conformal-blocks}.
\end{rem}

\begin{rem}
Let us note an analogue of \eqref{eq-integration-along-curves-degree-super} for surfaces. Suppose that $X$ is a proper smooth surface over an algebraically closed field $k$. Consider the composition:
\begin{equation}
\label{eq-surface-integration}
\int_X : \Gamma(X, \deloop^2\Ktheory{}_2) \rightarrow H^2(X, \Ktheory{}_2) \cong \tn{CH}^2(X) \xrightarrow{\deg} \mathbb Z.
\end{equation}

Suppose that the dualizing sheaf $\omega_{X/k}$ admits a square root $\omega^{1/2}$. \emph{Claim}: the morphism \eqref{eq-surface-integration} canonically extends to a morphism $\Gamma(X, \Ktheory{}_{[1, 2]}^{\super}) \rightarrow \mathbb Z$.

This extension will be defined as an analogue of the morphism \eqref{eq-integration-along-curves-super} for surfaces. Namely, we note that \eqref{eq-integration-along-curves} has an analogue for surfaces: the map $\Gamma(X, \Ktheory{}_{[1, 2]}) \rightarrow \mathbb Z$ induced from $[\cal E] \mapsto \chi(R\Gamma(X, \cal E))$. To see that it factors through $\Gamma(X, \Ktheory{}_{[1, 2]}^{\super})$ when $\omega^{1/2}$ exists, we appeal to the equality:
\begin{equation}
\label{eq-euler-characteristic-vanishing}
\chi(R\Gamma(X, \cal L \otimes \omega^{1/2})) - \chi(R\Gamma(X, \cal L^{-1}\otimes\omega^{1/2})) = 0,
\end{equation}
for every line bundle $\cal L$ over $X$, which follows from the Riemann--Roch formula:
$$
\chi(R\Gamma(X, \cal L\otimes\omega^{1/2})) = \chi(R\Gamma(X, \cal O)) + \frac{1}{2}(\cal L\cdot\cal L - \omega^{1/2}\cdot\omega^{1/2}),
$$
where $\cdot$ denotes the intersection pairing, \emph{i.e.}~the composition of \eqref{eq-k-theory-pairing} with \eqref{eq-surface-integration}.
\end{rem}

\section{Brylinski--Deligne classification}
\label{sec-brylinski-deligne}

In this section, we classify rigidified sections of $\Ktheory{}_{[1, 2]}^{\super}$ over the Zariski classifying stack $\deloop G$ of a reductive group scheme $G$ over a base scheme $S$, assumed regular and of finite type over a field. The main result is Theorem \ref{thm-classification-super}.

We begin in \S\ref{sec-classification-integer-grading} with a classification of rigidified sections of $\Ktheory{}_{[1, 2]}$ over $\deloop G$ (Proposition \ref{prop-classification-integral}). Using tools developed in \S\ref{sec-truncated-k-theory}, we reduce this result to the Brylinski--Deligne theorem \cite[Theorem 7.2]{MR1896177}. In \S\ref{sec-classification-super-grading}, we state the main result. The next subsection \S\ref{sec-central-extension-k1} is a technical interlude classifying central extensions of $G$ by $\mathbb G_m$ over an arbitrary base scheme. The results of \S\ref{sec-classification-integer-grading} and \S\ref{sec-central-extension-k1} are combined in \S\ref{sec-proof-classifcation-super} to prove Theorem \ref{thm-classification-super}.

\subsection{Classification: $\Ktheory{}_{[1, 2]}$}
\label{sec-classification-integer-grading}

\begin{void}
\label{void-classification-setup}
Let $S$ be a regular scheme of finite type over a field.

Let $G \rightarrow S$ be a reductive group scheme equipped with a maximal torus $T\subset G$. Cocharacters of $T$ form an \'etale sheaf of abelian groups $\Lambda$ over $S$.

Denote by $G_{\mathrm{sc}}$ the simply connected form of $G$. The preimage of $T$ in $G_{\mathrm{sc}}$ is a maximal torus $T_{\mathrm{sc}}\subset G_{\mathrm{sc}}$, whose sheaf of cocharacters is denoted by $\Lambda_{\mathrm{sc}}$. The algebraic fundamental group $\pi_1G$ may then be realized as $\Lambda/\Lambda_{\mathrm{sc}}$.
\end{void}

\begin{void}
Denote by $\deloop G$ the stack of \emph{Zariski} locally trivial $G$-torsor. Denote by $e : S \rightarrow \deloop G$ the unit section. For a Zariski sheaf $\cal F$ of connective spectra, we write $\Gamma_e(\deloop G, \cal F)$ for the fiber of the morphism:
$$
e^* : \Gamma(\deloop G, \cal F) \rightarrow \Gamma(S, \cal F).
$$

We also denote by $\underline{\Gamma}{}_e(\deloop G, \cal F)$ the presheaf over $S$ whose section over an $S_1$-scheme is $\Gamma_e(\deloop G\times_SS_1, \cal F)$. It is a sheaf in the Zariski topology.

In this subsection, we describe $\underline{\Gamma}{}_e(\deloop G, \Ktheory{}_{[1, 2]})$ in terms of the combinatorics of $G$.
\end{void}

\begin{void}
\label{void-brylinski-deligne}
We first recall Brylinski and Deligne's description of $\underline{\Gamma}{}_e(\deloop G, \deloop^2\Ktheory{}_2)$. Indeed, \cite[Theorem 7.2]{MR1896177} constructs a canonical equivalence of sheaves of Picard groupoids:
\begin{equation}
\label{eq-brylinski-deligne}
	\underline{\Gamma}{}_e(\deloop G, \deloop^2\Ktheory{}_2) \xrightarrow{\simeq} \vartheta_G(\Lambda),
\end{equation}
where sections of $\vartheta_G(\Lambda)$ are triples $(Q, \tilde{\Lambda}, \varphi)$ defined below:
\begin{enumerate}
	\item $Q$ is a Weyl-invariant integral quadratic form on $\Lambda$;
	\item $\tilde{\Lambda}$ is a central extension of $\Lambda$ by $\mathbb G_m$, whose commutator pairing $\Lambda\otimes\Lambda\rightarrow \mathbb G_m$ equals $\lambda_1,\lambda_2\mapsto(-1)^{b(\lambda_1,\lambda_2)}$ for $b(\lambda_1,\lambda_2):= Q(\lambda_1 + \lambda_2) - Q(\lambda_1) - Q(\lambda_2)$;
	\item $\varphi$ is an isomorphism between the restriction of $\tilde{\Lambda}$ to $\Lambda_{\mathrm{sc}}$ and the central extension induced from $Q_{\mathrm{sc}}$ (in the sense of Remark \ref{rem-brylinski-deligne-functor}), the restriction of $Q$ to $\Lambda_{\mathrm{sc}}$.
\end{enumerate}
The Picard groupoid structure on $\vartheta_G(\Lambda)$ is defined by sum in $Q$ and Baer sum in $\tilde{\Lambda}$.
\end{void}

\begin{rem}
\label{rem-brylinski-deligne-functor}
To be more explicit, \cite[\S3]{MR1896177} first shows that $\underline{\Gamma}{}_e(\deloop T, \deloop^2\Ktheory{}_2)$ is canonically equivalent to the sheaf of Picard groupoids $\vartheta(\Lambda)$ whose sections are pairs $(Q, \tilde{\Lambda})$, where $Q$ is an integral quadratic form on $\Lambda$, and $\tilde{\Lambda}$ is as in (2).

Then \cite[\S4]{MR1896177} shows that $\underline{\Gamma}{}_e(\deloop G_{\mathrm{sc}}, \deloop^2\Ktheory{}_2)$ is the sheaf of discrete groupoids whose sections are Weyl-invariant integral quadratic forms on $\Lambda_{\mathrm{sc}}$. Restriction along $T_{\mathrm{sc}} \subset G_{\mathrm{sc}}$ and applying the description of $\underline{\Gamma}{}_e(\deloop T_{\mathrm{sc}}, \deloop^2\Ktheory{}_2)$, we obtain a functor from Weyl-invariant quadratic forms on $\Lambda_{\mathrm{sc}}$ to $\vartheta(\Lambda_{\mathrm{sc}})$:
\begin{equation}
\label{eq-canonical-theta-data-brylinski-deligne}
\Quad(\Lambda_{\mathrm{sc}})^W \rightarrow \vartheta(\Lambda_{\mathrm{sc}}).
\end{equation}
In particular, any $Q_{\mathrm{sc}} \in \Quad(\Lambda_{\mathrm{sc}})^W$ induces a central extension of $\Lambda_{\mathrm{sc}}$ by $\mathbb G_m$.

The functor from $\underline{\Gamma}{}_e(\deloop G, \deloop^2\Ktheory{}_2)$ to $\vartheta_G(\Lambda)$ is given as follows. The pair $(Q, \tilde{\Lambda})$ is defined by its restriction along $T\subset G$, and the isomorphism $\varphi$ comes from functoriality with respect to the commutative diagram:
$$
\begin{tikzcd}
	T_{\mathrm{sc}} \ar[r, phantom, "\subset"]\ar[d] & G_{\mathrm{sc}} \ar[d] \\
	T \ar[r, phantom, "\subset"] & G
\end{tikzcd}
$$
\end{rem}

\begin{rem}
\label{rem-brylinski-deligne-descent}
Both sides of \eqref{eq-brylinski-deligne} are \'etale sheaves over $S$. Indeed, it is clear that $\vartheta_G(\Lambda)$ satisfies \'etale descent. The \'etale descent of $\underline{\Gamma}{}_e(\deloop G, \deloop^2\Ktheory{}_2)$ is established in \cite[\S2]{MR1896177}, logically prior to proving that \eqref{eq-brylinski-deligne} is an equivalence.
\end{rem}

\begin{void}
Let us define an enlargement $\vartheta^{\mathbb Z}_G(\Lambda)$ of $\vartheta_G(\Lambda)$. Namely, a section of $\vartheta^{\mathbb Z}_G(\Lambda)$ is a quadruple $(Q, \tilde{\Lambda}, \varphi, x)$ where $(Q, \tilde{\Lambda}, \varphi)$ is a section of $\vartheta_G(\Lambda)$ and:
\begin{enumerate}
	\item[(4)] $x : \Lambda \rightarrow \mathbb Z$ is a character vanishing on $\Lambda_{\mathrm{sc}}$ (\emph{i.e.}~a character of $\pi_1G$).
\end{enumerate}

Therefore, as a sheaf of \emph{pointed spaces}, $\vartheta_G^{\mathbb Z}(\Lambda)$ is the product $\vartheta_G(\Lambda) \times \underline{\Hom}(\pi_1G, \mathbb Z)$. As a sheaf of \emph{Picard groupoids}, we demand that it fits into a fiber sequence:
\begin{equation}
\label{eq-classification-integral-fiber-sequence}
	\vartheta_G(\Lambda) \rightarrow \vartheta_G^{\mathbb Z}(\Lambda) \rightarrow \underline{\Hom}(\pi_1G, \mathbb Z).
\end{equation}

Specifying the Picard groupoid structure on $\vartheta_G^{\mathbb Z}(\Lambda)$ thus amounts to specifying a \emph{symmetric cocycle}, \emph{i.e.}~a morphism of Picard groupoids:
\begin{equation}
\label{eq-classification-integral-cocycle}
\underline{\Hom}(\pi_1G, \mathbb Z) \otimes \underline{\Hom}(\pi_1G, \mathbb Z) \rightarrow \vartheta_G(\Lambda),
\end{equation}
together with a null-homotopy of its precomposition with the anti-symmetrizer.
\end{void}

\begin{proof}[Construction of \eqref{eq-classification-integral-cocycle}]
Given sections $x_1, x_2$ of $\underline{\Hom}(\pi_1G, \mathbb Z)$, the morphism \eqref{eq-classification-integral-cocycle} assigns to $x_1\otimes x_2$ the triple $(Q, \tilde{\Lambda}, \varphi)$, where $Q(\lambda) := x_1(\lambda)x_2(\lambda)$, $\tilde{\Lambda}$ is the central extension defined by the cocycle $\lambda_1, \lambda_2 \mapsto (-1)^{x_1(\lambda_1)x_2(\lambda_2)}$, and $\varphi$ is the identity automorphism of the trivial central extension of $\Lambda_{\mathrm{sc}}$ by $\mathbb G_m$.

In order to construct a null-homotopy of the image of $x_1\otimes x_2 - x_2 \otimes x_1$, we need to trivialize the central extension of $\Lambda$ by $\mathbb G_m$ defined by the cocycle:
\begin{equation}
\label{eq-antisymmetrized-cocycle}
\lambda_1, \lambda_2 \mapsto (-1)^{x_1(\lambda_1)x_2(\lambda_2)-x_2(\lambda_1)x_1(\lambda_2)}.
\end{equation}
In other words, we need to find a map $q : \Lambda \rightarrow \mathbb G_m$ such that $q(\lambda_1 + \lambda_2)q(\lambda_1)^{-1}q(\lambda_2)^{-1}$ coincides with \eqref{eq-antisymmetrized-cocycle}. The desired map is set to be $q(\lambda):= (-1)^{x_1(\lambda)x_2(\lambda)}$.
\end{proof}

\begin{rem}
\label{rem-central-extension-as-monoidal-morphism}
By associating to each $\lambda\in\Lambda$ its fiber $\cal L^{\lambda}\subset\tilde{\Lambda}$ viewed as a $\mathbb G_m$-torsor, the central extension $\tilde{\Lambda}$ in a section $(Q, \tilde{\Lambda}, \varphi)$ of $\vartheta_G(\Lambda)$ can be viewed as a monoidal morphism $\Lambda \rightarrow \Pic$ which preserves the commutativity constraint up to the factor $(-1)^b$.

Likewise, an object $(Q, \tilde{\Lambda}, \varphi, x)$ of $\vartheta_G^{\mathbb Z}(\Lambda)$ defines a monoidal morphism:
$$
\Lambda \rightarrow \Pic^{\mathbb Z},\quad \lambda\mapsto (\cal L^{\lambda}, x(\lambda)),
$$
which preserves the commutativity constraint up to the factor $(-1)^{\tilde b}$ (viewed as a $\mathbb G_m$-valued bilinear form on $\Lambda$), where $\tilde b$ is defined by:
\begin{equation}
\label{eq-adjusted-symmetric-form}
\lambda_1, \lambda_2\mapsto b(\lambda_1, \lambda_2) + x(\lambda_1)x(\lambda_2).
\end{equation}
\end{rem}

\begin{prop}
\label{prop-classification-integral}
There is a canonical equivalence of sheaves of Picard groupoids:
\begin{equation}
\label{eq-classification-integral}
	\underline{\Gamma}{}_e(\deloop G, \Ktheory{}_{[1, 2]}) \xrightarrow{\simeq} \vartheta_G^{\mathbb Z}(\Lambda).
\end{equation}
It is related to the Brylinski--Deligne equivalence by a commutative diagram:
$$
\begin{tikzcd}[column sep = 1em]
	\underline{\Gamma}{}_e(\deloop G, \deloop^2\Ktheory{}_2) \ar[d, "\eqref{eq-brylinski-deligne}"] \ar[r, phantom, "\subset"] & \underline{\Gamma}{}_e(\deloop G, \Ktheory{}_{[1, 2]}) \ar[d, "\eqref{eq-classification-integral}"] \\
	\vartheta_G(\Lambda) \ar[r, phantom, "\subset"] & \vartheta_G^{\mathbb Z}(\Lambda)
\end{tikzcd}
$$
\end{prop}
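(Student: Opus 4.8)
The plan is to apply the exact functor $\underline{\Gamma}{}_e(BG, -)$ to the truncation fiber sequence \eqref{eq-truncated-k-theory-fiber-sequence} and to recognize the outcome as the fiber sequence \eqref{eq-classification-integral-fiber-sequence} equipped with the symmetric cocycle \eqref{eq-classification-integral-cocycle}. Applying $\underline{\Gamma}{}_e(BG, -)$ to \eqref{eq-truncated-k-theory-fiber-sequence} gives a fiber sequence of sheaves of connective spectra
\[
\underline{\Gamma}{}_e(BG, B^2\underline K{}_2) \rightarrow \underline{\Gamma}{}_e(BG, \underline K{}_{[1, 2]}) \rightarrow \underline{\Gamma}{}_e(BG, B\underline K{}_1),
\]
whose left-hand term is $\vartheta_G(\Lambda)$ by Brylinski--Deligne \eqref{eq-brylinski-deligne}. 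For the right-hand term I would use $B\underline K{}_1 = B\mathbb G_m = \Pic$ and check that $\underline{\Gamma}{}_e(BG, \Pic)$ is the \emph{discrete} sheaf $\underline{\Hom}(\pi_1G, \mathbb Z)$: its $\pi_1$ is $\underline{\Gamma}{}_e(BG, \mathbb G_m)$, which vanishes since $\Gamma(BG, \mathbb G_m) = \Gamma(S, \mathbb G_m)$ with $e^*$ the identity (compute the descent equalizer along $S \rightarrow BG$, using that $G$ acts trivially on $S$), while its $\pi_0$ is the sheaf of rigidified line bundles on $BG$, \emph{i.e.}~of homomorphisms $G \rightarrow \mathbb G_m$, which for reductive $G$ with maximal torus $T$ is the group of characters of $T$ killed by all coroots, namely $\underline{\Hom}(\Lambda/\Lambda_{\mathrm{sc}}, \mathbb Z) = \underline{\Hom}(\pi_1G, \mathbb Z)$. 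In particular all three terms are $1$-truncated, so this is a fiber sequence of sheaves of Picard groupoids whose outer terms match those of \eqref{eq-classification-integral-fiber-sequence}.

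Next I would feed in the section $s : \Pic \rightarrow \underline K{}_{[1, 2]}$ of \S\ref{void-truncated-k-theory-section}: it induces a pointed-space section of $\underline{\Gamma}{}_e(BG, \underline K{}_{[1, 2]}) \rightarrow \underline{\Gamma}{}_e(BG, \Pic)$, and since the base is discrete this splits the fiber sequence on underlying sheaves of pointed spaces, giving $\underline{\Gamma}{}_e(BG, \underline K{}_{[1, 2]}) \cong \vartheta_G(\Lambda) \times \underline{\Hom}(\pi_1G, \mathbb Z)$ compatibly with the inclusion of $\vartheta_G(\Lambda)$ as the base-point fiber. This already yields the commutative square with \eqref{eq-brylinski-deligne} and matches $\vartheta_G^{\mathbb Z}(\Lambda)$ as a sheaf of pointed spaces, so what is left is to match the Picard structures --- equivalently (the base being discrete) the two symmetric cocycles $\underline{\Hom}(\pi_1G, \mathbb Z)^{\otimes 2} \rightarrow \vartheta_G(\Lambda)$. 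By \eqref{eq-section-quadratic-relation}, the bilinear part of the cocycle of $\underline{\Gamma}{}_e(BG, \underline K{}_{[1, 2]})$ carries a pair $(x_1, x_2)$ --- with $\cal L_i$ the line bundle on $BG$ of character $x_i$ --- to the image of $\{\cal L_1, \cal L_2\} \in \underline{\Gamma}{}_e(BG, B^2\underline K{}_2)$ under \eqref{eq-brylinski-deligne}.

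The crux, which I expect to be the main obstacle, is to compute that image: I claim that under \eqref{eq-brylinski-deligne} the class $\{\cal L_1, \cal L_2\}$ corresponds to the triple $(Q, \widetilde{\Lambda}, \varphi)$ with $Q(\lambda) = x_1(\lambda)x_2(\lambda)$, with $\widetilde{\Lambda}$ the central extension of $\Lambda$ given by the cocycle $\lambda_1, \lambda_2 \mapsto (-1)^{x_1(\lambda_1)x_2(\lambda_2)}$, and with $\varphi$ the identity --- precisely the value of \eqref{eq-classification-integral-cocycle} at $x_1 \otimes x_2$. Since \eqref{eq-brylinski-deligne} is built from restriction along $BT \hookrightarrow BG$ (Remark \ref{rem-brylinski-deligne-functor}), this reduces to the case of a torus, where I would unwind the explicit construction of \cite[\S3]{MR1896177} --- designed exactly to convert products of symbols $\{-, -\}$ of characters into quadratic-form data --- to read off $Q$ and $\widetilde{\Lambda}$; the trivialization $\varphi$ over $\Lambda_{\mathrm{sc}}$ comes out as the identity because $x_1, x_2$ vanish on $\Lambda_{\mathrm{sc}}$. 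Finally I would verify that the null-homotopy of the anti-symmetrization of this cocycle --- governed on the $K$-theory side by the commutativity constraint of $\underline K{}_{[1, 2]}$ recalled in \S\ref{void-truncated-k-theory-loop-space} and Proposition \ref{prop-truncated-k-theory-classification}, and reflecting the ``quadratic refinement'' property of $s$ --- agrees with the one $q(\lambda) = (-1)^{x_1(\lambda)x_2(\lambda)}$ built into \eqref{eq-classification-integral-cocycle}, again by unwinding \cite[\S3]{MR1896177} over $BT$. Assembling these identifications produces the equivalence \eqref{eq-classification-integral} together with the commutative square.
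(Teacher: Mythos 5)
Your proposal is correct and follows essentially the same route as the paper: split $\underline{\Gamma}{}_e(BG, \underline K{}_{[1,2]})$ as a pointed sheaf into $\underline{\Gamma}{}_e(BG, B^2\underline K{}_2)\times\underline{\Gamma}{}_e(BG, B\underline K{}_1)$ via the section $s$, identify the second factor with $\underline{\Hom}(\pi_1G,\mathbb Z)$, and then match the symmetric cocycle induced by $\{\cdot,\cdot\}$ (Proposition \ref{prop-truncated-k-theory-classification}) with \eqref{eq-classification-integral-cocycle} by reducing to $G=T$ and invoking the explicit Brylinski--Deligne description (the paper cites \cite[Theorem 3.16]{MR1896177} for exactly the unwinding you defer to \cite[\S3]{MR1896177}, including the null-homotopy $t\mapsto\{x_1(t),x_2(t)\}$ matching $q(\lambda)=(-1)^{x_1(\lambda)x_2(\lambda)}$).
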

\begin{proof}
As a presheaf of pointed spaces, $\underline{\Gamma}{}_e(\deloop G, \Ktheory{}_{[1, 2]})$ is the direct product $\underline{\Gamma}{}_e(\deloop G, \deloop^2\Ktheory{}_2)\times\underline{\Gamma}{}_e(\deloop G, \deloop\Ktheory{}_1)$ thanks to the section \eqref{eq-section-of-truncated-k-theory-from-pic}. In particular, $\underline{\Gamma}{}_e(\deloop G, \Ktheory{}_{[1, 2]})$ satisfies \'etale descent, see Remark \ref{rem-brylinski-deligne-descent}.

The desired functor \eqref{eq-classification-integral} is defined to be the Brylinski--Deligne equivalence \eqref{eq-brylinski-deligne} on the first factor and the canonical isomorphism:
\begin{equation}
\label{eq-character-classification}
\underline{\Gamma}{}_e(\deloop G, \deloop\Ktheory{}_1)\xrightarrow{\simeq} \underline{\Hom}(G, \Ktheory{}_1)\xrightarrow{\simeq} \underline{\Hom}(\pi_1G, \mathbb Z)
\end{equation}
on the second factor.

It thus remains to lift this functor to one between Picard groupoids. We appeal to the description of $\Ktheory{}_{[1,2]}$ using the symmetric cocycle $\deloop\Ktheory{}_1 \otimes \deloop\Ktheory{}_1 \rightarrow \deloop^2\Ktheory{}_2$ associated to the anti-symmetric pairing $\{\cdot, \cdot\} : \Ktheory{}_1\otimes\Ktheory{}_1 \rightarrow \Ktheory{}_2$ (Proposition \ref{prop-truncated-k-theory-classification}). Indeed, it suffices to construct an isomorphism between the $\underline{\Gamma}{}_e(\deloop G, \deloop^2\Ktheory{}_2)$-valued pairing it induces on $\underline{\Gamma}{}_e(\deloop G, \deloop\Ktheory{}_1)$ and the pairing \eqref{eq-classification-integral-cocycle}:
\begin{equation}
\label{eq-k-theory-pairing-over-group}
\begin{tikzcd}[column sep = 1em]
	\underline{\Gamma}{}_e(\deloop G, \deloop \Ktheory{}_1) \otimes \underline{\Gamma}{}_e(\deloop G, \deloop\Ktheory{}_1) \ar[r, "{\{\cdot,\cdot\}}"]\ar[d, "\eqref{eq-character-classification}"] & \underline{\Gamma}{}_e(\deloop G, \deloop^2\Ktheory{}_2) \ar[d, "\eqref{eq-brylinski-deligne}"] \\
	\underline{\Hom}(\pi_1G, \mathbb Z) \otimes \underline{\Hom}(\pi_1G, \mathbb Z) \ar[r, "\eqref{eq-classification-integral-cocycle}"] & \vartheta_G(\Lambda)
\end{tikzcd}
\end{equation}
compatibly with null homotopies of their pre-composition with the anti-symmetrizer.

By definition of $\vartheta_G(\Lambda)$, it suffices to treat the case $G = T$ as long as the isomorphism we construct is functorial in $T$.

In this case, any pair of characters $x_1, x_2$ of $T$ defines under the top horizontal arrow of \eqref{eq-k-theory-pairing-over-group} the central extension:
$$
1 \rightarrow \Ktheory{}_2 \rightarrow E \rightarrow T \rightarrow 1
$$
corresponding to the cocycle $T\otimes T \rightarrow \Ktheory{}_2$, $(t_1, t_2)\mapsto \{x_1(t_1), x_2(t_2)\}$. The null-homotopy the central extension defined by the cocycle $(t_1, t_2)\mapsto \{x_1(t_1), x_2(t_2)\} - \{x_2(t_1), x_1(t_2)\}$ is exhibited by the map $T \rightarrow \Ktheory{}_2$, $t\mapsto\{x_1(t), x_2(t)\}$. These data correspond to the description of \eqref{eq-classification-integral-cocycle} (for $G = T$) under the equivalence of \cite[Theorem 3.16]{MR1896177}.
\end{proof}

\subsection{Classification: $\Ktheory{}_{[1,2]}^{\super}$}
\label{sec-classification-super-grading}

\begin{void}
\label{void-theta-data-super}
Let us define another enlargement $\vartheta_G^{\super}(\Lambda)$ of $\vartheta_G(\Lambda)$ which fits into a fiber sequence of sheaves of Picard groupoids over $S$:
\begin{equation}
\label{eq-classification-super-fiber-sequence}
\vartheta_G(\Lambda) \rightarrow \vartheta_G^{\super}(\Lambda) \rightarrow \underline{\Hom}(\pi_1G, \mathbb Z/2).
\end{equation}

Namely, an object of $\vartheta_G^{\super}(\Lambda)$ is a triple $(b, \tilde{\Lambda}, \varphi)$ where:
\begin{enumerate}
	\item $b$ is a Weyl-invariant integral symmetric bilinear form on $\Lambda$, such that $b(\lambda, \lambda)\in 2\mathbb Z$ for any $\lambda \in \Lambda_{\mathrm{sc}}$;
	\item $\tilde{\Lambda}$ is a central extension of $\Lambda$ by $\mathbb G_m$, whose commutator pairing equals $\lambda_1, \lambda_2\mapsto (-1)^{b(\lambda_1, \lambda_2) + \epsilon(\lambda_1)\epsilon(\lambda_2)}$, where $\epsilon(\lambda) := b(\lambda, \lambda) \text{ mod }2$;
	\item $\varphi$ is an isomorphism between the restriction of $\tilde{\Lambda}$ to $\Lambda_{\mathrm{sc}}$ and the central extension induced by $Q_{\mathrm{sc}}$ as in \S\ref{void-brylinski-deligne}.
\end{enumerate}
To define the Picard groupoid structure on $\vartheta_G^{\super}(\Lambda)$, it is more natural to interpret $\tilde{\Lambda}$ as a monoidal morphism (see Remark \ref{rem-central-extension-as-monoidal-morphism}):
\begin{equation}
\label{eq-central-extension-as-monoidal-morphism-super}
\Lambda \rightarrow \Pic^{\super},\quad \lambda \mapsto (\cal L^{\lambda}, \epsilon(\lambda)),
\end{equation}
which preserves the commutativity constraint up to the bilinear form $(-1)^b$. The Picard groupoid structure on $\vartheta_G^{\super}(\Lambda)$ is induced from sum in $b$ and the Picard groupoid structure of $\Pic^{\super}$. In particular, it is \emph{not} strictly commutative in general.

Let us construct the fiber sequence \eqref{eq-classification-super-fiber-sequence}. The inclusion of $\vartheta_G(\Lambda)$ in $\vartheta_G^{\super}(\Lambda)$ sends $(Q, \tilde{\Lambda}, \varphi)$ to the triple $(b, \tilde{\Lambda}, \varphi)$, where $b(\lambda_1, \lambda_2) := Q(\lambda_1 + \lambda_2) - Q(\lambda_1) - Q(\lambda_2)$. The second map $\vartheta_G^{\super}(\Lambda) \rightarrow \Hom(\pi_1G, \mathbb Z/2)$ assigns to $(b, \tilde{\Lambda}, \varphi)$ the homomorphism $\epsilon$ as in (2). Note that $\epsilon$ vanishes if and only if $b$ comes from a quadratic form.
\end{void}

\begin{rem}
The sheaf of Picard groupoids $\vartheta_G^{\super}(\Lambda)$ is introduced in \cite[Questions 12.13(iii)]{MR1896177}. For $G = T$ a torus, a variant of it has also appeared in \cite[\S3.10]{MR2058353}, where its sections are called \emph{$\vartheta$-data}.
\end{rem}

\begin{thm}
\label{thm-classification-super}
There is a canonical equivalence of sheaves of Picard groupoids:
\begin{equation}
\label{eq-classification-super}
\underline{\Gamma}{}_e(\deloop G, \Ktheory{}_{[1,2]}^{\super}) \xrightarrow{\simeq} \vartheta_G^{\super}(\Lambda).
\end{equation}
It is related to the Brylinski--Deligne equivalence by a commutative diagram:
\begin{equation}
\label{eq-classification-super-comparison-with-brylinski-deligne}
\begin{tikzcd}[column sep = 1em]
	\underline{\Gamma}{}_e(\deloop G, \deloop^2\Ktheory{}_2) \ar[d, "\eqref{eq-brylinski-deligne}"] \ar[r, phantom, "\subset"] & \underline{\Gamma}{}_e(\deloop G, \Ktheory{}^{\super}_{[1, 2]}) \ar[d, "\eqref{eq-classification-super}"] \\
	\vartheta_G(\Lambda) \ar[r, phantom, "\subset"] & \vartheta_G^{\super}(\Lambda)
\end{tikzcd}
\end{equation}
\end{thm}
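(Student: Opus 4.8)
The plan is to apply $\underline{\Gamma}{}_e(BG, -)$ to the cofiber sequences assembled in \eqref{eq-super-k-spectrum-fiber-sequences} and to feed in three inputs: Proposition \ref{prop-classification-integral} giving $\underline{\Gamma}{}_e(BG, \underline K{}_{[1,2]}) \cong \vartheta_G^{\mathbb Z}(\Lambda)$; the Brylinski--Deligne equivalence \eqref{eq-brylinski-deligne} giving $\underline{\Gamma}{}_e(BG, B^2\underline K{}_2) \cong \vartheta_G(\Lambda)$; and the classification of central extensions of $G$ by $\mathbb G_m$ from \S\ref{sec-central-extension-k1} computing $\underline{\Gamma}{}_e(BG, B^2\underline K{}_1)$. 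Global sections over a stack being a limit, $\underline{\Gamma}{}_e(BG, -)$ is exact, so it carries each (co)fiber sequence to a (co)fiber sequence of sheaves of connective spectra; a diagram chase on the resulting homotopy sheaves---using that $\underline{\Gamma}{}_e(BG, B\underline K{}_1)\cong\underline{\Hom}(\pi_1G, \mathbb Z)$ is discrete and that $\vartheta_G(\Lambda)$, $\vartheta_G^{\mathbb Z}(\Lambda)$ are $1$-truncated---shows $\underline{\Gamma}{}_e(BG, \underline K{}_{[1,2]}^{\super})$ is again $1$-truncated and sits in a fiber sequence of sheaves of Picard groupoids
\[
\vartheta_G(\Lambda)\rightarrow\underline{\Gamma}{}_e(BG, \underline K{}_{[1,2]}^{\super})\rightarrow\underline{\Gamma}{}_e(BG, B\underline K{}_1/2)
\]
coming from the bottom row of \eqref{eq-super-k-spectrum-fiber-sequences}. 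As for $\vartheta_G$ (Remark \ref{rem-brylinski-deligne-descent}), both sides of the desired equivalence satisfy \'etale descent, so it suffices to construct the functor functorially in $(G, T)$; by the definition of $\vartheta_G^{\super}(\Lambda)$ it is then enough to handle $G = T$ and $G = G_{\mathrm{sc}}$.

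The crux is to identify the fiber sequence just obtained with the defining sequence \eqref{eq-classification-super-fiber-sequence} for $\vartheta_G^{\super}(\Lambda)$. I would do this by computing, via Lemma \ref{lem-duplication-section-loop} and the identity \eqref{eq-section-via-pairing}, the effect of $\Sq$ on the classification of Proposition \ref{prop-classification-integral}: applied to a line bundle on $BG$ with character $x$, the section $\Sq(\cal L) = [\cal L]-[\cal L^{-1}]$ has underlying character $2x$, while its Brylinski--Deligne component is controlled by the self-pairing $\{x, x\}$ together with the non-strictly-commutative structure recorded in Lemma \ref{lem-duplication-section-loop}(2). Transporting this through the bottom row of \eqref{eq-super-k-spectrum-fiber-sequences} translates the passage from $\underline K{}_{[1,2]}$ to $\underline K{}_{[1,2]}^{\super}$ into the passage from a $\mathbb Z$-valued datum $x$ (as in $\vartheta_G^{\mathbb Z}$) to the $\mathbb Z/2$-valued defect $\epsilon$ of a symmetric bilinear form not arising from a quadratic form (as in $\vartheta_G^{\super}$), with the braiding of $\Pic^{\super}$ accounting exactly for the sign $(-1)^{\epsilon(\lambda_1)\epsilon(\lambda_2)}$ in condition (2) of \S\ref{void-theta-data-super}. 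Once restricted to $T$ and to $G_{\mathrm{sc}}$, these comparisons reduce to elementary identities between bilinear forms and Steinberg-symbol cocycles, and \eqref{eq-classification-super} together with the commutativity of \eqref{eq-classification-super-comparison-with-brylinski-deligne} drops out.

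I expect the main obstacle to be twofold. First, tracking the $\mathbb E_1$- and $\mathbb E_2$-monoidal (braiding) structures through the loop-space manipulations so that the non-strictly-commutative Picard structure on $\vartheta_G^{\super}(\Lambda)$---in particular the factor $(-1)^{\epsilon(\lambda)\epsilon(\lambda')}$---is reproduced on the nose rather than up to an uncontrolled twist; this is precisely where the explicit description of $f_2$ in Lemma \ref{lem-duplication-section-loop} is indispensable. Second, bridging the gap between $\underline{\Hom}(\pi_1G, \mathbb Z)/2$, which one naively reads off from $B\underline K{}_1/2$, and the $\underline{\Hom}(\pi_1G, \mathbb Z/2)$ appearing in \eqref{eq-classification-super-fiber-sequence}: closing it requires the $2$-torsion of $\underline{\Gamma}{}_e(BG, B^2\underline K{}_1)$, i.e.\ the central extensions of $G$ by $\mathbb G_m$ classified in \S\ref{sec-central-extension-k1}, which is exactly why that technical interlude enters the proof alongside Proposition \ref{prop-classification-integral}.
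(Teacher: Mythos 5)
Your core computation is the right one: identifying the image of $\Sq$ under Proposition \ref{prop-classification-integral} via \eqref{eq-section-via-pairing} and Lemma \ref{lem-duplication-section-loop} (this is Lemma \ref{lem-classification-squaring-map} in the paper), and using the classification of central extensions of $G$ by $\mathbb G_m$ from \S\ref{sec-central-extension-k1} to identify $\underline{\Gamma}{}_e(BG, B\underline K{}_1/2)$ with $\underline{\Hom}(\pi_1G, \mathbb Z/2)$ rather than $\underline{\Hom}(\pi_1G,\mathbb Z)/2$. But there is a genuine gap in how you globalize, and it sits exactly where you wave your hands.

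First, the assertion that ``$\underline{\Gamma}{}_e(BG,-)$ is exact, so it carries each (co)fiber sequence to a (co)fiber sequence'' is false for the cofiber sequence defining $\underline K{}_{[1,2]}^{\super}$. Taking sections preserves the \emph{fiber} sequence, so the natural map from $\mathrm{cofib}\bigl(\underline{\Gamma}{}_e(BG, B\underline K{}_1)\to\underline{\Gamma}{}_e(BG,\underline K{}_{[1,2]})\bigr)$ to $\underline{\Gamma}{}_e(BG,\underline K{}_{[1,2]}^{\super})$ is fully faithful but need not be essentially surjective, even Zariski-locally. Via the Cartesian square \eqref{eq-mod-2-compatibility-with-grading}, the obstruction is the Zariski-local surjectivity of $\underline{\Hom}(\pi_1G,\mathbb Z)\to\underline{\Hom}(\pi_1G,\mathbb Z/2)$, i.e.\ of $M^{\Gamma}\to(M/2)^{\Gamma}$ for the Galois module $M$ dual to $\pi_1G$; this fails whenever $H^1(\Gamma,M)\neq 0$. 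So your fiber sequence identifying $\underline{\Gamma}{}_e(BG,\underline K{}_{[1,2]}^{\super})$ with $\vartheta_G^{\super}(\Lambda)$ only goes through when $\pi_1G$ is the cocharacter sheaf of a quasi-trivial torus (Lemma \ref{lem-torsion-free}). Second, your escape route---``both sides satisfy \'etale descent, so reduce to $T$ and $G_{\mathrm{sc}}$''---is circular: \'etale descent for $\underline{\Gamma}{}_e(BG,\underline K{}_{[1,2]}^{\super})$ is not known a priori (the sheaf is a cofiber, and the paper explicitly flags this); it is deduced only as a \emph{corollary} of Theorem \ref{thm-classification-super}. Moreover, restriction to $T$ and $G_{\mathrm{sc}}$ only lands in the relaxed groupoid $\widetilde{\vartheta}_G^{\super}(\Lambda)$, and one still has to show the image is Weyl-invariant and that the functor is essentially surjective. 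The paper closes both gaps at once by choosing a z-extension $1\to T_1\to\widetilde G\to G\to 1$ with $\pi_1\widetilde G$ quasi-trivial (Gonz\'alez-Avil\'es) and descending along the \v{C}ech nerve of $B\widetilde G\to BG$ (Lemma \ref{lem-z-extension-totalization}). Without some substitute for this bootstrapping step, your argument only proves the theorem for the quasi-trivial case.
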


\begin{void}
The proof of Theorem \ref{thm-classification-super} will occupy the remainder of this section. For now, we shall formulate a compatibility statement between the isomorphisms \eqref{eq-classification-integral} and \eqref{eq-classification-super} (which will in fact be used to define \eqref{eq-classification-super}.)

To do so, we need to construct two morphisms of sheaves of Picard groupoids:
\begin{align}
\label{eq-classification-squaring-map}
	\Hom(\pi_1G, \mathbb Z) &\rightarrow \vartheta_G^{\mathbb Z}(\Lambda) \\
\label{eq-classification-reduction-mod-2}
	\vartheta_G^{\mathbb Z}(\Lambda) &\rightarrow \vartheta_G^{\super}(\Lambda).
\end{align}

The morphism \eqref{eq-classification-squaring-map} sends a character $x : \pi_1G \rightarrow \mathbb Z$ to the quadruple $(Q, \tilde{\Lambda}, \varphi, 2x)$ where $Q(\lambda) := -2x(\lambda)^2$, $\tilde{\Lambda}$ is the trivial central extension, and $\varphi$ is the identity automorphism of the trivial central extension.

The morphism \eqref{eq-classification-reduction-mod-2} is the identity on the subgroupoid $\vartheta_G(\Lambda)$. To any character $x$ in the additional factor $\Hom(\pi_1G, \mathbb Z)$, it assigns the triple $(b, \tilde{\Lambda}, \varphi)$ where $b(\lambda_1, \lambda_2) := x(\lambda_1)x(\lambda_2)$, $\tilde{\Lambda}$ is the trivial central extension (\emph{i.e.}~the morphism \eqref{eq-central-extension-as-monoidal-morphism-super} is given by $\lambda \mapsto (\cal O, \epsilon(\lambda))$), and $\varphi$ is the identity automorphism of the trivial central extension.
\end{void}

\begin{lem}
The maps \eqref{eq-classification-squaring-map}, \eqref{eq-classification-reduction-mod-2} thus defined are morphisms of Picard groupoids, and fit into a fiber sequence of such:
\begin{equation}
\label{eq-classification-defining-sequence}
\Hom(\pi_1G, \mathbb Z) \rightarrow \vartheta_G^{\mathbb Z}(\Lambda) \rightarrow \vartheta_G^{\super}(\Lambda).
\end{equation}
\end{lem}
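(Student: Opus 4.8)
The plan is to argue in two stages: first promote \eqref{eq-classification-squaring-map} and \eqref{eq-classification-reduction-mod-2} to morphisms of sheaves of Picard groupoids, and then deduce that \eqref{eq-classification-defining-sequence} is a fiber sequence by comparing it with the fiber sequences \eqref{eq-classification-integral-fiber-sequence} and \eqref{eq-classification-super-fiber-sequence} already used to build $\vartheta_G^{\mathbb Z}(\Lambda)$ and $\vartheta_G^{\super}(\Lambda)$.

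For the first stage, recall that as a sheaf of pointed spaces $\vartheta_G^{\mathbb Z}(\Lambda)$ is the product $\vartheta_G(\Lambda)\times\underline{\Hom}(\pi_1G,\mathbb Z)$ with Picard structure encoded by the symmetric cocycle \eqref{eq-classification-integral-cocycle}, while $\vartheta_G^{\super}(\Lambda)$ is assembled from $\vartheta_G(\Lambda)$ via \eqref{eq-classification-super-fiber-sequence} together with the monoidal morphism \eqref{eq-central-extension-as-monoidal-morphism-super} into $\Pic^{\super}$. Since \eqref{eq-classification-reduction-mod-2} is by construction the identity on $\vartheta_G(\Lambda)$, promoting it to a Picard morphism comes down to producing, for sections $x_1,x_2$ of $\underline{\Hom}(\pi_1G,\mathbb Z)$, a coherent identification of the image under \eqref{eq-classification-reduction-mod-2} of the cocycle value $\eqref{eq-classification-integral-cocycle}(x_1,x_2)$ with the $\vartheta_G^{\super}(\Lambda)$-product of the triples attached to $x_1$ and $x_2$: the comparison of the underlying bilinear forms is the identity got by expanding $(x_1+x_2)(\lambda_1)(x_1+x_2)(\lambda_2)$, and the comparison of the central-extension components reduces to a manipulation of $\mathbb G_m$-valued cocycles of the shape $\lambda_1,\lambda_2\mapsto(-1)^{x_1(\lambda_1)x_2(\lambda_2)+\cdots}$ already encountered in \S\ref{void-truncated-k-theory-loop-space} and in the proof of Proposition \ref{prop-duplication-section-spectra}; compatibility with the null-homotopies of the anti-symmetrizations is checked the same way. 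For \eqref{eq-classification-squaring-map}, the point is that the choices $Q(\lambda)=-2x(\lambda)^2$ and fourth component $2x$ are made so that the failure of $x\mapsto-2x(\lambda)^2$ to be additive, namely $-2(x_1+x_2)(\lambda)^2+2x_1(\lambda)^2+2x_2(\lambda)^2=-4x_1(\lambda)x_2(\lambda)$, is cancelled against the value of \eqref{eq-classification-integral-cocycle} on the pair of fourth components $(2x_1,2x_2)$; the associated $\mathbb G_m$-cocycle $(-1)^{4x_1(\lambda_1)x_2(\lambda_2)}$ is trivial, matching the triviality of all the central extensions in sight, and the remaining coherences are formal.

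For the second stage, observe that \eqref{eq-classification-reduction-mod-2} fits into a commutative ladder of Picard groupoids whose two rows are \eqref{eq-classification-integral-fiber-sequence} and \eqref{eq-classification-super-fiber-sequence}: it is the identity on the common fiber $\vartheta_G(\Lambda)$, and the map it induces on the quotients is $\underline{\Hom}(\pi_1G,\mathbb Z)\to\underline{\Hom}(\pi_1G,\mathbb Z/2)$, which is reduction modulo $2$ --- indeed, for a quadruple $(Q,\tilde\Lambda,\varphi,x)$ one computes $\epsilon(\lambda)=\bigl(2Q(\lambda)+x(\lambda)^2\bigr)\bmod 2=x(\lambda)\bmod 2$. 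Since the map of horizontal fibers of the right-hand square is the identity of $\vartheta_G(\Lambda)$, hence an equivalence, that square is a pullback in the $\infty$-category of sheaves of connective spectra; passing to vertical fibers therefore gives an equivalence between $\mathrm{fib}$ of \eqref{eq-classification-reduction-mod-2} and $\mathrm{fib}$ of the reduction map $\underline{\Hom}(\pi_1G,\mathbb Z)\xrightarrow{\bmod 2}\underline{\Hom}(\pi_1G,\mathbb Z/2)$, implemented by the quotient map $\vartheta_G^{\mathbb Z}(\Lambda)\to\underline{\Hom}(\pi_1G,\mathbb Z)$. Applying the left exact functor $\underline{\Hom}(\pi_1G,-)$ to the short exact sequence $0\to\underline{\mathbb Z}\xrightarrow{2}\underline{\mathbb Z}\to\underline{\mathbb Z}/2\to 0$ identifies this last fiber with $\underline{\Hom}(\pi_1G,2\underline{\mathbb Z})\cong\underline{\Hom}(\pi_1G,\mathbb Z)$, the isomorphism being multiplication by $2$. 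Finally, the composite of \eqref{eq-classification-squaring-map} with \eqref{eq-classification-reduction-mod-2} is canonically null --- its $\vartheta_G(\Lambda)$-component vanishes because the bilinear form of $Q(\lambda)=-2x(\lambda)^2$ is the negative of the one attached to the fourth component $2x$, while the central extension and the $\varphi$ stay trivial --- and the quotient map $\vartheta_G^{\mathbb Z}(\Lambda)\to\underline{\Hom}(\pi_1G,\mathbb Z)$ carries \eqref{eq-classification-squaring-map} to multiplication by $2$; tracing this through the identifications above shows that \eqref{eq-classification-squaring-map} induces precisely the above isomorphism onto $\mathrm{fib}$ of \eqref{eq-classification-reduction-mod-2}, so \eqref{eq-classification-defining-sequence} is a fiber sequence.

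Once the two maps are known to be morphisms of Picard groupoids, the second stage is formal diagram-chasing with fiber sequences already in hand; the real work lies in the first stage, and specifically in matching up the braiding data --- checking that \eqref{eq-classification-reduction-mod-2} intertwines the strictly commutative constraint of $\vartheta_G^{\mathbb Z}(\Lambda)$ with the genuinely non-strict constraint of $\vartheta_G^{\super}(\Lambda)$. This is where the quadratic-versus-bilinear discrepancy, i.e.~the $\epsilon(\lambda)\epsilon(\lambda')$-correction in condition (2) of \S\ref{void-theta-data-super}, has to be accounted for, and I expect it to be handled exactly as in the self-pairing computations of Lemma \ref{lem-duplication-section-loop} and Proposition \ref{prop-duplication-section-spectra}.
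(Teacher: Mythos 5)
Your proposal is correct, and it is worth comparing the two halves separately. For the fiber-sequence claim, the paper argues directly: it takes an object $(Q,\tilde\Lambda,\varphi,x)$ in the fiber of \eqref{eq-classification-reduction-mod-2}, observes that the vanishing of the induced symmetric form $b_Q(\lambda_1,\lambda_2)+x(\lambda_1)x(\lambda_2)$ at $\lambda_1=\lambda_2$ forces $x=2y$ and $Q=-2y^2$, and notes that the trivializing isomorphism also trivializes $\tilde\Lambda$ compatibly with $\varphi$, exhibiting the object as the image of $y$ under \eqref{eq-classification-squaring-map}. You instead package this as a formal consequence of the Cartesian square comparing \eqref{eq-classification-integral-fiber-sequence} with \eqref{eq-classification-super-fiber-sequence} (the same square the paper later records as \eqref{eq-mod-2-morphism-classification}), identifying $\mathrm{fib}(f_2)$ with $\ker(\bmod 2)=2\Hom(\pi_1G,\mathbb Z)$ and observing that \eqref{eq-classification-squaring-map} projects to multiplication by $2$; when unwound this performs exactly the same identities ($b_Q=-4x_1x_2$ versus $+4x_1x_2$, $\epsilon=x\bmod 2$), so the content is the same, but your version makes the later use of the Cartesian square in Lemma \ref{lem-torsion-free} feel less ad hoc. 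The one point you should make explicit is why the right-hand square of your ladder is a pullback: an equivalence on horizontal fibers over the basepoint suffices here only because the groupoids are grouplike and $\vartheta_G^{\mathbb Z}(\Lambda)\to\underline{\Hom}(\pi_1G,\mathbb Z)$ is surjective on $\pi_0$ (it is a projection of a product of pointed sheaves), so all fibers are controlled by the one over the unit. As for the first half --- verifying that \eqref{eq-classification-squaring-map} and \eqref{eq-classification-reduction-mod-2} are morphisms of Picard groupoids --- the paper declines to prove it, and your sketch supplies the key cancellations (the additivity defect $-4x_1(\lambda)x_2(\lambda)$ of $Q_x=-2x^2$ against the cocycle value on $(2x_1,2x_2)$) correctly; the coherence with the non-strict commutativity constraint of $\vartheta_G^{\super}(\Lambda)$ that you defer to the style of Lemma \ref{lem-duplication-section-loop} is indeed the remaining bookkeeping, and your honesty about leaving it at that level matches what the paper itself does.
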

\begin{proof}
We only verify that \eqref{eq-classification-defining-sequence} is indeed a fiber sequence. Let $(Q, \tilde{\Lambda}, \varphi, x)$ be an object in the fiber of \eqref{eq-classification-reduction-mod-2}. Thus the induced symmetric form:
$$
\lambda_1, \lambda_2\mapsto Q(\lambda_1 + \lambda_2) - Q(\lambda_1) - Q(\lambda_2) + x(\lambda_1)x(\lambda_2)
$$
must vanish. Setting $\lambda_1 = \lambda_2$, this implies that $x(\lambda) \in 2\mathbb Z$ for all $\lambda\in\Lambda$, so we may write $x = 2y$ for a character $y : \pi_1G \rightarrow \mathbb Z$ and there holds $Q(\lambda) = -2y(\lambda)^2$. The fact that $(Q, \tilde{\Lambda}, \varphi, x)$ lies in the fiber also supplies us with a trivialization of $\widetilde{\Lambda}$ compatible with $\varphi$. This yields an isomorphism between $(Q, \tilde{\Lambda}, \varphi, x)$ and the image of $y$ under \eqref{eq-classification-squaring-map}.
\end{proof}

\begin{void}
The compatibility statement asserts that \eqref{eq-classification-defining-sequence} coincides with the cofiber sequence defining $\Ktheory{}_{[1, 2]}^{\super}$ (see \S\ref{void-super-k-spectrum-definition}) evaluated at $\deloop G$:
\begin{equation}
\label{eq-classification-defining-sequence-matchup}
\begin{tikzcd}[column sep = 1em]
	\underline{\Gamma}{}_e(\deloop G, \deloop \Ktheory{}_1) \ar[r, "\Sq"]\ar[d, "\cong"] & \underline{\Gamma}{}_e(\deloop G, \Ktheory{}_{[1, 2]}) \ar[r]\ar[d, "\eqref{eq-classification-integral}"] & \underline{\Gamma}{}_e(\deloop G, \Ktheory{}_{[1, 2]}^{\super}) \ar[d, "\eqref{eq-classification-super}"] \\
	\Hom(\pi_1G, \mathbb Z) \ar[r, "\eqref{eq-classification-squaring-map}"] & \vartheta_G^{\mathbb Z}(\Lambda) \ar[r, "\eqref{eq-classification-reduction-mod-2}"] & \vartheta_G^{\super}(\Lambda)
\end{tikzcd}
\end{equation}

The following statement can be verified without any knowledge of \eqref{eq-classification-super}.
\end{void}

\begin{lem}
\label{lem-classification-squaring-map}
The left square in \eqref{eq-classification-defining-sequence-matchup} is canonically commutative.
\end{lem}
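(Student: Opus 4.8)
The plan is to reduce to $G=T$ and then unwind both composites by hand. Both functors $\underline{\Gamma}{}_e(BG,B\underline K{}_1)\to\vartheta_G^{\mathbb Z}(\Lambda)$ appearing in the left square of \eqref{eq-classification-defining-sequence-matchup} are natural in the pair $(G,T)$, and a section of $\vartheta_G^{\mathbb Z}(\Lambda)$ is detected by its restriction along $T\subset G$ (since $\vartheta_G(\Lambda)$ sits inside $\vartheta(\Lambda)$ and a character of $\pi_1G$ is a character of $\Lambda$ vanishing on $\Lambda_{\mathrm{sc}}$). Hence it suffices to produce the canonical isomorphism for a torus $T$, functorially in $T$, exactly as in the proof of Proposition~\ref{prop-classification-integral}.

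So let $T$ have cocharacter lattice $\Lambda$, let $x\in\Hom(\Lambda,\mathbb Z)$ be a character, and let $\cal L_x$ denote the corresponding rigidified section of $B\underline K{}_1\cong\Pic$ over $BT$ under \eqref{eq-character-classification}. First I would apply $\Sq$: by \eqref{eq-duplication-section} it sends $\cal L_x$ to $[\cal L_x]-[\cal L_x^{-1}]$ in $\underline K{}_{[1,2]}$, and the canonical isomorphism \eqref{eq-section-via-pairing} rewrites this as $s(\cal L_x^{\otimes 2})-2\cdot\{\cal L_x,\cal L_x\}$. The equivalence \eqref{eq-classification-integral} was built from the pointed-space splitting $\underline K{}_{[1,2]}\cong B^2\underline K{}_2\times B\underline K{}_1$ afforded by $s$ --- Brylinski--Deligne on the first factor, \eqref{eq-character-classification} on the second, with the Picard structure glued by the $\{\cdot,\cdot\}$-cocycle --- so the image of $\Sq(\cal L_x)$ can be read off: its $B\underline K{}_1$-component is $\cal L_x^{\otimes 2}$, classified by $2x$, while its $B^2\underline K{}_2$-component is $-2\cdot\{\cal L_x,\cal L_x\}$.

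It remains to identify $-2\cdot\{\cal L_x,\cal L_x\}$ under Brylinski--Deligne, which is the diagonal instance of the compatibility \eqref{eq-k-theory-pairing-over-group} already verified in the proof of Proposition~\ref{prop-classification-integral}: the self cup product $\{\cal L_x,\cal L_x\}$ corresponds to the pair $(Q_0,\tilde\Lambda_0)$ with $Q_0(\lambda)=x(\lambda)^2$ and $\tilde\Lambda_0$ the central extension of $\Lambda$ by $\mathbb G_m$ with cocycle $(\lambda_1,\lambda_2)\mapsto(-1)^{x(\lambda_1)x(\lambda_2)}$, the $\varphi$-datum being the identity. Forming $(-2)$ times this object in $\vartheta(\Lambda)$ multiplies $Q_0$ by $-2$ and takes the $(-2)$-fold Baer sum of $\tilde\Lambda_0$, whose cocycle is $(-1)^{-2x(\lambda_1)x(\lambda_2)}=1$; thus the extension becomes canonically trivial. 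Altogether the image of $\Sq(\cal L_x)$ is the quadruple $(Q,\tilde\Lambda,\varphi,2x)$ with $Q(\lambda)=-2x(\lambda)^2$, $\tilde\Lambda$ trivial and $\varphi$ the identity, which is precisely \eqref{eq-classification-squaring-map} evaluated at $x$. The isomorphisms used (namely \eqref{eq-section-via-pairing} and the one from \eqref{eq-k-theory-pairing-over-group}) are canonical and natural in $T$, so they assemble to the asserted canonical commutativity; compatibility with the Picard-groupoid structures is again a cocycle check on $BT$.

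The only point I expect to require care is bookkeeping the pointed-space versus spectrum-level structures: $s$ is merely a section on underlying pointed spaces, so the splitting $\Sq(\cal L_x)=s(\cal L_x^{\otimes 2})+(\text{a class in }B^2\underline K{}_2)$ must be read in the group $\underline K{}_{[1,2]}$ and then matched with the (non-strictly-commutative) Picard structure on $\vartheta_G^{\mathbb Z}(\Lambda)$. Everything else is the elementary computation above, whose substance is the evenness of $-2x(\lambda_1)x(\lambda_2)$ --- equivalently, the triviality of the relevant commutator pairing.
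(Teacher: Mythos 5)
Your proof is correct and follows essentially the same route as the paper's: rewrite $\Sq$ via the canonical isomorphism \eqref{eq-section-via-pairing} as $\cal L\mapsto s(\cal L^2)-2\{\cal L,\cal L\}$, identify $s$ with the inclusion of $\Hom(\pi_1G,\mathbb Z)$ into $\vartheta_G^{\mathbb Z}(\Lambda)$ and $\{\cal L,\cal L\}$ with the diagonal restriction of the cocycle \eqref{eq-classification-integral-cocycle}, and observe that the $(-2)$-multiple kills the sign cocycle while producing $Q(\lambda)=-2x(\lambda)^2$ and grading $2x$. The explicit reduction to $G=T$ and the unwinding of the Baer sum are slightly more detailed than the paper's two-line computation, but the substance is identical.
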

\begin{proof}
We use the expression \eqref{eq-section-via-pairing} of the map $\Sq$ as the difference:
\begin{equation}
\label{eq-section-via-pairing-classification}
\deloop\Ktheory{}_1 \rightarrow \Ktheory{}_{[1, 2]},\quad \cal L \mapsto s(\cal L^2) - 2\{\cal L, \cal L\}.
\end{equation}

Under the equivalence \eqref{eq-classification-integral}, $s$ corresponds to the natural inclusion of $\Hom(\pi_1G, \mathbb Z)$ in $\vartheta_G^{\mathbb Z}(\Lambda)$, while the map $\cal L\mapsto\{\cal L, \cal L\}$ corresponds to the restriction of \eqref{eq-classification-integral-cocycle} along the diagonal copy of $\Hom(\pi_1G, \mathbb Z)$ (established in the proof of Proposition \ref{prop-classification-integral}). The map induced from \eqref{eq-section-via-pairing-classification} upon taking $\underline{\Gamma}{}_e(\deloop G, \cdot)$ is thus readily computed to be \eqref{eq-classification-squaring-map}.
\end{proof}

\subsection{Central extensions by $\Ktheory{}_1$}
\label{sec-central-extension-k1}

\begin{void}
In this subsection, we let $S$ be an arbitrary base scheme and $G \rightarrow S$ be a reductive group scheme. Our goal is to classify central extensions of $G$ by $\Ktheory{}_1 \cong \mathbb G_m$.

When $S$ is the spectrum of a field, this classification is obtained by Weissman \cite[Theorem 1.11]{MR2798431}. We give a self-contained proof valid over any base scheme.
\end{void}

\begin{void}
For a reductive group scheme $H\rightarrow S$, we write $\Rad(H)$ for the radical of $H$ as defined in \cite[XXII, D\'efinition 4.3.6]{SGA3}. Namely, it is the maximal torus of the center of $H$. The formation of $\Rad(H)$ is stable under base change and recovers the classical notion (maximal connected normal solvable subgroup) over a geometric point of $S$.

Given a central extension of a reductive group scheme $H$ by $\mathbb G_m$ (or any torus):
\begin{equation}
\label{eq-central-extension-reductive-group-scheme-k1}
1 \rightarrow \mathbb G_m \rightarrow \widetilde H \rightarrow H\rightarrow 1,
\end{equation}
we first observe that $\widetilde H$ is representable by a reductive group scheme. Indeed, one checks directly that $\widetilde H\rightarrow S$ is smooth and its geometric fibers have vanishing unipotent radicals.
\end{void}

\begin{void}
By functoriality of the algebraic fundamental group, we obtain a functor from the Picard groupoid of central extension of $G$ by $\mathbb G_m$ to that of extensions of $\pi_1G$ by $\mathbb Z$ as sheaves of abelian groups:
\begin{equation}
\label{eq-fundamental-group-applied-to-central-extensions}
	\Hom_{\mathbb E_1}(G, \deloop\mathbb G_m) \rightarrow \Hom_{\mathbb Z}(\pi_1G, \deloop\mathbb Z).
\end{equation}
\end{void}

\begin{prop}
\label{prop-central-extension-multiplicative}
The functor \eqref{eq-fundamental-group-applied-to-central-extensions} is an equivalence.
\end{prop}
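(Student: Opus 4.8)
The plan is a d\'evissage to the cases of a torus and of a semisimple simply connected group, followed by a gluing along a maximal torus in the style of \cite[\S\S3--4]{MR1896177}. To begin, both sides of \eqref{eq-fundamental-group-applied-to-central-extensions} are sheaves of Picard groupoids on $S$-schemes — immediate for the target, and for the source because a central extension of $G$ by $\mathbb G_m$ is a $\mathbb G_m$-torsor on $G$ equipped with a multiplicative structure, and $\mathbb G_m$-torsors descend even fppf-locally. Hence it suffices to prove the claim after base change along an \'etale cover of an affine $S$, so I would assume $S$ affine and $G$ equipped with a maximal torus $T$, together with $T_{\mathrm{sc}}\subset G_{\mathrm{sc}}$ and $\Lambda_{\mathrm{sc}}\subset\Lambda$ as in \S\ref{void-classification-setup}.

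\emph{The extreme cases.} For $G=T$ a torus, one first checks that every bimultiplicative pairing $T\times T\to\mathbb G_m$ vanishes, being a homomorphism from the geometrically connected torus $T$ into the discrete sheaf $\underline{\Hom}(T,\mathbb G_m)$; thus any central extension $\widetilde T$ of $T$ by $\mathbb G_m$ has trivial commutator pairing, is therefore commutative, and — its geometric fibers being tori — is itself a torus. Applying the exact equivalence $X_*$ from tori to finite free abelian groups then identifies the Picard groupoid of central extensions of $T$ by $\mathbb G_m$ with that of extensions $0\to\mathbb Z\to X_*(\widetilde T)\to X_*(T)\to 0$, functorially in $T$; since $X_*(T)=\pi_1T$, this gives the proposition for tori. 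For $G=G_{\mathrm{sc}}$ both sides are contractible: the target because $\pi_1G_{\mathrm{sc}}=0$, and the source because $\Hom(G_{\mathrm{sc}},\mathbb G_m)=0$ kills automorphisms while any central extension $1\to\mathbb G_m\to\widetilde G\to G_{\mathrm{sc}}\to 1$ is split. Indeed $\widetilde G$ is reductive with $\Rad(\widetilde G)=\mathbb G_m$ (its radical maps into $\Rad(G_{\mathrm{sc}})=1$, hence lies in the central $\mathbb G_m$), so $\widetilde G^{\mathrm{der}}\to\widetilde G/\mathbb G_m=G_{\mathrm{sc}}$ is a central isogeny \cite[XXII]{SGA3}, hence an isomorphism as $G_{\mathrm{sc}}$ is simply connected; its inverse, composed with $\widetilde G^{\mathrm{der}}\hookrightarrow\widetilde G$, splits the extension.

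\emph{Gluing.} For general $G$, restriction along $T\hookrightarrow G$ carries a central extension $\widetilde G$ of $G$ by $\mathbb G_m$ to the preimage $\widetilde T$ of $T$, which is commutative (its commutator pairing is a bimultiplicative pairing $T\times T\to\mathbb G_m$, so vanishes by the previous paragraph) — equivalently an extension $\widetilde\Lambda$ of the lattice $\Lambda$ by $\mathbb Z$, and moreover equivariant for $W=N_G(T)/T$; in addition, the pullback of $\widetilde G$ along $G_{\mathrm{sc}}\to G$ is canonically split by the previous paragraph, and restricting that splitting to $T_{\mathrm{sc}}$ trivializes $\widetilde\Lambda|_{\Lambda_{\mathrm{sc}}}$. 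I would then rerun the argument of \cite[\S\S3--4]{MR1896177} with $\underline K{}_2$ replaced by $\mathbb G_m$: it manipulates only $\mathbb G_m$-torsors and the Chevalley data of $G$, so it is valid over any base, and it simplifies here because, by the previous paragraph, the quadratic form and the $G_{\mathrm{sc}}$-datum of \emph{loc.\ cit.}\ both vanish. The outcome is that $\widetilde G\mapsto(\widetilde\Lambda,\text{ trivialization over }\Lambda_{\mathrm{sc}})$ is an equivalence onto the groupoid of $W$-equivariant extensions of $\Lambda$ by $\mathbb Z$ trivialized over $\Lambda_{\mathrm{sc}}$. Finally $W$ acts trivially on $\pi_1G=\Lambda/\Lambda_{\mathrm{sc}}$ (a simple reflection moves a cocharacter by a coroot), so the $W$-equivariance carries no information, and an extension of $\Lambda$ by $\mathbb Z$ trivialized over $\Lambda_{\mathrm{sc}}$ is exactly — by pullback along $\Lambda\twoheadrightarrow\pi_1G$ — an extension of $\pi_1G$ by $\mathbb Z$. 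Checking that the resulting composite agrees with \eqref{eq-fundamental-group-applied-to-central-extensions} — using that $(\widetilde G)_{\mathrm{sc}}=G_{\mathrm{sc}}$, so $\pi_1\widetilde G=X_*(\widetilde T)/\Lambda_{\mathrm{sc}}$ — completes the argument.

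I expect the gluing step to be the main obstacle. As in Brylinski--Deligne, its content is that a central extension of $G$ is rigid enough to be recovered from its restriction to $T$ together with its compatibility over $G_{\mathrm{sc}}$; the work lies in checking that their descent and generators-and-relations arguments, written for a regular base of finite type over a field, remain valid over an arbitrary base once the coefficient sheaf is $\mathbb G_m$ rather than $\underline K{}_2$.
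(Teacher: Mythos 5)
Your two extreme cases are essentially the paper's own key lemmas: the commutativity (hence toral nature) of central extensions of tori, and the splitting over $G_{\mathrm{sc}}$ via $\Rad(\widetilde G)\cong\mathbb G_m$ and the central isogeny $\widetilde G{}_{\der}\rightarrow G_{\mathrm{sc}}$. The gap is in the gluing step, which you yourself flag as the main obstacle and then do not carry out. ``Rerunning \cite[\S\S3--4]{MR1896177}'' is not a proof here: the Brylinski--Deligne reconstruction of a central extension from its restriction to $T$ plus Weyl and $G_{\mathrm{sc}}$ data rests on big-cell arguments (triviality of torsors and of characters on the unipotent part of the open cell), and these inputs genuinely fail over the arbitrary base $S$ for which the proposition is stated: over a non-reduced $S$ one has $\Hom(\mathbb G_{a,S},\mathbb G_{m,S})\neq 0$ (e.g.\ $x\mapsto 1+\epsilon x$ over $k[\epsilon]/\epsilon^2$), and over a non-normal $S$ one has $\Pic(\mathbb G_{a,S})\neq\Pic(S)$. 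Your parenthetical claim that the argument ``manipulates only $\mathbb G_m$-torsors and the Chevalley data, so it is valid over any base'' is exactly the assertion that needs proof, and it is the reason the paper does not take this route (it explicitly sets out to give a self-contained proof valid over any base, rather than citing Weissman's field case).

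The paper's gluing is different and uses only ingredients you already have. The unique splitting over $G_{\mathrm{sc}}$ is automatically $\widetilde G$-equivariant (again by uniqueness), so its image is normal and the extension descends along $G\rightarrow G/G_{\mathrm{sc}}$; the quotient is a monoidal stack identified with $T/T_{\mathrm{sc}}\cong\pi_1(G)\otimes\mathbb G_m$, and one is reduced to commutative extensions of this ``torus-like'' object by $\mathbb G_m$, which give $\Hom_{\mathbb Z}(\pi_1G, B\mathbb Z)$ since $\underline{\Ext}^1(\mathbb G_m,\mathbb G_m)=0$. No Weyl group, no big cell, no reconstruction from $T$. If you replace your gluing paragraph by this descent along $G\rightarrow G/G_{\mathrm{sc}}$ (taking care to record the equivariance/normality of the splitting, which your sketch omits), the proof closes.
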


\begin{void}
The Picard groupoids in \eqref{eq-fundamental-group-applied-to-central-extensions} are of \'etale local nature on $S$, so we may assume the existence of a maximal torus $T\subset G$ in the proof of Proposition \ref{prop-central-extension-multiplicative}.

Since the $G$-conjugation extends along the map $G_{\mathrm{sc}} \rightarrow G$, the quotient stack $G/G_{\mathrm{sc}}$ has a monoidal structure. As such, we have isomorphisms of monoidal stacks:
\begin{equation}
\label{eq-quotient-stack-equivalences}
G/G_{\mathrm{sc}} \xleftarrow{\simeq} T/T_{\mathrm{sc}} \xrightarrow{\simeq} \pi_1(G)\otimes\mathbb G_m.
\end{equation}
Here, the tensor product is understood in the derived sense and sheafified in the fppf, or equivalently the \'etale topology.
\end{void}

\begin{proof}[Proof of Proposition \ref{prop-central-extension-multiplicative}]
In view of the isomorphisms \eqref{eq-quotient-stack-equivalences}, it suffices to prove that the following two forgetful functors are equivalences:
\begin{align}
\label{eq-central-extension-fundamental-group-forgetful}
	\Hom_{\mathbb Z}(T/T_{\mathrm{sc}}, \deloop\mathbb G_m) &\rightarrow \Hom_{\mathbb E_1}(T/T_{\mathrm{sc}}, \deloop\mathbb G_m),\\
\label{eq-central-extension-simply-connected-forgetful}
	\Hom_{\mathbb E_1}(G/G_{\mathrm{sc}}, \deloop\mathbb G_m) &\rightarrow \Hom_{\mathbb E_1}(G, \deloop\mathbb G_m).
\end{align}

Indeed, the left-hand-side of \eqref{eq-central-extension-fundamental-group-forgetful} is identified with $\Hom_{\mathbb Z}(\pi_1G, \deloop\mathbb Z)$ by the vanishing of $\underline{\Ext}^1(-, \mathbb G_m)$ on the category of fppf sheaves of abelian groups.

Given a central extension of a reductive group scheme $H$ by $\mathbb G_m$ as in \eqref{eq-central-extension-reductive-group-scheme-k1}, we have a short exact sequence of tori:
\begin{equation}
\label{eq-central-extension-reductive-group-scheme-k1-radical}
1 \rightarrow \mathbb G_m \rightarrow \Rad(\widetilde H) \rightarrow\Rad(H) \rightarrow 1.
\end{equation}
Indeed, the fact that $\Rad(\widetilde H)\rightarrow \Rad(H)$ is surjective can be checked on geometric fibers. Moreover, $\Rad(\widetilde H)$ contains $\mathbb G_m$ since the latter is central, so the inclusion of the kernel $\Rad(\widetilde H)\cap \mathbb G_m$ inside $\mathbb G_m$ is an isomorphism.

We make two observations:
\begin{enumerate}
	\item If $H$ is a torus, then so is $\widetilde H$. This is because the map $\Rad(\widetilde H) \rightarrow\widetilde H$ is an isomorphism by comparing \eqref{eq-central-extension-reductive-group-scheme-k1} with \eqref{eq-central-extension-reductive-group-scheme-k1-radical}.
	\item If $H$ is semisimple, then we find an isomorphism $\mathbb G_m\xrightarrow{\simeq}\Rad(\widetilde H)$.
\end{enumerate}

To prove that \eqref{eq-central-extension-fundamental-group-forgetful} is an equivalence, it suffices to show that any central extension of a torus by $\mathbb G_m$ is commutative. This follows from observation (1).

To prove that \eqref{eq-central-extension-simply-connected-forgetful} is an equivalence, we first write the left-hand-side as the groupoid of central extensions:
\begin{equation}
\label{eq-central-extension-reductive-group-by-multiplicative}
1 \rightarrow \mathbb G_m \rightarrow \widetilde G \rightarrow G \rightarrow 1,
\end{equation}
equipped with a $\widetilde G$-equivariant splitting over $G_{\mathrm{sc}}$ for the adjoint action. Our task is to show that such a splitting exists uniquely.

To construct such a splitting, we may assume that $G$ is simply connected in \eqref{eq-central-extension-reductive-group-by-multiplicative}. Let $\widetilde G_{\der} \subset \widetilde G$ denote its derived subgroup. We claim that the composition:
\begin{equation}
\label{eq-derived-subgroup-lifting}
\widetilde G_{\der}\subset\widetilde G \rightarrow G
\end{equation}
is an isomorphism.

It suffices to prove that \eqref{eq-derived-subgroup-lifting} is a central isogeny, \emph{i.e.}~it is finite, flat, and surjective, with kernel contained in the center of $\widetilde G_{\der}$. The statement on the kernel is clear. The fact that \eqref{eq-derived-subgroup-lifting} is finite, finite, and surjective may be established smooth locally, so we base change along $\widetilde G \rightarrow G$, where \eqref{eq-derived-subgroup-lifting} becomes the multiplication map:
$$
\widetilde G_{\der} \times \mathbb G_m \rightarrow \widetilde G.
$$
However, by observation (2), this morphism is identified with the isogeny $\widetilde G_{\der}\times \Rad(\widetilde G) \rightarrow\widetilde G$ of \cite[XXII, 6.2.3]{SGA3}.

The isomorphism \eqref{eq-derived-subgroup-lifting} for $G$ simply connected equips \eqref{eq-central-extension-reductive-group-by-multiplicative} with a section over $G_{\mathrm{sc}}$. It is unique since any two sections differ by a character $G_{\mathrm{sc}} \rightarrow\mathbb G_m$ which is necessarily trivial. To see that this section is $\widetilde G$-equivariant, it suffices to observe that the diagram:
$$
\begin{tikzcd}[column sep = 1em]
	\widetilde G\times_GG_{\mathrm{sc}} \ar[r]\ar[d] & G_{\mathrm{sc}} \ar[d] \\
	\widetilde G \ar[r] & G
\end{tikzcd}
$$
is $\widetilde G$-equivariant, and any automorphism of $\widetilde G \times_G G_{\mathrm{sc}}$ preserves its derived subgroup.
\end{proof}

\subsection{Proof of Theorem \ref{thm-classification-super}}
\label{sec-proof-classifcation-super}

\begin{void}
We return to the set-up of \S\ref{void-classification-setup}. In particular, the base scheme $S$ is assumed to be regular and of finite type over a field. In this subsection, we construct the equivalence \eqref{eq-classification-super} and thereby prove Theorem \ref{thm-classification-super}.

We shall construct this equivalence in two stages: we first do it when $\pi_1G$ is torsion-free and satisfies a Galois cohomological condition. This step uses Proposition \ref{prop-classification-integral} and Proposition \ref{prop-central-extension-multiplicative}. We then bootstrap the general case from this one, using the flasque resolution over general base due to Gonz\'alez-Avil\'es \cite{MR3085137}.

The fact that we have to play with Galois cohomology is because we do not know \emph{a priori} that $\underline{\Gamma}{}_e(\deloop G, \Ktheory{}_{[1, 2]}^{\super})$ satisfies \'etale descent.
\end{void}

\begin{void}
Note that our hypothesis on $S$ guarantees that every $S$-tori is isotrivial, \emph{i.e.}~split by a finite \'etale cover \cite[X, Th\'eor\`eme 5.16]{SGA3}. In particular, it makes sense for an $S$-torus to be quasi-trivial, see \cite[Definition 1.2]{MR878473}.
\end{void}

\begin{lem}
\label{lem-torsion-free}
If $\pi_1G$ is the sheaf of cocharacters of a quasi-trivial torus, then both rows in \eqref{eq-classification-defining-sequence-matchup} are cofiber sequences of Zariski sheaves.
\end{lem}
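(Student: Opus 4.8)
The plan is to reduce both claims to a single cohomological input furnished by the hypothesis, namely $\underline{\Ext}{}^1(\pi_1G,\underline{\mathbb Z})=0$ for $\pi_1G$ the cocharacter sheaf of a quasi-trivial torus (a permutation module, so this follows from Shapiro's lemma and $\underline{\Ext}{}^1(\underline{\mathbb Z},\underline{\mathbb Z})=0$), and to feed it through a general principle. That principle is the following. Both rows of \eqref{eq-classification-defining-sequence-matchup} are already \emph{fiber} sequences of Zariski sheaves of connective spectra over $S$: the bottom row is \eqref{eq-classification-defining-sequence}, shown above to be a fiber sequence of Picard groupoids, and the top row is obtained by applying the limit-preserving functor $\underline{\Gamma}{}_e(BG,-)$ to the fiber sequence underlying the cofiber sequence that \emph{defines} $\underline K{}_{[1,2]}^{\super}$ — recall that for a map of connective sheaves of spectra a cofiber sequence is simultaneously a fiber sequence. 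It then remains only to upgrade "fiber sequence" to "cofiber sequence", and here the relevant elementary fact is: a fiber sequence $A\to B\to C$ of connective sheaves of spectra is a cofiber sequence exactly when $\pi_0B\to\pi_0C$ is an epimorphism of sheaves, the obstruction being $\op{coker}(\pi_0B\to\pi_0C)=\pi_{-1}\bigl(\op{fib}(B\to C)\text{ computed in sheaves of spectra}\bigr)$; in particular, once all three terms are known to be connective a fiber sequence of sheaves of spectra is automatically a cofiber sequence of connective ones. This is the sole point at which the non-stability of connective sheaves of spectra intervenes, and the reason the lemma is not formal.

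For the top row it therefore suffices to exhibit all three terms as connective. The middle term $\underline{\Gamma}{}_e(BG,\underline K{}_{[1,2]})\simeq\vartheta_G^{\mathbb Z}(\Lambda)$ is a Picard groupoid by Proposition \ref{prop-classification-integral}. For the left term I would use that $\underline{\Gamma}{}_e(BG,-)$ commutes with $\Omega$, so $\underline{\Gamma}{}_e(BG,B\underline K{}_1)=\Omega\,\underline{\Gamma}{}_e(BG,B^2\underline K{}_1)$ and hence $\pi_{-1}\underline{\Gamma}{}_e(BG,B\underline K{}_1)=\pi_0\,\underline{\Gamma}{}_e(BG,B^2\underline K{}_1)$ is the sheaf classifying central extensions of $G$ by $\mathbb G_m$; by Proposition \ref{prop-central-extension-multiplicative} this sheaf is $\underline{\Ext}{}^1(\pi_1G,\underline{\mathbb Z})$, which vanishes under the hypothesis. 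Thus $\underline{\Gamma}{}_e(BG,B\underline K{}_1)$ is connective (indeed discrete), and the long exact sequence of the fiber sequence then forces $\underline{\Gamma}{}_e(BG,\underline K{}_{[1,2]}^{\super})$ to be connective as well; so the top row is a cofiber sequence.

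For the bottom row I would apply the criterion directly, checking that $\pi_0\vartheta_G^{\mathbb Z}(\Lambda)\to\pi_0\vartheta_G^{\super}(\Lambda)$ is a sheaf epimorphism. Given a local object $(b,\widetilde{\Lambda},\varphi)$ of $\vartheta_G^{\super}(\Lambda)$ with $\epsilon(\lambda)=b(\lambda,\lambda)\bmod 2\in\underline{\Hom}(\pi_1G,\underline{\mathbb Z}/2)$, I would choose a local lift $x\in\underline{\Hom}(\pi_1G,\underline{\mathbb Z})$ of $\epsilon$ and set $Q(\lambda):=\frac12\bigl(b(\lambda,\lambda)-x(\lambda)^2\bigr)$; this is an integral, Weyl-invariant quadratic form because $x\equiv\epsilon\bmod 2$ makes the bracket even and $x$ vanishes on $\Lambda_{\mathrm{sc}}$, the commutator pairing condition for $(Q,\widetilde{\Lambda},\varphi)$ again follows from $x\equiv\epsilon\bmod 2$, and one checks that the quadruple $(Q,\widetilde{\Lambda},\varphi,x)$ maps to $(b,\widetilde{\Lambda},\varphi)$ under \eqref{eq-classification-reduction-mod-2}. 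The only obstruction is thus the existence of $x$, whose class is the image of $\epsilon$ under the connecting map $\underline{\Hom}(\pi_1G,\underline{\mathbb Z}/2)\to\underline{\Ext}{}^1(\pi_1G,\underline{\mathbb Z})$ of $0\to\underline{\mathbb Z}\xrightarrow{2}\underline{\mathbb Z}\to\underline{\mathbb Z}/2\to0$; it lies in the $2$-torsion of $\underline{\Ext}{}^1(\pi_1G,\underline{\mathbb Z})$ and therefore vanishes under the hypothesis. Hence the bottom row is a cofiber sequence.

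The step I expect to be the main obstacle is the second one: organizing the homotopy theory cleanly enough that "cofiber sequence of connective Zariski sheaves" is recognized as the concrete vanishing $\pi_{-1}\underline{\Gamma}{}_e(BG,B\underline K{}_1)=0$, and then correctly identifying that $\pi_{-1}$ — after the single loop/shift bookkeeping — with $\underline{\Ext}{}^1(\pi_1G,\underline{\mathbb Z})$ by means of Proposition \ref{prop-central-extension-multiplicative}. Once this is in place, the bottom-row computation is short and elementary, and the remaining connectivity is a diagram chase in a long exact sequence.
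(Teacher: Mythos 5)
Your argument is correct and follows essentially the same route as the paper: both reduce the lemma to the Zariski-local surjectivity on $\pi_0$ of the two horizontal maps $f_1$, $f_2$, and both derive that surjectivity from Proposition \ref{prop-central-extension-multiplicative} together with the vanishing of $H^1(\Gamma,M)$ (equivalently of $\underline{\Ext}{}^1(\pi_1G,\underline{\mathbb Z})$) for the permutation module attached to a quasi-trivial torus --- the paper merely packages the two surjectivities through the Cartesian squares \eqref{eq-mod-2-morphism-classification} and \eqref{eq-mod-2-compatibility-with-grading}, where you unfold them into an explicit lift $(Q,\widetilde{\Lambda},\varphi,x)$ and a $\pi_{-1}$-computation. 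One small overstatement: from $\pi_{-1}\underline{\Gamma}{}_e(BG,B\underline K{}_1)=0$ you conclude that this term is connective, which would also require vanishing of the lower homotopy sheaves $\pi_{-n}$ (controlled by $H^{n+1}_e(BG,\mathbb G_m)$, which you do not address); but this stronger claim is unnecessary, since by your own criterion only $\pi_{-1}$ of the fiber term is the obstruction, and that is exactly what you prove vanishes.
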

\begin{proof}
This assertion amounts to the Zariski local surjectivity of the two horizontal morphisms appearing in \eqref{eq-classification-defining-sequence-matchup}:
\begin{align*}
f_1 &: \underline{\Gamma}{}_e(\deloop G, \Ktheory{}_{[1, 2]}) \rightarrow \underline{\Gamma}{}_e(\deloop G, \Ktheory{}_{[1,2]}^{\super}), \\
f_2 &: \vartheta_G^{\mathbb Z}(\Lambda) \rightarrow \vartheta_G^{\super}(\Lambda).
\end{align*}

For $f_2$, we note that comparing \eqref{eq-classification-integral-fiber-sequence} with \eqref{eq-classification-super-fiber-sequence} leads to a Cartesian square:
\begin{equation}
\label{eq-mod-2-morphism-classification}
\begin{tikzcd}[column sep = 1em]
	\vartheta_G^{\mathbb Z}(\Lambda) \ar[r, twoheadrightarrow]\ar[d, "f_2"] & \underline{\Hom}(\pi_1G, \mathbb Z) \ar[d, "\text{mod }2"] \\
	\vartheta_G^{\super}(\Lambda) \ar[r] & \underline{\Hom}(\pi_1G, \mathbb Z/2)
\end{tikzcd}
\end{equation}
\emph{Claim}: the ``mod $2$'' morphism is surjective in the Zariski topology.

Indeed, Zariski locally on $S$, we may find a finite Galois cover $S_1 \rightarrow S$ which splits $\pi_1G$. Denote by $\Gamma$ the Galois group of $S_1/S$ and $M$ the $\mathbb Z$-linear dual of the $\Gamma$-module associated to $\pi_1G$ at a geometric point of $S$. Then the problem amounts to the surjectivity of $M^{\Gamma} \rightarrow (M/2)^{\Gamma}$, which follows from $H^1(\Gamma, M) = 0$ by quasi-triviality.

It follows that $f_2$ is also surjective in the Zariski topology.

For $f_1$, the canonical maps in \eqref{eq-super-k-spectrum-fiber-sequences} induce a Cartesian square:
\begin{equation}
\label{eq-mod-2-compatibility-with-grading}
\begin{tikzcd}[column sep = 1em]
	\underline{\Gamma}{}_e(\deloop G, \Ktheory{}_{[1, 2]}) \ar[r, twoheadrightarrow]\ar[d, "f_1"] & \underline{\Gamma}{}_e(\deloop G, B\Ktheory{}_1) \ar[d, "\text{mod }2"] \\
	\underline{\Gamma}{}_e(\deloop G, \Ktheory{}^{\super}_{[1, 2]}) \ar[r] & \underline{\Gamma}{}_e(\deloop G, \deloop\Ktheory{}_1/2)
\end{tikzcd}
\end{equation}
Proposition \ref{prop-central-extension-multiplicative} implies that the ``mod $2$'' morphism is identified with the one appearing in \eqref{eq-mod-2-morphism-classification}. In particular, it is also surjective in the Zariski topology given the hypothesis on $\pi_1G$. The same thus holds for $f_1$.
\end{proof}

\begin{void}
\label{void-classification-super-torsion-free}
Suppose that $\pi_1G$ is the sheaf of cocharacters of a quasi-trivial torus. By Lemma \ref{lem-classification-squaring-map} and Lemma \ref{lem-torsion-free}, we may define a morphism fitting into \eqref{eq-classification-defining-sequence-matchup}:
\begin{equation}
\label{eq-classification-super-torsion-free}
\underline{\Gamma}{}_e(\deloop G, \Ktheory_{[1,2]}^{\super}) \rightarrow \vartheta_G^{\super}(\Lambda).
\end{equation}
It is an equivalence by Proposition \ref{prop-classification-integral}.
\end{void}

\begin{void}
\label{void-auxiliary-theta-data}
For general $G$, we first introduce an auxiliary sheaf of Picard groupoids $\widetilde{\vartheta}_G^{\super}(\Lambda)$, defined to be the fiber product:
\begin{equation}
\label{eq-auxiliary-theta-data}
\begin{tikzcd}[column sep = 1em]
	\widetilde{\vartheta}_G^{\super}(\Lambda) \ar[r]\ar[d] & \Quad(\Lambda_{\mathrm{sc}})^W \ar[d, "\eqref{eq-canonical-theta-data-brylinski-deligne}"] \\
	\vartheta^{\super}(\Lambda) \ar[r] & \vartheta^{\super}(\Lambda_{\mathrm{sc}})
\end{tikzcd}
\end{equation}
where the bottom horizontal map is defined by functoriality with respect to $\Lambda_{\mathrm{sc}} \rightarrow \Lambda$. Concretely, a section of $\widetilde{\vartheta}_G^{\super}(\Lambda)$ is a triple $(b, \widetilde{\Lambda}, \varphi)$ as in $\vartheta_G^{\super}(\Lambda)$, but the Weyl-invariance on $b$ is relaxed: it is only required to be Weyl-invariant over $\Lambda_{\mathrm{sc}}$.

Restrictions along $T\subset G$, $G_{\mathrm{sc}} \rightarrow G$ and applying the functor \eqref{eq-classification-super-torsion-free} to $T$, $G_{\mathrm{sc}}$, and $T_{\mathrm{sc}}$ produces a functor:
\begin{equation}
\label{eq-classification-super-relaxed}
\underline{\Gamma}{}_e(\deloop G, \Ktheory{}_{[1, 2]}^{\super}) \rightarrow \widetilde{\vartheta}_G^{\super}(\Lambda).
\end{equation}
\end{void}

\begin{void}
\label{void-z-extension}
Suppose that we have a central extension of reductive group $S$-schemes:
\begin{equation}
\label{eq-coflasque-extension}
1 \rightarrow T_1 \rightarrow \widetilde G \rightarrow G \rightarrow 1,
\end{equation}
where $T_1$ is a torus with sheaf of cocharacters $\Lambda_1$. Denote by $\widetilde T$ the preimage of $T$ in $\widetilde G$. It is a maximal torus with sheaf of cocharacters $\widetilde{\Lambda}$.

By functoriality, we find two morphisms of presheaves of Picard groupoids:
\begin{align}
	\label{eq-z-extension-super}
	\underline{\Gamma}{}_e(\deloop G, \Ktheory{}_{[1, 2]}^{\super}) & \rightarrow \lim_{[n]} \underline{\Gamma}{}_e(\deloop(\widetilde G\times T_1^{\times n}), \Ktheory{}_{[1, 2]}^{\super}), \\
	\label{eq-z-extension-classification}
	\vartheta_G^{\super}(\Lambda) & \rightarrow \lim_{[n]}\vartheta_{\widetilde G\times T_1^{\times n}}^{\super}(\widetilde{\Lambda} \oplus \Lambda_1^{\oplus n}),
\end{align}
where the limits are taken over the simplicial category.
\end{void}

\begin{lem}
\label{lem-z-extension-totalization}
The following statements hold:
\begin{enumerate}
	\item the functor \eqref{eq-z-extension-super} is an equivalence;
	\item the functor \eqref{eq-z-extension-classification} is fully faithful.
\end{enumerate}
\end{lem}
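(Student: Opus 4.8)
The plan is to see that both functors come from one and the same bar resolution of $BG$, and then to handle (1) by descent for Zariski sheaves of spectra and (2) by a direct combinatorial computation.

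I would begin by setting up the bar resolution. Since $T_1$ is central in $\widetilde G$ with quotient $G$, the map $\pi\colon B\widetilde G\to BG$ induced by $\widetilde G\twoheadrightarrow G$ exhibits $B\widetilde G$ as a torsor over $BG$ under the group stack $BT_1$ --- equivalently, a $T_1$-gerbe over $BG$ --- so its \v{C}ech nerve is the simplicial Zariski stack
\[
[n]\longmapsto \underbrace{B\widetilde G\times_{BG}\cdots\times_{BG}B\widetilde G}_{n+1}\;\simeq\;B\widetilde G\times(BT_1)^{\times n}\;=\;B(\widetilde G\times T_1^{\times n}),
\]
with faces and degeneracies induced by the group homomorphisms of the bar construction of the central extension (the two maps $\widetilde G\times T_1\rightrightarrows\widetilde G$ being the projection and $(g,t)\mapsto g\cdot\iota(t)$, where $\iota\colon T_1\hookrightarrow\widetilde G$). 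The crucial point is that $\pi$ is an effective epimorphism in the $\infty$-topos of Zariski sheaves of spaces: $BG$ is by definition the Zariski stackification of the trivial groupoid on $G$, hence has terminal sheaf of connected components, as does $B\widetilde G$; concretely, any object of $BG$ over an $S$-scheme --- a Zariski-locally trivial $G$-torsor --- is Zariski-locally trivial and hence Zariski-locally lifts through $\pi$ to a trivial $\widetilde G$-torsor. Therefore $BG\simeq\colim_{[n]\in\Delta^{\mathrm{op}}}B(\widetilde G\times T_1^{\times n})$.

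Granting this, (1) is formal: since $\underline K{}_{[1,2]}^{\super}$ is a Zariski sheaf of spectra, $\Gamma(-,\underline K{}_{[1,2]}^{\super})$ carries the colimit above to the limit $\lim_{[n]}\Gamma(B(\widetilde G\times T_1^{\times n}),\underline K{}_{[1,2]}^{\super})$; the unit sections $e$ assemble into a map out of the constant augmented simplicial object on $S$, so passing to fibers over $e$ commutes with the limit, which gives \eqref{eq-z-extension-super} --- and the same over every $S$-scheme for the presheaf statement. For (2), the functor \eqref{eq-z-extension-classification} is the one induced on the combinatorial side by the very same augmented simplicial object; since all the Picard groupoids in sight are $1$-truncated, the totalization $\lim_{[n]}\vartheta_{\widetilde G\times T_1^{\times n}}^{\super}(\widetilde\Lambda\oplus\Lambda_1^{\oplus n})$ is computed from its restriction to $\Delta_{\le 2}$. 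Full faithfulness of a map of Picard groupoids amounts to injectivity on isomorphism classes together with bijectivity on automorphism groups. For the former I would use the surjection of cocharacter lattices $\widetilde\Lambda\twoheadrightarrow\Lambda$ with kernel $\iota(\Lambda_1)$ coming from $\widetilde T\twoheadrightarrow T$: an isomorphism between the images of two objects of $\vartheta_G^{\super}(\Lambda)$ descends along this surjection to an isomorphism between the objects, since the bilinear forms are pulled back from $\Lambda$ and the central extensions descend (here $\underline{\Ext}^1(\iota(\Lambda_1),\mathbb G_m)=0$ ensures the gluing datum is a genuine descent datum). For the latter, $\Aut$ of $(b,\widetilde\Lambda,\varphi)$ in $\vartheta_G^{\super}(\Lambda)$ is $\underline{\Hom}(\pi_1 G,\mathbb G_m)$, and compatibility of an automorphism of the image with the gluing datum cuts the a priori group $\underline{\Hom}(\pi_1\widetilde G,\mathbb G_m)$ down to the characters trivial on $\iota(\Lambda_1)$, i.e.\ to $\underline{\Hom}(\pi_1 G,\mathbb G_m)$ via $\pi_1 G\cong\pi_1\widetilde G/\iota(\Lambda_1)$.

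I expect the crux of (1) to be exactly the effective-epimorphism statement for $\pi$ over the \emph{Zariski} (as opposed to fppf) site --- conceptually clean but worth stating with care, since $BG$ is the classifying stack of Zariski-locally trivial torsors. For (2) the delicate part is the bookkeeping of the totalization at the $n=2$ cocycle level in the presence of the non-strictly-commutative structure of $\vartheta^{\super}$ and of the rigidification data $\varphi$; I note that (2) asserts only full faithfulness, the precise essential image --- where the genuine, rather than merely $\Lambda_{\mathrm{sc}}$-restricted, Weyl-invariance must be recovered --- being deferred to the surrounding argument through the auxiliary $\widetilde{\vartheta}_G^{\super}(\Lambda)$.
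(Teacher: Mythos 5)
Your argument is correct and matches the paper's: part (1) is exactly the paper's observation that $B\widetilde G \rightarrow BG$ is surjective in the Zariski topology, so that $\underline{\Gamma}{}_e(BG,-)$ of a Zariski sheaf is computed by the \v{C}ech totalization. For part (2) the paper packages your $\pi_0$-injectivity and automorphism-group check into the fiber sequence $\underline{\Hom}{}_{\mathbb Z}(\Lambda, B\mathbb G_m) \rightarrow \vartheta_G^{\super}(\Lambda)\rightarrow\Gamma^2(\check{\Lambda})^W$ and verifies descent of the two outer terms along $\widetilde{\Lambda}\rightarrow\Lambda$, using precisely your two inputs: $\Lambda\cong\colim_{[n]}(\widetilde{\Lambda}\oplus\Lambda_1^{\oplus n})$ and the elementary descent of symmetric bilinear forms.
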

\begin{proof}
Statement (1) follows from the fact that the induced map on Zariski classifying stacks $\deloop \widetilde G \rightarrow \deloop G$ is surjective in the Zariski topology.

To prove statement (2), we fit $\vartheta_G^{\super}(\Lambda)$ into a fiber sequence of \'etale sheaves of Picard groupoids over $S$:
\begin{equation}
\label{eq-super-theta-data-canonical-sequence}
\underline{\Hom}{}_{\mathbb Z}(\Lambda, \deloop\mathbb G_m) \rightarrow \vartheta_G^{\super}(\Lambda) \rightarrow \Gamma^2(\check{\Lambda})^W.
\end{equation}
Here, $\check{\Lambda}$ is the dual of $\Lambda$, so $\Gamma^2(\check{\Lambda})^W$ is the abelian group of Weyl-invariant symmetric bilinear forms on $\Lambda$. The second map in \eqref{eq-super-theta-data-canonical-sequence} sends a triple $(b, \widetilde{\Lambda}, \varphi)$ to $b$, so its fiber is precisely the Picard groupoid of symmetric monoidal morphisms $\Lambda \rightarrow \deloop\mathbb G_m$.

The fully faithfulness will follow, if we know that the two outer terms in \eqref{eq-super-theta-data-canonical-sequence} satisfy descent along $\widetilde{\Lambda} \rightarrow \Lambda$. For $\underline{\Hom}{}_{\mathbb Z}(\Lambda, \deloop\mathbb G_m)$, this is because $\Lambda$ is identified with $\colim_{[n]}(\widetilde{\Lambda} \oplus \Lambda_1^{\oplus n})$. For $\Gamma^2(\check{\Lambda})^W$, this is the elementary observation that a symmetric bilinear form on $\widetilde{\Lambda}$ descends to $\Lambda$ if its restrictions to $\widetilde{\Lambda}\oplus\Lambda_1$ along the action and projection maps coincide.
\end{proof}

\begin{rem}
In fact, the functor \eqref{eq-z-extension-classification} is also an equivalence. This will be established in the course of the proof of Theorem \ref{thm-classification-super} below.
\end{rem}

\begin{proof}[Proof of Theorem \ref{thm-classification-super}]
The case where $\pi_1G$ is the sheaf of cocharacters of a quasi-trivial torus is already treated in \S\ref{void-classification-super-torsion-free}.

For general $G$, it remains to prove that the functor \eqref{eq-classification-super-relaxed} factors through an equivalence onto the full subgroupoid $\vartheta_G^{\super}(\Lambda)$.

To do so, we choose a central extension \eqref{eq-coflasque-extension} with the additional property that $\pi_1\widetilde G$ is the sheaf of cocharacters of a quasi-trivial torus. Such central extensions exist, thanks to \cite[Proposition 3.2]{MR3085137}.

Combining the equivalences for $\widetilde G\times T_1^{\times n}$ and Lemma \ref{lem-z-extension-totalization}, we obtain the following (solid) functors among Zariski sheaves of Picard groupoids:
\begin{equation}
\label{eq-classification-super-descent}
\begin{tikzcd}[column sep = 1em]
	\underline{\Gamma}{}_e(\deloop G, \Ktheory{}_{[1, 2]}^{\super}) \ar[r, "\simeq"]\ar[d, dashrightarrow] & \lim_{[n]}\underline{\Gamma}{}_e(\deloop (\widetilde G\times T_1^{\times n}), \Ktheory{}_{[1, 2]}^{\super}) \ar[d, "\cong"] \\
	\vartheta_G^{\super}(\Lambda) \ar[r, phantom, "\subset"] & \lim_{[n]}\vartheta_{\widetilde G\times T_1^{\times n}}^{\super}(\widetilde{\Lambda} \oplus \Lambda_1^{\oplus n})
\end{tikzcd}
\end{equation}

Note that a symmetric bilinear form on $\Lambda$ is Weyl-invariant if and only if its restriction to $\widetilde{\Lambda}$ is. Hence, the functor \eqref{eq-classification-super-relaxed} factors through the full subgroupoid $\vartheta_G^{\super}(\Lambda)$, supplying the dashed arrow in \eqref{eq-classification-super-descent}. It follows that all functors in \eqref{eq-classification-super-descent} are equivalences.
\end{proof}

\begin{cor}
Let $G$ be a reductive group $S$-scheme. The Zariski sheaf of Picard groupoids $\underline{\Gamma}{}_e(\deloop G, \Ktheory{}_{[1, 2]}^{\super})$ over $S$ satisfies \'etale descent.
\end{cor}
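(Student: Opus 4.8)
The plan is to reduce to the setting of Theorem \ref{thm-classification-super}, whose target $\vartheta_G^{\super}(\Lambda)$ is visibly an \'etale sheaf. Precisely: whenever $G$ carries a maximal torus $T$ with cocharacter sheaf $\Lambda$, Theorem \ref{thm-classification-super} provides an equivalence $\underline{\Gamma}{}_e(BG,\underline K{}_{[1,2]}^{\super})\xrightarrow{\simeq}\vartheta_G^{\super}(\Lambda)$, and $\vartheta_G^{\super}(\Lambda)$ satisfies \'etale descent because it sits in the fiber sequence \eqref{eq-classification-super-fiber-sequence} whose outer terms do --- $\vartheta_G(\Lambda)$ by Remark \ref{rem-brylinski-deligne-descent}, and $\underline{\Hom}(\pi_1 G,\mathbb Z/2)$ tautologically. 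So the corollary holds for $G$ equipped with a maximal torus, and the content is the bootstrap to general $G$.

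The next step is to record which pieces of \S\ref{sec-brylinski-deligne} never really used a maximal torus. Proposition \ref{prop-central-extension-multiplicative}, hence the identification $\underline{\Gamma}{}_e(BG,B\underline K{}_1)\cong\underline{\Hom}(\pi_1 G,B\mathbb Z)$, is valid over an arbitrary base; Lemmas \ref{lem-torsion-free} and \ref{lem-z-extension-totalization} only involve $\pi_1 G$ as an \'etale sheaf together with the Zariski-local surjectivity of $B\widetilde G\to BG$; and the \'etale descent of $\underline{\Gamma}{}_e(BG,B^2\underline K{}_2)$ rests on \cite[\S2]{MR1896177}, which applies to any reductive group scheme. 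Since as a pointed presheaf $\underline{\Gamma}{}_e(BG,\underline K{}_{[1,2]})\cong\underline{\Gamma}{}_e(BG,B^2\underline K{}_2)\times\underline{\Gamma}{}_e(BG,B\underline K{}_1)$ via the section \eqref{eq-section-of-truncated-k-theory-from-pic}, it follows that $\underline{\Gamma}{}_e(BG,\underline K{}_{[1,2]})$ satisfies \'etale descent for \emph{every} $G$. I would then interpret $\vartheta_G(\Lambda)$ and $\vartheta_G^{\super}(\Lambda)$ for general $G$ by \'etale descent along covers splitting a maximal torus --- using the \'etale sheaf of cocharacters of a maximal torus, which exists unconditionally --- and check that Proposition \ref{prop-classification-integral}, Lemma \ref{lem-classification-squaring-map}, and the construction of \S\ref{void-classification-super-torsion-free} all extend to any $G$ whose $\pi_1 G$ is the sheaf of cocharacters of a quasi-trivial torus, yielding $\underline{\Gamma}{}_e(BG,\underline K{}_{[1,2]}^{\super})\cong\vartheta_G^{\super}(\Lambda)$ in that case; the right-hand side is an \'etale sheaf. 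The role of the quasi-trivial hypothesis is exactly that, by Lemma \ref{lem-torsion-free}, the cofiber sequence $B\underline K{}_1\xrightarrow{\Sq}\underline K{}_{[1,2]}\to\underline K{}_{[1,2]}^{\super}$ stays a cofiber sequence after applying $\underline{\Gamma}{}_e(BG,-)$, which need not hold for general $G$.

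For the remaining groups, choose a z-extension $1\to T_1\to\widetilde G\to G\to1$ with $\pi_1\widetilde G$ the sheaf of cocharacters of a quasi-trivial torus \cite[Proposition 3.2]{MR3085137}. Lemma \ref{lem-z-extension-totalization}(1) then identifies $\underline{\Gamma}{}_e(BG,\underline K{}_{[1,2]}^{\super})$ with the totalization $\lim_{[n]}\underline{\Gamma}{}_e(B(\widetilde G\times T_1^{\times n}),\underline K{}_{[1,2]}^{\super})$, and a totalization of \'etale sheaves is an \'etale sheaf, so the previous paragraph concludes the argument. The step I expect to be the main obstacle --- and the one requiring careful verification --- is precisely the claim that the Brylinski--Deligne framework of \cite[\S2, Theorem 7.2]{MR1896177}, together with Proposition \ref{prop-classification-integral}, makes sense and remains valid \emph{without} presupposing a maximal torus on $G$ (equivalently, that $\underline{\Gamma}{}_e(BG,B^2\underline K{}_2)$ is of \'etale-local nature for all reductive $G$); once this is in hand, everything else is a matter of rechecking that the homotopy-theoretic and z-extension arguments already present in \S\ref{sec-brylinski-deligne} were never using a maximal torus.
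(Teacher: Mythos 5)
Your proposal overshoots the difficulty of the statement and, in doing so, leaves its central step unproved. The paper's own argument is two lines: by \cite[XIV, Corollaire 3.20]{SGA3} a reductive group scheme admits a maximal torus \emph{Zariski-locally} on $S$; since $\underline{\Gamma}{}_e(BG, \underline K{}_{[1,2]}^{\super})$ is already a Zariski sheaf and the property of satisfying \'etale descent can be checked Zariski-locally on $S$ (cover any test scheme by its preimages of a Zariski cover of $S$ and commute the two limits), one simply applies Theorem \ref{thm-classification-super} over a Zariski cover where a maximal torus exists. Your first paragraph gets as far as ``the corollary holds for $G$ equipped with a maximal torus, and the content is the bootstrap to general $G$'' --- but the bootstrap is exactly this Zariski-localization, not a new d\'evissage.

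The route you take instead has a genuine gap that you yourself flag but do not close: to run the z-extension argument you must know that $\underline{\Gamma}{}_e(B(\widetilde G\times T_1^{\times n}), \underline K{}_{[1,2]}^{\super})$ is an \'etale sheaf for a group $\widetilde G$ that need not carry a maximal torus globally, and your proposed remedy is to re-derive Proposition \ref{prop-classification-integral}, Lemma \ref{lem-classification-squaring-map}, and \S\ref{void-classification-super-torsion-free} ``without presupposing a maximal torus.'' That is not a routine verification: the target $\vartheta_G^{\super}(\Lambda)$ is defined in terms of the cocharacter sheaf of a chosen maximal torus, the Brylinski--Deligne equivalence \eqref{eq-brylinski-deligne} as stated requires one, and defining the right-hand side ``by \'etale descent along covers splitting a maximal torus'' and then matching it with the left-hand side is essentially equivalent to the descent statement you are trying to prove. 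As written, the main obstacle you identify is the proof, and it is missing. (Separately, note that even granting it, you would also need the z-extension of \cite[Proposition 3.2]{MR3085137} to exist globally for a $G$ without a maximal torus --- another point your argument assumes silently.) The fix is simply to replace the entire bootstrap by the observation that maximal tori exist Zariski-locally.
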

\begin{proof}
Working Zariski locally over $S$, we may assume that $G$ admits a maximal torus $T$ \cite[XIV, Corollaire 3.20]{SGA3}. Let $\Lambda$ denote its sheaf of cocharacters.

Theorem \ref{thm-classification-super} then implies that $\underline{\Gamma}{}_e(\deloop G, \Ktheory{}_{[1, 2]}^{\super})$ is equivalent to $\vartheta_G^{\super}(\Lambda)$, which clearly satisfies \'etale descent.
\end{proof}

\begin{void}
We finish our study of $\underline{\Gamma}{}_e(\deloop G, \Ktheory{}_{[1, 2]}^{\super})$ by computing its homotopy sheaves. Let us assume that $T\subset G$ is a fixed maximal torus with sheaf of cocharacters $\Lambda$.

From the Cartesian square \eqref{eq-auxiliary-theta-data}, we obtain a long exact sequence of \'etale sheaves of abelian groups:
\begin{align*}
1 \rightarrow \pi_1\widetilde{\vartheta}_G^{\super}(\Lambda) &\rightarrow \pi_1\vartheta^{\super}(\Lambda) \rightarrow \pi_1\vartheta^{\super}(\Lambda_{\mathrm{sc}}) \\
&\rightarrow \pi_0\widetilde{\vartheta}_G^{\super}(\Lambda) \rightarrow \pi_0\vartheta^{\super}(\Lambda) \oplus \Quad(\Lambda_{\mathrm{sc}})^W \rightarrow \pi_0\vartheta^{\super}(\Lambda_{\mathrm{sc}}).
\end{align*}

The homotopy sheaves of $\vartheta^{\super}(\Lambda)$ are easily computed: $\pi_1\vartheta^{\super}(\Lambda)$ is isomorphic to $\underline{\Hom}(\Lambda, \mathbb G_m)$, and $\pi_0\vartheta^{\super}(\Lambda)$ is isomorphic to the sheaf of symmetric bilinear forms on $\Lambda$. Therefore, $\pi_1\widetilde{\vartheta}_G^{\super}(\Lambda)$ is isomorphic to $\underline{\Hom}(\pi_1G, \mathbb G_m)$, and $\pi_0\widetilde{\vartheta}_G^{\super}(\Lambda)$ is isomorphic to the sheaf of symmetric bilinear forms on $\Lambda$ whose restriction to $\Lambda_{\mathrm{sc}}$ comes from a Weyl-invariant quadratic form.

By definition of the full subgroupoid $\vartheta_G^{\super}(\Lambda) \subset \widetilde{\vartheta}_G^{\super}(\Lambda)$, we see that $b \in \pi_0\widetilde{\vartheta}_G^{\super}(\Lambda)$ belongs to $\pi_0\vartheta_G^{\super}(\Lambda)$ if and only if it is Weyl-invariant.

Writing $\Gamma^2(\check{\Lambda})^W_{\mathrm{sc}}$ for the abelian sheaf of Weyl-invariant symmetric bilinear forms on $\Lambda$ whose restriction to $\Lambda_{\mathrm{sc}}$ comes from a quadratic form, we obtain:
$$
\pi_i\underline{\Gamma}{}_e(BG, \Ktheory{}_{[1, 2]}^{\super}) \cong 
\begin{cases}
\Gamma^2(\check{\Lambda})^W_{\mathrm{sc}} & i = 0, \\
\underline{\Hom}(\pi_1G, \mathbb G_m) & i = 1, \\
0 & i\ge 2.
\end{cases}
$$
\end{void}

\subsection{Examples}
\label{sec-examples}

\begin{void}
Recall the canonical identification between $\Ktheory{}_{[0, 1]}$ and $\Pic^{\mathbb Z}$ (Remark \ref{rem-determinant-via-k-theory}). It induces an isomorphism $\deloop\Ktheory{}_1 \cong \deloop\mathbb G_m$, whose inverse can be viewed as a rigidified section of $\deloop\Ktheory{}_1$ over $\deloop\mathbb G_m$ and we denote it by $c_1$. It represents the K-theoretic first Chern class.

Using the cup product and the pairing $\Ktheory{}_1 \otimes \Ktheory{}_1 \rightarrow \Ktheory{}_2$, $x, y\mapsto \{x, y\}$, we obtain a rigidified section $c_1\cup c_1$ of $\deloop^2\Ktheory{}_2$ over $\deloop\mathbb G_m$. Its value at an $R$-point $\mathscr L$ is given by the pairing $\{\mathscr L, \mathscr L\}$ (\emph{cf.}~Remark \ref{rem-k-theory-pairing}). The central extension of $\mathbb G_m$ by $\Ktheory{}_2$ corresponding to $c_1\cup c_1$ is given by $\Ktheory{}_2\times \mathbb G_m$ with $x, y\mapsto \{x, y\}$ as cocycle. The quadratic form associated to $c_1 \cup c_1$ (\emph{cf.}~Remark \ref{rem-brylinski-deligne-functor}) takes value $1$ at the identity cocharacter of $\mathbb G_m$.
\end{void}

\begin{void}\emph{The Tate section.}
We have a rigidified section of $\Ktheory{}_{[1, 2]}$ over $\deloop\mathbb G_m$, sending $\mathscr L$ to $[\mathscr L] - [\mathscr O]$. We shall write $\Tate$ for the induced rigidified section of $\Ktheory{}_{[1, 2]}^{\super}$.

Let us show that as a rigidified section of $\Ktheory{}_{[1, 2]}^{\super}$, $c_1\cup c_1$ is twice the Tate extension.
\end{void}

\begin{lem}
\label{lem-cup-product-twice-tate}
There is a canonical isomorphism of rigidified sections of $\Ktheory{}_{[1, 2]}^{\super}$ over $\deloop\mathbb G_m$:
$$
2\cdot \Tate \cong c_1\cup c_1.
$$
\end{lem}
\begin{proof}
We need to establish an isomorphism in $\Ktheory{}_{[1, 2]}^{\super}$:
$$
2\cdot([\mathscr L] - [\mathscr O]) \cong \{\mathscr L, \mathscr L\}
$$
natural in the line bundle $\mathscr L$. Note that $[\mathscr L] - [\mathscr O]$ is denoted by $s(\mathscr L)$ in \S\ref{void-truncated-k-theory-section} and the proof of Lemma \ref{lem-duplication-section-loop} yields isomorphisms in $\Ktheory{}_{[1, 2]}$:
\begin{align*}
	[\mathscr L] - [\mathscr L^{-1}] &\cong s(\mathscr L^2) - 2\cdot \{\mathscr L, \mathscr L\} \\
	&\cong 2\cdot s(\mathscr L) - \{\mathscr L, \mathscr L\}.
\end{align*}

By definition, the section $[\mathscr L] - [\mathscr L^{-1}]$ vanishes in $\Ktheory{}_{[1, 2]}$, which gives rise to the desired isomorphism $2\cdot s(\mathscr L) \cong \{\mathscr L, \mathscr L\}$.
\end{proof}

\begin{rem}
\label{rem-tate-section-quadratic}
The rigidified section $\Tate : \deloop\mathbb G_m \rightarrow \Ktheory{}_{[1, 2]}^{\super}$ has the following quadratic multiplicative structure:
$$
\Tate(\mathscr L^n) \cong n^2\cdot \Tate(\mathscr L).
$$

Indeed, inducting on $n$ using the relation \eqref{eq-section-quadratic-relation} yields an isomorphism between $\Tate(\mathscr L^n)$ and $n\cdot \Tate(\mathscr L) + \binom{n}{2}\{\mathscr L, \mathscr L\}$, but the latter is $n^2\cdot \Tate(\mathscr L)$ by Lemma \ref{lem-cup-product-twice-tate}.
\end{rem}

\begin{void}
\label{void-SL2-twice-tate}
Consider the group scheme $\SL_2$ equipped with the diagonal maximal torus $\mathbb G_m \subset \SL_2$. Write $V$ for the universal rank-$2$ vector bundle over $\deloop\SL_2$. The section $[V] - [\mathscr O^{\oplus 2}]$ of $\Ktheory$ over $\deloop\SL_2$ factors through $\Ktheory{}_{\ge 2}$, so it induces a section of $\deloop^2\Ktheory{}_2$ over $\deloop \SL_2$.

The pullback of $[V] - [\mathscr O^{\oplus 2}]$ to $\deloop\mathbb G_m$ is the rigidified section sending $\mathscr L$ to $[\mathscr L \oplus \mathscr L^{-1}] - [\mathscr O^{\oplus 2}]$. The proof of Lemma \ref{lem-duplication-section-loop} yields isomorphisms:
\begin{align}
\notag
	[\mathscr L \oplus \mathscr L^{-1}] - [\mathscr O^{\oplus 2}] & \cong ([\mathscr L] - [\mathscr O]) + ([\mathscr L^{-1}] - [\mathscr O]) \\
\label{eq-SL2-twice-tate}
	& \cong \{\mathscr L, \mathscr L\}.
\end{align}

In other words, the pullback of $[V] - [\mathscr O^{\oplus 2}]$ to $\deloop\mathbb G_m$ is canonically isomorphic to $c_1\cup c_1$. In particular, it is isomorphic to twice the Tate section (Lemma \ref{lem-cup-product-twice-tate}).
\end{void}

\begin{rem}
\label{rem-standard-representation-minimal-form}
It follows from \S\ref{void-SL2-twice-tate} that the rigidified section $[V] - [\mathscr O^{\oplus 2}] : \deloop\SL_2 \rightarrow \deloop^2\Ktheory{}_2$ is classified by the Weyl-invariant quadratic form whose value at a coroot is $1$.

More generally, for any integer $n\ge 2$, we may consider the universal rank-$n$ vector bundle $V$ over $\deloop \SL_n$. The section $[V] - [\mathscr O^{\oplus n}]$ of $\deloop^2\Ktheory{}_2$ over $\deloop\SL_n$ is also classified by the Weyl-invariant quadratic form whose value at a coroot is $1$, with respect to the diagonal maximal torus. This follows by choosing a subgroup $\SL_2\subset\SL_n$ corresponding to a simple coroot and reducing to the case of $\SL_2$.
\end{rem}

\begin{void}\emph{Pfaffian and all that.}
\label{void-pfaffian-context}
Let $G$ be a split reductive group with fixed split maximal torus and Borel subgroup $T \subset B \subset G$ and a pinning. Let $\Ad$ denote the adjoint bundle over $\deloop G$, \emph{i.e.}~the vector bundle associated to the adjoint representation $\mathfrak g$.

We shall construct a ``half'' of $[\Ad] - [\mathscr O^{\dim\fr g}]$ as a rigidified section of $\Ktheory{}_{[1, 2]}^{\super}$.
\end{void}

\begin{prop}
\label{prop-pfaffian-construction}
In the context of \S\ref{void-pfaffian-context}, there is a canonical rigidified section $\Pf$ of $\Ktheory{}_{[1, 2]}^{\super}$ over $\deloop G$ equipped with an isomorphism:
\begin{equation}
\label{eq-pfaffian-square-isomorphism}
2\cdot \Pf \cong [\Ad] - [\mathscr O^{\dim \fr g}].
\end{equation}
\end{prop}

\begin{void}
We shall use the classification theorem (Theorem \ref{thm-classification-super}) to construct $\Pf$ and the isomorphism \eqref{eq-pfaffian-square-isomorphism}.

Denote by $\Lambda$ the cocharacter lattice of $T$ and $\Lambda_{\mathrm{sc}}$ that of the induced maximal torus $T_{\mathrm{sc}}$ of the simply connected form $G_{\mathrm{sc}}$ of $G$. The choice of $B$ endows $\Lambda_{\mathrm{sc}}$ with a basis consisting of simple coroots $\alpha\in\Delta$. The choice of a pinning induces a canonical extension of each $\alpha$ to a subgroup of $G_{\mathrm{sc}}$ isomorphic to $\SL_2$:
\begin{equation}
\label{eq-pinning-induced-root-subgroup}
\begin{tikzcd}
	\mathbb G_m \ar[r, phantom, "\subset"] \ar[d, "\alpha"] & \SL_2 \ar[d] \\
	T_{\mathrm{sc}} \ar[r, phantom, "\subset"] & G_{\mathrm{sc}}
\end{tikzcd}
\end{equation}
\end{void}

\begin{void}
\label{void-classification-pinning-formulation}
In the presence of a pinning, Theorem \ref{thm-classification-super} can be reformulated as classifying rigidified sections of $\Ktheory{}_{[1, 2]}^{\super}$ over $\deloop G$ by pairs $(f, \{\varphi_{\alpha}\}_{\alpha\in\Delta})$, where:
\begin{enumerate}
	\item $f$ is a rigidified section of $\Ktheory{}_{[1, 2]}^{\super}$ over $\deloop T$, whose associated symmetric bilinear form $b$ is Weyl-invariant and its restriction to $\Lambda_{\mathrm{sc}}$ comes from a quadratic form $Q_{\mathrm{sc}}$;
	\item for each $\alpha\in\Delta$, $\varphi_{\alpha}$ is an isomorphism between the restriction of $f$ along $\alpha : \deloop\mathbb G_m \rightarrow \deloop T$ and the $Q_{\mathrm{sc}}(\alpha)$-multiple of the $c_1\cup c_1$.
\end{enumerate}

Indeed, $f$ accounts for the data $(b, \widetilde{\Lambda})$ of \S\ref{void-theta-data-super}. To see that $\varphi$ of \emph{loc.cit.}~is equivalent to $\{\varphi_{\alpha}\}_{\alpha\in\Delta}$, we argue as follows: $\varphi$ is an isomorphism of two central extensions of $\Lambda_{\mathrm{sc}}$ by $\mathbb G_m$ with equal commutators, so it is uniquely determined over the basis $\Delta$. On the other hand, the rigidified section $\deloop G_{\mathrm{sc}} \rightarrow \deloop^2\Ktheory{}_2$ classified by $Q_{\mathrm{sc}}$ restricts to the $Q_{\mathrm{sc}}(\alpha)$-multiple of $c_1\cup c_1$ along $\alpha : \deloop\mathbb G_m \rightarrow \deloop T_{\mathrm{sc}} \rightarrow \deloop G_{\mathrm{sc}}$, in view of \S\ref{void-SL2-twice-tate} and \eqref{eq-pinning-induced-root-subgroup}.
\end{void}

\begin{void}
Let us turn to the construction of $\Pf$.
\end{void}

\begin{proof}[Proof of Proposition \ref{prop-pfaffian-construction}]
The symmetric bilinear form associated to the rigidified section $[\Ad] - [\mathscr O^{\dim\fr g}] : \deloop G \rightarrow \deloop^2\Ktheory{}_2$ is the Killing form (\emph{cf.}~Remark \ref{rem-standard-representation-minimal-form}):
$$
\lambda_1, \lambda_2\mapsto \sum_{\check{\beta} \in \Phi} \langle\check{\beta}, \lambda_1\rangle\langle\check{\beta}, \lambda_2\rangle,
$$
where $\Phi$ is the set of roots of $G$. The choice of $B$ expresses $\Phi$ as the union of positive roots $\Phi_+$ and negative roots $\Phi_-$. In particular, the Killing form is twice the symmetric bilinear form $b$ defined by:
$$
b(\lambda_1, \lambda_2) := \sum_{\check{\beta} \in \Phi_+}\langle\check{\beta}, \lambda_1\rangle\langle\check{\beta}, \lambda_2\rangle.
$$

In particular, we have $b(\lambda, \lambda) = \langle 2\check{\rho}, \lambda\rangle \mod 2$, for $2\check{\rho} := \sum_{\check{\beta} \in \Phi_+}\check{\beta}$, so $b(\alpha, \alpha)$ is even for all $\alpha\in\Delta$. This means that the restriction of $b$ to $\Lambda_{\mathrm{sc}}$ comes from a quadratic form $Q_{\mathrm{sc}}$.

Next, the restriction of $[\Ad] - [\mathscr O^{\dim\fr g}]$ to $\deloop T$ is the section:
$$
\sum_{\check{\beta}\in\Phi_+} ([\mathscr L^{\check{\beta}}] - [\mathscr O]) + \sum_{\check{\beta}\in\Phi_+} ([\mathscr L^{-\check{\beta}}] - [\mathscr O]) \cong 2\cdot \sum_{\check{\beta} \in \Phi_+} ([\mathscr L^{\check{\beta}}] - [\mathscr O]),
$$
where $\mathscr L^{\check{\beta}}$ denotes the line bundle over $\deloop T$ defined by the root $\check{\beta}$, and we applied the isomorphism $[\mathscr L^{\check{\beta}}] \cong [\mathscr L^{-\check{\beta}}]$ in $\Ktheory{}_{[1, 2]}^{\super}$. In particular, the restriction of $[\Ad] - [\mathscr O^{\dim\fr g}]$ to $\deloop T$ is twice the rigidified section:
\begin{equation}
\label{eq-pfaffian-over-maximal-torus}
\sum_{\check{\beta} \in \Phi_+}([\mathscr L^{\check{\beta}}] - [\mathscr O]).
\end{equation}

Note that the symmetric bilinear form associated to \eqref{eq-pfaffian-over-maximal-torus} is $b$. We shall argue that the restriction of \eqref{eq-pfaffian-over-maximal-torus} along each $\alpha : \deloop\mathbb G_m \rightarrow \deloop T$ is the $Q_{\mathrm{sc}}(\alpha)$-multiple of $c_1\cup c_1$. This would furnish the construction of $\Pf$ in view of \S\ref{void-classification-pinning-formulation}.

Indeed, the restriction of \eqref{eq-pfaffian-over-maximal-torus} along $\alpha$ is the rigidified section:
\begin{align*}
\sum_{\check{\beta} \in \Phi_+} ([\mathscr L^{\langle\check{\beta}, \alpha\rangle}] - [\mathscr O]) &\cong \sum_{\check{\beta} \in \Phi_+}\langle\check{\beta}, \alpha\rangle^2 \cdot\Tate \\
&\cong 2\cdot Q_{\mathrm{sc}}(\alpha) \cdot \Tate \cong Q_{\mathrm{sc}}(\alpha) \cdot (c_1\cup c_1)
\end{align*}
where the first isomorphism follows from the quadratic structure of the Tate section (Remark \ref{rem-tate-section-quadratic}) and the last isomorphism follows from Lemma \ref{lem-cup-product-twice-tate}.

The isomorphism $2\cdot \Pf\cong[\Ad] - (\mathscr O^{\dim\fr g})$ results directly from the construction of $\Pf$, so we omit the details.
\end{proof}

\begin{rem}
The image of $\Pf$ in $\underline{\Hom}(\pi_1G, \mathbb Z/2)$ under \eqref{eq-classification-super-fiber-sequence} is the character:
\begin{equation}
\label{eq-pfaffian-mod-two-character}
\lambda \mapsto \langle 2\check{\rho}, \lambda\rangle \mod 2.
\end{equation}
Thus, $\Pf$ comes from a rigidified section of $\deloop^2\Ktheory{}_2$ if and only if $\check{\rho}$ is integral.

The same fact also shows that $\Pf$ generally does \emph{not} come from a rigidified section of $\Ktheory_{[1, 2]}$, \emph{i.e.}~it is genuinely ``half-integral''. Indeed, if it did, then \eqref{eq-pfaffian-mod-two-character} would have to lift to character $\pi_1G \rightarrow \mathbb Z$, but this is generally not the case.
\end{rem}

\begin{void}
Let us combine Proposition \ref{prop-pfaffian-construction} and the integration functor of \S\ref{void-super-conformal-blocks} to construct the Pfaffian line bundle on the moduli stack of $G$-bundles over a spin curve.

More precisely, let $p : X_S \rightarrow S$ be a smooth, proper morphism of relative dimension $1$ with connected geometric fibers together with a square root $\omega^{1/2}$ of the relative canonical bundle. Denote by $\Bun_G$ the moduli stack of $G$-bundles over $X_S$. The rigidified relative canonical bundle of $\Bun_G \rightarrow S$ is the line bundle:
\begin{equation}
\label{eq-rigidified-relative-canonical-bundle}
\mathscr L_{\det} := \det(Rp_* \fr g_P) \otimes (\det (Rp_*\fr g_{P^0}))^{-1},
\end{equation}
where $p : X_S\times_S \Bun_G \rightarrow \Bun_G$ is the projection and $\fr g_P$ (resp.~$\fr g_{P^0}$) is the adjoint bundle of the universal $G$-bundle $P$ (resp.~trivial $G$-bundle $P^0$).
\end{void}

\begin{void}
The commutative diagram \eqref{eq-integration-along-curves-degree-super} yields the following commutative diagram via the construction of \S\ref{void-super-conformal-blocks}:
$$
\begin{tikzcd}[column sep = 1em]
	\Gamma(\deloop G, \deloop^2\Ktheory{}_2) \ar[d, "\int_{X_S}"] \ar[r] & \Gamma(\deloop G, \Ktheory{}_{[1, 2]}^{\super}) \ar[d, "\int_{(X_S, \omega^{1/2})}"] \\
	\Gamma(\Bun_G, \Pic) \ar[r] & \Gamma(\Bun_G, \Pic^{\super})
\end{tikzcd}
$$
Here, the horizontal morphisms are the tautological inclusions.

Note that $\mathscr L_{\det}$ is the image of $[\Ad] - [\mathscr O^{\dim\fr g}]$ under the left vertical functor. By Proposition \ref{prop-pfaffian-construction}, the image of $[\Ad] - [\mathscr O^{\dim\fr g}]$ in $\Gamma(\deloop G, \Ktheory{}_{[1, 2]}^{\super})$ is twice the Pfaffian section $\Pf$. In particular, $\mathscr L_{\det}$ admits a square root as a \emph{super} line bundle, given by:
$$
\mathscr L_{\Pf} := \int_{(X_S, \omega^{1/2})} \Pf.
$$
\end{void}

\begin{rem}
A square root of $\mathscr L_{\det}$ has been constructed in \cite[\S4]{MR2058353} using a different method. One feature of our construction is that it yields a purely group-theoretic object $\Pf$, while the spin curve $(X_S, \omega^{1/2})$ only appears in the integration functor.
\end{rem}

\part{Loop groups}

\section{Statements}
\label{sect-factorization}

The goal of this section is to state the classification of factorization super central extensions of $\cal LG$: Theorem \ref{thm-factorization-super-central-extension-classification}. The first two subsections \S\ref{sec-factorization-definition}, \S\ref{sec-loop-groups} review the notions of factorization structure and loop groups. In \S\ref{sec-contou-carrere}, we use the Contou-Carr\`ere symbol to define the notion of ``tame commutator'' and study its basic properties. In \S\ref{sec-factorization-classification}, we state the classification theorem of factorization super central extensions of $\cal LG$ and briefly indicate the structure of its proof.

We work over a ground field $k$. Let $X$ be a smooth curve over $k$.

\subsection{Factorization}
\label{sec-factorization-definition}

\begin{void}
Denote by $\Ran$ the presheaf whose $S$-points are nonempty finite subsets of $\Maps(S, X)$. We shall write an $S$-point of $\Ran$ as $x^I = (x^i)_{i\in I}$, where $I$ is a nonempty finite set.

Given an $S$-point $x^I$ of $\Ran$, we denote by $\Gamma_{x^I}$ the sum of the graphs $\Gamma_{x^i} \subset S\times X$ over $i\in I$ as effective Cartier divisors. Let $D_{x^I}$ be the completion of $S\times X$ along $\Gamma_{x^I}$ and $\mathring D_{x^I}$ be its open subscheme $D_{x^I}\backslash\Gamma_{x^I}$.

Two $S$-points $x^I$, $x^J$ of $\Ran$ are called \emph{disjoint} if $\Gamma_{x^I}\cap\Gamma_{x^J} = \emptyset$. Denote by $x^I\sqcup x^J$ the $S$-point of $\Ran$ given by their union.
\end{void}

\begin{rem}
\label{rem-ran-space-colimit-presentation}
For each nonempty finite set $I$, there is a tautological map $X^I \rightarrow \Ran$, sending an $S$-point $x^I$ of $X^I$ to the associated finite subset of $\Maps(S, X)$. The presheaf $\Ran$ is identified with the colimit of presheaves:
$$
\colim_I (X^I) \xrightarrow{\simeq} \Ran,
$$
indexed by the category of nonempty finite sets with surjections.
\end{rem}

\begin{void}
Let $\cal Y$ be a presheaf over $\Ran$. Given an $S$-point $x^I$ of $\Ran$, we write $\cal Y_{x^I}$ for the base change of $\cal Y$ along $x^I$.

The presheaf $\cal Y$ is called \emph{factorization} when we are supplied with a functorial system (in $S$) of isomorphisms for all disjoint pairs of $S$-points $(x^I, x^J)$ of $\Ran$:
\begin{equation}
\label{eq-factorization-isomorphism}
\varphi_{x^I, x^J} : \cal Y_{x^I\sqcup x^J} \xrightarrow{\simeq} \cal Y_{x^I} \times_S \cal Y_{x^J},
\end{equation}
satisfying the analogues of commutativity and associativity conditions. Namely, the following diagram commutes:
\begin{equation}
\label{eq-factorization-commutativity}
\begin{tikzcd}[column sep = 2em]
\cal Y_{x^I\sqcup x^J} \ar[r, "\varphi_{x^I, x^J}"]\ar[d, "\cong"] & \cal Y_{x^I} \times_S\cal Y_{x^J} \ar[d, "\cong"] \\
\cal Y_{x^J\sqcup x^I} \ar[r, "\varphi_{x^J, x^I}"] & \cal Y_{x^J}\times_S\cal Y_{x^I}
\end{tikzcd}
\end{equation}
where the left vertical arrow comes from the equality $x^I\sqcup x^J = x^J\sqcup x^I$ as $S$-points of $\Ran$ and the right vertical arrow is the map swapping the two factors; the following diagram commutes for pairwise disjoint $S$-points $(x^I, x^J, x^K)$ of $\Ran$:
\begin{equation}
\label{eq-factorization-associativity}
\begin{tikzcd}[column sep  = -4em]
	& \cal Y_{x^I\sqcup x^J\sqcup x^K} \ar[dr, "\varphi_{x^I\sqcup x^J, x^K}"]\ar[dl, swap, "\varphi_{x^I, x^J\sqcup x^K}"] \\
	\cal Y_{x^I} \times_S \cal Y_{x^J\sqcup x^K} \ar[dr, swap, "\id\times\varphi_{x^J, x^K}"] & & \cal Y_{x^I\sqcup x^J} \times_S \cal Y_{x^K} \ar[dl, "\varphi_{x^I, x^J}\times\id"] \\
	 & \cal Y_{x^I}\times_S\cal Y_{x^J} \times_S \cal Y_{x^K}
\end{tikzcd}
\end{equation}
\end{void}

\begin{void}
Let $\cal Y$ be a factorization presheaf such that $\cal Y_{x^I}$ satisfies fppf descent for each $S$-point $x^I$ of $\Ran$. (We do not impose fppf descent on $\cal Y$ because $\Ran$ itself does not satisfy \'etale descent, see \cite[Warning 2.4.4]{MR3887650}.)

A \emph{factorization super line bundle} over $\cal Y$ is a super line bundle $\cal L$ over $\cal Y$ equipped with functorial isomorphisms for all disjoint pairs of $S$-points $(x^I, x^J)$ of $\Ran$ with respect to \eqref{eq-factorization-isomorphism}:
\begin{equation}
\label{eq-line-bundle-factorization-isomorphism}
(\varphi_{x^I, x^J})^*(\cal L_{x^I}\boxtimes\cal L_{x^J}) \xrightarrow{\simeq} \cal L_{x^I\sqcup x^J},
\end{equation}
which are compatible with \eqref{eq-factorization-commutativity} and \eqref{eq-factorization-associativity}.

Let us spell out the compatibility with \eqref{eq-factorization-commutativity}. Denote by $\exch : \cal Y_{x^I} \times_S \cal Y_{x^J} \rightarrow \cal Y_{x^J} \times_S \cal Y_{x^I}$ the map which exchanges the coordinates. The commutativity constraint of the Picard groupoid of super line bundles yields an isomorphism:
\begin{equation}
\label{eq-super-line-bundle-braiding}
\exch^*(\cal L_{x^J} \boxtimes \cal L_{x^I}) \xrightarrow{\simeq} \cal L_{x^I} \boxtimes \cal L_{x^J}.
\end{equation}
The compatbility states that the image of \eqref{eq-super-line-bundle-braiding} under $(\varphi_{x^I, x^J})^*$, viewed as an isomorphism $(\varphi_{x^J, x^I})^*(\cal L_{x^J} \boxtimes \cal L_{x^I}) \xrightarrow{\simeq} (\varphi_{x^I, x^J})^*(\cal L_{x^I}\boxtimes\cal L_{x^J})$ by the commutativity of \eqref{eq-factorization-commutativity}, intertwines the isomorphisms \eqref{eq-line-bundle-factorization-isomorphism} attached to $(x^I, x^J)$, respectively $(x^J, x^I)$.
\end{void}

\begin{void}
Let $\cal H$ be a group factorization presheaf such that $\cal H_{x^I}$ satisfies fppf descent for each $S$-point $x^I$ of $\Ran$.

A multiplicative super line bundle $\cal L$ over $\cal H$ is called \emph{factorization} if it is equipped with a factorization structure which commutes with the multiplicative structure, \emph{i.e.}~\eqref{eq-line-bundle-factorization-isomorphism} is an isomorphism of multiplicative line bundles over $\cal H_{x^I\sqcup x^J} \cong \cal H_{x^I} \times_S \cal H_{x^J}$.

Note that a multiplicative factorization super line bundle over $\cal H$ is equivalent to a super central extension of group presheaves over $\Ran$:
\begin{equation}
\label{eq-super-central-extension}
1 \rightarrow \mathbb G_{m, \Ran} \rightarrow \widetilde{\cal H} \rightarrow \cal H \rightarrow 1,
\end{equation}
equipped with a functorial homomorphism $\widetilde{\varphi}_{x^I, x^J}$ lifting the factorization isomorphism $\varphi_{x^I, x^J}$ of $\cal H$ for each disjoint pair of $S$-points $(x^I, x^J)$ of $\Ran$:
\begin{equation}
\label{eq-central-extension-factorization}
\begin{tikzcd}[column sep = 1em]
	1 \ar[r] & \mathbb G_{m, S}\times_S \mathbb G_{m, S} \ar[r]\ar[d, "{(a, b)\mapsto ab}"] & \widetilde{\cal H}_{x^I} \times_S \widetilde{\cal H}_{x^J} \ar[r]\ar[d, "\widetilde{\varphi}_{x^I, x^J}"] & \cal H_{x^I} \times_S \cal H_{x^J} \ar[r]\ar[d, "\varphi_{x^I, x^J}"] & 1 \\
	1 \ar[r] & \mathbb G_{m, S} \ar[r] & \widetilde{\cal H}_{x^I\sqcup x^J} \ar[r] & \cal H_{x^I\sqcup x^J} \ar[r] & 1
\end{tikzcd}
\end{equation}
which satisfies commutativity and associativity. The data \eqref{eq-super-central-extension}, \eqref{eq-central-extension-factorization} subject to these conditions are called a \emph{factorization super central extension} of $\cal H$ by $\mathbb G_{m, \Ran}$. They form a Picard groupoid to be denoted by:
$$
\Hom_{\fact}(\cal H, \Pic^{\super}).
$$
(We interpret them as homomorphisms $\cal H \rightarrow \Pic^{\super}$ compatible with factorization.)

Let us again be explicit about commutativity: \eqref{eq-super-central-extension} being a super central extension, each $S$-point $(x^I, h^I)$ of $\widetilde{\cal H}$ carries a grading, viewed as a locally constant section of $\underline{\mathbb Z}/2$ over $S$. Commutativity refers to the equality:
$$
\widetilde{\varphi}_{x^I, x^J}(h^I, h^J) = (-1)^{\epsilon^I\epsilon^J}\widetilde{\varphi}_{x^J, x^I}(h^J, h^I),
$$
whenever $h^I$ (resp.~$h^J$) has grading $\epsilon^I$ (resp.~$\epsilon^J$).
\end{void}

\subsection{Loop groups}
\label{sec-loop-groups}

\begin{void}
Let $Y \rightarrow X$ be an affine morphism of finite type.

Denote by $\cal LY$ (resp.~$\cal L^+Y$) the presheaf whose $S$-points are pairs $(x^I, y^I)$ where $x^I$ is an $S$-point of $\Ran$ and $y^I$ is an $X$-morphism $\mathring D_{x^I} \rightarrow Y$ (resp.~$D_{x^I} \rightarrow Y$). Note that $\cal L^+Y$ is a closed subpresheaf of $\cal LY$ and the structural morphism $\cal LY \rightarrow \Ran$ (resp.~$\cal L^+Y\rightarrow \Ran$) is indschematic (resp.~schematic), see \cite[2.4-2.5]{MR2102701}.

Furthermore, $\cal LY$ admits a canonical factorization structure. Indeed, for any disjoint pair of $S$-points $(x^I, x^J)$ of $\Ran$, there is a functorial isomorphism:
$$
\cal L_{x^I\sqcup x^J}Y \xrightarrow{\simeq} \cal L_{x^I} Y \times_S \cal L_{x^J} Y,
$$
induced from $\mathring D_{x^I\sqcup x^J} \cong \mathring D_{x^I} \sqcup\mathring D_{x^J}$, which is clearly commutative and associative. Analogously, $\cal L^+Y$ also admits a canonical factorization structure.

Since the association $Y \mapsto \cal LY$ (resp.~$\cal L^+Y$) preserves limits, it carries an affine group $X$-scheme $G$ of finite type to a factorization group presheaf $\cal LG$ (resp.~$\cal L^+G$) over $\Ran$.
\end{void}

\begin{void}
Let $G$ be a smooth group $X$-scheme with connected geometric fibers. We also introduce the \emph{affine Grassmannian} $\Gr_G$ as the presheaf whose $S$-points are triples $(x^I, P, \alpha)$, where $x^I$ is an $S$-point of $X$, $P$ is a $G$-torsor over $S\times X$, and $\alpha$ is a trivialization of $P$ over $S\times X\backslash\Gamma_{x^I}$. Then $\Gr_G \rightarrow \Ran$ is ind-schematic of ind-finite type; it is ind-proper when $G$ is reductive \cite[Theorem 3.1.3]{MR3752460}.

The factorization structure on $\Gr_G$ is defined by Beauville--Laszlo gluing \cite[Theorem 3.2.1]{MR3752460} and the canonical map $\cal LG \rightarrow \Gr_G$ realizes the latter as the quotient $\cal LG/\cal L^+G$ in the \'etale topology \cite[Proposition 3.1.9]{MR3752460}.
\end{void}

\begin{void}
\label{void-arc-group-structure}
For later purposes, we give a convenient description of $\cal L^+G \rightarrow \Ran$ as an inverse limit of smooth affine group schemes relative to $\Ran$.

Consider an $S$-point $x^I$ of $\Ran$. The morphism $\Gamma_{x^I} \rightarrow S$ is finite locally free. Denote by $R_{\Gamma}G$ the Weil restriction along $\Gamma_{x^I} \rightarrow S$ of $G$ (pulled back along $\Gamma_{x^I}\subset S\times X \twoheadrightarrow X$.) Then $R_{\Gamma}G$ is representable by a smooth affine group $S$-scheme \cite[\S7.6]{MR1045822}. The evaluation map defines a short exact sequence:
\begin{equation}
\label{eq-weil-restriction-sequence}
1 \rightarrow \cal L_{x^I}^{\ge 1}G \rightarrow \cal L^+_{x^I}G \rightarrow R_{\Gamma}G \rightarrow 1.
\end{equation}

More generally, we let $\Gamma_{x^I}^{(n)}$ (for $n\ge 0$) denote the $n$th order infinitesimal neighborhood of the closed immersion $\Gamma_{x^I} \subset S\times X$. Then $\Gamma_{x^I}^{(n)} \rightarrow S$ is finite locally free: writing $\cal I$ for the ideal sheaf defining $\Gamma_{x^I}$, we see that each $\cal I^n/\cal I^{n+1}$ is locally isomorphic to $\cal O_{S\times X}/\cal I$ as an $\cal O_S$-module. Let $R_{\Gamma^{(n)}}G$ be the Weil restrction along $\Gamma_{x^I}^{(n)} \rightarrow S$, which is again representable by a smooth affine group $S$-scheme. This gives us a limit presentation:
$$
\cal L^+_{x^I}G \xrightarrow{\simeq} \lim_n R_{\Gamma^{(n)}}G.
$$

Under the Tannakian formalism, the formula $\xi \mapsto 1 + \xi$ defines an isormorphism between the vector group $S$-scheme $\fr g\otimes(\cal I^{n+1}/\cal I^{n+2})$ and the kernel of the evaluation map $R_{\Gamma^{(n+1)}}G \rightarrow R_{\Gamma^{(n)}}G$. In particular, the group scheme $\cal L_{x^I}^{\ge 1}G$ in \eqref{eq-weil-restriction-sequence} is an (infinite) iterated extension of vector group $S$-schemes.
\end{void}

\subsection{Contou-Carr\`ere}
\label{sec-contou-carrere}

\begin{void}
For each integer $n\ge 1$, we shall define \emph{Tate central extension} as a factorization super central extension:
\begin{equation}
\label{eq-tate-central-extension}
1 \rightarrow \mathbb G_{m, \Ran} \rightarrow \widetilde{\GL}_n \rightarrow \cal L\GL_n \rightarrow 1.
\end{equation}

Viewing $\widetilde{\GL}_n$ as a super line bundle over $\cal L\GL_n$, its fiber at an $S$-point $(x^I, a^I)$ of $\cal L\GL_n$ is the super $\cal O_S$-module:
$$
\det(a^I\cal O_{D_{x^I}}^{\oplus n} \mid \cal O_{D_{x^I}}^{\oplus n}) \text{ with grading }\rank(a^I\cal O_{D_{x^I}}^{\oplus n} \mid \cal O_{D_{x^I}}^{\oplus n})\text{ mod }2,
$$
where $\det(L_1 \mid L_2)$ denotes the relative determinant of two lattices $L_1$, $L_2$ in the Tate $\cal O_S$-module $\cal O_{\mathring D_{x^I}}^{\oplus n}$ and $\rank(L_1 \mid L_2)$ denotes their relative rank. (See \cite{MR2181808} or \cite[\S3]{campbell2021geometric} for the definition of these notions).

The multiplicative structure of \eqref{eq-tate-central-extension} is defined by the canonical isomorphism:
$$
\det(a^Ib^I\cal O_{D_{x^I}}^{\oplus n} \mid \cal O_{D_{x^I}}^{\oplus n}) \cong \det(a^I\cal O_{D_{x^I}}^{\oplus n} \mid \cal O_{D_{x^I}}^{\oplus n}) \otimes \det(b^I\cal O_{D_{x^I}}^{\oplus n} \mid \cal O_{D_{x^I}}^{\oplus n}),
$$
for any $S$-points $(x^I, a^I)$ and $(x^I, b^I)$ of $\cal L\GL_n$. The factorization isomorphism arises from the $\mathbb Z/2$-graded multiplicativity of determinants with respect to direct sums:
$$
\det(a^I\cal O_{D_{x^I}}^{\oplus n} \oplus b^J\cal O_{D_{x^J}}^{\oplus n} \mid \cal O_{D_{x^I}}^{\oplus n} \oplus \cal O_{D_{x^J}}^{\oplus n}) \cong \det(a^I\cal O_{D_{x^I}}^{\oplus n} \mid \cal O_{D_{x^I}}^{\oplus n}) \otimes \det(b^J\cal O_{D_{x^J}}^{\oplus n} \mid \cal O_{D_{x^J}}^{\oplus n}),
$$
for $S$-points $(x^I, a^I)$, $(x^J, b^J)$ of $\cal L\GL_n$ with $x^I$, $x^J$ disjoint.
\end{void}

\begin{void}
Following \cite[\S4]{campbell2021geometric}, we define the \emph{Contou-Carr\`ere symbol} (or \emph{tame symbol}) to be the commutator pairing of \eqref{eq-tate-central-extension} for $n = 1$:
\begin{equation}
\label{eq-contou-carrere-pairing}
\langle \cdot, \cdot \rangle : 
\cal L\mathbb G_m \otimes \cal L\mathbb G_m \rightarrow \mathbb G_{m, \Ran},
\end{equation}
Namely, $\langle\cdot,\cdot\rangle$ carries $S$-points $(x^I, a^I)$, $(x^I, b^I)$ of $\cal L\mathbb G_m$ to the element $(x^I, \tilde a^I\tilde b^I (\tilde a^I)^{-1}(\tilde b^I)^{-1})$ of $\mathbb G_{m,\Ran}$, where $\tilde a^I$ (resp.~$\tilde b^I$) is a lift of $a^I$ (resp.~$b^I$) to $\widetilde{\mathbb G}_m$ which exists locally on $S$.

The pairing \eqref{eq-contou-carrere-pairing} is \emph{factorization} in the following sense: given disjoint $S$-points $x^I$, $x^J$ of $\Ran$ and lifts $a^I$, $b^I$ (resp.~$a^J$, $b^J$) of $x^I$ (resp.~$x^J$) to $\cal L\mathbb G_m$, there holds:
$$
\langle a^I\sqcup a^J, b^I\sqcup b^J\rangle = \langle a^I, b^I\rangle\langle a^J, b^J\rangle.
$$

Furthermore, \eqref{eq-contou-carrere-pairing} is \emph{perfect} in the sense that its adjoint:
\begin{equation}
\label{eq-contou-carrere-adjoint}
\cal L\mathbb G_m \rightarrow \underline{\Hom}(\cal L\mathbb G_m, \mathbb G_{m, \Ran})
\end{equation}
is an isomorphism of factorization group presheaves \cite[Corollary 5.4.1.1]{campbell2021geometric}. This pairing exhibits $\cal L^+\mathbb G_m$ as the Cartier dual of $\Gr_{\mathbb G_m}$ \cite[Theorem 5.2.1]{campbell2021geometric}.
\end{void}

\begin{void}
\label{void-definition-tame}
More generally, let $T$ be an $X$-torus with dual $X$-torus $\check T$, \eqref{eq-contou-carrere-adjoint} induces an isomorphism between $\cal L\check T$ and $\underline{\Hom}(\cal LT, \mathbb G_{m, \Ran})$.

In particular, for a pair of $X$-tori $T_1$, $T_2$ with sheaves of cocharacters $\Lambda_1$, $\Lambda_2$, any bilinear form $b : \Lambda_1 \otimes \Lambda_2 \rightarrow \mathbb Z$ defines a factorization pairing:
\begin{equation}
\label{eq-contou-carrere-torus}
\langle\cdot, \cdot\rangle_b : \cal LT_1 \otimes \cal LT_2 \rightarrow \mathbb G_{m, \Ran},
\end{equation}
uniquely characterized by the property that its restriction along $\lambda_1, \lambda_2$, viewed as homomorphisms from $\cal L\mathbb G_m$ to $\cal LT_1$ (resp.~$\cal LT_2$), equals $b(\lambda_1, \lambda_2)\langle\cdot, \cdot\rangle$.

Pairings $\cal LT_1\otimes\cal LT_2\rightarrow \mathbb G_{m, \Ran}$ of the form $\langle\cdot,\cdot\rangle_b$ are called \emph{tame}. Given morphisms $T_1' \rightarrow T_1$, $T_2' \rightarrow T_2$, a tame pairing $\cal LT_1 \otimes \cal LT_2\rightarrow \mathbb G_{m, \Ran}$ induces a tame pairing $\cal LT_1' \otimes \cal LT_2' \rightarrow \mathbb G_{m, \Ran}$. The converse also holds for surjections of tori.
\end{void}

\begin{lem}
\label{lem-tame-pairing-descent}
Let $\langle\cdot,\cdot\rangle : \cal LT_1\otimes\cal LT_2 \rightarrow \mathbb G_{m, \Ran}$ be a factorization pairing. Given surjections of $X$-tori $T_1' \rightarrow T_1$, $T_2' \rightarrow T_2$, if the induced pairing $\cal LT'_1\otimes\cal LT'_2 \rightarrow \mathbb G_{m, \Ran}$ is tame, then so is $\langle\cdot, \cdot\rangle$.
\end{lem}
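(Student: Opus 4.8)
The plan is to reduce to split tori—where a factorization pairing breaks up into rank‑one pieces—and then to invoke a saturation property of the Contou–Carr\`ere symbol inside the group of all factorization pairings on $\cal L\mathbb G_m$. First I would observe that tameness may be tested after a finite \'etale base change $X_1\to X$: bilinear forms $\Lambda_1\otimes\Lambda_2\to\underline{\mathbb Z}$ form an \'etale sheaf, and the presheaf of factorization pairings $\cal LT_1\otimes\cal LT_2\to\mathbb G_{m,\Ran}$ is separated for the \'etale topology on $X$ (its fibres over $\Ran$ are built from $\mathbb G_m$, which satisfies descent); so if the pullback of $\langle\cdot,\cdot\rangle$ to $X_1$ is some $\langle\cdot,\cdot\rangle_{b_1}$, the uniqueness clause in \S\ref{void-definition-tame} equips $b_1$ with descent data, and the descended form $b$ satisfies $\langle\cdot,\cdot\rangle=\langle\cdot,\cdot\rangle_b$. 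Choosing $X_1$ to split $T_1,T_1',T_2,T_2'$, we may assume all four tori split; write $T_i=\mathbb G_m^{r_i}$, $T_i'=\mathbb G_m^{r_i'}$, so $\cal LT_i=(\cal L\mathbb G_m)^{r_i}$, etc.

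Since a factorization pairing is multiplicative in each variable, $P:=\langle\cdot,\cdot\rangle$ is the product, over pairs of coordinate axes, of factorization pairings $P_{ij}:\cal L\mathbb G_m\otimes\cal L\mathbb G_m\to\mathbb G_{m,\Ran}$, and $P$ is tame precisely when each $P_{ij}$ lies in the subgroup $\mathbb Z\langle\cdot,\cdot\rangle\subset\mathcal P$, where $\mathcal P:=\Hom_{\fact}(\cal L\mathbb G_m\otimes\cal L\mathbb G_m,\mathbb G_{m,\Ran})$ and $\langle\cdot,\cdot\rangle$ now denotes the Contou–Carr\`ere symbol. The surjection $\pi_i:T_i'\to T_i$ induces a map $\Lambda_i'\to\Lambda_i$ with image of finite index; fix $N$ with $N\Lambda_i\subseteq\mathrm{image}(\Lambda_i'\to\Lambda_i)$. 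For a coordinate cocharacter $e_j$ of $\Lambda_i$, the cocharacter $Ne_j$ lifts to $\Lambda_i'$, and $\cal L$ carries $Ne_j$ to $\cal L(e_j)$ composed with the $N$‑th power map on $\cal L\mathbb G_m$. Restricting the hypothesis ``$P$ pulled back to $\cal LT_1'\otimes\cal LT_2'$ is tame'' along the lifts of $Ne_i,Ne_j$, and using that $(\mu,\nu)\mapsto P|_{\mu,\nu}$ is biadditive in the cocharacters, one gets $N^2P_{ij}=c_{ij}\langle\cdot,\cdot\rangle$ in $\mathcal P$ for integers $c_{ij}$. Applying the degree homomorphism $\deg:\mathcal P\to\mathbb Z$ introduced below yields $N^2\deg(P_{ij})=c_{ij}$, so $b_{ij}:=c_{ij}/N^2\in\mathbb Z$.

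It remains to upgrade $N^2(P_{ij}-b_{ij}\langle\cdot,\cdot\rangle)=0$ to $P_{ij}=b_{ij}\langle\cdot,\cdot\rangle$, and this is where I expect the real work. Using perfectness of the Contou–Carr\`ere symbol \cite{campbell2021geometric}, identify $\mathcal P\cong\End_{\fact}(\cal L\mathbb G_m)$ with $\langle\cdot,\cdot\rangle$ corresponding to the identity. The valuation $\cal L\mathbb G_m\to\underline{\mathbb Z}$ annihilates the connected group $\cal L^+\mathbb G_m$, so composing it with any factorization endomorphism gives a map which factors through the valuation by a well‑defined integer; this produces a homomorphism $\deg:\End_{\fact}(\cal L\mathbb G_m)\to\mathbb Z$ restricting to the identity on $\mathbb Z$, whence $\End_{\fact}(\cal L\mathbb G_m)=\mathbb Z\oplus\ker(\deg)$. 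One then has to show $\ker(\deg)$—equivalently the group of factorization homomorphisms $\cal L\mathbb G_m\to\cal L^+\mathbb G_m$—is torsion‑free: in characteristic zero this is immediate from Corollary \ref{corx-tame-symbol} (there $\End_{\fact}(\cal L\mathbb G_m)=\mathbb Z$), while in positive characteristic it amounts to ruling out ``wild'' torsion endomorphisms, which would have to factor through an infinitesimal loop group such as $\cal L\mu_p$; this control of $\End_{\fact}(\cal L\mathbb G_m)$ is the main obstacle. Granting it, $P_{ij}=b_{ij}\langle\cdot,\cdot\rangle$ follows.

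Finally, assembling the integers $b_{ij}$ into a bilinear form $b:\Lambda_1\otimes\Lambda_2\to\mathbb Z$ (its biadditivity inherited from that of $P|_{\mu,\nu}$), one obtains $\langle\cdot,\cdot\rangle=\langle\cdot,\cdot\rangle_b$ after the base change $X_1\to X$; the \'etale descent recalled in the first paragraph then produces the corresponding form over $X$, exhibiting $\langle\cdot,\cdot\rangle$ as tame. I would organize the write‑up so that the saturation statement for $\mathbb Z\langle\cdot,\cdot\rangle\subset\mathcal P$ is isolated as a separate lemma, since it is the one genuinely nontrivial input and is the only place characteristic enters.
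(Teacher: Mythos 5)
Your skeleton is the same as the paper's: reduce to a single pair of cocharacters, use the hypothesis to get an identity of the form $n\cdot P = c\cdot\langle\cdot,\cdot\rangle$ in the group of factorization pairings on $\cal L\mathbb G_m$, extract the candidate integer via a degree homomorphism, and then divide by $n$. (The preliminary \'etale splitting of the tori is harmless but unnecessary --- the paper simply restricts the adjoint map $\cal LT_1\rightarrow\cal L\check T_2$ along individual cocharacters, which makes sense directly over $X$.) The problem is the step you explicitly defer: torsion-freeness of the kernel of your degree map, i.e.\ that $n\cdot\psi=0$ in $\End_{\fact}(\cal L\mathbb G_m)$ forces $\psi=0$. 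In characteristic zero the whole lemma is vacuous given Proposition \ref{prop-characteristic-zero-implies-tame}, so this torsion-freeness in positive characteristic \emph{is} the content of the lemma, and ``ruling out wild torsion endomorphisms, which would have to factor through an infinitesimal loop group such as $\cal L\mu_p$'' is a gesture, not an argument. As written, the proof has a genuine gap exactly at the point you label the main obstacle.

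The paper closes this gap without any analysis of the full endomorphism group. By the observation of \S\ref{void-bilinear-pairing-induces-skew-pairing}, the endomorphism $\varphi$ in question is determined by its restriction $\varphi^+$ to $\cal L^+\mathbb G_m$, whose image again lies in $\cal L^+\mathbb G_m$; the quotient $R_{\Gamma}\mathbb G_m$ of \eqref{eq-weil-restriction-sequence} is a torus, which pins down the integer $N$ and shows $n\mid N'$. Since $\cal L^+_{x^I}\mathbb G_m\rightarrow S$ is pro-smooth, the remaining identity $\varphi^+(f)=f^N$ may be checked on $\bar k$-points, where it reduces to the elementary fact that a principal unit $g=1+\sum_{i\ge 1}a_it^i$ in $\bar k\arc{t}^{\times}$ with $g^n=1$ equals $1$: if $\mathrm{char}(\bar k)\nmid n$, all $n$th roots of unity of $\bar k\arc{t}$ lie in $\bar k$; if $\mathrm{char}(\bar k)=p\mid n$, one uses injectivity of the Frobenius on the domain $\bar k\arc{t}$. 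So the torsion is killed by restricting to the arc group and passing to geometric points --- legitimate precisely because of pro-smoothness --- rather than by controlling $\End_{\fact}(\cal L\mathbb G_m)$ itself, which (by Example \ref{eg-central-extension-wild}) is genuinely larger than $\mathbb Z$ in positive characteristic.
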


\begin{void}
\label{void-bilinear-pairing-induces-skew-pairing}
Before proving Lemma \ref{lem-tame-pairing-descent}, we shall make an observation.

Suppose that we are given $X$-tori $T_1$ and $T_2$. \emph{Claim}: all factorization morphisms of group presheaves over $\Ran$ below are trivial:
\begin{align}
\label{eq-arc-to-grassmannian}
	\cal L^+T_1 &\rightarrow \Gr_{T_2}; \\
\label{eq-grassmannian-to-arc}
	\Gr_{T_1} &\rightarrow \cal L^+T_2.
\end{align}

For \eqref{eq-arc-to-grassmannian}, this is because $\cal L^+T_1\rightarrow \Ran$ is pro-smooth with connected geometric fibers, whereas $\Gr_{T_2}\rightarrow \Ran$ has formal geometric fibers. For \eqref{eq-grassmannian-to-arc}, this is because $\Gr_{T_1}\rightarrow \Ran$ is ind-proper, whereas $\cal L^+T_2\rightarrow \Ran$ is pro-affine. The combination of these two facts shows that any factorization bilinear pairing $\cal LT_1\otimes\cal LT_2\rightarrow \mathbb G_m$ induces, and is uniquely determined by a pairing $\cal L^+T_1 \otimes \Gr_{T_2} \rightarrow \mathbb G_m$.
\end{void}

\begin{proof}[Proof of Lemma \ref{lem-tame-pairing-descent}]
Let $\check T_2$ (resp.~$\check T_2'$) denote the $X$-torus dual to $T_2$ (resp.~$T_2'$). By perfectness of the Contou-Carr\`ere symbol, $\langle\cdot,\cdot\rangle$ is equivalent to a factorization morphism of group presheaves over $\Ran$:
\begin{equation}
\label{eq-arbitrary-pairing-adjoint}
\cal LT_1 \rightarrow \cal L\check T_2.
\end{equation}

We need to prove that for any pair of cocharacters $\lambda_1$, $\lambda_2$ of $T_1$, $T_2$, the endomorphism $\varphi$ of $\cal L\mathbb G_m$ defined by the composition:
$$
\begin{tikzcd}[column sep = 1em]
\cal L\mathbb G_m \ar[r, "\lambda_1"] & \cal LT_1 \ar[r, "\eqref{eq-arbitrary-pairing-adjoint}"] & \cal L\check T_2 \ar[r, "\lambda_2"] & \cal L\mathbb G_m
\end{tikzcd}
$$
is given by $a\mapsto a^N$ for some integer $N$.

The hypothesis implies that this statement holds after composing with an endomorphism of $\cal L\mathbb G_m$ defined by $n$th power map $a\mapsto a^n$ for some integer $n \ge 1$.

By the observation of \S\ref{void-bilinear-pairing-induces-skew-pairing}, $\varphi$ is uniquely determined by its restriction $\varphi^+$ to $\cal L^+\mathbb G_m$, whose image is also contained in $\cal L^+\mathbb G_m$. Note furthermore that for each $S$-point $x^I$ of $X^I$, the restriction $\cal L^+_{x^I}\mathbb G_m$ of $\cal L^+\mathbb G_m$ is the extension of a group $S$-scheme of multiplicative type by an iterated extension of vector group $S$-schemes (\S\ref{void-arc-group-structure}).

In particular, $\varphi^+$ induces a homomorphism of short exact sequences:
$$
\begin{tikzcd}[column sep = 1em]
	0 \ar[r] & \cal L_{x^I}^{\ge 1}\mathbb G_m \ar[r]\ar[d, "\varphi^+_a"] & \cal L_{x^I}^+\mathbb G_m \ar[r]\ar[d, "\varphi^+"] & R_{\Gamma}\mathbb G_m \ar[r]\ar[d, "\varphi^+_m"] & 1 \\
	0 \ar[r] & \cal L_{x^I}^{\ge 1}\mathbb G_m \ar[r] & \cal L_{x^I}^+\mathbb G_m \ar[r] & R_{\Gamma}\mathbb G_m \ar[r] & 1
\end{tikzcd}
$$

The fact that $\varphi_m^+$ is the $N'$th power map after composing with the $n$th power map shows that $n\mid N'$ and $\varphi_m^+$ is the $N$th power map, for $N := N'/n$.

Since $\cal L^+_{x^I}\mathbb G_m \rightarrow S$ is pro-smooth, it suffices to prove that $\varphi^+$ is the $N$th power map on $\bar k$-points. In other words, given $f \in \bar k\arc{t}^{\times}$ satisfying the equality:
$$
\varphi^+(f)^n = (f^N)^n \text{ in } \bar k\arc{t}^{\times},
$$
we need to deduce the equality $\varphi^+(f) = f^N$. Setting $g := \varphi^+(f)/f^N$, we may write:
$$
g = 1 + \sum_{i\ge 1} a_i t^i \text{ in } \bar k\arc{t}^{\times}.
$$

\emph{Claim}: $g^n = 1$ implies $g = 1$. For $\tn{char}(\bar k) \nmid n$, this holds because all $n$th roots of unity of $\bar k\arc{t}$ are contained in $\bar k$. For $\tn{char}(\bar k) \mid n$, this holds because the Frobenius is injective.
\end{proof}

\begin{eg}
\label{eg-central-extension-wild}
Suppose that $k$ has characterstic $p > 0$. Let us define a factorization central extension whose commutator is \emph{not} tame:
\begin{equation}
\label{eq-central-extension-wild}
1 \rightarrow \mathbb G_{m, \Ran} \rightarrow \cal G \rightarrow \cal L\mathbb G_m \rightarrow 1.
\end{equation}

Given a morphism $Y \rightarrow S$ of $k$-presheaves, we write $\Frob_{Y/S} : Y \rightarrow Y^{(1)}_{/S}$ for the $p$th power Frobenius of $Y$ relative to $S$. Its formation is compatible with base change along $S$. Note that the presheaf $\cal L\mathbb G_m{}^{(1)}_{/\Ran}$ is canonically isomorphic to $\cal L\mathbb G_m$: an $S$-point of $\cal L\mathbb G_m{}^{(1)}_{/\Ran}$ is a pair $(x^I, a)$ where $x^I$ is an $S$-point of $\Ran$ and $a$ is map $\mathring D_{\Frob_S^*(x^I)}\rightarrow\mathbb G_m$. However, $\mathring D_{\Frob_S^*(s^I)}$ is isomorphic to $\mathring D_{x^I}$ since its formation depends only on the subset $|\Gamma_{x^I}|$ of $|S\times X|$. In particular, we may view $\Frob_{\cal L\mathbb G_m/\Ran}$ as an endomorphism of $\cal L\mathbb G_m$ over $\Ran$.

The central extension \eqref{eq-central-extension-wild} is defined to be the presheaf of sets $\mathbb G_{m, \Ran} \times_{\Ran} \cal L\mathbb G_m$ whose group structure is defined by the cocycle:
$$
\cal L\mathbb G_m \otimes \cal L\mathbb G_m \rightarrow \mathbb G_{m, \Ran},\quad (a, b)\mapsto \langle \Frob_{\cal L\mathbb G_m/\Ran}(a), b\rangle,
$$
where $\langle\cdot, \cdot\rangle$ denotes the Contou-Carr\`ere symbol. Since $\langle\cdot, \cdot\rangle$ is anti-symmetric, the commutator of \eqref{eq-central-extension-wild} is the pairing:
\begin{equation}
\label{eq-central-extension-wild-commutator}
\cal L\mathbb G_m\otimes\cal L\mathbb G_m \rightarrow \mathbb G_{m, \Ran},\quad (a, b)\mapsto\langle\Frob_{\cal L\mathbb G_m/\Ran}(a), b\rangle\langle a, \Frob_{\cal L\mathbb G_m/\Ran}(b)\rangle.
\end{equation}

Let us argue that this pairing is not tame over any geometric point $x : \Spec(\bar k) \rightarrow X$. The choice of a uniformizer allows us to identify $\cal L_x\mathbb G_m$ with $\mathbb G_m\loo{t}$. The morphism $\Frob_{\mathbb G_m\loo{t}/\bar k}$ evaluates to the following map on $R$-points for any $\bar k$-algebra $R$:
$$
R\loo{t}^{\times} \rightarrow R\loo{t}^{\times},\quad \sum_n a_nt^n \mapsto \sum_n (a_n)^pt^n.
$$
The commutator \eqref{eq-central-extension-wild-commutator} is indeed $\langle\cdot,\cdot\rangle^{p+1}$ on $\bar k$-points. For a more general $\bar k$-algebra $R$, the Contou-Carr\`ere pairing $\langle 1 - a_1t, 1 - b_{-1}t^{-1}\rangle$ equals $1 - a_1b_{-1}$ for nilpotents $a_1, b_{-1}\in R$ \cite{MR2036223}. Taking $R := \bar k[\epsilon]/\epsilon^3$ with $a_1 := \epsilon x$, $b_{-1} := \epsilon y$ for $x, y\in\bar k^{\times}$ and equating the commutator $(1 - a_1^pb_{-1})(1 - a_1b_{-1}^p)$ with $(1 - a_1b_{-1})^{p+1}$, we find $xy = 0$, which is impossible.
\end{eg}

\begin{void}
We now show that tameness is a positive characteristic phenomenon.

The assertion below relies on \cite{tao2021mathrmgrg} which uses the hypothesis $\tn{char}(k) = 0$.
\end{void}

\begin{prop}
\label{prop-characteristic-zero-implies-tame}
Let $T_1$, $T_2$ be a pair of $X$-tori. If $\tn{char}(k) = 0$, then any factorization pairing $\cal LT_1\otimes\cal LT_2\rightarrow \mathbb G_{m, \Ran}$ is tame.
\end{prop}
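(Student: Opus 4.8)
The plan is to use perfectness of the Contou-Carr\`ere symbol together with Tao's reducedness theorem: a factorization pairing will be repackaged as a homomorphism between the affine Grassmannians of two tori, and such a homomorphism is then forced to be ``linear'' in characteristic zero.

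First I would recall from \S\ref{void-bilinear-pairing-induces-skew-pairing} that a factorization pairing $\cal LT_1\otimes\cal LT_2\to\mathbb G_{m,\Ran}$ induces, and is uniquely determined by, a factorization pairing $\cal L^+T_1\otimes\Gr_{T_2}\to\mathbb G_{m,\Ran}$; by the perfect Cartier pairing $\cal L^+T\otimes\Gr_{\check T}\to\mathbb G_{m,\Ran}$ of \cite[Theorem 5.2.1]{campbell2021geometric} (the torus case follows from $T=\mathbb G_m$ by tensoring with cocharacter lattices) the latter amounts to a homomorphism of factorization group presheaves over $\Ran$
\[
g:\Gr_{T_2}\longrightarrow\Gr_{\check T_1},
\]
where $\check T_1$ denotes the dual torus, with cocharacter lattice $\check\Lambda_1$. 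Under these identifications the tame pairings $\langle\cdot,\cdot\rangle_b$ correspond exactly to the $g$ of the form $\Gr_{\ell}$ for a morphism of \'etale sheaves $\ell:\Lambda_2\to\check\Lambda_1$ (equivalently a bilinear form $b:\Lambda_1\otimes\Lambda_2\to\mathbb Z$), by functoriality of $\Gr_{(-)}$. So it suffices to show that, when $\tn{char}(k)=0$, every $g$ as above has this form.

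Here I would invoke \cite{tao2021mathrmgrg}: if $\tn{char}(k)=0$ then $\Gr_{\mathbb G_m}$ --- and, as one checks, $\Gr_T$ for every $X$-torus $T$ --- is reduced over $\Ran$, so that it is identified with the presheaf of $\Lambda$-valued divisors on $X$ supported at finitely many points. In particular over the stratum $X\hookrightarrow\Ran$ of finite subsets of size one, $\Gr_T$ becomes $\coprod_{\lambda\in\Lambda}X_\lambda$ with each $X_\lambda\cong X$ (the divisor $\lambda\cdot[x]$), fibred over $\Ran$ by $x\mapsto\{x\}$. A factorization homomorphism $g:\Gr_{T_2}\to\Gr_{\check T_1}$ over $\Ran$ must therefore carry $X_\lambda$ into $X_{\ell(\lambda)}$ for a well-defined map $\ell:\Lambda_2\to\check\Lambda_1$, and this map of copies of $X$ is the identity since $g$ lies over $\Ran$ and $x\mapsto\{x\}$ is a monomorphism. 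Running the same comparison over $X^2$: on the disjoint locus $g$ acts as $g\times g$ by factorization, so the component labelled $(\lambda,\lambda')$ is carried to the one labelled $(\ell(\lambda),\ell(\lambda'))$; as $x_2\to x_1$ the divisor $\lambda[x_1]+\lambda'[x_2]$ degenerates into the component $X_{\lambda+\lambda'}$ of $\Gr_{T_2}$, and likewise on the target, forcing $\ell(\lambda+\lambda')=\ell(\lambda)+\ell(\lambda')$ --- with $\ell(0)=0$ because $g$ preserves unit sections --- and that $g$ is a morphism of $\Ran$-presheaves makes $\ell$ a morphism of \'etale sheaves. Since a factorization homomorphism between these reduced divisor presheaves is determined by the induced assignment of components (each component map being the identity on its base), $g=\Gr_{\ell}$; as the tame pairing $\langle\cdot,\cdot\rangle_b$ attached to $\ell$ induces the same $g$ and the passage from pairings to $g$ is injective (\S\ref{void-bilinear-pairing-induces-skew-pairing}), the original pairing equals $\langle\cdot,\cdot\rangle_b$ and is tame.

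The step I expect to be the main obstacle is this last rigidity: making precise that over $\Ran$ a factorization homomorphism of the reduced ind-presheaves $\Gr_{T_2}\to\Gr_{\check T_1}$ is genuinely combinatorial, determined by its effect on connected components. This has to be argued Ran-globally rather than fibrewise --- over a single point of $\Ran$ the affine Grassmannian of a torus is highly non-reduced, so Tao's theorem is used in an essential way --- and one must keep track of how the divisor description interacts with the group law coming from Cartier duality, so that the extracted $\ell$ reproduces $\langle\cdot,\cdot\rangle_b$ itself and not merely some tame pairing. In positive characteristic the whole argument collapses, consistently with the non-tame Frobenius-twisted central extension of Example \ref{eg-central-extension-wild}.
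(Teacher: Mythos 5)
Your proposal is correct and rests on the same two pillars as the paper's proof: the reduction of \S\ref{void-bilinear-pairing-induces-skew-pairing} to a pairing $\cal L^+T_1\otimes\Gr_{T_2}\to\mathbb G_{m,\Ran}$ combined with Contou-Carr\`ere/Cartier duality, and Tao's reducedness theorem to kill everything beyond the combinatorial skeleton. Where you diverge is in the packaging of the endgame. The paper reduces to $T_1=T_2=\mathbb G_m$, introduces the combinatorial sub-presheaf $\Gr_{\mathbb G_m}^{\tn{comb}}=\colim_{(I,\lambda^I)}X^I$, and classifies factorization pairings $\cal L^+\mathbb G_m\otimes\Gr_{\mathbb G_m}^{\tn{comb}}\to\mathbb G_{m,\Ran}$ by analyzing characters of $\cal L^+_{X^I}\mathbb G_m$ via the presentation \eqref{eq-weil-restriction-sequence}; you instead dualize the $\cal L^+T_1$-variable away entirely, turning the pairing into a factorization homomorphism $g:\Gr_{T_2}\to\Gr_{\check T_1}$ and classifying such homomorphisms between the reduced divisor presheaves. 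Your route buys you a cleaner statement of what has to be proved (a component map $\ell:\Lambda_2\to\check\Lambda_1$ forced to be additive by the diagonal degeneration over $X^2$), at the cost of needing the reduced description of the \emph{target} as well as the source, and of the rigidity assertion you yourself flag: that over $X^I$ the reduction of $\Gr_{T,X^I}$ is exactly $\bigcup_{\lambda^I}\iota_{\lambda^I}(X^I)$ and that an $X^I$-point of it over the tautological point of $\Ran$ is a $\Lambda$-valued divisor $\sum_i\lambda_i\Gamma_{x^i}$ with locally constant multiplicities. That assertion is true (it is the torus analogue of \eqref{eq-simply-connected-schubert-presentation} and is the substance of the paper's ``Claim'' about $\Gr_{\mathbb G_m}^{\tn{comb}}$, transported across the duality), so the two arguments are doing the same bookkeeping on opposite sides of the Cartier pairing; neither is materially shorter. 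Two small points to make explicit if you write this up: reducedness of $\Gr_T$ for a general torus follows from the $\mathbb G_m$ case because products of reduced $k$-schemes are reduced in characteristic zero (this is where the hypothesis enters a second time), and determination of $g$ by its restriction to reduced test schemes uses both Tao's factorization property for the source and ind-separatedness of the target.
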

\begin{proof}
Using the observations in \S\ref{void-bilinear-pairing-induces-skew-pairing}, it suffices to prove that any factorization pairing $\cal L^+T_1 \otimes \Gr_{T_2} \rightarrow \mathbb G_{m, \Ran}$ is necessarily of the form $\langle\cdot, \cdot\rangle_b$ for some bilinear form $b : \Lambda_1 \otimes \Lambda_2 \rightarrow \mathbb Z$ (see \S\ref{void-definition-tame}).

Using the duality between $\Gr_{T_2}$ and $\cal L^+\check T_2$ under the Contou-Carr\`ere symbol, we reduce the statement to the special case $T_1 = T_2 = \mathbb G_m$.

For each $I$-tuple $\lambda^I = (\lambda^i)$ of integers, there is a closed immersion $\iota_{\lambda^I} : X^I \rightarrow \Gr_{\mathbb G_m, X^I}$ sending an $S$-point $x^I = (x^i)$ of $X^I$ to the line bundle $\cal O(\sum_{i\in I}\Gamma_{x^i})$ over $S\times X$ equipped with its canonical trivialization off $\Gamma_{x^I}$. Consider the category of pairs $(I, \lambda^I)$ where $I$ is a nonempty finite set and $\lambda^I$ is as above, where morphisms $(I, \lambda^I) \rightarrow (J, \lambda^J)$ are defined by surjections $\varphi : I\twoheadrightarrow J$ with $\lambda^j = \sum_{i \in \varphi^{-1}(j)}\lambda^i$ for each $j\in J$. The closed immersions $\iota_{\lambda^I}$ assemble into a morphism of presheaves over $\Ran$:
$$
\Gr_{\mathbb G_m}^{\tn{comb}} := \colim_{(I, \lambda^I)} X^I \rightarrow \Gr_{\mathbb G_m},
$$
which induces a bijection on field-valued points.

\emph{Claim}: factorization pairings $\cal L^+\mathbb G_m \otimes \Gr_{\mathbb G_m}^{\tn{comb}} \rightarrow \mathbb G_{m, \Ran}$ are in bijection with sections of $\underline{\mathbb Z}$ over $X$. More precisely, locally on $X$, a generator is the colimit over $(I, \lambda^I)$ of maps:
\begin{equation}
\label{eq-factorization-homomorphism-generator}
f_{(I, \lambda^I)} : \cal L^+_{X^I}\mathbb G_m \rightarrow \mathbb G_m,\quad (x^I, a) \mapsto \prod_{i\in I} (a|_{\Gamma_{x^i}})^{\lambda^i},
\end{equation}
where $a|_{\Gamma_{x^i}}$ is the restriction of $a$ to the closed subscheme $\Gamma_{x^i}\subset D_{x^I}$.

To prove the claim, we use the presentation \eqref{eq-weil-restriction-sequence} of the group $X^I$-scheme $\cal L^+_{X^I}\mathbb G_m$ as an extension of $R_{\Gamma}\mathbb G_m$ by $\cal L_{X^I}^{\ge 1}\mathbb G_m$. Every character $\cal L^+_{X^I}\mathbb G_m \rightarrow \mathbb G_m$ must factor through $R_{\Gamma}\mathbb G_m$ and is uniquely determined by its restriction to the pairwise disjoint locus of $X^I$.

Given a factorization pairing $f' : \cal L^+\mathbb G_m\otimes \Gr_{\mathbb G_m}^{\tn{comb}} \rightarrow \mathbb G_{m, \Ran}$, we obtain a system of maps indexed by $(I, \lambda^I)$:
\begin{equation}
\label{eq-factorization-homomorphism-arbitrary}
f'_{(I, \lambda^I)} : \cal L_{X^I}^+\mathbb G_m \rightarrow \mathbb G_m.
\end{equation}
By the observation above, \eqref{eq-factorization-homomorphism-arbitrary} is uniquely determined by the case $I = \{1\}$. Moreover, $f'_{(\{1\}, \lambda)}$ is a character of $R_{\Gamma}\mathbb G_m\cong \mathbb G_{m, X}$, hence a section of $\underline{\mathbb Z}$ over $X$. Looking at $I = \{1, 2\}$, we see that the association $\lambda\mapsto f'_{(\{1\}, \lambda)}$ defines a group homomorphism $\underline{\mathbb Z} \rightarrow \underline{\mathbb Z}$. The group homomorphism corresponding to multiplication by $n$ yields the $n$th power of \eqref{eq-factorization-homomorphism-generator}.

Under the hypothesis $\tn{char}(k) = 0$, \cite[Proposition 5.1.5]{tao2021mathrmgrg} shows that any $S$-point of $\Gr_{\mathbb G_m}$ admits a factorization $S \rightarrow S_0 \rightarrow \Gr_{\mathbb G_m}$, where $S_0$ is reduced. Therefore, any pairing $\cal L^+\mathbb G_m \otimes \Gr_{\mathbb G_m} \rightarrow \mathbb G_{m, \Ran}$ is uniquely determined by its values on reduced test schemes, hence on field-valued points. Consequently, any such pairing is uniquely determined by its restriction to $\cal L^+\mathbb G_m\otimes\Gr_{\mathbb G_m}^{\tn{comb}}$. 
\end{proof}

\begin{cor}
\label{cor-contou-carrere-universal}
If $\tn{char}(k) = 0$, then any factorization pairing $\cal L\mathbb G_m \otimes \cal L\mathbb G_m \rightarrow \mathbb G_{m, \Ran}$ is an integral power of the Contou-Carr\`ere symbol.
\end{cor}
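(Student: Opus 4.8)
The plan is to read this off immediately from Proposition \ref{prop-characteristic-zero-implies-tame}, specialized to $T_1 = T_2 = \mathbb G_m$, by unwinding the definition of tameness in \S\ref{void-definition-tame}. Let $\langle\cdot,\cdot\rangle' : \cal L\mathbb G_m \otimes \cal L\mathbb G_m \rightarrow \mathbb G_{m,\Ran}$ be a factorization pairing. Since $\mathrm{char}(k) = 0$, Proposition \ref{prop-characteristic-zero-implies-tame} tells us that $\langle\cdot,\cdot\rangle'$ is tame, \emph{i.e.}~of the form $\langle\cdot,\cdot\rangle_b$ for some bilinear form $b : \Lambda_1 \otimes \Lambda_2 \rightarrow \mathbb Z$ on the cocharacter lattices of the two copies of $\mathbb G_m$.

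Now $\Lambda_1 = \Lambda_2 = \mathbb Z$, so a bilinear form $b$ is just multiplication by the integer $N := b(1,1)$. The characterization of $\langle\cdot,\cdot\rangle_b$ recalled in \S\ref{void-definition-tame} says precisely that its restriction along the pair of cocharacters $1 \in \mathbb Z$, viewed as homomorphisms $\cal L\mathbb G_m \rightarrow \cal L\mathbb G_m$ — which are the identity — agrees with $b(1,1)\langle\cdot,\cdot\rangle$. Writing the target multiplicatively, $b(1,1)\langle\cdot,\cdot\rangle$ is exactly the $N$th power $\langle\cdot,\cdot\rangle^N$ of the Contou-Carr\`ere symbol. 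Hence $\langle\cdot,\cdot\rangle' = \langle\cdot,\cdot\rangle^N$, which is the assertion of the corollary.

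There is no genuine obstacle at this stage: all of the substance has been packed into Proposition \ref{prop-characteristic-zero-implies-tame}, which in turn rests on Tao's reducedness theorem for $\Gr_{\mathbb G_m}$ over the Ran space \cite{tao2021mathrmgrg}, so what remains here is purely a matter of matching up definitions and confirming that $\mathrm{Hom}_{\mathbb Z}(\mathbb Z \otimes \mathbb Z, \mathbb Z) \cong \mathbb Z$. For context it is worth recording that Example \ref{eg-central-extension-wild} shows the corollary genuinely fails when $\mathrm{char}(k) > 0$: the Frobenius-twisted commutator \eqref{eq-central-extension-wild-commutator} is a factorization pairing on $\cal L\mathbb G_m$ which is not any integral power of $\langle\cdot,\cdot\rangle$, so the hypothesis on the characteristic is essential.
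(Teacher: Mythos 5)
Your proposal is correct and is exactly the paper's argument: the paper also proves the corollary by restating Proposition \ref{prop-characteristic-zero-implies-tame} in the special case $T_1 = T_2 = \mathbb G_m$, and your unwinding of the definition of tameness (a bilinear form on $\mathbb Z\otimes\mathbb Z$ is an integer $N$, and $\langle\cdot,\cdot\rangle_b = \langle\cdot,\cdot\rangle^N$) is the intended, purely definitional content of that restatement.
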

\begin{proof}
This is a restatement of Proposition \ref{prop-characteristic-zero-implies-tame} in the special case $T_1 = T_2 = \mathbb G_m$.
\end{proof}

\begin{rem}
For any field $k$, we may view the proof of Proposition \ref{prop-characteristic-zero-implies-tame} as establishing the implication (1) $\Rightarrow$ (2) between the following statements:
\begin{enumerate}
	\item any $S$-point of $\Gr_{\mathbb G_m}$ admits a factorization $S\rightarrow S_0\rightarrow\Gr_{\mathbb G_m}$ where $S_0$ is reduced;
	\item any factorization pairing $\cal L\mathbb G_m\otimes\cal L\mathbb G_m \rightarrow \mathbb G_{m, \Ran}$ is an integral power of the Contou-Carr\`ere symbol.
\end{enumerate}

Since (2) fails when $\tn{char}(k) > 0$ (Example \ref{eg-central-extension-wild}), (1) must also fail when $\tn{char}(k) > 0$. In other words, the hypothesis $\tn{char}(k) = 0$ in \cite[Theorem 1.2.1]{tao2021mathrmgrg} is necessary.
\end{rem}

\begin{rem}
It is known that the Contou-Carr\`ere symbol is the universal Steinberg symbol over a point of $X$, \emph{cf.}~\cite{MR3438594}. The universal property established in Corollary \ref{cor-contou-carrere-universal} is of a different kind: instead of imposing the Steinberg relation, we impose compatibility with factorization.
\end{rem}

\subsection{Classification}
\label{sec-factorization-classification}

\begin{void}
Let $G$ be a reductive group $X$-scheme. Denote by $\Rad(G)$ the radical of $G$, \emph{i.e.}~the maximal torus of the center $Z_G$ of $G$ \cite[XXII, D\'efinition 4.3.6]{SGA3}.

Our principal goal is to study factorization super central extensions of $\cal LG$ by $\mathbb G_{m, \Ran}$ subject to the followng property: the commutator of the induced factorization super central extension of $\cal L\Rad(G)$ by $\mathbb G_{m, \Ran}$ is tame in the sense of \S\ref{void-definition-tame}. Such a factorization super central extension of $\cal LG$ by $\mathbb G_{m, \Ran}$ is said to have \emph{tame commutator}.

By Example \ref{eg-central-extension-wild} and Proposition \ref{prop-characteristic-zero-implies-tame}, the condition of having tame commutator is vacuous when $\tn{char}(k) = 0$, but not so when $\tn{char}(k) > 0$.
\end{void}

\begin{void}
\label{void-omega-twist}
Suppose that $G$ has a maximal torus $T$ with sheaf of cocharacters $\Lambda$.

Write $G_{\mathrm{sc}}$ for the simply connected form of $G$ with induced maximal torus $T_{\mathrm{sc}}$ and sheaf of cocharacters $\Lambda_{\mathrm{sc}}$.

Recall that any integral Weyl-invariant quadratic form $Q_{\mathrm{sc}}$ on $\Lambda_{\mathrm{sc}}$ defines a section of $\vartheta(\Lambda_{\mathrm{sc}})$ via \eqref{eq-canonical-theta-data-brylinski-deligne}. The corresponding central extension $\widetilde{\Lambda}_{\mathrm{sc}}$ of $\Lambda_{\mathrm{sc}}$ by $\mathbb G_m$ can be viewed as a monoidal morphism:
\begin{equation}
\label{eq-simply-connected-canonical-extension-as-morphism}
\nu_{Q_{\mathrm{sc}}} : \Lambda_{\mathrm{sc}} \rightarrow \deloop \mathbb G_m \cong \Pic.
\end{equation}

We define a morphism of pointed $X$-stacks by the formula:
\begin{equation}
\label{eq-simply-connected-canonical-map-omega-shift}
\nu_{Q_{\mathrm{sc}}, +} : \Lambda_{\mathrm{sc}} \rightarrow \Pic,\quad \lambda \mapsto \nu_{Q_{\mathrm{sc}}}(\lambda) \otimes \omega_X^{Q_{\mathrm{sc}}(\lambda)}.
\end{equation}
\end{void}

\begin{void}
To specify the additional structure of the map $\nu_{Q_{\mathrm{sc}}, +}$ inherited from the monoidal structure of $\nu_{Q_{\mathrm{sc}}}$, we introduce a piece of terminology.

Let $\Lambda$ denote, temporarily, any \'etale sheaf of finite free $\mathbb Z$-modules over $X$ and $b : \Lambda\otimes\Lambda \rightarrow \mathbb Z$ be any symmetric bilinear form. A morphism of $X$-stacks $\nu_+ : \Lambda \rightarrow \Pic^{\super}$ is said to be \emph{$\omega$-monoidal with respect to $b$} if it is equipped with ismorphisms:
\begin{align*}
\cal O_X &\xrightarrow{\simeq} \nu_+(0); \\
\nu_+(\lambda_1) \otimes \nu_+(\lambda_2) \otimes \omega_X^{b(\lambda_1, \lambda_2)} &\xrightarrow{\simeq} \nu_+(\lambda_1 + \lambda_2),
\end{align*}
for each $\lambda_1, \lambda_2\in\Lambda$, satisfying unitality and associativity. (This notion does not refer to the commutativity constraint of the Picard groupoid $\Pic^{\super}$.)

In this terminology, the map $\nu_{Q_{\mathrm{sc}}, +}$ is $\omega$-monoidal with respect to the symmetric form $b_{\mathrm{sc}}$ associated to $Q_{\mathrm{sc}}$.
\end{void}

\begin{void}
\label{void-super-theta-data-shifted}
Denote by $\vartheta^{\super}_{G, +}(\Lambda)$ the Picard groupoid of triples $(b, \nu_+, \varphi)$, where:
\begin{enumerate}
	\item $b$ is a Weyl-invariant integral symmetric bilinear form on $\Lambda$, such that $b(\lambda, \lambda) \in 2\mathbb Z$ if $\lambda \in \Lambda_{\mathrm{sc}}$---we write $Q_{\mathrm{sc}}$ for the corresponding quadratic form on $\Lambda_{\mathrm{sc}}$;
	\item $\nu_+$ is a morphism $\Lambda \rightarrow \Pic^{\super}$ which is $\omega$-monoidal with respect to $b$ and commutes with the commutativity constraint up to the bilinear form $(-1)^{b}$;
	\item $\varphi$ is an isomorphism between the restriction of $\nu_+$ to $\Lambda_{\mathrm{sc}}$ and $\nu_{Q_{\mathrm{sc}}, +}$ as $\omega$-monoidal morphisms.
\end{enumerate}

The relation between $\vartheta_{G, +}^{\super}(\Lambda)$ and the Picard groupoid $\vartheta_G^{\super}(\Lambda)$ defined in \S\ref{void-theta-data-super} is as follows. Given a \emph{$\vartheta$-characteristic} $\omega^{1/2}$, \emph{i.e.}~a line bundle over $X$ equipped with an isomorphism $(\omega^{1/2})^{\otimes 2}\cong \omega_{X}$, we obtain an isomorphism (called the \emph{$\omega^{1/2}$-shift}):
\begin{equation}
\label{eq-super-theta-data-omega-shift}
\vartheta_G^{\super}(\Lambda) \xrightarrow{\simeq} \vartheta_{G, +}^{\super}(\Lambda),\quad (b, \nu, \varphi) \mapsto (b, \nu_+, \varphi),
\end{equation}
where $\nu_+$ is the $\omega$-monoidal morphism $\lambda \mapsto \nu(\lambda)\otimes (\omega^{1/2})^{b(\lambda, \lambda)}$ and $\nu$ is the monoidal morphism $\Lambda \rightarrow \Pic^{\super}$ corresponding to $\widetilde{\Lambda}$, \emph{i.e.}~\eqref{eq-central-extension-as-monoidal-morphism-super}.
\end{void}

\begin{thm}
\label{thm-factorization-super-central-extension-classification}
Let $G$ be a reductive group $X$-scheme. The following Picard groupoids are canonically equivalent:
\begin{enumerate}
	\item factorization super central extensions of $\cal LG$ by $\mathbb G_{m, \Ran}$ with tame commutator;
	\item factorization super line bundles over $\Gr_G$;
	\item $\vartheta_{G, +}^{\super}(\Lambda)$---if $G$ is equipped with a maximal torus $T$ with sheaf of cocharacters $\Lambda$.
	\item rigidified sections of $\Ktheory{}_{[1, 2]}^{\super}$ over the Zariski classifying stack of $G$---if $X$ is equipped with a $\vartheta$-characteristic.
\end{enumerate}
\end{thm}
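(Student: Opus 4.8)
The plan is to construct functors $(1) \to (2) \to (3)$ among the Picard groupoids in the statement and to prove that the composite --- hence each individual functor --- is an equivalence; item $(4)$ will then be deduced formally. The functor $(2) \to (3)$ is already known to be fully faithful by \cite{MR4322626}, and this is the one place a maximal torus is needed; so it suffices to show that $(1) \to (3)$ is an equivalence, since together with full faithfulness of $(2) \to (3)$ this forces $(2) \to (3)$ to be essentially surjective --- hence an equivalence --- and then $(1) \to (2)$ is an equivalence as well. For the comparison with $(4)$, I will use that a choice of $\vartheta$-characteristic $\omega^{1/2}$ provides the $\omega^{1/2}$-shift isomorphism \eqref{eq-super-theta-data-omega-shift} between $\vartheta_{G, +}^{\super}(\Lambda)$ and $\vartheta_G^{\super}(\Lambda)$, and that Theorem \ref{thm-classification-super}, applied over the base scheme $X$ (which is regular of finite type over $k$), identifies $\vartheta_G^{\super}(\Lambda)$ with $\underline{\Gamma}{}_e(BG, \underline K{}_{[1, 2]}^{\super})$; passing to global sections over $X$ yields $(3) \cong (4)$.

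The functor $(1) \to (2)$ will be defined by descent along $\cal LG \to \Gr_G$, using that this map exhibits $\Gr_G$ as the \'etale quotient $\cal LG/\cal L^+G$ \cite[Proposition 3.1.9]{MR3752460}: a factorization super central extension $\widetilde{\cal LG}$, regarded as a multiplicative factorization super line bundle over $\cal LG$, descends to $\Gr_G$ precisely once its restriction to $\cal L^+G$ is canonically trivialized as a multiplicative factorization super line bundle, compatibly with right translation. I expect the tame commutator hypothesis to be exactly the condition making this trivialization available: using the presentation of $\cal L^+G$ as an inverse limit of the smooth affine group schemes $R_{\Gamma^{(n)}}G$ with iterated vector-group kernels (\S\ref{void-arc-group-structure}), the trivialization problem reduces through the reductive quotients to the radical $\Rad(G)$, and tameness of the commutator of the central extension induced on $\cal L\Rad(G)$ is what permits the canonical trivialization --- the failure of tameness obstructs descent, as witnessed by Example \ref{eg-central-extension-wild} for $\Gr_{\mathbb G_m}$. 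The functor $(2) \to (3)$ will extract the triple $(b, \nu_+, \varphi)$ by restricting a factorization super line bundle over $\Gr_G$ along the combinatorial loci $\iota_{\lambda^I} : X^I \to \Gr_T \to \Gr_G$: restriction along a single cocharacter $\lambda$ produces $\nu_+(\lambda)$, with the $\omega_X^{b(\lambda,\lambda)}$-twist inherent in $\omega$-monoidality arising from the local structure of $\Gr_T$ near these loci; the form $b$ records the interaction of the factorization isomorphisms with the commutativity constraint; and $\varphi$ comes from functoriality along $G_{\mathrm{sc}} \to G$ together with \eqref{eq-canonical-theta-data-brylinski-deligne}.

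To prove $(1) \cong (3)$ I would first treat $G = \mathbb G_m$, classifying factorization super central extensions of $\cal L\mathbb G_m$ with tame commutator by combining perfectness of the Contou-Carr\`ere pairing (\S\ref{sec-contou-carrere}) with the structure of $\cal L^+\mathbb G_m$ and $\Gr_{\mathbb G_m}$, the Tate central extension \eqref{eq-tate-central-extension} furnishing the distinguished generator; this should match $\vartheta_{\mathbb G_m, +}^{\super}(\mathbb Z)$. The case of a general $X$-torus $T$ would then follow by bilinearity, reducing pairings to integral powers of the Contou-Carr\`ere symbol as in \S\ref{void-definition-tame}. For a general reductive $G$ with a maximal torus I would follow the pattern of \S\ref{sec-proof-classifcation-super}: choose a central extension $1 \to T_1 \to \widetilde G \to G \to 1$ with $\pi_1\widetilde G$ the cocharacters of a quasi-trivial torus \cite[Proposition 3.2]{MR3085137}, descend along the simplicial totalization of $B(\widetilde G \times T_1^{\times n})$ using the cases already established for those groups and the input from $G_{\mathrm{sc}}$, and invoke that Weyl-invariance of a bilinear form on $\Lambda$ is detected on $\widetilde{\Lambda}$ to cut the image down to $\vartheta_{G, +}^{\super}(\Lambda)$. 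As flagged in the introduction, the equivalences $(1) \cong (2)$ and $(2) \cong (3)$ are established simultaneously, each drawing on special cases of the other.

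The step I expect to be the main obstacle is the essential surjectivity of $(1) \to (2)$ --- equivalently, that every factorization super line bundle over $\Gr_G$ carries a canonical multiplicative structure over $\cal LG$ with tame commutator. This is not automatic for non-simply-connected $G$ in the absence of a factorization structure, so the argument must genuinely exploit the Picard groupoid structure of $(1)$ and the $z$-extension bootstrapping, and it must operate over a possibly non-reduced base, where \'etale descent for $(1)$ is not available a priori --- the same kind of difficulty that forces the flasque-resolution detour in \S\ref{sec-proof-classifcation-super}. A secondary point is to confirm that the central extensions produced by the essential-surjectivity argument indeed have tame commutator: this is automatic when $\mathrm{char}(k) = 0$ by Proposition \ref{prop-characteristic-zero-implies-tame}, but in positive characteristic it requires checking that the construction lands in the tame part, consistently with Example \ref{eg-central-extension-wild}.
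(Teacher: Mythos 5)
Your high-level architecture matches the paper's: build $(1)\to(2)\to(3)$, show the composite is an equivalence while $(2)\to(3)$ is fully faithful by \cite{MR4322626}, and obtain $(4)$ from Theorem \ref{thm-classification-super} together with the $\omega^{1/2}$-shift \eqref{eq-super-theta-data-omega-shift}. But two of your key steps do not survive contact with the details.

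First, you have the role of the tame commutator hypothesis backwards. The descent functor $(1)\to(2)$ exists \emph{unconditionally}: the paper proves (Proposition \ref{prop-arc-group-triviality}) that \emph{every} factorization super central extension of $\cal L^+G$ is canonically trivial, for any smooth affine group scheme with connected fibers, by an argument with extension of line bundles across codimension-$2$ loci in $X^I$ --- no tameness enters. In particular the non-tame extension of Example \ref{eg-central-extension-wild} descends perfectly well to a factorization line bundle on $\Gr_{\mathbb G_m}$ (its cocycle is already trivial on $\cal L^+\mathbb G_m\times\cal L^+\mathbb G_m$), so tameness is not "the condition making the trivialization available" and its failure does not "obstruct descent." Where tameness actually bites is in the \emph{inverse} direction: when one reconstructs the multiplicative structure from a factorization line bundle $\cal L$ on $\Gr_T$, the $\cal L^+T$-equivariance structure is a pairing $\cal L^+T\times\Gr_T\to\mathbb G_m$, and only the tameness condition forces it to be $\langle\cdot,\cdot\rangle_{b_1}$ for an integral bilinear form $b_1$, which the convolution compatibility then pins down to $b_1=b$ (Proposition \ref{prop-torus-loop-group-to-grassmannian}). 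Without restricting to the tame part, $(1)\to(2)$ fails to be an equivalence because of exactly the Frobenius-twisted equivariance structures of Example \ref{eg-central-extension-wild}.

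Second, your reduction from general reductive $G$ to the torus and simply connected cases does not close. Transplanting the flasque-resolution argument of \S\ref{sec-proof-classifcation-super} replaces $G$ by a $z$-extension $\widetilde G$ with $\pi_1\widetilde G$ quasi-trivial, but such a $\widetilde G$ (e.g.\ $\GL_n$) is still a general reductive group, not a product of a torus with a simply connected group, so "the cases already established" do not cover it and the induction has no base. The paper's actual mechanism is different: the covering $1\to T_{\mathrm{sc}}\to G_{\mathrm{sc}}\rtimes T\to G\to 1$ with $T_{\mathrm{sc}}$ embedded anti-diagonally, together with surjectivity of $\cal L(G_{\mathrm{sc}}\rtimes T)\to\cal LG$ in the fpqc-plus-proper topology, the fact that factorization super central extensions of $\cal LG_{\mathrm{sc}}\rtimes\cal LT$ are just \emph{pairs} because the conjugation action lifts uniquely (Proposition \ref{prop-semidirect-central-extension}, resting on the rigidity of line bundles on $\Gr_{G_{\mathrm{sc}}}$ fiberwise), and the translation of the normality condition on the splitting over $\cal LT_{\mathrm{sc}}$ into Weyl-invariance of $b$ via the commutator computation of Lemma \ref{lem-adjoint-equivariance-pairing} (Lemma \ref{lem-normality-explicit}). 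This semidirect-product descent is the essential missing idea in your plan; without it, the essential surjectivity of $(1)\to(2)\to(3)$ that you correctly flag as the main obstacle is not addressed. A smaller omission: to state $(1)\cong(4)$ without assuming a maximal torus exists, one must check the composite is independent of the choice of $(T,B)$ and glue over an \'etale cover, which is the content of \S\ref{sec-poor-transgression}.
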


\begin{void}
The equivalences of Theorem \ref{thm-factorization-super-central-extension-classification} are constructed in several stages. We briefly indicate the steps and explain where prior works are used.

First, we prove that factorization super central extensions of $\cal L^+G$ by $\mathbb G_{m, \Ran}$ are canonically trivial (Proposition \ref{prop-arc-group-triviality}). By descent, we obtain the functor (1) $\rightarrow$ (2).

Next, the equivalence (2) $\cong$ (3) is essentially known when $G$ is a torus or a semisimple, simply connected group scheme. The torus case is treated in \cite{MR4322626} (using a substantial theorem of \cite{MR4198527}). The simply connected case reduces to Falting \cite{MR1961134}.

One may then define the functor (2) $\rightarrow$ (3) for any reductive group scheme $G$, by appealing to functoriality with respect to the commutative diagram:
$$
\begin{tikzcd}
	T_{\mathrm{sc}} \ar[r]\ar[d] & G_{\mathrm{sc}} \ar[d] \\
	T \ar[r] & G
\end{tikzcd}
$$

We then proceed as follows. We first prove ``by hand'' that (1) $\rightarrow$ (2) is an equivalence for $G$ a torus or a semisimple, simply connected group scheme. Then we prove that, for any reductive group scheme $G$, the composition (1) $\rightarrow$ (2) $\rightarrow$ (3) is an equivalence, whereas the second functor is fully faithful. Here, we use an idea of Finkelberg--Lysenko \cite{MR2684259}, an idea of Gaitsgory \cite{MR4117995}, and an argument from \cite{MR4322626}.

The equivalences $(1) \cong (2) \cong (3)$ are completed in Proposition \ref{prop-super-central-extension-classification}.

Finally, the equivalence (3) $\cong$ (4) is defined by combining Theorem \ref{thm-classification-super} with the $\omega^{1/2}$-shift \eqref{eq-super-theta-data-omega-shift}. The composed functor (1) $\rightarrow $(2) $\rightarrow$ (3) $\rightarrow$ (4) thus \emph{a priori} depends on a maximal torus $T$, but it shall follow from the construction that this is not the case. By \'etale descent, we obtain the equivalence (1) $\cong$ (4) without assuming the existence of a maximal torus.
\end{void}

\begin{cor}
\label{cor-factorization-central-extension-classification}
Let $G$ be a reductive group $X$-scheme. The following Picard groupoids are canonically equivalent:
\begin{enumerate}
	\item factorization central extensions of $\cal LG$ by $\mathbb G_{m, \Ran}$ with tame commutator;
	\item factorization line bundles over $\Gr_G$;
	\item $\vartheta_{G, +}(\Lambda)$, if $G$ is equipped with a maximal torus $T$ with sheaf of cocharacters $\Lambda$;
	\item central extensions of $G$ by $\Ktheory{}_2$ on the big Zariski site of $X$.
\end{enumerate}
\end{cor}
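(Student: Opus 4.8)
The plan is to obtain Corollary~\ref{cor-factorization-central-extension-classification} from Theorem~\ref{thm-factorization-super-central-extension-classification} by restricting every equivalence there to the ``trivially graded'' part. The key point is that each of the four Picard groupoids of Theorem~\ref{thm-factorization-super-central-extension-classification} carries a canonical morphism to $\underline{\Hom}(\pi_1G, \mathbb Z/2)$ recording a $\mathbb Z/2$-grading, and the four Picard groupoids of Corollary~\ref{cor-factorization-central-extension-classification} are precisely the fibres of these morphisms over the zero section. For (1) and (2) this is immediate from the definitions: a factorization super line bundle over $\Gr_G$, or a factorization super central extension of $\cal LG$, assigns to each connected component a parity in $\mathbb Z/2$, and compatibility with the multiplicative and factorization structures forces this parity to be a factorization homomorphism out of $\pi_0\Gr_G \cong \pi_0\cal LG \cong \pi_1G$; the plain objects are by definition those for which this homomorphism vanishes (a super line bundle with zero grading being canonically an ordinary line bundle). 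For (3), the morphism $\vartheta^{\super}_{G,+}(\Lambda) \to \underline{\Hom}(\pi_1G, \mathbb Z/2)$ sends $(b, \nu_+, \varphi)$ to $\epsilon(\lambda) := b(\lambda, \lambda)\bmod 2$ (the ``$+$''-analogue of \eqref{eq-classification-super-fiber-sequence}), and its fibre over $0$ is by definition $\vartheta_{G, +}(\Lambda)$, the subgroupoid on which $b$ equals $b_Q$ for a Weyl-invariant integral quadratic form $Q$ on all of $\Lambda$. For (4), central extensions of $G$ by $\underline K{}_2$ on the big Zariski site of $X$ form the Picard groupoid $\Gamma_e(BG, B^2\underline K{}_2)$, which by the bottom cofibre sequence of \eqref{eq-super-k-spectrum-fiber-sequences} is the fibre over $0$ of the morphism $\Gamma_e(BG, \underline K{}_{[1,2]}^{\super}) \to \Gamma_e(BG, B(\underline K{}_1/2)) \cong \underline{\Hom}(\pi_1G, \mathbb Z/2)$, using \eqref{eq-character-classification} and that $\pi_1G$ is Zariski-locally a finite free $\mathbb Z$-module.

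It then remains to check that the equivalences furnished by Theorem~\ref{thm-factorization-super-central-extension-classification}, and the equivalence $\Gamma_e(BG, \underline K{}_{[1,2]}^{\super}) \cong \vartheta^{\super}_{G,+}(\Lambda)$ underlying its step (3)$\cong$(4), intertwine the four grading morphisms to $\underline{\Hom}(\pi_1G, \mathbb Z/2)$. For the chain (1)$\to$(2)$\to$(3) this is built into the constructions of \S\ref{sect-factorization-proofs}: the parity of a factorization super central extension of $\cal LG$ passes to the component-wise parity of the associated super line bundle on $\Gr_G$, which is in turn the grading of the monoidal morphism $\nu_+ : \Lambda \to \Pic^{\super}$, namely $\epsilon$. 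For the step (3)$\cong$(4) it follows from the compatibility square \eqref{eq-classification-super-comparison-with-brylinski-deligne} of Theorem~\ref{thm-classification-super}, which already matches the inclusion $\Gamma_e(BG, B^2\underline K{}_2) \subset \Gamma_e(BG, \underline K{}_{[1,2]}^{\super})$ with $\vartheta_G(\Lambda) \subset \vartheta^{\super}_G(\Lambda)$, together with the observation that the $\omega^{1/2}$-shift \eqref{eq-super-theta-data-omega-shift} preserves $\epsilon$. Passing to fibres over $0$ in all four therefore yields a chain of equivalences $(1) \cong (2) \cong (3) \cong (4)$, in which the last one restricts the Brylinski--Deligne equivalence $\Gamma_e(BG, B^2\underline K{}_2) \cong \vartheta_G(\Lambda)$ of \cite[Theorem~7.2]{MR1896177}, post-composed with a shift $\vartheta_G(\Lambda) \cong \vartheta_{G, +}(\Lambda)$.

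The genuinely new content — and the step where I expect to have to be careful — is that this chain is \emph{canonical}, i.e.\ independent of any auxiliary $\vartheta$-characteristic. In Theorem~\ref{thm-factorization-super-central-extension-classification} the equivalences $(1)\cong(2)\cong(3)$ are already $\vartheta$-characteristic-free, and only $(3)\cong(4)$ invokes $\omega^{1/2}$, through the shift $\nu_+(\lambda) = \nu(\lambda) \otimes (\omega^{1/2})^{b(\lambda, \lambda)}$ of \eqref{eq-super-theta-data-omega-shift}. But on the fibre over $0$ the form $b$ equals $b_Q$ for a quadratic form $Q$ on $\Lambda$, so $b(\lambda, \lambda) = 2Q(\lambda)$ is even and $(\omega^{1/2})^{b(\lambda, \lambda)} = \omega_X^{Q(\lambda)}$ is an honest line bundle; hence the restricted shift $\vartheta_G(\Lambda) \xrightarrow{\simeq} \vartheta_{G, +}(\Lambda)$, $\lambda \mapsto \nu(\lambda) \otimes \omega_X^{Q(\lambda)}$, requires no square root of $\omega_X$, and the whole chain becomes canonical. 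Finally, the maximal-torus hypothesis is needed only to bring in description (3): the equivalence $(1)\cong(2)\cong(4)$ holds for an arbitrary reductive group $X$-scheme by the same étale-descent argument as in the proof of Theorem~\ref{thm-factorization-super-central-extension-classification}, and when a maximal torus $T$ with cocharacter lattice $\Lambda$ exists the common value is identified with $\vartheta_{G, +}(\Lambda)$ as above.
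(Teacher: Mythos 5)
Your proposal is correct and follows essentially the same route as the paper: the paper's proof likewise equips each of the four Picard groupoids of Theorem \ref{thm-factorization-super-central-extension-classification} with its canonical grading functor to $\Hom(\pi_1G, \mathbb Z/2)$ (parity for (1) and (2), the second map in \eqref{eq-classification-super-fiber-sequence} for (3), the lower horizontal functor in \eqref{eq-mod-2-compatibility-with-grading} for (4)), observes that the equivalences intertwine them, and passes to fibres over $0$. Your closing point — that on the fibre $b(\lambda,\lambda)$ is even, so the restricted shift \eqref{eq-super-theta-data-omega-shift} needs no $\vartheta$-characteristic — is exactly the paper's parenthetical remark.
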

\begin{proof}
Each Picard groupoid in Theorem \ref{thm-factorization-super-central-extension-classification} admits a canonical functor to $\Hom(\pi_1G, \mathbb Z/2)$: for the Picard groupoids (1), (2), these are the functors remembering the grading; for (3), this is the second functor in \eqref{eq-classification-super-fiber-sequence}; for (4), this is the lower horizontal functor in \eqref{eq-mod-2-compatibility-with-grading}.

The equivalences of Theorem \ref{thm-factorization-super-central-extension-classification} commute with these functors to $\Hom(\pi_1G, \mathbb Z/2)$. By restricting them to the fibers, we obtain the corollary. (Note that the restriction of \eqref{eq-super-theta-data-omega-shift} is defined without the choice of $\omega^{1/2}$.)
\end{proof}

\section{Proofs}
\label{sect-factorization-proofs}

The goal of this section is to prove Theorem \ref{thm-factorization-super-central-extension-classification}. We begin in \S\ref{sec-triviality-arc} by constructing a functor from factorization super central extensions of $\cal LG$ to factorization super line bundles over $\Gr_G$. The subsections \S\ref{sec-classification-tori}, \S\ref{sec-classification-simply-connected}, \S\ref{sec-classification-reductive} prove the equivalences (1) $\cong$ (2) $\cong$ (3) of Theorem \ref{thm-factorization-super-central-extension-classification} for tori, simply connected groups, respectively all reductive groups. In \S\ref{sec-poor-transgression}, we show that the equivalence (1) $\cong$ (4) obtained by combining the equivalence (1) $\cong$ (3) and Theorem \ref{thm-classification-super} is independent of the choice of a maximal torus.

We remain in the context of the previous section: we fix a ground field $k$ and a smooth curve $X$.

\subsection{Triviality over arc groups}
\label{sec-triviality-arc}

\begin{void}
In this subsection, we let $G$ denote a smooth affine group $X$-scheme with connected geometric fibers. Our goal is to prove the following result.
\end{void}

\begin{prop}
\label{prop-arc-group-triviality}
Any factorization super central extension of $\cal L^+G$ by $\mathbb G_{m, \Ran}$ is canonically trivial.
\end{prop}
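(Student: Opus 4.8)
The plan is first to discard the $\mathbb Z/2$-grading, and then to split the resulting factorization central extension by peeling off the pro-unipotent part of the arc group and reducing to a central extension on a jet group. For the grading: in the presentation $\cal L^+_{x^I}G \simeq \lim_n R_{\Gamma^{(n)}}G$ of \S\ref{void-arc-group-structure}, each $R_{\Gamma^{(n)}}G$ is a smooth affine $S$-group with geometrically connected fibers (it is the Weil restriction along a finite locally free scheme of a smooth group with connected fibers), and the transition maps are surjective with vector-group kernels; hence $\cal L^+G \rightarrow \Ran$ has geometrically connected fibers, so the grading homomorphism $\cal L^+G \rightarrow \underline{\mathbb Z}/2$ vanishes. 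We are reduced to proving that every factorization central extension $1 \rightarrow \mathbb G_{m, \Ran} \rightarrow \widetilde{\cal H} \rightarrow \cal L^+G \rightarrow 1$ splits, functorially in $\widetilde{\cal H}$.

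Next I would restrict $\widetilde{\cal H}$ to the congruence subgroup $\cal L^{\ge 1}G$, the kernel of the evaluation $\cal L^+G \twoheadrightarrow \cal L^+G/\cal L^{\ge 1}G$, which is a factorization group presheaf that is an inverse limit of iterated extensions of vector-group schemes of the form $\fr g \otimes (\cal I^{n+1}/\cal I^{n+2})$ (\S\ref{void-arc-group-structure}). For a vector group $V$ over a base one has $\underline{\Hom}(V, \mathbb G_m) = \underline{\Ext}^1(V, \mathbb G_m) = 0$, so any central extension of $V$ by $\mathbb G_m$ is canonically and \emph{uniquely} trivial; inducting along the tower and passing to the limit, the restriction $\widetilde{\cal H}|_{\cal L^{\ge 1}G}$ acquires a unique trivialization. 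Uniqueness forces this trivialization to be compatible with the factorization isomorphisms and with the structure maps $X^I \rightarrow X^J$, so $\widetilde{\cal H}$ descends to a factorization central extension $\widetilde{\cal J}$ of $\cal L^+G/\cal L^{\ge 1}G$ by $\mathbb G_{m, \Ran}$ (whose restriction to each $X^I$ is a central extension of the jet group $R_{\Gamma_{x^I}}G$); it remains to split $\widetilde{\cal J}$ canonically and compatibly with factorization.

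The crux is therefore this residual extension on the jet group. Over the open locus $U^I \subset X^I$ where the points $x^i$ are pairwise distinct, $\cal L^+G/\cal L^{\ge 1}G \cong \prod_{i \in I} R_{\Gamma_{x^i}}G$ and $\widetilde{\cal J}$ is the Baer sum of the pullbacks of $E := \widetilde{\cal J}|_{X^{\{1\}}}$, a central extension of the smooth affine group $X$-scheme $G$ by $\mathbb G_{m, X}$. One must (i) trivialize $E$ canonically and (ii) verify that the resulting product trivializations extend across the diagonals of all $X^I$, compatibly --- (ii) being once again a matter of uniqueness of vector-group trivializations together with the coherence of the factorization data. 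Part (i) is the main obstacle, since a central extension of $G$ over a curve by $\mathbb G_m$ need not be trivial (e.g. $\GL_{2, X}$ over $\PGL_{2, X}$): here one must use the last piece of factorization datum, namely that $\widetilde{\cal J}$ over $X^{\{1,2\}}$ restricts on $U^{\{1,2\}}$ to the Baer sum of the two pullbacks of $E$ while over the diagonal the ambient group degenerates from $G \times G$ to a first-jet group (an extension of $G$ by $\fr g \otimes \omega_X$), and this compatibility rigidifies $E$ to the trivial extension. I would establish (i) by reducing to the constituents: $G$ a torus (the torus case of \cite{MR4322626}), $G$ semisimple and simply connected (triviality by Faltings \cite{MR1961134}), $G$ reductive (via the radical and the simply connected cover of $G_\der$, using functoriality along $T_{\mathrm{sc}} \hookrightarrow G_{\mathrm{sc}}$ and $T \hookrightarrow G$), and finally a general smooth affine connected $G$ (filtering further by the unipotent radical, whose arc group again contributes only vector-group pieces, and the reductive quotient). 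Assembling the trivializations over the $X^I$ via $\Ran = \colim_I X^I$, and observing that every choice above was forced, yields the canonical splitting.
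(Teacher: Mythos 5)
Your scaffolding agrees with the paper's proof: the grading is killed by geometric connectedness of $\cal L^+_{x^I}G \rightarrow S$; the pro-unipotent part is split off uniquely by vector-group arguments (the paper's Lemma \ref{lem-normality-descent-to-weil-restriction}, which needs $S$ locally Noetherian and normal --- your blanket claim $\underline{\Ext}^1(V,\mathbb G_m)=0$ ``over a base'' is too strong, but the relevant base $X^I$ is smooth, so this is harmless); and the crux is correctly located in the residual central extension $E$ of $G$ by $\mathbb G_{m,X}$ over $X^{\{1\}}$, which is not trivial for free.

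The gap is in your execution of step (i). You name the right mechanism (``the factorization datum at the diagonal rigidifies $E$'') but then propose to \emph{prove} it by reduction to constituents (torus, simply connected, reductive, general), and that reduction cannot work: your own example $\GL_{2,X}\rightarrow\PGL_{2,X}$ pulls back to the \emph{trivial} extension of $\SL_2$ and of a maximal torus, so triviality of the restrictions to $T$ and $G_{\mathrm{sc}}$ does not trivialize $E$; moreover the results you cite (Faltings, the torus case of \cite{MR4322626}) concern line bundles on $\Gr_G$, not central extensions of the finite-dimensional group $G$ over a curve, and a general smooth affine $G$ over $X$ need not even admit a unipotent-radical filtration. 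The paper's argument is uniform and uses no case analysis: the off-diagonal factorization isomorphism $\cal G_2 \cong \pr_1^*\cal G_1\otimes\pr_2^*\cal G_1$ over $X^{\{1,2\}}\setminus\Delta$ extends (by codimension reasons on the smooth scheme $R_\Gamma G$) to an isomorphism with $\pr_1^*\cal G_1\otimes\pr_2^*\cal G_1\otimes\cal O(n\Delta)$; restriction to the unit section forces $n=0$; and restricting the resulting isomorphism \eqref{eq-arc-group-two-copies-of-curve} along the diagonal --- where $x\sqcup x = x$ in $\Ran$, so $\cal G_2|_\Delta$ is again the pullback of $\cal G_1$ --- yields $\cal G_1\xrightarrow{\simeq}\cal G_1^{\otimes 2}$, i.e.\ the canonical trivialization of $E=\cal G_1$. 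Your proposal never produces this isomorphism $E\cong E^{\otimes 2}$, nor addresses the codimension-one ambiguity $\cal O(n\Delta)$ that must be killed before restricting to the diagonal. Separately, your step (ii) for $|I|\ge 3$ is not ``vector-group uniqueness'' but a Hartogs-type extension of the trivialization across the loci of higher collision, which have codimension $\ge 2$ in $X^I$.
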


\begin{void}
Consider an $S$-point $x^I$ of $\Ran$. Recall that the restriction $\cal L_{x^I}^+G$ is an extension of $R_{\Gamma}G$ by $\cal L_{x^I}^{\ge 1}G$ as a group $S$-scheme \eqref{eq-weil-restriction-sequence}.

We begin with a Lemma which allows us to reduce the problem to $R_{\Gamma}G$.
\end{void}

\begin{lem}
\label{lem-normality-descent-to-weil-restriction}
If $S$ is locally Noetherian and normal, then pullback along \eqref{eq-weil-restriction-sequence} defines an equivalence between the groupoid of central extensions of $R_{\Gamma}G$ by $\mathbb G_{m, S}$ and that of $\cal L_{x^I}^+G$.
\end{lem}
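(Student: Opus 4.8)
The plan is to reduce the statement to the vanishing of characters and of central extensions of the pro-unipotent group $N := \cal L^{\ge 1}_{x^I}G$. Write $E := \cal L^+_{x^I}G$ and $Q := R_{\Gamma}G$, so that \eqref{eq-weil-restriction-sequence} reads $1 \to N \to E \to Q \to 1$, and recall from \S\ref{void-arc-group-structure} that $N \cong \lim_n N_n$ with each $N_n$ a finite iterated extension of vector group $S$-schemes. I claim it suffices to prove: (i) $\Hom_{S\text{-gp}}(N, \mathbb G_{m, S}) = 0$; and (ii) the groupoid of central extensions of $N$ by $\mathbb G_{m, S}$ is trivial (one object, no nontrivial automorphisms). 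Granting these, any central extension $\widetilde E$ of $E$ by $\mathbb G_{m,S}$ restricts over $N$ to a central extension which splits by (ii), via a homomorphism $\sigma : N \to \widetilde E|_N$ that is unique by (i). One then checks that $\sigma(N)$ is normal in $\widetilde E$, forms the quotient $\widetilde E/\sigma(N)$ — a central extension of $Q$ by $\mathbb G_{m,S}$ — and observes that its pullback along $E \to Q$ recovers $\widetilde E$, so that $\widetilde E \mapsto \widetilde E/\sigma(N)$ is a quasi-inverse to the pullback functor. Full faithfulness is then formal: $\pi : E \to Q$ being an fppf epimorphism, pullback is injective on morphisms, and any morphism between two pulled-back extensions restricts over $N$ to an automorphism of the trivial central extension, hence to an element of $\Hom(N, \mathbb G_m) = 0$, so it descends to $Q$; one also uses $\Hom(Q, \mathbb G_m) \xrightarrow{\simeq} \Hom(E, \mathbb G_m)$, which follows from (i) and the sequence $1 \to N \to E \to Q \to 1$.

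The normality of $\sigma(N)$, i.e.\ that conjugation by $\widetilde E$ fixes $\sigma$, is where reducedness of $S$ is used: $\widetilde E$ is pro-smooth over the reduced scheme $S$, hence $\widetilde E$ and $\widetilde E \times_S N$ are reduced, so the two morphisms $\widetilde E \times_S N \to \widetilde E$ sending $(g, n)$ to $g\,\sigma(n)\,g^{-1}$ and to $\sigma(g n g^{-1})$ agree once they agree on $\overline k$-points over geometric points of $S$. Over such a point $N$ is pro-unipotent, so it has no nontrivial characters, and the difference of these two splittings vanishes.

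For (i): a homomorphism $N \to \mathbb G_{m,S}$ factors through some $N_n$, since $\mathbb G_m$ is finitely presented and the transition maps of $N = \lim_n N_n$ are affine; $N_n$ is smooth over the reduced base $S$, hence reduced and, Zariski-locally on $S$, isomorphic to an affine-space bundle $\mathbb A^d_S$ (torsors under vector group schemes being Zariski-locally trivial). Over a normal base the units of $\mathbb A^d$ are those of the base, so $\mathcal \cal O(N_n)^\times = \cal O(S)^\times$ by gluing, and every group-like unit is trivial. For (ii): reduce again to a finite level $N_n$; the underlying $\mathbb G_m$-torsor of a central extension lies in $\ker\big(e^* : \Pic(N_n) \to \Pic(S)\big)$ because of the identity section $e$, and this kernel vanishes by homotopy invariance of the Picard group over the normal Noetherian base $S$, applied inductively along the vector-group layers of $N_n$. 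Trivializing the torsor presents the extension as $N_n \times \mathbb G_m$ with group law twisted by a normalized $2$-cocycle $N_n \times_S N_n \to \mathbb G_m$, which is a rigidified unit on an affine-space bundle over $S$ and hence equal to $1$. So the extension is trivial, its automorphisms are $\Hom(N_n, \mathbb G_m) = 0$ by (i), and passing to the colimit over $n$ gives (ii).

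The main obstacle I anticipate is the equivariance of the canonical splitting $\sigma$: it is what legitimizes the quotient construction, and securing it forces the descent-to-geometric-points argument above, which is precisely where the reducedness half of the hypothesis is spent — the normality half being spent on homotopy invariance of $\Pic$ and of units in (i)--(ii). A secondary point requiring care is the passage between $N$ and its finite-level truncations $N_n$ for both $\Hom$ and $\CExt$, which relies on the standard finite-presentation arguments for inverse limits of schemes.
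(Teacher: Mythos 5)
Your proof is correct and follows essentially the same route as the paper's: reduce to the pro-unipotent kernel $\cal L^{\ge 1}_{x^I}G$, split the extension there using the absence of characters of vector groups into $\mathbb G_m$ over a reduced base together with homotopy invariance of $\Pic$ over a locally Noetherian normal base, and then pass to the quotient by the (normal) image of the splitting. The only cosmetic difference is the justification of equivariance of the splitting: the paper deduces it directly from its uniqueness, while you reduce to geometric points via reducedness of the pro-smooth total space --- these rest on the same inputs.
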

\begin{proof}
Given a central extension:
\begin{equation}
\label{eq-central-extension-arc-group-schematic}
1 \rightarrow \mathbb G_{m, S} \rightarrow \cal G \rightarrow \cal L_{x^I}^+G \rightarrow 1,
\end{equation}
we claim that \eqref{eq-central-extension-arc-group-schematic} admits a unique splitting over $\cal L_{x^I}^{\ge 1}G$, and its image is normal in $\cal G$. Then the association $\cal G \mapsto \cal G/\cal L_{x^I}^{\ge 1}G$ supplies the desired inverse functor.

Since $\cal L_{x^I}^{\ge 1}G$ is an iterated extension of vector group $S$-schemes, to show the existence and uniqueness of the splitting, it suffices to show that any central extension of $\mathbb G_{a, S}$ by $\mathbb G_{m, S}$ is canonically split.

This assertion follows from the observations below:
\begin{enumerate}
	\item any $S$-morphism $\mathbb G_{a, S} \rightarrow \mathbb G_{m, S}$ is trivial---this uses the hypothesis that $S$ is reduced;
	\item the pullback $\Pic(S) \rightarrow \Pic(\mathbb G_{a, S})$ is an equivalence---this uses the hypothesis that $S$ is locally Noetherian and normal \cite[Corollaire 21.4.13, p.361]{MR238860}.
\end{enumerate}

To prove that the resulting splitting $\cal L_{x^I}^{\ge 1}G \rightarrow \cal G$ has normal image, it suffices to show that it commutes with the conjugation action of $\cal L_{x^I}^+G$. However, this follows from the uniqueness of the splitting.
\end{proof}

\begin{proof}[Proof of Proposition \ref{prop-arc-group-triviality}]
For an integer $n\ge 1$, we denote by $\Ran^{\le n} \subset \Ran$ the subfunctor whose $S$-points are finite subsets of $\Maps(S, X)$ of cardinality $\le 2$.

Given a factorization super central extension:
\begin{equation}
\label{eq-factorization-super-extension-arc-group}
1 \rightarrow \mathbb G_{m, \Ran} \rightarrow \cal G \rightarrow \cal L^+G \rightarrow 1,
\end{equation}
we first observe that ``super'' is redundant since the group scheme $\cal L_{x^I}^+G \rightarrow S$ for any $S$-point $x^I$ of $\Ran$ has connected geometric fibers. Next, we claim that a trivialization of \eqref{eq-factorization-super-extension-arc-group} over $\Ran^{\le 2}$ compatible with the factorization morphism \eqref{eq-central-extension-factorization} for $|I| = |J| = 1$ uniquely extends to a trivialization of \eqref{eq-factorization-super-extension-arc-group} over $\Ran$.

For a nonempty finite set $I$, consider the tautological $X^I$-point $x^I$ of $\Ran$ (Remark \ref{rem-ran-space-colimit-presentation}). Denote by $U\subset X^I$ the open subset where $\Gamma_{x^{i_1}} \cap \Gamma_{x^{i_2}} \neq \emptyset$ for at most one pair of distinct elements $i_1, i_2 \in I$. Let $Z \subset X^I$ be its complement, a closed subset of codimension $\ge 2$. (It is empty if $|I|\le 2$.) The induced $U$-point $x^I|_U$ is a disjoint union of $x^i$ (for $i\neq i_1, i_2$) with $x^{\{i_1, i_2\}}$. The factorization morphism for $\cal G$ and its trivialization over $\Ran^{\le 2}$ define a trivialization of $\cal G_{x^I}$ over $U\subset X^I$.

Since $X^I$ is smooth, $\cal G_{x^I}$ is pulled back from a central extension of $R_{\Gamma}G$ by $\mathbb G_{m, X^I}$ (Lemma \ref{lem-normality-descent-to-weil-restriction}). Since $R_{\Gamma}G$ is also smooth, the trivialization of $\cal G_{x^I}$ extends uniquely along $U\subset X^I$ \cite[031T]{stacks-project}. Given a surjection of nonempty finite sets $I \twoheadrightarrow J$, we need to argue that the trivialization of $\cal G_{x^I}$ restricts to the trivialization of $\cal G_{x^J}$. This statement reduces to the case $|I| = |J| + 1$, where the diagonal $X^J \subset X^I$ intersects nontrivially with $U$. Since the two trivializations agree over $X^J\cap U$, they must agree over $X^J$.

Finally, we construct the trivialization of \eqref{eq-factorization-super-extension-arc-group} over $\Ran^{\le 2}$ compatible with factorization. Consider the tautological $X^{\{1, 2\}}$-point $x^{\{1, 2\}}$ of $\Ran$, with $U\subset X^{\{1, 2\}}$ the complement of the diagonal. The closed immersions $\Gamma_{x^i}\subset \Gamma_{x^{\{1, 2\}}}$ (for $i = 1, 2$) induce projection maps from $R_{\Gamma}G$ to $G$. Moreoever, the base change of $R_{\Gamma}G$ along the diagonal of $X^{\{1, 2\}}$ is an extension $\widetilde G$ of $G$ by a vector group scheme. These morphisms are summarized in the following diagram:
$$
\begin{tikzcd}
	\widetilde G \ar[r, "\Delta"]\ar[d] & R_{\Gamma}G \ar[r, shift left = 0.5ex, "\pr_1"]\ar[r, shift right = 0.5ex, swap, "\pr_2"]\ar[d] & G \ar[d] \\
	X \ar[r, "\Delta"] & X^{\{1, 2\}} \ar[r, shift left = 0.5ex, "\pr_1"]\ar[r, shift right = 0.5ex, swap, "\pr_2"] & X
\end{tikzcd}
$$
where both compositions in the upper row are the canonical surjection $\widetilde G \rightarrow G$.

The restriction of $\cal G$ along the tautological $X$-point of $\Ran$ is pulled back from a central extension of $G$ by $\mathbb G_{m, X}$, to be denoted by $\cal G_1$. Similarly, the restriction $\cal G_{x^{\{1, 2\}}}$ is pulled back from a central extension of $R_{\Gamma}G$, to be denoted by $\cal G_2$. By factorization, the restriction of $\cal G_2$ to $U\subset X^{\{1, 2\}}$ is identified with $\pr_1^*\cal G_1 \otimes \pr_2^*\cal G_1$. This identification extends uniquely to an isomorphism between $\cal G_2$ with $\pr_1^*\cal G_1 \otimes \pr_2^*\cal G_1 \otimes \cal O(n\Delta)$ as line bundles over $R_{\Gamma}G$, for some $n \in \mathbb Z$. Restriction to the unit section $e : X^{\{1, 2\}} \rightarrow R_{\Gamma}G$ tells us that $n = 0$, so we obtain an isomorphism of central extensions of $R_{\Gamma}G$ by $\mathbb G_{m, X^{\{1, 2\}}}$:
\begin{equation}
\label{eq-arc-group-two-copies-of-curve}
\cal G_2 \xrightarrow{\simeq} \pr_1^*\cal G_1 \otimes \pr_2^*\cal G_1.
\end{equation}
Restriction of \eqref{eq-arc-group-two-copies-of-curve} along the diagonal then yields an isomorphism $\cal G_1 \xrightarrow{\simeq} \cal G_1^{\otimes 2}$, \emph{i.e.}~a trivialization of $\cal G_1$. The trivialization of $\cal G_2$ is deduced from \eqref{eq-arc-group-two-copies-of-curve}, so it is automatically compatible with factorization.
\end{proof}

\begin{void}
\label{void-descent-from-loop-group}
Using Proposition \ref{prop-arc-group-triviality}, we construct a functor of Picard groupoids:
\begin{equation}
\label{eq-loop-group-descent-to-grassmannian}
	\Hom_{\fact}(\cal LG, \Pic^{\super}) \rightarrow \Gamma_{\fact}(\Gr_G, \Pic^{\super}),
\end{equation}
where the target consists of factorization super line bundles over $\Gr_G$.

Indeed, any factorization monoidal morphism $\cal LG \rightarrow \Pic^{\super}$ is trivial over $\cal L^+G$, so the monoidal structure yields its descent data to $\cal LG/\cal L^+G\cong \Gr_G$.

One can make a stronger statement: this monoidal structure yields descent data to the local Hecke stack $\Hec_G := \cal L^+G\backslash \cal LG/\cal L^+G$, and the resulting factorization super line bundle over $\Hec_G$ is compatible with the ``convolution structure'' of $\Hec_G$. To make this precise, we need the prestack:
$$
\Hec_G^{[2]} := \cal L^+G\backslash \cal LG\times^{\cal L^+G}\cal LG/\cal L^+G,
$$
equipped with three maps $p_1$, $m$, $p_2$ to $\Hec_G$: projection onto the first factor, multiplication, and projection onto the second factor. There is also a unit section $e : B(\cal L^+G) \rightarrow \Hec_G$, where $B$ stands for delooping relative to $\Ran$. A factorization super line bundle $\cal L$ over $\Hec_G$ is \emph{compatible with convolution} if there are additional isomorphisms:
\begin{align}
\label{eq-compatibility-with-convolution-product}
	\cal O_{B(\cal L^+G)} &\xrightarrow{\simeq} e^*\cal L; \\
\label{eq-compatibility-with-convolution-unit}
	(p_1)^*\cal L \otimes (p_2)^*\cal L &\xrightarrow{\simeq} m^*\cal L,
\end{align}
which satisfy the conditions of an associative algebra and commute with factorization.

Let $\Hom_{\fact}(\Hec_G, \Pic^{\super})$ denote the Picard groupoid of factorization super line bundles over $\Hec_G$ compatible with convolution. The descent procedure then yields an equivalence of Picard groupoids:
\begin{equation}
\label{eq-loop-group-descent-to-hecke}
	\Hom_{\fact}(\cal LG, \Pic^{\super}) \xrightarrow{\simeq} \Hom_{\fact}(\Hec_G, \Pic^{\super}).
\end{equation}
This assertion follows at once from two observations: the convolution structure on $\Hec_G$ is defined by the \v{C}ech nerve of $B(\cal L^+G) \rightarrow B(\cal LG)$ and any monoidal morphism $B(\cal L^+G) \rightarrow \Pic^{\super}$ compatible with factorization is canonically trivial (Proposition \ref{prop-arc-group-triviality}).
\end{void}

\subsection{Tori}
\label{sec-classification-tori}

\begin{void}
Let $\Lambda$ be an \'etale sheaf of finite free $\mathbb Z$-modules over $X$. Denote by $\vartheta^{\super}_+(\Lambda)$ the Picard groupoid of pairs $(b, F_+)$, where:
\begin{enumerate}
	\item $b$ is an integral symmetric bilinear form on $\Lambda$;
	\item $F_+ : \Lambda \rightarrow \Pic^{\super}$ is an $\omega$-monoidal morphism with respect to $b$ and commutes with the commutativity constraint up to the bilinear form $(-1)^b$.
\end{enumerate}

This is the special case of $\vartheta^{\super}_{G, +}(\Lambda)$ defined in \S\ref{void-super-theta-data-shifted}, for $G$ the $X$-torus $\Lambda\otimes\mathbb G_m$.
\end{void}

\begin{void}
\label{void-torus-grassmannian-subschemes}
Let $T$ be an $X$-torus with sheaf of cocharacters $\Lambda$. We shall define a functor from the Picard groupoid of factorization super line bundles over $\Gr_T$ to $\vartheta_+^{\super}(\Lambda)$:
\begin{equation}
\label{eq-torus-grassmannian-classification}
	\Gamma_{\fact}(\Gr_T, \Pic^{\super}) \rightarrow \vartheta_+^{\super}(\Lambda).
\end{equation}
This is a variant of \cite[\S3.10.7]{MR2058353}, where objects of $\vartheta_+^{\super}(\Lambda)$ are called \emph{$\vartheta$-data}.

For each $I$-tuple $\lambda^I = (\lambda_i)$ of elements of $\Lambda$, there is a closed immersion $\iota_{\lambda^I} : X^I \rightarrow \Gr_{T, X^I}$ sending an $S$-point $x^I = (x^i)$ of $X^I$ to the $T$-torsor $\cal O(\lambda_i \Gamma_{x^i})$ equipped with the canonical trivialization off $\Gamma_{x^I}$.

To define \eqref{eq-torus-grassmannian-classification}, we take a factorization super line bundle $\cal L$ over $\Gr_T$ and construct a pair $(b, F_+)$. Given $\lambda_1, \lambda_2 \in \Lambda$, the line bundle $(\iota_{\lambda_1, \lambda_2})^*\cal L$ is identified with $(\iota_{\lambda_1})^*\cal L \boxtimes (\iota_{\lambda_2})^*\cal L$ off the diagonal of $X^2$ by factorization---this identification extends to an isomorphism:
\begin{equation}
\label{eq-factorization-isomorphism-induced-map}
(\iota_{\lambda_1, \lambda_2})^*\cal L \cong (\iota_{\lambda_1})^*\cal L \otimes (\iota_{\lambda_2})^*\cal L \otimes \cal O_{X^2}(b(\lambda_1, \lambda_2)\Delta),
\end{equation}
for a uniquely defined integer $b(\lambda_1, \lambda_2)$. The associativity and unitality of the factorization isomorphism implies that $\lambda_1, \lambda_2\mapsto b(\lambda_1, \lambda_2)$ is a bilinear form. Commutativity of the factorization isomorphism implies that $b$ is symmetric and that $b(\lambda, \lambda)\tn{ mod }2$ agrees with the grading on $(\iota_{\lambda})^*\cal L$. The definition of $b$ is complete.

The second datum $F_+$ is set to be $F_+(\lambda) := (\iota_{\lambda})^*\cal L$, with $\omega$-monoidal structure given by restricting \eqref{eq-factorization-isomorphism-induced-map} along the diagonal. The fact that $F_+$ commutes with the brading up to the factor $(-1)^{b(\lambda_1, \lambda_2)}$ follows from the fact that the isomorphism $\cal O_{X^2}(\Delta)|_{\Delta} \cong \omega_X$ is equivariant against the exchange map $X^2\rightarrow X^2$, $(x^1, x^2)\mapsto (x^2, x^1)$ up to the factor $(-1)$.
\end{void}

\begin{prop}
\label{prop-tori-grassmannian-classification}
The functor \eqref{eq-torus-grassmannian-classification} is an equivalence of Picard groupoids.
\end{prop}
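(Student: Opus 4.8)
The plan is to exhibit an explicit inverse to \eqref{eq-torus-grassmannian-classification} by constructing, from a pair $(b, F_+) \in \vartheta_+^{\super}(\Lambda)$, a factorization super line bundle over $\Gr_T$, and then checking that the two constructions are mutually inverse. First I would reduce to the case where $\Lambda$ is constant. Since both sides of \eqref{eq-torus-grassmannian-classification} are sheaves of Picard groupoids in the \'etale topology on $X$ and the formation of $\Gr_T$ is compatible with \'etale base change, and since an \'etale sheaf of finite free $\mathbb Z$-modules is Zariski-locally a direct sum of sheaves of the form $(\mathbb Z/X')$ for $X' \rightarrow X$ finite \'etale, a descent argument lets me assume $\Lambda = \mathbb Z^r$. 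In that case $\Gr_T$ over the Ran space is, componentwise, built out of the $X^I$-points $\iota_{\lambda^I}$: the key geometric input is that the reduced locus of $\Gr_{\mathbb G_m}$ over $X^I$ is the union of the images of the $\iota_{\lambda^I}$, and more precisely that $\Gr_T$ is an inductive limit (over the combinatorial category of pairs $(I,\lambda^I)$ appearing in the proof of Proposition \ref{prop-characteristic-zero-implies-tame}, but now indexed by arbitrary $\lambda^I \in \Lambda^I$, with no reducedness hypothesis) glued along the factorization structure. Line bundles over each connected component $\Gr_T^{\lambda}$ are controlled by the determinant of the tautological modification, and a factorization super line bundle is determined by its restrictions along all the $\iota_{\lambda^I}$ together with the factorization isomorphisms.

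Next I would carry out the construction of the inverse functor. Given $(b, F_+)$, define a super line bundle $\cal M_{b,F_+}$ over $\Gr_T$ by declaring that its pullback along $\iota_{\lambda^I} : X^I \rightarrow \Gr_{T,X^I}$ is
$$
\iota_{\lambda^I}^* \cal M_{b, F_+} \;:=\; \bigboxtimes_{i\in I} F_+(\lambda_i) \;\otimes\; \bigotimes_{i<j} \cal O_{X^I}\!\big(b(\lambda_i, \lambda_j)\,\Delta_{ij}\big),
$$
with the $\mathbb Z/2$-grading on the $i$-th factor given by $b(\lambda_i,\lambda_i) \bmod 2$, and specify the factorization isomorphisms \eqref{eq-line-bundle-factorization-isomorphism} using the $\omega$-monoidal structure of $F_+$ together with the canonical identification $\cal O_{X^I}(\Delta_{ij})|_{\Delta_{ij}} \cong \omega_X$. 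The commutativity and associativity constraints \eqref{eq-factorization-commutativity}, \eqref{eq-factorization-associativity} translate exactly into: symmetry of $b$, associativity/unitality of the $\omega$-monoidal structure, and the sign compatibility (the exchange map on $X^2$ acts on $\cal O(\Delta)|_\Delta \cong \omega_X$ by $-1$, matching the clause in (2) that $F_+$ commutes with the braiding up to $(-1)^b$). The main thing to verify is that this assignment actually glues to a line bundle over the \emph{whole} of $\Gr_T$, not merely over the reduced locus: here I would use that $\Gr_T$ is ind-schematic and that, on each Schubert-type subscheme, a line bundle together with compatible trivialization data on a dense reduced subscheme extends uniquely — more robustly, I would identify $\cal M_{b,F_+}$ intrinsically with the super line bundle whose fiber at $(x^I, P, \alpha)$ is $\det\big(\cal L^{\otimes ?}\big)$ built from the Contou–Carr\`ere determinant of the modification, twisted by $\omega$-powers dictated by $b$, paralleling the construction of the Tate central extension in \S\ref{sec-contou-carrere}; this gives a definition valid on all test schemes.

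Finally I would check that $(b, F_+) \mapsto \cal M_{b, F_+}$ and \eqref{eq-torus-grassmannian-classification} are inverse. One direction is tautological from the construction: applying \eqref{eq-torus-grassmannian-classification} to $\cal M_{b, F_+}$ recovers $b$ from \eqref{eq-factorization-isomorphism-induced-map} and recovers $F_+(\lambda) = \iota_\lambda^*\cal M_{b,F_+}$ with its $\omega$-monoidal structure. For the other direction, given a factorization super line bundle $\cal L$ with associated $(b, F_+)$, I must produce an isomorphism $\cal M_{b,F_+} \xrightarrow{\simeq} \cal L$; by construction the two line bundles have canonically identified pullbacks along every $\iota_{\lambda^I}$, compatibly with factorization, so they agree on the reduced locus, and I extend the isomorphism over all of $\Gr_T$ using the intrinsic determinant-line description (both $\cal L$ and $\cal M_{b,F_+}$ are pulled back, component by component, from the appropriate power of the determinant line on the relevant Beilinson–Drinfeld Grassmannian, so an isomorphism on a dense reduced substack extends), and I check the extension is unique because $\cal O^\times(\Gr_T) $ reduces to $\cal O^\times(X)$ on each component. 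The step I expect to be the main obstacle is precisely this passage from the reduced/combinatorial locus to all of $\Gr_T$: because fibers of $\Gr_{\mathbb G_m}$ over points of $X$ are highly non-reduced (as emphasized in the introduction), I cannot simply say "a line bundle is determined by its restriction to field-valued points", and I will need either Tao's structural results on $\Gr_{\mathbb G_m}$ or, more self-containedly, the explicit determinant-line realization of both objects to pin down the extension. Everything else — the descent reduction to constant $\Lambda$, and the dictionary between the diagram axioms and the algebraic structure on $(b, F_+)$ — is routine bookkeeping of the kind already carried out in \S\ref{void-torus-grassmannian-subschemes}.
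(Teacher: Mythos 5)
The paper's own ``proof'' is a one-line citation: it asserts that the argument is identical to \cite[Proposition 1.4]{MR4322626}, whose proof in turn rests on a substantial structural theorem about $\Gr_{\mathbb G_m}$ from \cite{MR4198527}. Your proposal reconstructs the easy half of that argument correctly: the reduction to constant $\Lambda$, the formula for $\cal M_{b,F_+}$ on the combinatorial locus, and the dictionary between the factorization axioms and the data $(b, F_+)$ (symmetry of $b$, $\omega$-monoidality, the sign from $\cal O_{X^2}(\Delta)|_{\Delta}\cong\omega_X$) all match \S\ref{void-torus-grassmannian-subschemes} and \cite[\S3.10.7]{MR2058353}. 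You also correctly identify where the real difficulty lies.

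The gap is that neither of your two proposed ways past that difficulty works. (i) Tao's reducedness result \cite[Proposition 5.1.5]{tao2021mathrmgrg} is a characteristic-zero statement; the paper explicitly notes (in the remark following Corollary \ref{cor-contou-carrere-universal}) that the corresponding property of $\Gr_{\mathbb G_m}$ \emph{fails} in positive characteristic, whereas Proposition \ref{prop-tori-grassmannian-classification} must hold over any field --- indeed avoiding characteristic hypotheses is a main point of the paper. (ii) The ``intrinsic determinant-line realization'' is circular as stated: the determinant construction of \S\ref{sec-contou-carrere} produces only the specific factorization line bundles attached to the Tate form, and knowing that an \emph{arbitrary} factorization super line bundle over $\Gr_T$ (or an arbitrary pair $(b,F_+)$) admits such a realization is essentially equivalent to the essential surjectivity and full faithfulness you are trying to prove. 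Relatedly, your claim that $\cal O^{\times}(\Gr_T)$ reduces to $\cal O^{\times}(X)$ on each component is precisely the kind of nontrivial assertion about the highly non-reduced formal fibers of $\Gr_{\mathbb G_m}$ that cannot be taken for granted: the coordinate rings of these fibers are large local rings with many units, and showing that factorization-compatible units (and hence isomorphisms and gluings of factorization line bundles) are controlled by the reduced locus is the content of the theorem of \cite{MR4198527} that the cited proof depends on. As written, your argument proves the proposition only on the combinatorial sub-presheaf $\Gr_T^{\tn{comb}}$, not on $\Gr_T$ itself.
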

\begin{proof}
The proof is identical to that of \cite[Proposition 1.4]{MR4322626}.
\end{proof}

\begin{void}
\label{void-pairing-induced-factorization-line-bundle}
Let $T$, $T'$ be $X$-tori whose sheaves of cocharacters are $\Lambda$, respectively $\Lambda'$. Suppose that we are given a factorization bilinear pairing:
\begin{equation}
\label{eq-arbitrary-pairing-arc-against-grassmannian}
\langle \cdot, \cdot\rangle : \cal L^+T \otimes \Gr_{T'} \rightarrow \mathbb G_m.
\end{equation}

Delooping in the first variable and pulling back along the projection map $\Gr_{T} \rightarrow B(\cal L^+T)$, we obtain a morphism compatible with factorization:
\begin{equation}
\label{eq-contou-carrere-torus-line}
\langle\cdot, \cdot\rangle : \Gr_{T} \times_{\Ran} \Gr_{T'} \rightarrow B\mathbb G_m,
\end{equation}
or equivalently a factorization line bundle over $\Gr_{T\times T'}$.

Consider the factorization pairing $\langle\cdot,\cdot\rangle_b$ defined by a bilinear form $b : \Lambda\otimes\Lambda' \rightarrow \mathbb Z$ as in \S\ref{void-definition-tame}. The property of Contou-Carr\`ere symbol shows that $\langle\cdot,\cdot\rangle_b$ induces a pairing $\cal L^+T\otimes \Gr_{T'} \rightarrow \mathbb G_m$, hence a factorization line bundle $\cal O(b)$ over $\Gr_{T\times T'}$. We may calculate its image under the equivalence of Proposition \ref{prop-tori-grassmannian-classification}.
\end{void}

\begin{lem}
\label{lem-contou-carrere-line-calculation}
The factorization line bundle $\cal O(b)$ corresponds under the equivalence of Proposition \ref{prop-tori-grassmannian-classification} to the pair consisting of:
\begin{enumerate}
	\item the quadratic form $\Lambda\oplus\Lambda' \rightarrow \mathbb Z$, $(\lambda, \lambda') \mapsto b(\lambda, \lambda')$;
	\item the $\omega$-monoidal morphism $\Lambda \oplus\Lambda' \rightarrow \Pic$ with $(\lambda, \lambda') \mapsto \omega_X^{b(\lambda, \lambda')}$, whose $\omega$-monoidal structure is the isomorphism:
	$$
	(-1)^{b(\lambda_2, \lambda_1')}\cdot\id : \omega_X^{b(\lambda_1 + \lambda_2, \lambda_1' + \lambda'_2)} \xrightarrow{\simeq} \omega_X^{b(\lambda_1, \lambda_1')} \otimes \omega_X^{b(\lambda_2, \lambda'_2)} \otimes \omega_X^{b(\lambda_1, \lambda'_2) + b(\lambda_2, \lambda_1')},
	$$
	for any pair of elements $(\lambda_1, \lambda_1'), (\lambda_2, \lambda'_2) \in \Lambda \oplus\Lambda'$.
\end{enumerate}
\end{lem}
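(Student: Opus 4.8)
The plan is to make explicit the equivalence of Proposition~\ref{prop-tori-grassmannian-classification} on the factorization line bundle $\cal O(b)$ of \S\ref{void-pairing-induced-factorization-line-bundle}, using the explicit description of the Contou-Carr\`ere symbol on the combinatorial affine Grassmannian established in the proof of Proposition~\ref{prop-characteristic-zero-implies-tame}. Working \'etale-locally on $X$ we may assume $\Lambda$ and $\Lambda'$ are constant; since $\langle\cdot,\cdot\rangle_b$ — hence $\cal O(b)$ — is built bilinearly from the Contou-Carr\`ere symbol, and $\cal O(b)$ carries the structure of a biextension of $\Gr_T\times_{\Ran}\Gr_{T'}$ by $\mathbb G_m$, choosing bases of $\Lambda$ and $\Lambda'$ reduces us, after accounting for the biextension structure, to the universal case $\Lambda=\Lambda'=\mathbb Z$ with $b$ the standard pairing, so that $\cal O(b)$ is the factorization line bundle over $\Gr_{\mathbb G_m\times\mathbb G_m}$ associated to the Contou-Carr\`ere symbol itself.

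First I would compute the underlying morphism of $X$-stacks by pulling back along $\iota_{(\lambda,\lambda')}:X\to\Gr_{\mathbb G_m\times\mathbb G_m}$. By \eqref{eq-factorization-homomorphism-generator}, pairing against the point $\iota_{\lambda'}$ of the second factor produces the character of $\cal L^+\mathbb G_m$ given by $g\mapsto(g|_{\Gamma_x})^{\lambda'}$, i.e.~evaluation at the graph raised to the power $\lambda'$; the line bundle $\iota_{(\lambda,\lambda')}^*\cal O(b)$ is the bundle over $X$ induced, via this character, from the $\cal L^+\mathbb G_m$-torsor $\cal L\mathbb G_m\to\Gr_{\mathbb G_m}$ restricted along $\iota_\lambda$. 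Taking, near a point $x$, a local coordinate $t$ and the reference section $t^\lambda$, one reads off that the resulting trivialization is multiplied by $u(x)^{\lambda\lambda'}$ under $t\mapsto ut$ — the transition rule of $\omega_X^{\otimes\lambda\lambda'}$. Hence $F_+((\lambda,\lambda'))\cong\omega_X^{b(\lambda,\lambda')}$; specializing $\lambda=0$ or $\lambda'=0$ shows the $\mathbb Z/2$-grading is trivial, so $F_+$ indeed lands in $\Pic$.

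Next I would extract the bilinear form and the $\omega$-monoidal structure by pulling back along $\iota_{\mu_1,\mu_2}:X^2\to\Gr_{\mathbb G_m\times\mathbb G_m}$ with $\mu_i=(\lambda_i,\lambda_i')$ and decomposing, using the biextension structure, into four terms indexed by pairs of factors. The two ``diagonal'' terms reconstitute $\iota_{\mu_1}^*\cal O(b)\boxtimes\iota_{\mu_2}^*\cal O(b)$. The two ``cross'' terms — pairing $x^1$ in the first copy of $\mathbb G_m$ against $x^2$ in the second, and conversely — are trivial off the diagonal of $X^2$ by factorization; repeating the evaluation computation, now with the cross-restriction $t_1^{\lambda_1}|_{\Gamma_{x^2}}$ vanishing to order $\lambda_1$ along $\Delta$, identifies them with $\cal O_{X^2}(b(\lambda_1,\lambda_2')\Delta)$ and $\cal O_{X^2}(b(\lambda_2,\lambda_1')\Delta)$. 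Comparing with \eqref{eq-factorization-isomorphism-induced-map}, the integer there is $b(\lambda_1,\lambda_2')+b(\lambda_2,\lambda_1')$, i.e.~the polarization of the quadratic form $(\lambda,\lambda')\mapsto b(\lambda,\lambda')$, giving datum (1). Restricting \eqref{eq-factorization-isomorphism-induced-map} along the diagonal $\Delta:X\hookrightarrow X^2$, with the isomorphism $\cal O_{X^2}(\Delta)|_\Delta\cong\omega_X$ applied to each cross term, yields the $\omega$-monoidal structure of datum (2); the sign $(-1)^{b(\lambda_2,\lambda_1')}$ records the fact that this isomorphism, applied to the ``swapped'' cross term, is only $(-1)$-equivariant for the exchange of factors of $X^2$ — as recalled in \S\ref{void-torus-grassmannian-classification} — equivalently, the imprint of the anti-symmetry of the Contou-Carr\`ere symbol.

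The main obstacle is not the degree bookkeeping (routine once the combinatorial formula for the Contou-Carr\`ere symbol is available) but fixing the sign exactly as $(-1)^{b(\lambda_2,\lambda_1')}$: this requires tracking consistently the orientation conventions in the Contou-Carr\`ere pairing, the order of the tensor factors in \eqref{eq-factorization-isomorphism-induced-map}, and the canonical isomorphism $\cal O_{X^2}(\Delta)|_\Delta\cong\omega_X$. A helpful internal check is that the structure isomorphism, being non-symmetric under $\mu_1\leftrightarrow\mu_2$ precisely by $(-1)^{b(\lambda_1,\lambda_2')+b(\lambda_2,\lambda_1')}$, reproduces the clause in datum~(2) that $F_+$ commute with the commutativity constraint up to the bilinear form $(-1)^{b(\cdot,\cdot)}$.
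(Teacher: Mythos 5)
Your proposal is correct and follows essentially the same route as the paper: restrict $\cal O(b)$ along the combinatorial points $\iota$, identify the resulting line bundle over $X^I$ as a tensor product of $\omega_X^{b(\lambda_i,\lambda_i')}$-factors and $\cal O_{X^2}(b(\lambda_j,\lambda_i')\Delta)$-factors, and read off (1) from $I=\{1,2\}$ and (2) from $I=\{1\}$ together with the $(-1)$-equivariance of $\cal O_{X^2}(\Delta)|_{\Delta}\cong\omega_X$ under the exchange map. The only cosmetic differences are your preliminary reduction to $\Lambda=\Lambda'=\mathbb Z$ and your more explicit local-coordinate verification of the fiber formula, which the paper states directly as \eqref{eq-contou-carrere-line-bundle}.
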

\begin{proof}
Let $(\lambda')^I = (\lambda_i')$ be an $I$-tuple of elements of $\Lambda'$. The restriction of $\cal O(b)$ along:
$$
(\id, \iota_{(\lambda')^I}) : \Gr_{T, X^I} \rightarrow \Gr_{T, X^I} \times_{X^I} \Gr_{T', X^I}
$$
is the line bundle over $\Gr_{T, X^I}$ whose fiber at an $S$-point $(x^I, P_{T}, \alpha)$ of $\Gr_{T, X^I}$ is given by $\bigotimes_{i\in I} (P_{T} |_{\Gamma_{x^i}})^{b(-, \lambda'_i)}$, where the superscript indicates inducing along the character $T\rightarrow \mathbb G_m$ defined by $b(-, \lambda_i') : \Lambda \rightarrow \mathbb Z$.

In particular, for an $I$-tuple $\lambda^I = (\lambda_i)$ of elements of $\Lambda$, further restricting $(\id, \iota_{(\lambda')^I})^*\cal O(b)$ along $\iota_{\lambda^I}$ yields the following line bundle over $X^I$:
\begin{equation}
\label{eq-contou-carrere-line-bundle}
\bigotimes_{i\in I} (p_i^*\omega_X^{b(\lambda_i, \lambda_i')} \otimes \bigotimes_{\substack{j\in I \\ j\neq i}} p_{ij}^*\cal O_{X^2}(b(\lambda_j, \lambda_i')\Delta)),
\end{equation}
where $p_i : X^I \rightarrow X$ (resp.~$p_{ij} : X^I \rightarrow X^2$) denotes the projection onto the factor labeled by $i$ (resp.~factors labeled by $(i, j)$).

Statement (1) follows by inspecting \eqref{eq-contou-carrere-line-bundle} for $I = \{1, 2\}$, seeing that the quadratic form $(\lambda, \lambda')\mapsto b(\lambda, \lambda')$ has symmetric form $(\lambda_1, \lambda_1'), (\lambda_2, \lambda_2') \mapsto b(\lambda_2, \lambda'_1) + b(\lambda_1, \lambda'_2)$. The first part of statement (2) follows by inspecting \eqref{eq-contou-carrere-line-bundle} for $I = \{1\}$, the second part for $I = \{1, 2\}$, taking into account the fact that the isomorphism $\cal O_{X^2}(\Delta)|_{\Delta} \cong \omega_X$ is equivariant for the exchange map $X^2 \rightarrow X^2$, $(x^1, x^2) \mapsto (x^2, x^1)$ up to the factor $(-1)$.
\end{proof}

\begin{void}
\label{void-torus-classification-complete}
Let $T$ be an $X$-torus with sheaf of cocharacters $\Lambda$. We now complete the classification of factorization super central extensions of $\cal LT$ by $\mathbb G_{m, \Ran}$ with tame commutator.

In light of the equivalence between factorization super line bundles over $\Gr_T$ and $\vartheta_+^{\super}(\Lambda)$ (Proposition \ref{prop-tori-grassmannian-classification}), it remains to prove the following assertion.
\end{void}

\begin{prop}
\label{prop-torus-loop-group-to-grassmannian}
The functor \eqref{eq-loop-group-descent-to-grassmannian} induces an equivalence between:
\begin{enumerate}
	\item factorization super central extensions of $\cal LT$ with tame commutator; and
	\item factorization super line bundles over $\Gr_T$.
\end{enumerate}
\end{prop}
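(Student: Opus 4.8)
The plan is to show that the functor \eqref{eq-loop-group-descent-to-grassmannian}, restricted to the full subgroupoid of factorization super central extensions of $\cal LT$ with tame commutator, is an equivalence onto $\Gamma_{\fact}(\Gr_T,\Pic^{\super})$. Recall that this functor is produced from Proposition \ref{prop-arc-group-triviality} by descent along $\cal LT\rightarrow\Gr_T=\cal LT/\cal L^+T$: a factorization super central extension $\cal G$ of $\cal LT$ restricts to a canonically trivialized one over $\cal L^+T$, hence descends. Since $\cal LT$ is commutative, once this canonical trivialization is fixed the datum of $\cal G$ unwinds into three pieces: the factorization super line bundle $\cal L$ over $\Gr_T$ (the image under \eqref{eq-loop-group-descent-to-grassmannian}); the commutator pairing of $\cal G$, which is a factorization pairing $\cal LT\otimes\cal LT\rightarrow\mathbb G_{m,\Ran}$; and the ``crossed'' multiplicativity recording how the $\cal L^+T$-direction commutes with the $\Gr_T$-direction inside $\cal G$. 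By the observations of \S\ref{void-bilinear-pairing-induces-skew-pairing} the last two pieces together amount to a single factorization pairing $\cal L^+T\otimes\Gr_T\rightarrow\mathbb G_m$, subject to a coherence with $\cal L$.

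The role of the tameness hypothesis is to pin this pairing down. By definition, $\cal G$ has tame commutator exactly when its commutator pairing equals $\langle\cdot,\cdot\rangle_b$ in the notation of \eqref{eq-contou-carrere-torus} (twisted by the super sign $(-1)^{\epsilon(\lambda_1)\epsilon(\lambda_2)}$ that records the grading) for a symmetric bilinear form $b:\Lambda\otimes\Lambda\rightarrow\mathbb Z$; and this $b$ is precisely the form extracted from $\cal L$ by Proposition \ref{prop-tori-grassmannian-classification}. Under \S\ref{void-bilinear-pairing-induces-skew-pairing} the pairing $\langle\cdot,\cdot\rangle_b$ corresponds to a distinguished pairing $\cal L^+T\otimes\Gr_T\rightarrow\mathbb G_m$, and Lemma \ref{lem-contou-carrere-line-calculation} identifies the factorization line bundle it defines over $\Gr_{T\times T}$ with the one $\cal L$ already forces on $\Gr_T\times_{\Ran}\Gr_T$. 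Thus, for a tame-commutator central extension, the crossed data is not a free parameter, but is determined, up to unique isomorphism, by $\cal L$.

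Full faithfulness is then formal: a morphism of tame-commutator central extensions restricts to a morphism of the associated super line bundles over $\Gr_T$; conversely a morphism over $\Gr_T$ pulls back to a morphism of the underlying super line bundles over $\cal LT$, which is compatible with multiplication---over the $\cal L^+T$-direction because both extensions are canonically trivialized there, and over the $\Gr_T$-direction and the cross-terms by the rigidity just explained---and this upgrade is unique. For essential surjectivity it suffices to construct a functor $\vartheta^{\super}_+(\Lambda)\rightarrow\Hom_{\fact}(\cal LT,\Pic^{\super})$ landing among tame-commutator extensions and inverse to the composite of \eqref{eq-loop-group-descent-to-grassmannian} with Proposition \ref{prop-tori-grassmannian-classification}. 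Given $(b,F_+)\in\vartheta^{\super}_+(\Lambda)$, one assembles a factorization super central extension $\widetilde{\cal L}$ of $\cal LT$ ``by hand'': the trivial central extension over $\cal L^+T$, the one prescribed by $F_+$ over the combinatorial Grassmannian assembled from the closed immersions $\iota_{\lambda^I}$ of \S\ref{void-torus-grassmannian-subschemes}, glued by the Contou-Carr\`ere pairing $\langle\cdot,\cdot\rangle_b$ for the cross-terms; as in the proof of Proposition \ref{prop-arc-group-triviality}, the data defined over this reduced, field-point-dense locus inside the smooth schemes $X^I$ extends uniquely by normality and Hartogs-type arguments, its commutator is $\langle\cdot,\cdot\rangle_b$, hence tame, and it visibly descends to $\cal L$.

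The main obstacle is this last construction: checking that the cross-term data built from $\langle\cdot,\cdot\rangle_b$ is consistent---associative and compatible with factorization---with the lattice data $F_+$, and that the whole package, defined \emph{a priori} only over the combinatorial locus, extends over the non-reduced $\Gr_T$ and then over the ind-scheme $\cal LT$. This is where one genuinely uses that the combinatorial Grassmannian surjects onto the field-valued points of $\Gr_T$, together with the normality and extension techniques of \S\ref{sec-triviality-arc}, and where the tameness restriction is indispensable: by Example \ref{eg-central-extension-wild}, in positive characteristic a non-tame commutator does not arise from such a construction, consistent with the theorem being stated only for tame commutators.
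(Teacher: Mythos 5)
Your first two paragraphs track the paper's actual mechanism quite closely: the paper reduces (via \eqref{eq-loop-group-descent-to-hecke}) to showing that a factorization super line bundle $\cal L$ over $\Gr_T$ carries a \emph{unique} $\cal L^+T$-equivariance structure plus convolution-compatibility, that tameness forces the equivariance to be $\langle\cdot,\cdot\rangle_{b_1}$ for some bilinear form $b_1$, and that the convolution isomorphism $m^*\cal L\cong(\cal L\boxtimes\cal L)\otimes\cal O(b_1)$ exists if and only if $b_1=b$, by comparing $\vartheta$-data on $\Gr_{T\times T}$ via Proposition \ref{prop-tori-grassmannian-classification} and Lemma \ref{lem-contou-carrere-line-calculation}. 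One caution: you state up front that the tame form $b$ ``is precisely the form extracted from $\cal L$''---this is a conclusion, not a definition; it is exactly what the comparison on $\Gr_{T\times T}$ proves, so the logic should run in that direction.

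The genuine gap is in your third paragraph. For essential surjectivity you propose to build the central extension by hand over the combinatorial Grassmannian (the images of the $\iota_{\lambda^I}$) and then extend ``by normality and Hartogs-type arguments'' as in \S\ref{sec-triviality-arc}. That extension step fails here: the Hartogs/normality arguments of Proposition \ref{prop-arc-group-triviality} extend data across codimension-$\ge 2$ loci in the \emph{smooth} schemes $X^I$ and $R_\Gamma G$, whereas what you need is to pass from the reduced combinatorial sublocus to the highly non-reduced formal scheme $\Gr_T$---and data on the reduced locus does \emph{not} determine data on $\Gr_T$ in general (this is precisely the phenomenon behind Example \ref{eg-central-extension-wild} and the need for Tao's reducedness theorem in Proposition \ref{prop-characteristic-zero-implies-tame}). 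The paper never extends anything from the combinatorial locus. Instead, the $\cal L^+T$-equivariance is supplied by the globally defined tame pairing $\langle\cdot,\cdot\rangle_b$ (which exists on all of $\cal L^+T\otimes\Gr_T$ by Contou-Carr\`ere duality), and the multiplicativity isomorphism over the full non-reduced $\Gr_T\times_{\Ran}\Gr_T$ is produced by invoking the already-established equivalence of Proposition \ref{prop-tori-grassmannian-classification} for the torus $T\times T$: once Lemma \ref{lem-contou-carrere-line-calculation} shows the two sides of \eqref{eq-torus-multiplicativity} have isomorphic $\vartheta$-data, the equivalence manufactures the isomorphism, and its uniqueness subject to the cocycle condition gives both essential surjectivity and full faithfulness in one stroke. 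Replacing your hand-built gluing with this appeal to Proposition \ref{prop-tori-grassmannian-classification} is the missing idea.
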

\begin{proof}
Passing through the equivalence \eqref{eq-loop-group-descent-to-hecke}, we replace \eqref{eq-loop-group-descent-to-grassmannian} by the forgetful functor:
\begin{equation}
\label{eq-hecke-to-grassmannian-forgetful-torus}
	\Hom_{\fact}(\Hec_T, \Pic^{\super}) \rightarrow \Gamma_{\fact}(\Gr_T, \Pic^{\super}),
\end{equation}
defined via pullback along $\Gr_T \rightarrow \Hec_T$.

The desired equivalence amounts to showing that every factorization super line bundle $\cal L$ over $\Gr_T$ admits a unique collection of the following pieces of structure:
\begin{enumerate}
	\item an $\cal L^+T$-equivariance structure;
	\item compatibility data with convolution, \emph{i.e.}~\eqref{eq-compatibility-with-convolution-product} and \eqref{eq-compatibility-with-convolution-unit}, on the factorization super line bundle over $\Hec_T$ induced from structure (1),
\end{enumerate}
subject to the \emph{tameness condition}: the induced factorization super central extension of $\cal LT$ has tame commutator.

Since the $\cal L^+T$-action on $\Gr_T$ is trivial, we may view an $\cal L^+T$-equivariance structure on $\cal L$ as a morphism:
\begin{equation}
\label{eq-arc-equivariance-as-morphism}
\cal L^+T \times_{\Ran} \Gr_T \rightarrow \mathbb G_{m, \Ran},
\end{equation}
linear in the first variable. On the other hand, given a factorization super central extension of $\cal LT$ with commutator $\langle\cdot,\cdot\rangle : \cal LT\otimes\cal LT\rightarrow \mathbb G_{m, \Ran}$, the $\cal L^+T$-equivariance structure on the induced factorization super line bundle over $\Gr_T$ is precisely the map $\cal L^+T\times\Gr_T \rightarrow\mathbb G_{m, \Ran}$ associated to $\langle\cdot, \cdot\rangle$ by restriction (\emph{cf.}~\S\ref{void-bilinear-pairing-induces-skew-pairing}). By the tameness condition, the morphism \eqref{eq-arc-equivariance-as-morphism} must then be of the form $\langle\cdot,\cdot\rangle_{b_1}$ for some bilinear form:
\begin{equation}
\label{eq-arc-equivariance-bilinear-form}
b_1 : \Lambda \otimes \Lambda \rightarrow \mathbb Z.
\end{equation}

Let us now analyze the compatibility data with convolution. The existence and uniqueness of the unital structure \eqref{eq-compatibility-with-convolution-unit} follows from the canonical triviality of factorization super line bundles over $\Ran$ and the bilinearity of \eqref{eq-arc-equivariance-bilinear-form}. The multiplicative structure \eqref{eq-compatibility-with-convolution-product} amounts to an isomorphism of factorization super line bundles over $\cal LT \times^{\cal L^+T} \Gr_T$ equivariant against the leftmost $\cal L^+T$-action. Triviality of the $\cal L^+T$-action on $\Gr_T$ yields an isomorphism:
$$
\cal LT\times^{\cal L^+T}\Gr_T \cong \Gr_T\times_{\Ran}\Gr_T,
$$
Since the $\cal L^+T$-equivariance is defined by $b_1$, the multiplicative structure \eqref{eq-compatibility-with-convolution-product} amounts to an isomorphism of factorization line bundles over $\Gr_T\times\Gr_T$:
\begin{equation}
\label{eq-torus-multiplicativity}
m^*(\cal L) \cong (\cal L\boxtimes\cal L) \otimes \cal O(b_1),
\end{equation}
where $\cal O(b_1)$ is the factorization line bundle associated to the form $b_1$ in \S\ref{void-pairing-induced-factorization-line-bundle}.

Under the equivalence of Proposition \ref{prop-tori-grassmannian-classification}, $\cal L$ corresponds to a pair $(b, F_+)$. Applying Proposition \ref{prop-tori-grassmannian-classification} to $T\times T$, we see that \eqref{eq-torus-multiplicativity} exists if and only if $b_1 = b$. Indeed, the quadratic form equates $Q(\lambda_1 + \lambda_2)$ with $Q(\lambda_1) + Q(\lambda_2) + b_1(\lambda_1, \lambda_2)$ for each $\lambda_1, \lambda_2\in\Lambda$, using Lemma \ref{lem-contou-carrere-line-calculation}(1). When $b_1 = b$, Lemma \ref{lem-contou-carrere-line-calculation}(2) yields a canonical isomorphism of $\omega$-monoidal morphisms associated to the two sides of \eqref{eq-torus-multiplicativity}. The isomorphism \eqref{eq-torus-multiplicativity} thus defined is the unique one satisfying the cocycle condition.
\end{proof}

\begin{void}
\label{void-tori-commutator}
Consider any factorization super central extension of $\cal LT$ with tame commutator:
\begin{equation}
\label{eq-torus-factorization-super-central-extension}
1 \rightarrow \mathbb G_{m, \Ran} \rightarrow \cal T \rightarrow \cal LT \rightarrow 1.
\end{equation}
The equivalences of Proposition \ref{prop-tori-grassmannian-classification}, Proposition \ref{prop-torus-loop-group-to-grassmannian} show that \eqref{eq-torus-factorization-super-central-extension} is classified by a pair $(b, F_+)$ in $\vartheta^{\super}_+(\Lambda)$.

In the course of the proof of Proposition \ref{prop-torus-loop-group-to-grassmannian}, we have also established the fact that the commutator pairing of \eqref{eq-torus-factorization-super-central-extension} equals $\langle\cdot, \cdot\rangle_b$.
\end{void}

\subsection{Simply connected groups}
\label{sec-classification-simply-connected}

\begin{void}
Let $G$ denote a semisimple and simply connected group $X$-scheme.

Since $\Gr_G \rightarrow \Ran$ (resp.~$\cal LG \rightarrow \Ran$) has connected geometric fibers, every factorization super line bundle over $\Gr_G$ (resp.~factorization super central extension of $\cal LG$ by $\mathbb G_{m, \Ran}$) is pure of even grading.
\end{void}

\begin{prop}
\label{prop-simply-connected-descent}
If $G$ is semisimple and simply connected, then the functor \eqref{eq-loop-group-descent-to-grassmannian} is an equivalence between:
\begin{enumerate}
	\item factorization central extensions of $\cal LG$; and
	\item factorization line bundles over $\Gr_G$.
\end{enumerate}
\end{prop}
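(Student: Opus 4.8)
The plan is to pass to the local Hecke stack. Since \eqref{eq-loop-group-descent-to-hecke} already identifies $\Hom_{\fact}(\cal LG, \Pic^{\super})$ with $\Hom_{\fact}(\Hec_G, \Pic^{\super})$, it suffices to prove that the pullback functor along $\Gr_G \to \Hec_G$,
\[
\Hom_{\fact}(\Hec_G, \Pic^{\super}) \longrightarrow \Gamma_{\fact}(\Gr_G, \Pic^{\super}),
\]
is an equivalence. As $\Gr_G \to \Ran$, and hence $\Hec_G$, have connected geometric fibers, every object in sight is purely of even grading, so $\Pic^{\super}$ may be replaced by $B\mathbb G_m$. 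In these terms the assertion is: a factorization line bundle $\cal L$ over $\Gr_G$ admits a unique $\cal L^+G$-equivariance structure, and the resulting factorization line bundle over $\Hec_G$ admits a unique compatibility \eqref{eq-compatibility-with-convolution-product}, \eqref{eq-compatibility-with-convolution-unit} with convolution.

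Uniqueness, that is, full faithfulness of the functor, I would establish as in \S\ref{sec-triviality-arc}. Two $\cal L^+G$-equivariance structures on a fixed $\cal L$, or two convolution compatibilities refining a fixed equivariance structure, differ by a factorization morphism into $\mathbb G_{m, \Ran}$ that is multiplicative in the $\cal L^+G$-direction and trivial along the unit section. Restricting over each $X^I$ and using the presentation \eqref{eq-weil-restriction-sequence} of $\cal L^+_{X^I}G$ as an extension of $R_\Gamma G$ by a pro-unipotent group, such a morphism must factor through $R_\Gamma G$; but $R_\Gamma G$ is semisimple because $G$ is, hence has no nontrivial characters, and over reduced test schemes the ind-properness and connectedness of the fibers of $\Gr_{G, X^I}$ remove the last freedom. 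So the difference morphism is trivial, and uniqueness follows.

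Existence is the substantive point, and this is the step I expect to be the main obstacle; here semisimple simple-connectedness is indispensable. For $G$ semisimple and simply connected, Faltings' analysis of the affine Grassmannian \cite{MR1961134} shows that every line bundle over $\Gr_G$ (over a geometric point of $X$) is, up to tensor powers, the pullback of a determinant line bundle along a closed embedding $\Gr_G \hookrightarrow \Gr_{\SL_n}$ attached to a faithful representation $G \hookrightarrow \SL_n$, and that this determinant bundle carries a canonical $\cal L^+G$-equivariance together with a canonical multiplicative structure, both obtained by restriction from the Tate central extension \eqref{eq-tate-central-extension} of $\cal L\SL_n$. It then remains to see that a \emph{factorization} line bundle over $\Gr_G$ is, compatibly with its factorization structure, a tensor power of such a determinant bundle: this follows because the factorization structure on the latter is rigid and any factorization line bundle over $\Gr_G$ is pinned down by its restrictions along the coweight-labelled closed subschemes $X^I \to \Gr_{G, X^I}$, exactly in the spirit of the torus computation underlying Proposition \ref{prop-tori-grassmannian-classification}. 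Since the essential image of a functor of Picard groupoids is closed under tensor products and inverses, it therefore contains every factorization line bundle over $\Gr_G$; combined with the full faithfulness above, the pullback functor is an equivalence, which is the assertion of the proposition.
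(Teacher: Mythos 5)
Your reduction to $\Hec_G$ and your uniqueness argument are essentially the paper's: both rest on the fact that $\Gr_{G,x^I}\rightarrow S$ is derived $\cal O$-connected (Lemma \ref{lem-schubert-variety-properties}), so that automorphisms of a line bundle over $\Gr_{G,x^I}$ are pinned down along the unit section, and on the absence of characters of $\cal L^+G$. The problem is the existence step, where your route through determinant bundles has a genuine gap. The pullback of the determinant line bundle of the Tate extension \eqref{eq-tate-central-extension} along a faithful representation $G\hookrightarrow\SL_n$ corresponds, under $\Pic(\Gr_{G,x})\cong\mathbb Z^r$, to the \emph{Dynkin index} of that representation times the ample generator; this index is $>1$ for many groups (for $E_8$ every nontrivial representation has index at least $60$). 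So the collection of line bundles you can equip with equivariance and multiplicative structures by restriction from $\widetilde{\GL}_n$ generates only a finite-index subgroup of $\Pic(\Gr_{G,x})$, and ``closed under tensor products and inverses'' does not promote this to the full group: knowing that $\cal L^{\otimes d}$ lies in the essential image of a functor of Picard groupoids says nothing about $\cal L$ itself. Your parenthetical appeal to the restrictions along $\iota_{\lambda^I}$ also quietly assumes the full faithfulness statement for $\Gr_G$ (which in the paper is a separate input from \cite{MR4322626}), so it cannot be used to close this gap.

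The paper's existence argument avoids representations entirely. For an arbitrary factorization line bundle $\cal L$ and an $S$-point $g$ of $\cal L^+_{x^I}G$, the difference $(\act_g)^*\cal L\otimes\cal L^{-1}$ is trivial on every geometric fiber of $\Gr_{G,x^I}\rightarrow S$: the fiberwise Picard group is the discrete group $\mathbb Z^r$ (Faltings), the fibers of $\cal L^+G$ are connected, and $\act_e=\id$, so the class is constantly zero. By Lemma \ref{lem-schubert-variety-properties} and cohomology-and-base-change (\S\ref{void-simply-connected-fiberwise}), fiberwise triviality forces $(\act_g)^*\cal L\otimes\cal L^{-1}$ to descend to $S$, and rigidifying along the unit section of $\Gr_{G,x^I}$ produces the unique equivariance isomorphism; the convolution compatibility is obtained the same way over $\cal LG\times^{\cal L^+G}\Gr_G$. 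If you want to salvage your write-up, replace the determinant-bundle step with this descent argument; everything else you wrote can stay.
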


\begin{void}
Before proving Proposition \ref{prop-simply-connected-descent}, we define Schubert varieties in $\Gr_G$ as flat schematic morphisms to $\Ran$. We give a detailed presentation because Lemma \ref{lem-schubert-variety-properties} below was also used in the proof of \cite[Lemma 3.6]{MR4322626} but the justification there is inadequate.

Let us assume that $G$ contains a Borel subgroup and a maximal torus $T \subset B\subset G$. Denote by $\Lambda^+\subset\Lambda$ the subsheaf of dominant cocharacters of $T$. For an $I$-tuple $\lambda^I = (\lambda^i)$ of elements of $\Lambda^+$, we may view $\iota_{\lambda^I}$ of \S\ref{void-torus-grassmannian-subschemes} as a closed immersion $X^I \rightarrow \Gr_{G, X^I}$. Denote by $\Gr_G^{\le \lambda^I}\subset\Gr_{G, X^I}$ the schematic image of the map $\cal L^+_{X^I}G \rightarrow \Gr_{G, X^I}$ defined by acting on $\iota_{\lambda^I}(X^I)$.

Since $G$ is semisimple and simply connected, $\Gr_{G, X^I}$ is reduced and $\Gr_{G, X^I}\rightarrow X^I$ has connected geometric fibers. In particular, the above closed subschemes define an isomorphism of indschemes:
\begin{equation}
\label{eq-simply-connected-schubert-presentation}
	\colim_{\lambda^I} \Gr_G^{\le \lambda^I} \xrightarrow{\simeq} \Gr_{G, X^I}.
\end{equation}
\end{void}

\begin{lem}
\label{lem-schubert-variety-properties}
For each $I$-tuple $\lambda^I$ of elements of $\Lambda^+$, the projection $p : \Gr_G^{\le\lambda^I} \rightarrow X^I$ is flat and the canonical map below is an isomorphism:
\begin{equation}
\label{eq-schubert-variety-o-connected}
\cal O_{X^I} \rightarrow Rp_*\cal O_{\Gr_G^{\le\lambda^I}}.
\end{equation}
\end{lem}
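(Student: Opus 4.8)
The plan is to resolve $\Gr_G^{\le\lambda^I}$ by a global Bott--Samelson variety over $X^I$ and read both assertions off its geometry. First I would record the elementary reductions: the two statements are local on $X^I$ and compatible with flat base change, so we may assume $X$ irreducible and $k$ algebraically closed; and $p$ is proper, since $\Gr_G^{\le\lambda^I}$ is a finite-type closed subscheme of $\Gr_{G,X^I}$, which is ind-proper over $X^I$ ($G$ being reductive) and reduced ($G$ being semisimple and simply connected). Since $\cal L^+_{X^I}G$ is pro-(smooth affine with connected geometric fibers) over the irreducible $X^I$, it is irreducible, hence so are its orbit through $\iota_{\lambda^I}$ and the closure $\Gr_G^{\le\lambda^I}$. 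Over the dense open $U\subset X^I$ parametrizing $|I|$ pairwise distinct points, $\Gr_G^{\le\lambda^I}|_U$ is the external product of the bundles $\Gr_G^{\le\lambda^i}|_U$, of relative dimension $d:=\sum_{i\in I}\langle 2\rho,\lambda^i\rangle = \langle 2\rho,\sum_{i\in I}\lambda^i\rangle$; as $\dim X^I=|I|$, the irreducible scheme $\Gr_G^{\le\lambda^I}$ has dimension $d+|I|$.

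Next I would construct, from reduced words in the affine Weyl group (one modification at each point $x^i$), the global Bott--Samelson resolution $\pi:\widetilde\Gr\to\Gr_G^{\le\lambda^I}$: it is an iterated $\mathbb P^1$-fibration over $X^I$ — at each stage an étale-locally trivial fiber bundle with fiber $\mathbb P^1$ — so $\widetilde p:=p\circ\pi:\widetilde\Gr\to X^I$ is smooth and proper of relative dimension $d$. The argument then rests on two inputs: (a) the canonical map $\cal O_{\Gr_G^{\le\lambda^I}}\to R\pi_*\cal O_{\widetilde\Gr}$ is an isomorphism; (b) $\Gr_G^{\le\lambda^I}$ is Cohen--Macaulay. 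For $\mathrm{char}(k)=0$ both follow from the char-$0$ theory of rational singularities applied to the smooth $\pi$ (Grauert--Riemenschneider vanishing, and rational singularities $\Rightarrow$ Cohen--Macaulay). For $\mathrm{char}(k)>0$ they rest on Frobenius splitting: the global Bott--Samelson variety is compatibly Frobenius split over $X^I$, yielding (a), and the global Schubert scheme is globally $F$-regular, yielding (b). These are the fibered-over-$X^I$ analogues of Faltings' results \cite{MR1961134} on affine Schubert varieties and their Bott--Samelson resolutions, and establishing them uniformly in the characteristic — specifically, arranging the Frobenius splitting in families — is the only substantial point.

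Granting (a)--(b), flatness follows from miracle flatness. Since $\widetilde p$ is smooth surjective of relative dimension $d$ and $\pi$ is surjective, each fiber $(\Gr_G^{\le\lambda^I})_y$ is the image of the $d$-dimensional $\widetilde\Gr_y$, hence of dimension $\le d$; and the lower bound on fiber dimensions of the dominant morphism $p$ out of the irreducible $\Gr_G^{\le\lambda^I}$ shows every component of $(\Gr_G^{\le\lambda^I})_y$ has dimension $\ge d$. So all fibers are equidimensional of dimension $d$, whence at a closed point $z$ over $y$ one has $\dim\cal O_{\Gr_G^{\le\lambda^I},z}=d+\dim X^I=\dim\cal O_{X^I,y}+\dim\cal O_{(\Gr_G^{\le\lambda^I})_y,z}$; since $\cal O_{X^I,y}$ is regular and $\cal O_{\Gr_G^{\le\lambda^I},z}$ is Cohen--Macaulay, $p$ is flat at $z$, hence flat. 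Finally, from $\widetilde p=p\circ\pi$ and (a),
\[
Rp_*\cal O_{\Gr_G^{\le\lambda^I}}\simeq Rp_*R\pi_*\cal O_{\widetilde\Gr}\simeq R\widetilde p_*\cal O_{\widetilde\Gr},
\]
and $R\widetilde p_*\cal O_{\widetilde\Gr}\simeq\cal O_{X^I}$ is obtained by iterating, down the Bott--Samelson tower, the elementary identity $Rg_*\cal O=\cal O$ for a $\mathbb P^1$-fibration $g$ (flat base change together with $H^0(\mathbb P^1,\cal O)=k$ and $H^{>0}(\mathbb P^1,\cal O)=0$). The hard part is thus isolated in inputs (a)--(b): controlling the singularities of the \emph{global} Schubert scheme over $X^I$, which in positive characteristic amounts to the Frobenius-splitting package for families of affine Schubert varieties and their Demazure resolutions.
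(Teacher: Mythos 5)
Your route is genuinely different from the paper's, and it has a gap exactly where you locate ``the only substantial point'': inputs (a) and (b) are asserted rather than proved, and (b) is not available off the shelf. Cohen--Macaulayness of the \emph{total space} $\Gr_G^{\le\lambda^I}$ over $X^I$ (global $F$-regularity in positive characteristic) is a statement one normally deduces \emph{from} flatness together with Cohen--Macaulayness of the fibres; using it as the input to miracle flatness inverts that logic, and no reference establishes it independently for general $I$. In characteristic $0$ your (b) would follow from (a) via Kov\'acs' criterion, but (a) itself is a global statement over $X^I$ that you have not reduced to anything known. The resolution you invoke is also problematic if taken literally: a global Bott--Samelson scheme over $X^I$ which is an iterated $\mathbb P^1$-fibration of relative dimension $d$ resolving $\Gr_G^{\le\lambda^I}$ does not exist naively, since the Demazure resolution of $\Gr_{G,x}^{\le\lambda}$ factors through $\Fl_{G,x}^{\le w(\lambda)}$ and has dimension $\langle 2\rho,\lambda\rangle+\ell(w_0)$, and $\ell(w(\lambda_1))+\ell(w(\lambda_2))=\ell(w(\lambda_1+\lambda_2))+\ell(w_0)$, so the combinatorics jumps across the diagonal strata. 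What does exist is the twisted product of one-point Demazure resolutions mapping through the convolution Grassmannian $\widetilde{\Gr}{}_G^{\le\lambda^I}$; but then (a) decomposes into fibrewise statements plus $\cal O$-connectedness of the global convolution map, and you need a base-change argument rather than a Frobenius splitting ``in families''.

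The paper's proof is designed to avoid every global singularity statement. It uses only \emph{fibrewise} classical inputs --- Faltings' theorem \cite[Theorem 8]{MR1961134} that the Demazure resolution is derived $\cal O$-connected, and derived $\cal O$-connectedness of the convolution map between Schubert varieties over a geometric point, obtained from rational resolutions \cite{kovacs2022rational} --- and combines them with the observation that $\widetilde{\Gr}{}_G^{\le\lambda^I}\rightarrow X^I$ is \emph{evidently} flat, being an iterated fibration of one-point Schubert schemes. Cohomology and base change \cite[Proposition 3.13]{MR1978342} then shows that $m_*\cal O$ is $X^I$-flat with formation compatible with base change; identifying $m_*\cal O$ with $\cal O_{\Gr_G^{\le\lambda^I}}$ (reducedness plus surjectivity of $m$) gives flatness of $p$ and simultaneously identifies the geometric fibres with Schubert varieties, so that \eqref{eq-schubert-variety-o-connected} reduces to its pointwise version. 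To repair your argument, replace input (b) by this base-change mechanism; as written, (b) is a missing ingredient at least as hard as the lemma itself.
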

\begin{proof}
For $|I| \le 2$, flatness of $p$ is established in \cite[\S1.2]{MR2508931}. The argument below which applies to general $I$ is explained to me by Jo\~{a}o Louren\c{c}o. We call a morphism $f : Y_1 \rightarrow Y_2$ of schemes \emph{derived $\cal O$-connected} if the induced map $\cal O_{Y_2} \rightarrow Rf_*\cal O_{Y_1}$ is an equivalence.

We begin by recalling some classical facts taking place over geometric points of $X$. Let $x$ be a $\bar k$-point of $X$ and $\Gr_{G, x}$ the fiber of $\Gr_G$. Let $W$ denote the Weyl group of $(G, T)$ and $W_{\aff} := \Lambda \rtimes W$ its affinization. Write $I_x\subset\cal L_x^+G$ for the Iwahori group scheme associated to the Borel $B$. The affine flag variety $\Fl_{G, x} := \cal L_xG/I_x$ has $I_x$-orbits parametrized by $W_{\aff}$. For each $\lambda \in \Lambda^+$, the preimage of $\Gr_{G, x}^{\le\lambda}$ along $\Fl_{G, x} \rightarrow \Gr_{G, x}$ concides with $\Fl_{G, x}^{\le w(\lambda)}$, the closure of the $I_x$-orbit corresponding to the longest element $w(\lambda)$ in $W\lambda W\subset W_{\aff}$.

The scheme $\Fl_{G, x}^{\le w(\lambda)}$ admits a Demazure resolution $D^{w(\lambda)}$ associated to any reduced expression of $w(\lambda)$:
\begin{equation}
\label{eq-demazure-resolution}
D^{w(\lambda)} \xrightarrow{\pi} \Fl_{G, x}^{\le w(\lambda)} \rightarrow \Gr_{G, x}^{\le\lambda}.
\end{equation}

By \cite[Theorem 8]{MR1961134}, the morphism $\pi$ is derived $\cal O$-connected. Since $G/B \rightarrow \Spec(\bar k)$ is derived $\cal O$-connected \cite{MR409474}, the same holds for the composition \eqref{eq-demazure-resolution}. We collect two consequences of this fact:
\begin{enumerate}
	\item $\Gr_{G, x}^{\le\lambda} \rightarrow \Spec(\bar k)$ is derived $\cal O$-connected; indeed, this is because $D^{w(\lambda)}\rightarrow\Spec(\bar k)$ is derived $\cal O$-connected, being an iterated $\mathbb P^1$-bundle.
	\item the convolution map $\Gr_{G, x}^{\le\lambda_1} \, \widetilde{\times} \, \Gr_{G, x}^{\le\lambda_2} \rightarrow \Gr_{G, x}^{\le\lambda_1 + \lambda_2}$ is derived $\cal O$-connected; indeed, the source and target both admit rational resolutions in the sense of \cite[Definition 9.1]{kovacs2022rational}, so this claim follows from \cite[Theorem 9.12(i)]{kovacs2022rational}.
\end{enumerate}

To prove that $p : \Gr_G^{\le\lambda^I} \rightarrow X^I$ is flat, we consider the global convolution map $m : \widetilde{\Gr}{}_G^{\le\lambda^I} \rightarrow \Gr_G^{\le\lambda^I}$ over $X^I$, where the composition $\widetilde{\Gr}{}_G^{\le\lambda^I} \rightarrow X^I$ is evidently flat. Statement (2) implies that for each $\bar k$-point $x^I$ of $X^I$, the base change $m_{x^I}$ of $m$ satisfies $R^i(m_{x^I})_*\cal O \cong 0$ for $i\ge 1$. By \cite[Proposition 3.13]{MR1978342}, $m_*\cal O$ is $X^I$-flat and its formation is compatible with base change along $X^I$. Since $\Gr_G^{\le\lambda^I}$ is reduced and $m$ is surjective on $\bar k$-points, we see that $m_*\cal O$ coincides with the structure sheaf of $\Gr_G^{\le\lambda^I}$. This implies that $p$ is flat and its geometric fibers are identified with the corresponding Schubert varieties. The derived $\cal O$-connectedness \eqref{eq-schubert-variety-o-connected} then follows from its pointwise version, \emph{i.e.}~statement (1) above.
\end{proof}

\begin{void}
\label{void-simply-connected-fiberwise}
Lemma \ref{lem-schubert-variety-properties} has the following consequence: given any $S$-point $x^I : S\rightarrow X^I$, a line bundle $\cal L$ over $\Gr_{G, x^I}$ descends to $S$ if and only if it is trivial over all geometric fibers. Indeed, if $\cal L$ is trivial over all geometric fibers, then by Lemma \ref{lem-schubert-variety-properties} and cohomology and base change, the derived pushforward of $\cal L$ to $S$ yields the desired descent.

On the other hand, \cite[Theorem 7]{MR1961134} proves that the Picard group of $\Gr_{G, x}$, for any $\bar k$-point $x$ of $X$, is isomorphic to $\mathbb Z^r$, where $r$ denotes the number of simple factors of $G$, with $(1,\cdots, 1)\in\mathbb Z^r$ corresponding to an ample line bundle over $\Gr_{G, x}$.
\end{void}

\begin{proof}[Proof of Proposition \ref{prop-simply-connected-descent}]
The problem is of \'etale locally nature on $X$, so we may assume that $G$ contains a Borel subgroup and a maximal torus $T\subset B\subset G$.

Let $\cal L$ be a factorization line bundle over $\Gr_G$. According to \S\ref{void-descent-from-loop-group}, it suffices to prove that $\cal L$ admits a unique $\cal L^+G$-equivariance structure and the induced factorization line bundle over $\Hec_G$ admits unique compatibility data with respect to convolution.

Consider an $S$-point $(x^I, g)$ of $\cal L_{X^I}^+G$. The action by $g$ defines an automorphism $\act_g$ of $\Gr_{G, x^I}$. There is a unique isomorphism:
\begin{equation}
\label{eq-simply-connected-arc-equivariance}
(\act_g)^*\cal L \xrightarrow{\simeq} \cal L
\end{equation}
extending the identity over the unit section $e : S\rightarrow \Gr_{G, x^I}$. Indeed, this is because the difference $(\act_g)^*\cal L\otimes\cal L^{-1}$ is trivial along geometric fibers, so we may apply the observation of \S\ref{void-simply-connected-fiberwise}. The uniqueness of \eqref{eq-simply-connected-arc-equivariance} implies that it satisfies the cocycle condition.

The compatibility data with respect to convolution consist of isomorphisms \eqref{eq-compatibility-with-convolution-product} and \eqref{eq-compatibility-with-convolution-unit}. The second isomorphism is clear. The first one amounts to an isomorphism of line bundles over:
$$
\Gr_G\,\widetilde{\times}\,\Gr_G := \cal LG \times^{\cal L^+G} \Gr_G,
$$
compatible with the left $\cal L^+G$-equivariance. This isomorphism is constructed in the same way as \eqref{eq-simply-connected-arc-equivariance}, by reducing to geometric fibers over $\Ran$.
\end{proof}

\begin{void}
\label{void-simply-connected-summary}
Suppose that $G$ contains a maximal torus $T$ with sheaf of cocharacters $\Lambda$. The Weyl group $W$ acts naturally on $\Lambda$. Restricting along $T\subset G$ and applying Proposition \ref{prop-tori-grassmannian-classification}, each factorization line bundle over $\Gr_G$ defines a pair $(b, F_+)$ with $b(\lambda, \lambda)\in 2\mathbb Z$, hence quadratic form $Q : \lambda\mapsto b(\lambda, \lambda)/2$ on $\Lambda$.

By \cite[Proposition 2.5]{MR4322626}, the quadratic form $Q$ is Weyl-invariant and this procedure defines an equivalence of Picard groupoids between factorization line bundles over $\Gr_G$ and Weyl-invariant quadratic forms $\Quad(\Lambda, \mathbb Z)^W$ on $\Lambda$. (Evaluation on short coroots belonging to each simple factor of $G$ defines an isomorphism $\Quad(\Lambda, \mathbb Z)^W\cong \mathbb Z^{\oplus r}$.)

In summary, all of the Picard groupoids below are canonically equivalent when $G$ is semisimple and simply connected:
\begin{equation}
\label{eq-simply-connected-diagram}
\begin{tikzcd}[column sep = 1em]
	\Hom_{\fact}(\cal LG, \Pic) \ar[r, "\simeq"]\ar[d, "\cong"] & \Hom_{\fact}(\cal LG, \Pic^{\super}) \ar[d, "\cong"] \\
	\Gamma_{\fact}(\Gr_G, \Pic) \ar[r, "\simeq"]\ar[d, "\cong"] & \Gamma_{\fact}(\Gr_G, \Pic^{\super}) \\
	\Quad(\Lambda, \mathbb Z)^W
\end{tikzcd}
\end{equation}

In particular, composing these equivalences with the restriction along $T\subset G$ and the functor \eqref{eq-torus-grassmannian-classification}, we obtain a functor:
\begin{equation}
\label{eq-canonical-theta-data}
	\Quad(\Lambda, \mathbb Z)^W \rightarrow \vartheta_+^{\super}(\Lambda).
\end{equation}

In \cite[\S2.4.7]{MR4322626}, we have verified that the image of $Q$ is the pair $(b, F_+)$, where $F_+$ is the $\omega$-twist of the monoidal morphism $F_Q$ defined in \S\ref{void-omega-twist}.
\end{void}

\begin{rem}
\label{rem-simply-connected-pointwise-classification}
The equivalences in \eqref{eq-simply-connected-diagram} remain valid when factorization central extensions of $\cal LG$ (resp.~factorization line bundles over $\Gr_G$) are replaced by central extensions of $\cal L_xG$ (resp.~line bundles over $\Gr_{G, x}$) for any geometric point $x$ of $X$.
\end{rem}

\begin{void}
\label{void-adjoint-equivariance}
Let us relax the hypothesis and let $G$ be any reductive group $X$-scheme. We shall now use our knowledge about the simply connected case to perform a commutator calculation.

Denote by $G_{\mathrm{sc}}$ the simply connected form of $G$ and $G_{\ad}$ the adjoint form of $G$. The $G_{\ad}$-action on $G$ by conjugation extends to a $G_{\ad}$-action on $G_{\mathrm{sc}}$, which we still refer to as the \emph{conjugation} action.

Consider any factorization central extension:
\begin{equation}
\label{eq-factorization-central-extension-simply-connected}
	1 \rightarrow \mathbb G_{m, \Ran} \rightarrow \cal G_{\mathrm{sc}} \rightarrow \cal LG_{\mathrm{sc}} \rightarrow 1.
\end{equation}
\emph{Claim}: the conjugation $\cal LG_{\ad}$-action on $\cal LG_{\mathrm{sc}}$ extends uniquely to $\cal G_{\mathrm{sc}}$.

Indeed, let $(x^I, g)$ be an $S$-point of $\cal LG_{\ad}$. Action by $g$ defines an automorphism $\act_g$ of $\cal L_{x^I}G_{\mathrm{sc}}$. Viewing $\cal G_{\mathrm{sc}}$ as a multiplicative line bundle over $\cal LG_{\mathrm{sc}}$, we shall argue that there is a unique isomorphism:
\begin{equation}
\label{eq-adjoint-action-lift}
(\act_g)^*\cal G_{\mathrm{sc}, x^I} \xrightarrow{\simeq} \cal G_{\mathrm{sc}, x^I},
\end{equation}
compatible with the multiplicative structure of $\cal G_{\mathrm{sc}, x^I}$. According to \S\ref{void-simply-connected-fiberwise}, it suffices to show that the two sides of \eqref{eq-adjoint-action-lift} are isomorphic on geometric fibers over $S$. This statement holds by Remark \ref{rem-simply-connected-pointwise-classification}, seeing that $(\act_g)^*$ induces the identity map on $\Quad(\Lambda, \mathbb Z)^W$.
\end{void}

\begin{void}
Suppose that $G$ contains a maximal torus $T$ with sheaf of cocharacters $\Lambda$. Write $T_{\mathrm{sc}}$, $T_{\ad}$ for the induced maximal tori in $G_{\mathrm{sc}}$, $G_{\mathrm{ad}}$, with sheaves of cocharacters $\Lambda_{\mathrm{sc}}$, $\Lambda_{\ad}$.

The $\cal LG_{\ad}$-action on $\cal G_{\mathrm{sc}}$ constructed in \S\ref{void-adjoint-equivariance} restricts to an $\cal LT_{\ad}$-action, and we obtain a factorization pairing:
\begin{equation}
\label{eq-simply-connected-pairing}
\cal LT_{\ad}\otimes \cal LT_{\mathrm{sc}} \rightarrow \mathbb G_{m, \Ran},\quad (t_{\ad}, t_{\mathrm{sc}}) \mapsto (t_{\ad} \tilde t_{\mathrm{sc}}t_{\ad}^{-1})\cdot \tilde t_{\mathrm{sc}}^{-1},
\end{equation}
where $\tilde t_{\mathrm{sc}}$ is an arbitrary lift of $t_{\mathrm{sc}}$ to $\cal G_{\mathrm{sc}}$, which exists locally.

To compute this pairing, we recall that $\cal G_{\mathrm{sc}}$ is classified by a Weyl-invariant quadratic form $Q_{\mathrm{sc}}$ on $\Lambda_{\mathrm{sc}}$. Since $\Lambda_{\ad}$ is canonically dual to the root lattice, the formula:
$$
(\lambda, \alpha) \mapsto Q_{\mathrm{sc}}(\alpha)\langle\lambda, \check{\alpha}\rangle
$$
for each $\lambda \in \Lambda_{\ad}$ and coroot $\alpha \in \Lambda_{\mathrm{sc}}$ yields a pairing $b_1 : \Lambda_{\ad} \otimes \Lambda_{\mathrm{sc}} \rightarrow \mathbb Z$.
\end{void}

\begin{lem}
\label{lem-adjoint-equivariance-pairing}
The factorization pairing \eqref{eq-simply-connected-pairing} equals $\langle\cdot,\cdot\rangle_{b_1}$ (in the notation of \S\ref{void-definition-tame}).
\end{lem}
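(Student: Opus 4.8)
The plan is to reduce everything to the torus commutator computation already carried out, using Lemma \ref{lem-tame-pairing-descent} to rule out the wild phenomena that could in principle intervene. Write $\pi\colon T_{\mathrm{sc}}\to T_{\ad}$ for the central isogeny induced by $G_{\mathrm{sc}}\to G_{\der}\subset G\to G_{\ad}$; it is a surjection of $X$-tori, and on cocharacters it is the finite-index inclusion $\Lambda_{\mathrm{sc}}\hookrightarrow\Lambda_{\ad}$ of the coroot lattice into the coweight lattice. First I would precompose the pairing \eqref{eq-simply-connected-pairing} with $\pi$ in the first variable. The $\cal LG_{\ad}$-action on $\cal G_{\mathrm{sc}}$ was constructed in \S\ref{void-adjoint-equivariance} as the unique lift of the conjugation action which is trivial on the unit section; since inner conjugation by a lift of an $S$-point $h$ of $\cal LG_{\mathrm{sc}}$ visibly has this property, the restriction of the $\cal LG_{\ad}$-action along $\cal LG_{\mathrm{sc}}\to\cal LG_{\ad}$ is precisely inner conjugation of $\cal G_{\mathrm{sc}}$ on itself. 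Hence the pairing $\cal LT_{\mathrm{sc}}\otimes\cal LT_{\mathrm{sc}}\to\mathbb G_{m,\Ran}$ obtained from \eqref{eq-simply-connected-pairing} by precomposition with $\pi\otimes\id$ is the commutator pairing $(\mu,\mu')\mapsto[\widetilde{\mu},\widetilde{\mu'}]$ of the factorization central extension $\cal G_{\mathrm{sc}}|_{\cal LT_{\mathrm{sc}}}$.

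Next I would invoke the torus case. Restriction along $T_{\mathrm{sc}}\subset G_{\mathrm{sc}}$ is compatible with all the equivalences of \S\ref{void-simply-connected-summary}, so $\cal G_{\mathrm{sc}}|_{\cal LT_{\mathrm{sc}}}$ corresponds under the classification of factorization (super) line bundles over $\Gr_{T_{\mathrm{sc}}}$ to the $\vartheta$-datum whose bilinear form is the polarization $b_{\mathrm{sc}}$ of $Q_{\mathrm{sc}}$, and by \S\ref{void-tori-commutator} its commutator is the tame pairing $\langle\cdot,\cdot\rangle_{b_{\mathrm{sc}}}$. Combining with the previous paragraph, the pairing induced from \eqref{eq-simply-connected-pairing} along the surjections $\pi\colon T_{\mathrm{sc}}\to T_{\ad}$ and $\id\colon T_{\mathrm{sc}}\to T_{\mathrm{sc}}$ equals $\langle\cdot,\cdot\rangle_{b_{\mathrm{sc}}}$, which is tame; so Lemma \ref{lem-tame-pairing-descent} shows that \eqref{eq-simply-connected-pairing} is itself tame, say \eqref{eq-simply-connected-pairing}$\,=\langle\cdot,\cdot\rangle_{b_1'}$ for a bilinear form $b_1'\colon\Lambda_{\ad}\otimes\Lambda_{\mathrm{sc}}\to\mathbb Z$.

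It then remains to check $b_1'=b_1$. Precomposing $\langle\cdot,\cdot\rangle_{b_1'}$ with $\pi\otimes\id$ gives $\langle\cdot,\cdot\rangle_{b_1'\circ(\pi\otimes\id)}$, which we have just identified with $\langle\cdot,\cdot\rangle_{b_{\mathrm{sc}}}$; since a tame pairing is trivial exactly when its form vanishes (restrict to a pair of cocharacters and use that the Contou-Carr\`ere symbol has infinite order), $b_1'(\pi(\mu),\mu')=b_{\mathrm{sc}}(\mu,\mu')$ for all $\mu,\mu'\in\Lambda_{\mathrm{sc}}$. On the other hand, Weyl-invariance of $Q_{\mathrm{sc}}$ under $s_{\alpha}$ yields the standard identity $b_{\mathrm{sc}}(\mu,\alpha)=Q_{\mathrm{sc}}(\alpha)\langle\mu,\check\alpha\rangle$ for every coroot $\alpha$, whence $b_1(\pi(\mu),\alpha)=Q_{\mathrm{sc}}(\alpha)\langle\mu,\check\alpha\rangle=b_{\mathrm{sc}}(\mu,\alpha)$ using $\langle\pi(\mu),\check\alpha\rangle=\langle\mu,\check\alpha\rangle$; as the coroots generate $\Lambda_{\mathrm{sc}}$, this gives $b_1\circ(\pi\otimes\id)=b_{\mathrm{sc}}=b_1'\circ(\pi\otimes\id)$, and since $\pi(\Lambda_{\mathrm{sc}})$ has finite index in $\Lambda_{\ad}$ and both forms are $\mathbb Z$-valued, $b_1'=b_1$. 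The only non-formal step is the tameness of \eqref{eq-simply-connected-pairing}: this is exactly where the wild central extensions of Example \ref{eg-central-extension-wild} could a priori obstruct the conclusion, and the point of the argument is that Lemma \ref{lem-tame-pairing-descent} defeats them because \eqref{eq-simply-connected-pairing} becomes the manifestly tame torus commutator after pulling back along the isogeny $T_{\mathrm{sc}}\to T_{\ad}$.
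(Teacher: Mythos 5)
Your opening and closing steps are fine: the restriction of the $\cal LG_{\ad}$-action along $\cal LG_{\mathrm{sc}}\to\cal LG_{\ad}$ is indeed inner conjugation (by the uniqueness of the multiplicative lift in \S\ref{void-adjoint-equivariance}), so the pullback of \eqref{eq-simply-connected-pairing} along $\pi\otimes\id$ is the commutator of $\cal G_{\mathrm{sc}}|_{\cal LT_{\mathrm{sc}}}$; and the lattice argument identifying $b_1'$ with $b_1$ once tameness is known is correct. The gap is in the middle: you invoke \S\ref{void-tori-commutator} to conclude that the commutator of $\cal G_{\mathrm{sc}}|_{\cal LT_{\mathrm{sc}}}$ equals $\langle\cdot,\cdot\rangle_{b_{\mathrm{sc}}}$, but that paragraph applies only to factorization central extensions of a loop torus \emph{already known to have tame commutator}. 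Knowing that the descent of $\cal G_{\mathrm{sc}}|_{\cal LT_{\mathrm{sc}}}$ to $\Gr_{T_{\mathrm{sc}}}$ is classified by $(b_{\mathrm{sc}},F_+)$ does not determine the commutator: Example \ref{eg-central-extension-wild} exhibits, in characteristic $p$, a wild central extension of $\cal L\mathbb G_m$ which descends to a factorization line bundle over $\Gr_{\mathbb G_m}$ just as a tame one does, yet has a different (non-tame) commutator. So the "manifestly tame torus commutator" in your final paragraph is not manifest at all --- it is exactly the point at issue. Worse, in the paper's logical order the tameness of the commutator of $\cal G_{\mathrm{sc}}|_{\cal LT_{\mathrm{sc}}}$ is \emph{deduced from} Lemma \ref{lem-adjoint-equivariance-pairing} (see the proof of Lemma \ref{lem-tame-commutator-center-vs-torus}), so taking it as an input is circular.

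What is missing is a non-formal computation showing that the extra structure --- namely that $\cal G_{\mathrm{sc}}|_{\cal LT_{\mathrm{sc}}}$ extends to all of $\cal LG_{\mathrm{sc}}$ --- forces tameness. The paper supplies this by reducing along the coroot maps $f_{\alpha}:\SL_2\to G_{\mathrm{sc}}$ (using that factorization central extensions of $\cal L\SL_2$ are determined by their quadratic forms, by \S\ref{void-simply-connected-summary}) to the case $G=\SL_2$ with $\cal G_{\mathrm{sc}}=\widetilde{\SL}_2$, and then computing the $T_{\ad}$-action pairing explicitly inside the Tate extension $\widetilde{\GL}_2$, where the commutator is the Contou-Carr\`ere symbol by construction of the determinant lines. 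Your argument has no substitute for this step; once it is supplied, Lemma \ref{lem-tame-pairing-descent} and your descent of the bilinear form along $\Lambda_{\mathrm{sc}}\subset\Lambda_{\ad}$ do correctly finish the proof.
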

\begin{proof}
The problem is of \'etale local nature on $X$, so we may assume that $G$ is split and $T$ is a split maximal torus. Each coroot $\alpha$ induces a morphism $f_{\alpha} : \SL_2 \rightarrow G_{\mathrm{sc}}$, sending the upper-triangular unipotent matrices to the root subgroup $U_{\alpha}\subset G_{\mathrm{sc}}$ and restricts to $\alpha$ on the diagonal torus $\mathbb G_m \subset \SL_2$.

The central extension \eqref{eq-simply-connected-pairing} restricts along $f_{\alpha}$ to the $Q_{\mathrm{sc}}(\alpha)$-multiple of the factorization central extension:
\begin{equation}
\label{eq-canonical-central-extension-special-linear}
1 \rightarrow \mathbb G_{m, \Ran} \rightarrow \widetilde{\SL}_2 \rightarrow \cal L\SL_2 \rightarrow 1,
\end{equation}
defined by restricting the Tate central extension $\widetilde{\GL}_2$ along $\cal L\SL_2\subset \cal L\GL_2$. Indeed, factorization central extensions of $\cal L\SL_2$ are uniquely determined by their quadratic forms (\S\ref{void-simply-connected-summary}), hence by the commutator of their restrictions to $\cal L\mathbb G_m$, so we conclude by \S\ref{void-tori-commutator} and our definition of the Contou-Carr\`ere symbol.

By functoriality of the construction, it suffices to show that \eqref{eq-simply-connected-pairing} equals the Contou-Carr\`ere pairing for $G = \SL_2$ and $\cal G_{\mathrm{sc}}$ being the central extension \eqref{eq-canonical-central-extension-special-linear}. Note that the $T_{\ad}$ ($=\mathbb G_m$)-action on $G$ ($=\SL_2$) extends to the inner action of $\mathbb G_m$ on $\GL_2$ as the subgroup:
\begin{equation}
\label{eq-general-linear-group-submatrix}
\mathbb G_m\subset\GL_2,\quad
a\mapsto
\begin{pmatrix}
	a & 0 \\
	0 & 1
\end{pmatrix}.
\end{equation}
Using the group structure on $\widetilde{\GL}_2$, we extend the induced inner $\cal L\mathbb G_m$-action on $\cal L\GL_2$ to an action on $\widetilde{\GL}_2$. This action must restrict to the $T_{\ad}$-action on the subgroup $\widetilde{\SL}_2\subset\widetilde{\GL}_2$, by the uniqueness of the latter (\S\ref{void-adjoint-equivariance}). Therefore, it remains to prove that the commutator pairing in $\widetilde{\GL}_2$ between the subtorus \eqref{eq-general-linear-group-submatrix} and the subtorus:
$$
\mathbb G_m\subset \GL_2,\quad
a\mapsto
\begin{pmatrix}
	a & 0 \\
	0 & a^{-1}
\end{pmatrix}
$$
is the Contou-Carr\`ere symbol. This assertion holds because $\widetilde{\GL}_2$ restricts to the Tate central extension $\widetilde{\mathbb G}_m$ of $\cal L\mathbb G_m$ along \eqref{eq-general-linear-group-submatrix}.
\end{proof}

\subsection{Descent}
\label{sec-classification-reductive}

\begin{void}
\label{void-semi-direct}
Let $\cal K$ and $\cal H$ be factorization group presheaves whose base changes along any $S$-point of $\Ran$ are fppf sheaves.

An action of $\cal H$ on $\cal K$ as group presheaves is \emph{compatible with factorization} if for any disjoint $S$-points $x^I, x^J$ of $\Ran$, the $\cal H_{x^I\sqcup x^J}$-action on $\cal K_{x^I\sqcup x^J}$ coincides with the $\cal H_{x^I}\times\cal H_{x^J}$-action on $\cal K_{x^I}\times\cal K_{x^J}$ under the factorization isomorphisms of $\cal H$ and $\cal K$. When this happens, the group presheaf $\cal K\rtimes\cal H$ over $\Ran$ inherits a factorization structure.

Consider a factorization super central extension:
$$
1 \rightarrow \mathbb G_m \rightarrow \widetilde{\cal K} \rightarrow \cal K \rightarrow 1.
$$
Suppose that $\widetilde{\cal K}$ is equipped with an $\cal H$-action which is trivial on $A$. Then we say that the $\cal H$-action on $\widetilde{\cal K}$ is \emph{compatible with factorization} if for disjoint $S$-points $x^I$, $x^J$ of $\Ran$, the $\cal H_{x^I\sqcup x^J}$-action on $\widetilde{\cal K}_{x^I\sqcup x^J}$ coincides with the induced $\cal H_{x^I} \times\cal H_{x^J}$-action on the quotient:
$$
\widetilde{\cal K}_{x^I} \times \widetilde{\cal K}_{x^J} \twoheadrightarrow \widetilde{\cal K}_{x^I\sqcup x^J}.
$$

The following lemma is a variant of \cite[Construction 1.7]{MR1896177}.
\end{void}

\begin{lem}
\label{lem-semidirect-central-extension}
Let $\cal K$, $\cal H$ be as in \S\ref{void-semi-direct} with $\cal H$ acting on $\cal K$ compatibly with factorization. The following categories are equivalent:
\begin{enumerate}
	\item factorization super central extensions of $\cal K\rtimes\cal H$ by $\mathbb G_{m, \Ran}$;
	\item triples $(\widetilde{\cal K}, \widetilde{\cal H}, \alpha)$, where $\widetilde{\cal K}$ (resp.~$\widetilde{\cal H}$) is a factorization super central extension of $\cal K$ (resp.~$\cal H$) by $\mathbb G_{m, \Ran}$, and $\alpha$ is an $\cal H$-action on $\widetilde{\cal K}$ which is trivial on $\mathbb G_{m, \Ran}$, compatible with factorization, and induces the given $\cal H$-action on $\cal K$.
\end{enumerate}
\end{lem}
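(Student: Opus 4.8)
The plan is to exhibit mutually inverse functors between the two categories, following the pattern of \cite[Construction 1.7]{MR1896177} while keeping track of the $\mathbb Z/2$-grading and the factorization structure. All the arguments take place one $S$-point of $\Ran$ at a time, using the hypothesis that the base changes of $\cal K$, $\cal H$ and $\cal K\rtimes\cal H$ along such a point are fppf sheaves of groups, so that central quotients behave well there.

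Going from (1) to (2): given a factorization super central extension $1\rightarrow\mathbb G_{m, \Ran}\rightarrow\widetilde{\cal E}\rightarrow\cal K\rtimes\cal H\rightarrow 1$, I would restrict it along the closed subgroups $\cal K\subset\cal K\rtimes\cal H$ and $\cal H\subset\cal K\rtimes\cal H$ to obtain factorization super central extensions $\widetilde{\cal K}$ and $\widetilde{\cal H}$. Since $\cal H$ normalizes $\cal K$, conjugation inside $\widetilde{\cal E}$ by the subgroup $\widetilde{\cal H}$ preserves $\widetilde{\cal K}$; because $\mathbb G_{m, \Ran}\subset\widetilde{\cal H}$ is central in $\widetilde{\cal E}$, this conjugation action descends to an action $\alpha$ of $\cal H$ on $\widetilde{\cal K}$. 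One then checks directly that $\alpha$ is trivial on $\mathbb G_{m, \Ran}$, that it recovers the given $\cal H$-action on the quotient $\widetilde{\cal K}/\mathbb G_{m, \Ran}\cong\cal K$, and that it is compatible with factorization, the last point because the factorization homomorphisms $\widetilde{\varphi}_{x^I, x^J}$ of $\widetilde{\cal E}$ are group maps intertwining conjugation.

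Going from (2) to (1): given $(\widetilde{\cal K}, \widetilde{\cal H}, \alpha)$, I would first pull $\alpha$ back along $\widetilde{\cal H}\twoheadrightarrow\cal H$ to an action of $\widetilde{\cal H}$ on $\widetilde{\cal K}$ that is trivial on the central $\mathbb G_{m, \Ran}\subset\widetilde{\cal K}$, and form the semidirect product $\widetilde{\cal K}\rtimes\widetilde{\cal H}$. This is a factorization group presheaf---the factorization structure coming from those on $\widetilde{\cal K}$ and $\widetilde{\cal H}$ together with the factorization-compatibility of $\alpha$---and it is an extension of $\cal K\rtimes\cal H$ by $\mathbb G_{m, \Ran}\times\mathbb G_{m, \Ran}$ in which this subgroup is central: the first copy because $\alpha$ fixes it, the second because the $\widetilde{\cal H}$-action on $\widetilde{\cal K}$ factors through $\cal H$. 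I would then set $\widetilde{\cal E}$ to be the pushout along the multiplication $\mathbb G_{m, \Ran}\times\mathbb G_{m, \Ran}\rightarrow\mathbb G_{m, \Ran}$, \emph{i.e.}~the quotient by the anti-diagonal copy of $\mathbb G_{m, \Ran}$, formed fppf-locally over each $S$-point of $\Ran$; I would equip it with the sum of the two gradings (well defined since the anti-diagonal lies in even degree) and with the factorization isomorphisms inherited from $\widetilde{\cal K}$ and $\widetilde{\cal H}$. The commutativity and associativity axioms for these isomorphisms, including the super sign, follow from the corresponding axioms for $\widetilde{\cal K}$ and $\widetilde{\cal H}$.

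Finally I would check that the two constructions are mutually inverse. In one direction, for $\widetilde{\cal E}$ as in (1) the multiplication map $\widetilde{\cal K}\rtimes\widetilde{\cal H}\rightarrow\widetilde{\cal E}$, $(k, h)\mapsto kh$, kills the anti-diagonal and induces an isomorphism from the pushout onto $\widetilde{\cal E}$ compatible with all the structures; in the other direction, restricting $\widetilde{\cal K}\rtimes\widetilde{\cal H}$ along $\cal K$ and $\cal H$ and reading off the conjugation action returns $(\widetilde{\cal K}, \widetilde{\cal H}, \alpha)$ tautologically. I expect the only real effort to be the routine but somewhat lengthy verification that every isomorphism above respects the factorization isomorphisms and the $\mathbb Z/2$-gradings; there is no conceptual obstacle beyond this bookkeeping.
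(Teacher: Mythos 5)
Your proposal is correct and follows essentially the same route as the paper: restriction along the two subgroups (with the conjugation action descending to $\cal H$ because $\mathbb G_{m, \Ran}$ is central) in one direction, and the semidirect product $\widetilde{\cal K}\rtimes\widetilde{\cal H}$ pushed out along the multiplication map $\mathbb G_{m, \Ran}\times\mathbb G_{m, \Ran}\rightarrow\mathbb G_{m, \Ran}$ in the other. The paper's proof is exactly this two-step construction, stated more tersely; your additional bookkeeping on gradings and factorization isomorphisms is the routine verification it leaves implicit.
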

\begin{proof}
The functor (1) $\Rightarrow$ (2) is given by restricting a factorization central extension of $\cal K\rtimes\cal H$ along the group sub-presheaves $\cal K \subset \cal K\rtimes\cal H$, $\cal H\subset\cal K\rtimes\cal H$ to obtain $\widetilde{\cal K}$, $\widetilde{\cal H}$, and observing that the $\widetilde{\cal H}$-action on $\widetilde{\cal K}$ factors through $\cal H$.

The functor (2) $\Rightarrow$ (1) is given by forming the central extension $\widetilde{\cal K} \rtimes \widetilde{\cal H}$ of $\cal K\rtimes\cal H$ by $\mathbb G_{m, \Ran}\times \mathbb G_{m, \Ran}$ using the action $\alpha$, and pushing out along the product map on $\mathbb G_{m, \Ran}$.
\end{proof}

\begin{prop}
\label{prop-semidirect-central-extension}
Let $G$ be a reductive group $X$-scheme with a maximal torus $T$. Denote by $G_{\mathrm{sc}}$ the simply connected form of $G$, equipped with the conjugation $T$-action. The following categories are canonically equivalent:
\begin{enumerate}
	\item factorization super central extensions of $\cal LG_{\mathrm{sc}} \rtimes \cal LT$ by $\mathbb G_{m, \Ran}$;
	\item pairs $(\cal G_{\mathrm{sc}}, \cal T)$, where $\cal G_{\mathrm{sc}}$ (resp.~$\cal T$) is a factorization super central extension of $\cal LG_{\mathrm{sc}}$ (resp.~$\cal LT$) by $\mathbb G_{m, \Ran}$.
\end{enumerate}
\end{prop}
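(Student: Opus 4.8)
The plan is to deduce this from Lemma \ref{lem-semidirect-central-extension}, applied with $\cal K := \cal LG_{\mathrm{sc}}$ and $\cal H := \cal LT$, and then to discard the auxiliary equivariance datum by invoking the uniqueness established in \S\ref{void-adjoint-equivariance}.

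First I would supply the input of Lemma \ref{lem-semidirect-central-extension}. The conjugation $T$-action on $G_{\mathrm{sc}}$ is obtained by restricting the conjugation $G_{\ad}$-action along $T \to G_{\ad}$; since $Y \mapsto \cal LY$ preserves limits and commutes with the factorization isomorphisms $\mathring D_{x^I \sqcup x^J} \cong \mathring D_{x^I} \sqcup \mathring D_{x^J}$, this yields an action of $\cal LT$ on $\cal LG_{\mathrm{sc}}$ as factorization group presheaves over $\Ran$, compatible with factorization in the sense of \S\ref{void-semi-direct}; hence $\cal LG_{\mathrm{sc}} \rtimes \cal LT$ carries a canonical factorization structure. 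The presheaves $\cal LG_{\mathrm{sc}}$ and $\cal LT$ have ind-schematic, resp.\ schematic, restrictions along $S$-points of $\Ran$, so in particular fppf sheaves, and Lemma \ref{lem-semidirect-central-extension} identifies the Picard groupoid (1) with that of triples $(\cal G_{\mathrm{sc}}, \cal T, \alpha)$, where $\cal G_{\mathrm{sc}}$, $\cal T$ are factorization super central extensions of $\cal LG_{\mathrm{sc}}$, $\cal LT$ by $\mathbb G_{m, \Ran}$ and $\alpha$ is an $\cal LT$-action on $\cal G_{\mathrm{sc}}$, trivial on $\mathbb G_{m, \Ran}$, compatible with factorization, inducing the conjugation action on $\cal LG_{\mathrm{sc}}$. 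It then remains to check that the functor $(\cal G_{\mathrm{sc}}, \cal T, \alpha) \mapsto (\cal G_{\mathrm{sc}}, \cal T)$ is an equivalence.

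For that I would invoke \S\ref{void-adjoint-equivariance}. A factorization super central extension of $\cal LG_{\mathrm{sc}}$ is purely of even grading (as $\cal LG_{\mathrm{sc}} \to \Ran$ has connected geometric fibers, cf.\ \S\ref{sec-classification-simply-connected}), so it is a factorization central extension in the sense of \S\ref{void-adjoint-equivariance}, which therefore supplies the \emph{unique} lift of the conjugation $\cal LG_{\ad}$-action---hence of the $\cal LT$-action via $\cal LT \to \cal LG_{\ad}$---to $\cal G_{\mathrm{sc}}$. Concretely, for each $S$-point $(x^I, g)$ the isomorphism of multiplicative line bundles $(\act_g)^* \cal G_{\mathrm{sc}, x^I} \xrightarrow{\simeq} \cal G_{\mathrm{sc}, x^I}$ of \eqref{eq-adjoint-action-lift} is the unique one extending the identity over the unit section, because $(\act_g)^* \cal G_{\mathrm{sc}, x^I} \otimes \cal G_{\mathrm{sc}, x^I}^{-1}$ is trivial on geometric fibers over $S$ (Remark \ref{rem-simply-connected-pointwise-classification}, using that $\act_g$ acts trivially on $\Quad(\Lambda, \mathbb Z)^W$) and one then applies \S\ref{void-simply-connected-fiberwise}. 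Uniqueness forces the cocycle condition, forces $\alpha$ to be trivial on the central $\mathbb G_{m, \Ran}$ (an isomorphism of line bundles is the identity on the structural $\mathbb G_m$), forces compatibility with factorization (the factorization isomorphisms of $\cal G_{\mathrm{sc}}$ intertwine the $\act_g^*$ over disjoint points), and forces every isomorphism of the underlying pair $(\cal G_{\mathrm{sc}}, \cal T)$ to respect $\alpha$; so the forgetful functor is fully faithful and essentially surjective. Its quasi-inverse, sending $(\cal G_{\mathrm{sc}}, \cal T)$ to the canonical triple and then to $\cal G_{\mathrm{sc}} \rtimes \cal T$ pushed out along $\mathbb G_{m, \Ran} \times \mathbb G_{m, \Ran} \to \mathbb G_{m, \Ran}$ as in Lemma \ref{lem-semidirect-central-extension}, gives the asserted equivalence $(1) \cong (2)$.

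The only substantive point is the uniqueness in \S\ref{void-adjoint-equivariance}, which rests on the fiberwise-triviality-implies-descent principle of \S\ref{void-simply-connected-fiberwise} (itself built on Faltings' computation of $\Pic(\Gr_{G_{\mathrm{sc}}, x})$, \cite{MR1961134}); everything else is a formal manipulation of Lemma \ref{lem-semidirect-central-extension}. The one place to stay vigilant is to see that $\alpha$ is pinned down as an action by \emph{factorization} group presheaves over $\Ran$, and not merely fiber by fiber---but this too follows from the uniqueness clause.
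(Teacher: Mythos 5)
Your proposal is correct and follows essentially the same route as the paper: the paper's proof likewise reduces (1) to triples $(\cal G_{\mathrm{sc}}, \cal T, \alpha)$ via Lemma \ref{lem-semidirect-central-extension} and then observes that the action datum $\alpha$ exists uniquely by the argument of \S\ref{void-adjoint-equivariance} (stated there for $\cal LG_{\ad}$, with the uniqueness argument carrying over to $\cal LT$). Your additional remarks---that ``super'' is vacuous for $\cal LG_{\mathrm{sc}}$, and that uniqueness forces the cocycle, centrality, and factorization conditions as well as full faithfulness of the forgetful functor---are exactly the points left implicit in the paper.
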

\begin{proof}
We appeal to the equivalence of Lemma \ref{lem-semidirect-central-extension}. It suffices to prove that given any pair $(\cal G_{\mathrm{sc}}, \cal T)$ in (2), there is a unique $\cal LT$-action on $\cal G_{\mathrm{sc}}$ which is trivial on $\mathbb G_{m, \Ran}$, compatible with factorization, and induces the conjugation action on $\cal LG_{\mathrm{sc}}$. This is established in \S\ref{void-adjoint-equivariance} when $T$ is replaced by $G_{\ad}$, but the argument for uniqueness carries over.
\end{proof}

\begin{void}
Under the equivalence of Proposition \ref{prop-semidirect-central-extension}, we shall write the factorization super central extension of $\cal LG_{\mathrm{sc}} \rtimes \cal LT$ induced from $(\cal G_{\mathrm{sc}}, \cal T)$ as follows:
$$
1 \rightarrow \mathbb G_{m, \Ran} \rightarrow \cal G_{\mathrm{sc}} \widetilde{\rtimes} \cal T \rightarrow \cal LG_{\mathrm{sc}} \rtimes \cal LT \rightarrow 1.
$$
It is by construction the pushout of $\cal G_{\mathrm{sc}} \rtimes\cal T$ along the product map on $\mathbb G_{m, \Ran}$.
\end{void}

\begin{lem}
Let $\widetilde G \rightarrow G$ be a surjection of reductive group $X$-schemes whose kernel is a torus. The induced morphism $\cal L\widetilde G \rightarrow \cal LG$ is surjective in the topology generated by fpqc and proper covers.
\end{lem}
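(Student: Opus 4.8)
Set $T_1:=\ker(\widetilde G\to G)$, an $X$-torus. The plan is to factor the surjectivity through the affine Grassmannians, using the commutative square
$$
\begin{tikzcd}
\cal L\widetilde G \ar[r]\ar[d] & \Gr_{\widetilde G} \ar[d] \\
\cal LG \ar[r] & \Gr_G
\end{tikzcd}
$$
in which the horizontal arrows realize $\Gr_H=\cal LH/\cal L^+H$ and are surjective in the \'etale topology \cite[Proposition 3.1.9]{MR3752460}. Since the topology in the statement is generated by covers that are fpqc or proper and is closed under composition and refinement, and \'etale covers are fpqc covers, it is enough to lift an arbitrary $S$-point of $\cal LG$ (with $S$ affine) through $\cal L\widetilde G$ after a composition of \'etale and proper covers of $S$. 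I would reduce this to two independent surjectivity statements: (a) $\Gr_{\widetilde G}\to\Gr_G$ is surjective in the proper topology; (b) $\cal L^+\widetilde G\to\cal L^+G$ is surjective in the \'etale topology.

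For (a): both $\Gr_{\widetilde G}$ and $\Gr_G$ are ind-proper over $\Ran$ (as $\widetilde G$, $G$ are reductive) and $\Gr_G\to\Ran$ is ind-separated, so $\Gr_{\widetilde G}\to\Gr_G$ is ind-proper; it is also surjective on geometric points, because over an algebraically closed field $\overline k$ the obstruction to lifting a point of $\cal LG$ to $\cal L\widetilde G$ is a $T_1$-torsor on a punctured formal disc $\Spec\overline k\loo{t}$, which is trivial since $\overline k\loo{t}$ has cohomological dimension $\le 1$ (reduce to quasi-trivial tori and apply Hilbert 90). An ind-proper morphism surjective on geometric points is surjective in the proper topology: a map from an affine (hence quasi-compact Noetherian) scheme to $\Gr_G$ factors through a quasi-compact closed piece over which some Schubert subscheme of $\Gr_{\widetilde G}$ is already proper and surjective, giving a proper cover that carries a tautological lift. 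For (b): via the presentation $\cal L^+_{x^I}H\cong\lim_n R_{\Gamma^{(n)}}H$ of \S\ref{void-arc-group-structure}, lifting a section of $\cal L^+G$ to $\cal L^+\widetilde G$ means lifting its restriction to $\Gamma_{x^I}^{(0)}=\Gamma_{x^I}$ and then successively along the nilpotent thickenings $\Gamma_{x^I}^{(n)}\hookrightarrow\Gamma_{x^I}^{(n+1)}$. The first lift exists after an \'etale cover of $S$ because $R_{\Gamma}\widetilde G\to R_{\Gamma}G$ is a smooth surjective morphism of smooth affine $S$-schemes (surjectivity on geometric fibres follows from smoothness of $\widetilde G\to G$ and the triviality of $T_1$-torsors over an Artinian algebra with algebraically closed residue field), and each subsequent lift is unobstructed since the relevant obstruction lies in $H^1(\Gamma_{x^I},-)$, which vanishes as $\Gamma_{x^I}$ is affine.

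The final step is a diagram chase. Given $u\colon S\to\cal LG$ with $S$ affine, let $\bar u$ be its image in $\Gr_G$; by (a) and the \'etale surjectivity of $\cal L\widetilde G\to\Gr_{\widetilde G}$ there is a cover $S'\to S$ (a composition of \'etale and proper covers) and $v'\colon S'\to\cal L\widetilde G$ whose image in $\Gr_G$ is $\bar u|_{S'}$. Its image $w'$ in $\cal LG$ and $u|_{S'}$ differ by a unique section $h$ of $\cal L^+G$ over $S'$ (the fibre of $\cal LG\to\Gr_G$ is an $\cal L^+G$-torsor, trivialized by either point), and by (b), after a further \'etale cover $h$ lifts to a section $\tilde h$ of $\cal L^+\widetilde G$; then $v'\cdot\tilde h^{-1}$ lifts $u$. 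I expect the main difficulty to be not any single step but correctly orchestrating the interplay of (a) and (b) — in particular checking that the discrepancy between the lift obtained through $\Gr_{\widetilde G}$ and the given $\cal LG$-point is governed precisely by $\cal L^+G$ and is absorbed by (b); a secondary delicate point is the field-theoretic input in (a), namely the vanishing of $H^1$ of a torus over $\overline k\loo{t}$, which in positive characteristic requires invoking that fields of characteristic $p$ have $p$-cohomological dimension $\le 1$.
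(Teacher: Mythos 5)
Your proposal follows essentially the same route as the paper: the same decomposition into (a) proper-local surjectivity of $\Gr_{\widetilde G}\to\Gr_G$ and (b) surjectivity of $\cal L^+\widetilde G\to\cal L^+G$, followed by the same diagram chase through $\cal LG\times_{\Gr_G}\Gr_{\widetilde G}$. Your treatment of (b) is a pleasant variant: the paper proves fpqc-local surjectivity via flatness of the inverse limit of Weil restrictions, whereas your deformation-theoretic lifting along the nilpotent thickenings $\Gamma^{(n)}_{x^I}\hookrightarrow\Gamma^{(n+1)}_{x^I}$ (formal smoothness of $\widetilde G\to G$ plus affineness of $\Gamma_{x^I}$) gives the stronger \'etale-local statement; either suffices. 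Two places need shoring up. First, in (a), the assertion that ``some Schubert subscheme of $\Gr_{\widetilde G}$ is already proper and surjective'' over the relevant closed piece is exactly the nontrivial point: pointwise surjectivity of an ind-scheme onto a scheme does not by itself produce a single closed subscheme that surjects (different points may require different pieces). One must argue as the paper does: reduce to $S$ irreducible, lift the \emph{generic} point into some $Y_i$ after a finite field extension, and use properness of $Y_i\to S$ to conclude its image is closed and hence all of $S$. Second, your parenthetical justification of $H^1(\overline k\loo{t},T_1)=0$ via an embedding into a quasi-trivial torus plus Hilbert 90 does not actually yield vanishing (the exact sequence only exhibits $H^1(K,T_1)$ as a quotient of $T'(K)$); either invoke the cohomological-dimension argument properly, or do as the paper does and reduce to a split kernel by \'etale base change on $X$, where Hilbert 90 applies directly.
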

\begin{proof}
We shall deduce this from two statements:
\begin{enumerate}
	\item $\cal L^+\widetilde G \rightarrow \cal L^+G$ is surjective in the fpqc topology;
	\item $\Gr_{\widetilde G} \rightarrow \Gr_G$ is surjective in the topology generated by proper covers.
\end{enumerate}

Let us prove the lemma assuming both statements. Indeed, the morphism $\cal L\widetilde G \rightarrow \cal LG$ factors as:
$$
\cal L\widetilde G \rightarrow \cal LG\times_{\Gr_G}\Gr_{\widetilde G} \rightarrow \cal LG.
$$
By statement (2), the second morphism is surjective in the proper topology. We claim that the first morphism is surjective in the fpqc topology. Indeed, consider an $S$-point $(g, x)$ of $\cal LG\times_{\Gr_G}\Gr_{\widetilde G}$. Since $\cal L\widetilde G \rightarrow\Gr_{\widetilde G}$ is surjective in the \'etale topology, we may lift $x$ to an $S_1$-point $\tilde g$ of $\cal L\widetilde G$ over some \'etale cover $S_1\rightarrow S$. The image of $\tilde g$ in $\cal LG$ differs from $g$ by an $S_1$-point $h$ of $\cal L^+G$. Statement (1) allows us to lift $h$ to an $S_2$-point $\tilde h$ in $\cal L^+\widetilde G$ over some fpqc cover $S_2\rightarrow S_1$, which we may then use to modify $\tilde g$ to obtain a lift of $(g, x)$.

To prove statement (1), we consider an $S$-point $x^I$ of $X^I$ with graph $\Gamma_{x^I} \subset S\times X$. The base change $\cal L_{x^I}^+\widetilde G \rightarrow \cal L_{x^I}^+G$ is the inverse limit of morphisms:
\begin{equation}
\label{eq-congruence-subgroup-maps}
R_{\Gamma^{(n)}}\widetilde G \rightarrow R_{\Gamma^{(n)}}G,
\end{equation}
which are the Weil restrictions of $\widetilde G\rightarrow G$ along the finite locally free morphisms $\Gamma^{(n)}_{x^I} \rightarrow S$, see \S\ref{void-arc-group-structure}. Since $\widetilde G\rightarrow G$ is affine, smooth, and surjective, the same holds for \eqref{eq-congruence-subgroup-maps}. Hence $\cal L_{x^I}^+\widetilde G \rightarrow \cal L_{x^I}^+G$ is affine, \emph{flat}, and surjective.

We now turn to statement (2). Since the formation of the affine Grassmannian is compatible with \'etale base change, we may assume that the kernel of $\widetilde G \rightarrow G$ is a \emph{split} torus $T$. This implies that $\cal L\widetilde G \rightarrow \cal LG$ is surjective on field-valued points. Indeed, given any $k$-field $F$, the graph of an $F$-point of $X^I$ is the disjoint union of schemes isomorphic to $\Spec(F\loo{t})$, but the map $\widetilde G(F\loo{t}) \rightarrow G(F\loo{t})$ is surjective because $H^1(F\loo{t}, T) = 0$. In particular, any $F$-point of $\Gr_G$ lifts to $\Gr_{\widetilde G}$ after a finite extension $F\subset F_1$.

On the other hand, the morphism $\Gr_{\widetilde G} \rightarrow \Gr_G$ is ind-proper because $\widetilde G$ is reductive. Since $\Gr_G$ is of finite presentation, taking schematic points of $\Gr_G$ puts us in the following situation: an affine $k$-scheme $S$ of finite type, an ind-proper $S$-indscheme $Y$, such that $Y \rightarrow S$ is surjective on field-valued points up to finite extension. We claim that \emph{some closed subscheme $Y_i\subset Y$ surjects onto $S$.} Then $Y_i \rightarrow S$ is a proper cover which lifts to $Y$.

Let us prove the claim. Since $S$ is of finite type over $k$, we reduce to the case where $S$ is irreducible. Then its generic point lifts to some $Y_i$ after a finite extension. Since $Y_i \rightarrow S$ is proper, its image contains the closure of the generic point which is all of $S$.
\end{proof}

\begin{void}
Let $G$ denote a reductive group $X$-scheme with a maximal torus $T$. Denote by $G_{\mathrm{sc}}$ the simply connected form of $G$ and $T_{\mathrm{sc}} \subset G_{\mathrm{sc}}$ the preimage of $T$. The $T$-action on $G$ by conjugation extends to $G_{\mathrm{sc}}$. There is a short exact sequence:
\begin{equation}
\label{eq-semi-direct-covering}
1 \rightarrow T_{\mathrm{sc}} \rightarrow G_{\mathrm{sc}} \rtimes T \rightarrow G \rightarrow 1,
\end{equation}
where the first map is the anti-diagonal embedding $t\mapsto (t, t^{-1})$. Furthermore, its image is central in $G_{\mathrm{sc}} \rtimes T$.

The exact sequence \eqref{eq-semi-direct-covering} induces an exact sequence of factorization group presheaves:
\begin{equation}
\label{eq-semi-direct-covering-loop}
	1 \rightarrow \cal LT_{\mathrm{sc}} \rightarrow \cal LG_{\mathrm{sc}} \rtimes \cal LT \rightarrow \cal LG,
\end{equation}
where the last map is surjective in the topology generated by fpqc and proper covers. Since perfect complexes satisfy derived proper descent \cite[Theorem 1.8]{chough2022spectral} and loop groups are classical \cite[Theorem 9.3.5]{MR3220628}, central extensions of $\cal LG$ by $\mathbb G_{m, \Ran}$ are equivalent to those of $\cal LG_{\mathrm{sc}} \rtimes \cal LT$ by $\mathbb G_{m, \Ran}$ equipped with a splitting over $\cal LT_{\mathrm{sc}}$ whose image is normal. (This idea is due to Gaitsgory, \emph{cf.}~\cite[Corollary 5.2.7]{MR4117995}.)
\end{void}

\begin{void}
\label{void-semi-direct-covering}
Let $(G, T)$ be as above. Appealing to Proposition \ref{prop-semidirect-central-extension}, we obtain an equivalence of Picard groupoids between:
\begin{enumerate}
	\item factorization super central extensions of $\cal LG$ by $\mathbb G_{m, \Ran}$; and
	\item triples $(\cal T, \cal G_{\mathrm{sc}}, \varphi)$, where $\cal T$ and $\cal G_{\mathrm{sc}}$ are factorization super central extensions:
\begin{align}
\label{eq-torus-central-extension}
	1 \rightarrow \mathbb G_{m, \Ran} \rightarrow &\cal T \rightarrow \cal LT \rightarrow 1, \\
\label{eq-simply-connected-central-extension}
	1 \rightarrow \mathbb G_{m, \Ran} \rightarrow &\cal G_{\mathrm{sc}} \rightarrow \cal LG_{\mathrm{sc}} \rightarrow 1,
\end{align}
and $\varphi$ is an isomorphism of their pullbacks to $\cal LT_{\mathrm{sc}}$, subject to the \emph{normality condition} that the section $\cal LT_{\mathrm{sc}} \rightarrow \cal G_{\mathrm{sc}} \widetilde{\rtimes} \cal T$ induced from $\varphi$ has normal image.
\end{enumerate}
\end{void}

\begin{lem}
\label{lem-tame-commutator-center-vs-torus}
A factorization super central extension of $\cal LG$ by $\mathbb G_{m, \Ran}$ has tame commutator if and only if its restriction to $\cal LT$ does.
\end{lem}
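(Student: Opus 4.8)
The plan is to prove the two implications separately. The engine is the almost-direct-product decomposition $T=\Rad(G)\cdot T_{\mathrm{der}}$ of the maximal torus (with $T_{\mathrm{der}}=T\cap G_{\mathrm{der}}$ a maximal torus of $G_{\mathrm{der}}$): composing the central isogeny $T_{\mathrm{sc}}\to T_{\mathrm{der}}$ with multiplication yields a surjection of $X$-tori $\phi:\Rad(G)\times T_{\mathrm{sc}}\twoheadrightarrow T$. Write $c_T$ for the commutator pairing of the restriction to $\cal LT$ of the given factorization super central extension $\widetilde{\cal LG}$ of $\cal LG$.

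For the implication ``tame on $\cal LT$ $\Rightarrow$ tame on $\cal L\Rad(G)$'': since $\Rad(G)\hookrightarrow T$ is a morphism of $X$-tori, functoriality of tame pairings (\S\ref{void-definition-tame}) shows that the pairing induced by $c_T$ on $\cal L\Rad(G)$ is tame; but that induced pairing is the commutator of the restriction of $\widetilde{\cal LG}$ to $\cal L\Rad(G)$, so the extension has tame commutator. Conversely, suppose the commutator on $\cal L\Rad(G)$ is tame. By Lemma~\ref{lem-tame-pairing-descent} applied to $\phi$, it suffices to show that $(\cal L\phi)^*c_T$, a factorization bilinear pairing on $\cal L(\Rad(G)\times T_{\mathrm{sc}})\cong\cal L\Rad(G)\times_{\Ran}\cal LT_{\mathrm{sc}}$, is tame. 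By bilinearity this pairing decomposes into four blocks, given by restricting to $\cal L\Rad(G)$ or to $\cal LT_{\mathrm{sc}}$ in each variable, and it is tame iff each block is — this is immediate from the fact that $\langle\cdot,\cdot\rangle_b$ is characterized by its restrictions to one-parameter subgroups and that the cocharacter lattice of $\Rad(G)\times T_{\mathrm{sc}}$ is the direct sum of those of the factors.

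Now I would treat the blocks. The $\cal L\Rad(G)\otimes\cal L\Rad(G)$ block is the commutator on $\cal L\Rad(G)$, tame by hypothesis. The $\cal LT_{\mathrm{sc}}\otimes\cal LT_{\mathrm{sc}}$ block is the commutator of the pullback of $\widetilde{\cal LG}$ along $\cal LT_{\mathrm{sc}}\to\cal LT\to\cal LG$; since this map factors through $\cal LG_{\mathrm{sc}}$, the pullback is the restriction to $\cal LT_{\mathrm{sc}}$ of a factorization central extension of $\cal LG_{\mathrm{sc}}$ (the grading being trivial, as $\cal LG_{\mathrm{sc}}$ has connected geometric fibers over $\Ran$), whose commutator on $\cal LT_{\mathrm{sc}}$ equals $\langle\cdot,\cdot\rangle_{b_{\mathrm{sc}}}$ for the symmetric bilinear form $b_{\mathrm{sc}}$ of the classifying Weyl-invariant quadratic form, by \S\ref{void-simply-connected-summary} and \S\ref{void-tori-commutator}; in particular it is tame. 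Finally, the two mixed blocks are exchanged up to inversion by the anti-symmetry of $c_T$, so it is enough to show that the commutator between $\cal L\Rad(G)$ and $\cal LT_{\mathrm{sc}}$ inside $\widetilde{\cal LG}$ vanishes. For a local lift $\tilde a$ of a point $a$ of $\cal L\Rad(G)$, conjugation by $\tilde a$ on $\widetilde{\cal LG}$ covers conjugation by $a$ on $\cal LG$, which is the identity because $\Rad(G)$ is central in $G$; hence it restricts to an automorphism of the pullback of $\widetilde{\cal LG}$ to $\cal LG_{\mathrm{sc}}$ over the identity, i.e.\ a homomorphism $\cal LG_{\mathrm{sc}}\to\mathbb G_m$. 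Letting $a$ vary, one obtains a factorization bilinear pairing $\cal L\Rad(G)\otimes\cal LG_{\mathrm{sc}}\to\mathbb G_{m,\Ran}$, which vanishes since $\cal LG_{\mathrm{sc}}$ carries no nonzero homomorphism to $\mathbb G_m$ — equivalently, the trivial factorization central extension of $\cal LG_{\mathrm{sc}}$ has no nontrivial automorphism, by \S\ref{void-simply-connected-summary} (étale-locally on $X$, one may also invoke that $\cal LG_{\mathrm{sc}}$ is generated by the root subgroups $\cal L\mathbb G_a$, which admit no nonzero map to $\mathbb G_m$). Restricting, the mixed blocks vanish, so all four blocks are tame and $(\cal L\phi)^*c_T$ is tame.

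I expect the main obstacle to be the mixed-block argument: making rigorous that the commutator between $\cal L\Rad(G)$ and $\cal LT_{\mathrm{sc}}$ in $\widetilde{\cal LG}$ is the restriction of a genuine factorization pairing with $\cal LG_{\mathrm{sc}}$ and that the latter is forced to vanish, together with the small point that tameness of a factorization pairing between loop groups of tori may be checked one block at a time. The other ingredients are direct applications of results already established in the paper.
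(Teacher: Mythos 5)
Your argument is, in expanded form, the same as the paper's: the backward implication by functoriality of tame pairings under $\Rad(G)\subset T$, and the forward implication by pulling back along the isogeny $\Rad(G)\times T_{\mathrm{sc}}\to T$, invoking Lemma \ref{lem-tame-pairing-descent}, and checking the four blocks. The paper compresses the block analysis into one sentence, so most of what you write is the implicit content of its proof.

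One citation in your treatment of the $\cal LT_{\mathrm{sc}}\otimes\cal LT_{\mathrm{sc}}$ block is circular as stated. You appeal to \S\ref{void-simply-connected-summary} and \S\ref{void-tori-commutator} to identify the commutator of the restriction to $\cal LT_{\mathrm{sc}}$ with $\langle\cdot,\cdot\rangle_{b_{\mathrm{sc}}}$; but \S\ref{void-tori-commutator} computes the commutator of a torus extension \emph{under the hypothesis} that it has tame commutator (it rests on Proposition \ref{prop-torus-loop-group-to-grassmannian}, which is an equivalence only between the \emph{tame} extensions and line bundles over the Grassmannian), and tameness of this block is exactly what you are trying to establish. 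The non-circular source — and the one the paper cites — is Lemma \ref{lem-adjoint-equivariance-pairing}: the commutator of any factorization central extension of $\cal LG_{\mathrm{sc}}$ on $\cal LT_{\mathrm{sc}}$ is the restriction of the conjugation pairing $\langle\cdot,\cdot\rangle_{b_1}$ along $T_{\mathrm{sc}}\to T_{\ad}$, hence tame. The same lemma also streamlines your mixed block: by the uniqueness of the lifted conjugation action in \S\ref{void-adjoint-equivariance}, conjugation by lifts of points of $\cal LT$ on the restriction to $\cal LG_{\mathrm{sc}}$ factors through $\cal LT_{\ad}$, and $\Rad(G)\to T_{\ad}$ is the trivial map, so the mixed commutator vanishes; this replaces your separate appeal to the vanishing of characters of $\cal LG_{\mathrm{sc}}$ (which is true, and is in any case what underlies that uniqueness). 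With these substitutions your proof is complete and coincides with the paper's.
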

\begin{proof}
Note that any factorization (super) central extension of $\cal LG_{\mathrm{sc}}$ by $\mathbb G_{m, \Ran}$ has tame commutator (Lemma \ref{lem-adjoint-equivariance-pairing}). The claim now follows from Lemma \ref{void-bilinear-pairing-induces-skew-pairing}, seeing that $\Rad(G) \times T_{\mathrm{sc}} \rightarrow T$ is an isogeny of $X$-tori.
\end{proof}

\begin{void}
Consider a triple $(\cal T, \cal G_{\mathrm{sc}}, \varphi)$ as in \S\ref{void-semi-direct-covering} and assume that $\cal T$ has tame commutator.

In particular, this implies that its commutator is $\langle\cdot, \cdot\rangle_b$ where $b$ is the symmetric bilinear form appearing in the classifying data of $\cal T$, see \S\ref{void-tori-commutator}.

Under this assumption, we shall make the normality condition of \S\ref{void-semi-direct-covering}(2) explicit.
\end{void}

\begin{lem}
\label{lem-normality-explicit}
If $\cal T$ has commutator $\langle\cdot,\cdot\rangle_b$, then the normality condition holds if and only if $b$ is Weyl-invariant.
\end{lem}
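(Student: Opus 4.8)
The plan is to make the normality condition of \S\ref{void-semi-direct-covering}(2) explicit by a direct conjugation computation inside the pushout $\cal G_{\mathrm{sc}}\widetilde{\rtimes}\cal T$, and then to recognize the resulting numerical constraint on $b$ as Weyl-invariance.

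First I would describe the section $s:\cal LT_{\mathrm{sc}}\to\cal G_{\mathrm{sc}}\widetilde{\rtimes}\cal T$ concretely: it sends $t_{\mathrm{sc}}$ to the class of $(a,\varphi(a)^{-1})$, where $a\in\cal G_{\mathrm{sc}}$ is any local lift of $t_{\mathrm{sc}}\in\cal LT_{\mathrm{sc}}\subset\cal LG_{\mathrm{sc}}$; replacing $a$ by $za$ leaves this class unchanged in the pushout, so $s$ is well defined, and it is a homomorphism because $\varphi$ is. Since the image of $T_{\mathrm{sc}}$ in $G_{\mathrm{sc}}\rtimes T$ is central, $\cal LT_{\mathrm{sc}}$ is central in $\cal LG_{\mathrm{sc}}\rtimes\cal LT$; hence for every $g\in\cal G_{\mathrm{sc}}\widetilde{\rtimes}\cal T$ the commutator $g\,s(t_{\mathrm{sc}})\,g^{-1}s(t_{\mathrm{sc}})^{-1}$ lies in $\mathbb G_{m,\Ran}$ and, being insensitive to the central $\mathbb G_{m,\Ran}$, depends only on the image $\bar g$ of $g$ in $\cal LG_{\mathrm{sc}}\rtimes\cal LT$; write it $z(\bar g,t_{\mathrm{sc}})$. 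It is a morphism of group presheaves in $\bar g$, so the image of $s$ is normal if and only if $z\equiv 1$, equivalently if and only if $z$ vanishes on $\cal LG_{\mathrm{sc}}$ and on $\cal LT$, which together generate $\cal LG_{\mathrm{sc}}\rtimes\cal LT$.

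Then I would compute $z$ on these two subgroups. Conjugating $(a,\varphi(a)^{-1})$ by $(h,1)$ with $h\in\cal G_{\mathrm{sc}}$ brings in the conjugation automorphism $\mathrm{Ad}_a$ of $\cal G_{\mathrm{sc}}$; this is a factorization automorphism, trivial on $\mathbb G_{m,\Ran}$, lying over the inner action on $\cal LG_{\mathrm{sc}}$ of the image of $t_{\mathrm{sc}}$ in $\cal LG_{\ad}$, hence by the uniqueness clause of \S\ref{void-adjoint-equivariance} it coincides with the canonical lifted $\cal LG_{\ad}$-action by that element; as the latter is a genuine action it cancels against the action of the inverse element, giving $z(h,t_{\mathrm{sc}})=1$. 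So conjugation by $\cal LG_{\mathrm{sc}}$ always preserves the image of $s$. Conjugating $(a,\varphi(a)^{-1})$ by $(1,\tilde t)$ with $\tilde t\in\cal T$ over $t\in\cal LT$ brings in, on the $\cal G_{\mathrm{sc}}$-coordinate, the canonical lifted $\cal LT$-action on the lift $a$ of $t_{\mathrm{sc}}$, which by Lemma \ref{lem-adjoint-equivariance-pairing} multiplies $a$ by $\langle\bar t,t_{\mathrm{sc}}\rangle_{b_1}$, where $\bar t$ is the image of $t$ in $\cal LT_{\ad}$ and $b_1$ is the pairing of that lemma pulled back to $\Lambda\otimes\Lambda_{\mathrm{sc}}$; and, on the $\cal T$-coordinate, the commutator of $\cal T$, which is $\langle\cdot,\cdot\rangle_b$ by hypothesis. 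Reassembling in the pushout yields $z(t,t_{\mathrm{sc}})=\langle\bar t,t_{\mathrm{sc}}\rangle_{b_1}\cdot\langle t,t_{\mathrm{sc}}\rangle_b^{-1}$. This vanishes for all $t,t_{\mathrm{sc}}$ if and only if the two tame pairings agree; restricting along cocharacters $\mu$ of $T$ and coroots $\alpha$ of $T_{\mathrm{sc}}$ and using that the Contou-Carr\`ere symbol on $\cal L\mathbb G_m$ is non-degenerate (so that distinct integer powers are distinct), this amounts to the identity $b(\mu,\alpha)=Q_{\mathrm{sc}}(\alpha)\langle\mu,\check\alpha\rangle$ for all $\mu\in\Lambda$ and all coroots $\alpha\in\Lambda_{\mathrm{sc}}$.

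Finally I would match this identity with Weyl-invariance. The existence of $\varphi$ forces $b|_{\Lambda_{\mathrm{sc}}}=b_{\mathrm{sc}}$, the symmetric form attached to $Q_{\mathrm{sc}}$, so $b(\alpha,\alpha)=2Q_{\mathrm{sc}}(\alpha)$. If $b$ is Weyl-invariant, applying the reflection $s_\alpha$ and using $s_\alpha\alpha=-\alpha$, $s_\alpha\mu=\mu-\langle\mu,\check\alpha\rangle\alpha$ gives $2b(\mu,\alpha)=\langle\mu,\check\alpha\rangle\,b(\alpha,\alpha)$, which is the identity above. Conversely, if the identity holds, expanding $b(s_\alpha\mu,s_\alpha\nu)$ and substituting the values of $b(\mu,\alpha)$, $b(\nu,\alpha)$ and $b(\alpha,\alpha)$ makes all correction terms cancel, so $b$ is fixed by every reflection $s_\alpha$, hence Weyl-invariant. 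Together with the previous step this proves the lemma. The part I expect to be delicate is the careful bookkeeping in the pushout $\cal G_{\mathrm{sc}}\widetilde{\rtimes}\cal T$ and the clean invocation of Lemma \ref{lem-adjoint-equivariance-pairing} together with the uniqueness clause of \S\ref{void-adjoint-equivariance}; the closing lattice computation is routine.
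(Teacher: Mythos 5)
Your proposal is correct and follows essentially the same route as the paper: since $\cal LT_{\mathrm{sc}}$ is central in $\cal LG_{\mathrm{sc}}\rtimes\cal LT$, normality of the image of the section reduces to triviality of a $\mathbb G_{m,\Ran}$-valued commutator, which you compute via the uniqueness of the lifted conjugation action and Lemma \ref{lem-adjoint-equivariance-pairing} to obtain the condition $b(\lambda,\alpha)=Q_{\mathrm{sc}}(\alpha)\langle\lambda,\check\alpha\rangle$, and then identify this with Weyl-invariance by the standard reflection computation. Your treatment of the $\cal LG_{\mathrm{sc}}$-conjugation is in fact slightly more explicit than the paper's, which only records the $\cal LT$-contribution.
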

\begin{proof}
Let $\Lambda$ (resp.~$\Lambda_{\ad}$, $\Lambda_{\mathrm{sc}}$) denote the sheaf of cocharacters of $T$ (resp.~$T_{\ad}$, $T_{\mathrm{sc}}$). Note that $\cal T$ has commutator $\langle\cdot,\cdot\rangle_b$ while $\cal G_{\mathrm{sc}}$ defines the pairing $\langle\cdot,\cdot\rangle_{b_1}$ via its $\cal LT_{\ad}$-action (see Lemma \ref{lem-adjoint-equivariance-pairing}). The existence of $\varphi$ implies that $b$ and $b_1$ agree on $\Lambda_{\mathrm{sc}}\otimes\Lambda_{\mathrm{sc}}$. In particular, the restriction of $b$ to $\Lambda_{\mathrm{sc}}$ comes from a Weyl-invariant quadratic form $Q$.

We shall prove that the normality condition holds if and only if $b$ and $b_1$ coincide over $\Lambda \otimes \Lambda_{\mathrm{sc}}$. This latter condition means that for each $\lambda\in\Lambda$ and root $\alpha\in\Lambda_{\tn{sc}}$, there holds:
\begin{equation}
\label{eq-strict-weyl-invariance}
b(\lambda, \alpha) = Q(\alpha)\langle\check{\alpha},\lambda\rangle.
\end{equation}
The equality \eqref{eq-strict-weyl-invariance} is equivalent to the Weyl-invariance of $b$.

Let us now analyze the normality condition. The section induced from $\varphi$ has the following description on $S$-points:
\begin{equation}
\label{eq-isomorphism-induced-section}
\cal LT_{\mathrm{sc}} \rightarrow \cal G_{\mathrm{sc}} \widetilde{\rtimes} \cal T,\quad t_{\mathrm{sc}} \mapsto (\tilde t_{\mathrm{sc}}, \varphi(\tilde t_{\mathrm{sc}})^{-1}),
\end{equation}
where $\tilde t_{\mathrm{sc}}$ is any lift of $t_{\mathrm{sc}}$ to $\cal G_{\mathrm{sc}}$ and $\varphi(\tilde t_{\mathrm{sc}})$ is its image in $\cal T$ under $\varphi$. Since $\cal LT_{\mathrm{sc}}$ is a central subgroup of $\cal LG_{\mathrm{sc}} \rtimes \cal LT$, the image of \eqref{eq-isomorphism-induced-section} is normal if and only if it is central. This condition translates to the following equality in $\cal G_{\mathrm{sc}} \widetilde{\rtimes} \cal T$:
\begin{equation}
\label{eq-conjugation-action-fixed}
(g_{\mathrm{sc}}, t) \cdot (\tilde t_{\mathrm{sc}}, \varphi(\tilde t_{\mathrm{sc}})^{-1}) \cdot (g_{\mathrm{sc}}, t)^{-1} = (\tilde t_{\mathrm{sc}}, \varphi(\tilde t_{\mathrm{sc}})^{-1}),
\end{equation}
for all $S$-points $(g_{\mathrm{sc}}, t)$ of $\cal LG_{\mathrm{sc}} \rtimes \cal LT$ and $t_{\mathrm{sc}}$ of $\cal LT_{\mathrm{sc}}$.

The left-hand-side of \eqref{eq-conjugation-action-fixed} computes to $(\langle(t, \tilde t_{\mathrm{sc}}\rangle_{b_1}\tilde t_{\mathrm{sc}}, \langle t, \varphi(\tilde t_{\mathrm{sc}})^{-1}\rangle_b \varphi(\tilde t_{\mathrm{sc}})^{-1})$. Its equality with the right-hand-side amounts to the equality:
$$
\langle t, \tilde t_{\mathrm{sc}}\rangle_{b_1} = \langle t, \varphi(\tilde t_{\mathrm{sc}})\rangle_b,
$$
for all $S$-points $t$ of $\cal LT$ and $t_{\mathrm{sc}}$ of $\cal LT_{\mathrm{sc}}$, \emph{i.e.}~the agreement of $b$ and $b_1$ over $\Lambda\otimes\Lambda_{\mathrm{sc}}$.
\end{proof}

\begin{void}
Define the Picard groupoid $\widetilde{\vartheta}_{G, +}^{\super}(\Lambda)$ by the Cartesian diagram:
$$
\begin{tikzcd}[column sep = 1em]
	\widetilde{\vartheta}_{G, +}^{\super}(\Lambda) \ar[r]\ar[d] & \Quad(\Lambda_{\mathrm{sc}}, \mathbb Z)^W \ar[d, "\eqref{eq-canonical-theta-data}"] \\
	\theta_+^{\super}(\Lambda) \ar[r] & \theta_+^{\super}(\Lambda_{\tn{sc}})
\end{tikzcd}
$$
Then $\vartheta_{G, +}^{\super}(\Lambda)$ can be viewed as the full subgroupoid of $\tilde{\vartheta}_{G, +}^{\super}(\Lambda)$ consisting of objects whose images in $\theta_+^{\super}(\Lambda)$ have a \emph{Weyl-invariant} form $b$.

Pulling back along $T\subset G$, $G_{\mathrm{sc}}\subset G$, and using the compatibility over $T_{\mathrm{sc}}$, we obtain a functor:
\begin{equation}
\label{eq-factorization-line-bundle-classification-morphism}
	\Gamma_{\fact}(\Gr_G, \Pic^{\super}) \rightarrow \widetilde{\vartheta}_{G, +}^{\super}(\Lambda).
\end{equation}
\end{void}

\begin{void}
We are now ready to establish the equivalence (1) $\cong$ (2) $\cong$ (3) in Theorem \ref{thm-factorization-super-central-extension-classification}. We shall do so using the equivalence of \S\ref{void-semi-direct-covering}, together with an argument from \cite{MR4322626} showing that factorization super line bundles over $\Gr_G$ embed fully faithfully into $\widetilde{\vartheta}_{G, +}^{\super}(\Lambda)$.

Denote by:
$$
\Hom_{\fact}^{\tame}(\cal LG, \Pic^{\super}) \subset \Hom_{\fact}(\cal LG, \Pic^{\super})
$$
the full subgroupoid of factorization super central extensions of $\cal LG$ by $\mathbb G_{m, \Ran}$, characterized by the property of having tame commutator.
\end{void}

\begin{prop}
\label{prop-super-central-extension-classification}
Let $G$ be a reductive group $X$-scheme equipped with a maximal torus $T$ with sheaf of cocharacters $\Lambda$. The functors \eqref{eq-loop-group-descent-to-grassmannian} and \eqref{eq-factorization-line-bundle-classification-morphism} induce equivalences among the following Picard groupoids:
\begin{equation}
\label{eq-super-central-extension-classification}
	\Hom_{\fact}^{\tame}(\cal LG, \Pic^{\super}) \xrightarrow{\simeq} \Gamma_{\fact}(\Gr_G, \Pic^{\super}) \xrightarrow{\simeq} \vartheta_{G, +}^{\super}(\Lambda).
\end{equation}
\end{prop}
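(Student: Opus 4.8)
The plan is to bootstrap from the torus and simply connected cases of \S\ref{sec-classification-tori}--\S\ref{sec-classification-simply-connected} via the semi-direct product description of \S\ref{void-semi-direct-covering}, and then deduce the general reductive case by a formal argument resting on the full faithfulness of \eqref{eq-factorization-line-bundle-classification-morphism}. Throughout, write $F$ for the functor \eqref{eq-loop-group-descent-to-grassmannian} restricted to the tame subgroupoid $\Hom^{\tame}_{\fact}(\cal LG, \Pic^{\super})$, and $H$ for \eqref{eq-factorization-line-bundle-classification-morphism}, viewed as a functor $\Gamma_{\fact}(\Gr_G, \Pic^{\super}) \to \widetilde{\vartheta}_{G,+}^{\super}(\Lambda)$; recall that $\vartheta_{G,+}^{\super}(\Lambda) \subset \widetilde{\vartheta}_{G,+}^{\super}(\Lambda)$ is the full subgroupoid of objects with Weyl-invariant form $b$.

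First I would show that the composite $H \circ F$ factors through $\vartheta_{G,+}^{\super}(\Lambda)$ and is an equivalence onto it. By \S\ref{void-semi-direct-covering} (Proposition \ref{prop-semidirect-central-extension} together with Lemma \ref{lem-semidirect-central-extension}), $\Hom_{\fact}(\cal LG, \Pic^{\super})$ is equivalent to the groupoid of triples $(\cal T, \cal G_{\mathrm{sc}}, \varphi)$, where $\cal T$ (resp.\ $\cal G_{\mathrm{sc}}$) is a factorization super central extension of $\cal LT$ (resp.\ $\cal LG_{\mathrm{sc}}$) and $\varphi$ identifies their pullbacks to $\cal LT_{\mathrm{sc}}$, subject to the normality condition; by Lemma \ref{lem-tame-commutator-center-vs-torus} the tame-commutator hypothesis corresponds to $\cal T$ having tame commutator. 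Now classify $\cal T$ via Propositions \ref{prop-tori-grassmannian-classification} and \ref{prop-torus-loop-group-to-grassmannian} by a pair $(b, F_+) \in \vartheta_+^{\super}(\Lambda)$, whose commutator is $\langle\cdot,\cdot\rangle_b$ by \S\ref{void-tori-commutator}; classify $\cal G_{\mathrm{sc}}$ via \S\ref{void-simply-connected-summary} by a Weyl-invariant quadratic form $Q_{\mathrm{sc}}$ on $\Lambda_{\mathrm{sc}}$; and observe that $\varphi$ becomes an isomorphism in $\vartheta_+^{\super}(\Lambda_{\mathrm{sc}})$ between the restriction of $(b, F_+)$ and the image of $Q_{\mathrm{sc}}$ under \eqref{eq-canonical-theta-data}, using functoriality of these classifications along $T \hookrightarrow G$ and $G_{\mathrm{sc}} \to G$. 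By Lemma \ref{lem-normality-explicit} the normality condition holds exactly when $b$ is Weyl-invariant, so the data $(\cal T, \cal G_{\mathrm{sc}}, \varphi)$ assemble into precisely a section of $\vartheta_{G,+}^{\super}(\Lambda)$; tracing through the definitions of $F$ (descent from $\cal LG$) and $H$ (restriction to $\Gr_T$ and $\Gr_{G_{\mathrm{sc}}}$ with compatibility over $\Gr_{T_{\mathrm{sc}}}$) identifies the resulting equivalence with $H \circ F$.

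Next, the argument of \cite{MR4322626} shows that $H$ is fully faithful: a factorization super line bundle over $\Gr_G$, together with a morphism out of it, is reconstructed from its restrictions along $\Gr_T \to \Gr_G$ and $\Gr_{G_{\mathrm{sc}}} \to \Gr_G$ and their compatibility over $\Gr_{T_{\mathrm{sc}}}$, using the proper/fpqc surjectivity of $\cal LG_{\mathrm{sc}} \rtimes \cal LT \to \cal LG$ and the descent results of \S\ref{sec-classification-reductive}, together with the ideas of Finkelberg--Lysenko \cite{MR2684259} and Gaitsgory \cite{MR4117995}. Separately, the image of $H$ lies in $\vartheta_{G,+}^{\super}(\Lambda)$: the form $b$ attached to a factorization super line bundle on $\Gr_G$ restricts along $\Gr_{G_{\mathrm{sc}}} \to \Gr_G$ to a Weyl-invariant form on $\Lambda_{\mathrm{sc}}$ by \S\ref{void-simply-connected-summary}, and since $\Lambda/\Lambda_{\mathrm{sc}}$ is finite and $\mathbb Z$ is torsion-free, such a $b$ is automatically Weyl-invariant on all of $\Lambda$.

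Finally, the conclusion is formal. Since $H$ is fully faithful and $H \circ F$ is an equivalence onto $\vartheta_{G,+}^{\super}(\Lambda)$, the functor $F$ is fully faithful; and for any object $c$ of $\Gamma_{\fact}(\Gr_G, \Pic^{\super})$ we have $H(c) \in \vartheta_{G,+}^{\super}(\Lambda)$ by the previous step, so by essential surjectivity of $H \circ F$ there is $a$ with $H(F(a)) \cong H(c)$, whence $F(a) \cong c$ by full faithfulness of $H$; thus $F$ is an equivalence, and hence $H$ corestricted to $\vartheta_{G,+}^{\super}(\Lambda)$, being $H \circ F$ precomposed with $F^{-1}$, is an equivalence as well. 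I expect the main obstacle to be the full faithfulness of \eqref{eq-factorization-line-bundle-classification-morphism}: controlling morphisms of factorization super line bundles over $\Gr_G$ via combinatorial data requires descent along the cover $\cal LG_{\mathrm{sc}} \rtimes \cal LT \to \cal LG$ and delicate bookkeeping of the normality and $\cal LT_{\mathrm{sc}}$-compatibility conditions; a secondary point is verifying that the abstract equivalence built in the first step coincides with the geometrically defined composite $H \circ F$.
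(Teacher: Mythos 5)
Your overall architecture matches the paper's proof exactly: show that the composite of \eqref{eq-loop-group-descent-to-grassmannian} and \eqref{eq-factorization-line-bundle-classification-morphism} is an equivalence onto $\vartheta_{G,+}^{\super}(\Lambda)$ via \S\ref{void-semi-direct-covering}, Lemma \ref{lem-tame-commutator-center-vs-torus} and Lemma \ref{lem-normality-explicit}; invoke \cite[\S3.2]{MR4322626} for full faithfulness of \eqref{eq-factorization-line-bundle-classification-morphism}; check that its essential image lands in $\vartheta_{G,+}^{\super}(\Lambda)$; and conclude formally. The first, second and fourth steps are fine.

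The third step, however, contains a genuine gap. You claim that because the restriction of $b$ to $\Lambda_{\mathrm{sc}}$ is Weyl-invariant, and ``$\Lambda/\Lambda_{\mathrm{sc}}$ is finite and $\mathbb Z$ is torsion-free,'' the form $b$ is automatically Weyl-invariant on all of $\Lambda$. First, $\Lambda/\Lambda_{\mathrm{sc}} = \pi_1G$ is finite only when $G$ is semisimple; for $G = \GL_n$ it is $\mathbb Z$. Second, and more seriously, the implication is false even as a statement about lattices: for $G = \GL_2$ with $\Lambda = \mathbb Z^2$ and $\Lambda_{\mathrm{sc}} = \mathbb Z(1,-1)$, the form $b\bigl((a,b),(c,d)\bigr) = ac$ restricts to a (trivially Weyl-invariant) form on $\Lambda_{\mathrm{sc}}$ but is not invariant under the swap $(a,b)\mapsto(b,a)$. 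Weyl-invariance of $b$ on $\Lambda$ is equivalent to the mixed condition $b(\lambda,\alpha) = Q_{\mathrm{sc}}(\alpha)\langle\lambda,\check\alpha\rangle$ for all $\lambda\in\Lambda$ and coroots $\alpha$ (compare \eqref{eq-strict-weyl-invariance}), which sees the cross terms between $\Lambda$ and $\Lambda_{\mathrm{sc}}$ and is not controlled by the restriction to $\Lambda_{\mathrm{sc}}$ alone. The paper instead passes along the Weyl-equivariant isogeny $\Rad(G)\times T_{\mathrm{sc}} \rightarrow T$ (which does have finite kernel and cokernel on cocharacters, so the torsion-free argument applies there) to reduce to $G$ replaced by $\Rad(G)\times G_{\mathrm{sc}}$, and then proves that the external product
$$
\Gamma_{\fact}(\Gr_{\Rad(G)}, \Pic^{\super}) \times \Gamma_{\fact}(\Gr_{G_{\mathrm{sc}}}, \Pic) \rightarrow \Gamma_{\fact}(\Gr_{\Rad(G)\times G_{\mathrm{sc}}}, \Pic^{\super})
$$
is an equivalence, using the fiberwise characterization of \S\ref{void-simply-connected-fiberwise}. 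It is this geometric input---that every factorization super line bundle on $\Gr_{\Rad(G)\times G_{\mathrm{sc}}}$ splits as an external product, forcing the cross terms of $b$ between $\Lambda_{\Rad}$ and $\Lambda_{\mathrm{sc}}$ to vanish---that yields Weyl-invariance; no purely combinatorial argument from the restriction to $\Lambda_{\mathrm{sc}}$ can do so. Since essential surjectivity of \eqref{eq-loop-group-descent-to-grassmannian} in your formal conclusion rests precisely on this containment, the gap is not cosmetic.
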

\begin{proof}
The functors \eqref{eq-loop-group-descent-to-grassmannian} and \eqref{eq-factorization-line-bundle-classification-morphism} \emph{a priori} define:
$$
\Hom_{\fact}^{\tame}(\cal LG, \Pic^{\super}) \rightarrow  \Gamma_{\fact}(\Gr_G, \Pic^{\super}) \rightarrow \widetilde{\vartheta}_{G, +}^{\super}(\Lambda),
$$

The composition is an equivalence onto $\vartheta_{G, +}^{\super}(\Lambda)$---this is the equivalence of \S\ref{void-semi-direct-covering} restricted to the subgroupoid characterized by the tameness condition, as we see from Lemma \ref{lem-tame-commutator-center-vs-torus} and Lemma \ref{lem-normality-explicit}. Hence it suffices to prove that \eqref{eq-factorization-line-bundle-classification-morphism} is fully faithful and its essential image is contained in $\vartheta_{G, +}^{\super}(\Lambda)$.

The fully faithfulness is the content of \cite[\S3.2]{MR4322626}. The assertion that its image lies in $\vartheta_{G, +}^{\super}(\Lambda)$ amounts to establishing the Weyl-invariance of the bilinear form $b$ associated to any factorization super line bundle over $\Gr_G$. Since $\Rad(G) \times T_{\mathrm{sc}} \rightarrow T$ is a Weyl-equivariant isogeny of $X$-tori, the statement can be proved when $G$ is replaced by $\Rad(G) \times G_{\mathrm{sc}}$. In this case, we claim that the external product:
$$
\boxtimes : \Gamma_{\fact}(\Gr_{\Rad(G)}, \Pic^{\super}) \times \Gamma_{\fact}(\Gr_{G_{\mathrm{sc}}}, \Pic) \rightarrow \Gamma_{\fact}(\Gr_{\Rad(G)\times G_{\mathrm{sc}}}, \Pic^{\super})
$$
is an equivalence of Picard groupoids. Indeed, this follows from the fiberwise characterization of line bundles over $\Gr_{\Rad(G)\times G_{\mathrm{sc}}}$ which descend to $\Gr_{\Rad(G)}$ (\S\ref{void-simply-connected-fiberwise}).
\end{proof}

\subsection{Poor man's transgression}
\label{sec-poor-transgression}

\begin{void}
Let $G$ be a reductive group $X$-scheme. Denote by $\deloop G$ the $X$-stack classifying \emph{Zariski} locally trivial $G$-torsors.

The following result completes the equivalence (1) $\cong$ (4) in Theorem \ref{thm-factorization-super-central-extension-classification}.
\end{void}

\begin{prop}
\label{prop-trangression-torus-independence}
Fix a $\vartheta$-characteristic $\omega^{1/2}$ over $X$. There is a canonical equivalence of Picard groupoids:
\begin{equation}
\label{eq-poor-man-transgression}
\int_{(\mathring D, \omega^{1/2})} : \Gamma_e(\deloop G, \Ktheory{}_{[1, 2]}^{\super}) \xrightarrow{\simeq} \Hom_{\fact}^{\tame}(\cal LG, \Pic^{\super}).
\end{equation}

Furthermore, if $G$ is equipped with a maximal torus $T$ with sheaf of cocharacters $\Lambda$. Then the following diagram is canonically commutative:
\begin{equation}
\label{eq-poor-man-transgression-compatibility}
\begin{tikzcd}
	\Gamma_e(\deloop G, \Ktheory{}_{[1, 2]}^{\super}) \ar[r, "\int_{(\mathring D, \omega^{1/2})}"]\ar[d, "\eqref{eq-classification-super}"] & \Hom_{\fact}^{\tame}(\cal LG, \Pic^{\super}) \ar[d, "\eqref{eq-super-central-extension-classification}"] \\
	\vartheta_{G}^{\super}(\Lambda) \ar[r, "\eqref{eq-super-theta-data-omega-shift}"] & \vartheta_{G, +}^{\super}(\Lambda)
\end{tikzcd}
\end{equation}
\end{prop}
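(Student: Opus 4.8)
The plan is to define the equivalence \eqref{eq-poor-man-transgression} first in the presence of a maximal torus, where it is essentially forced by the classification theorems already proved; then to show the resulting functor is independent of the torus; and finally to obtain the general case by descent.

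First, suppose $G$ is equipped with a maximal torus $T$ with sheaf of cocharacters $\Lambda$. I would \emph{define} $\int_{(\mathring D, \omega^{1/2})}$ to be the composite of the equivalence \eqref{eq-classification-super} of Theorem \ref{thm-classification-super}, the $\omega^{1/2}$-shift \eqref{eq-super-theta-data-omega-shift} of \S\ref{void-super-theta-data-shifted}, and a quasi-inverse of the equivalence \eqref{eq-super-central-extension-classification} of Proposition \ref{prop-super-central-extension-classification}:
\[
\Gamma_e(BG, \underline K{}_{[1, 2]}^{\super}) \xrightarrow{\ \eqref{eq-classification-super}\ } \vartheta_G^{\super}(\Lambda) \xrightarrow{\ \eqref{eq-super-theta-data-omega-shift}\ } \vartheta_{G, +}^{\super}(\Lambda) \xleftarrow{\ \eqref{eq-super-central-extension-classification}\ } \Hom_{\fact}^{\tame}(\cal LG, \Pic^{\super}).
\]
Since each of the three displayed arrows is an equivalence of Picard groupoids, so is their composite; and the diagram \eqref{eq-poor-man-transgression-compatibility} commutes by this very definition. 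This already settles the ``Furthermore'' clause of the proposition.

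The main step is to show the functor just constructed is independent of $T$. All three constituent equivalences are functorial for isomorphisms of pairs $(G, T)$, so it suffices to compare the functors attached to $T$ and to $\mathrm{inn}_g(T)$ for a local section $g$ of $G$, via functoriality along $\mathrm{inn}_g \colon (G, T) \to (G, \mathrm{inn}_g(T))$. This reduces matters to two standard facts: $B\mathrm{inn}_g$ is canonically $2$-isomorphic to $\id_{BG}$, the isomorphism being translation by $g$ on $G$-torsors; and pullback along $\cal L\mathrm{inn}_g$ is canonically isomorphic to the identity on $\Hom_{\fact}(\cal LG, \Pic^{\super})$, the isomorphism being conjugation by the point of $\cal LG$ determined by $g$, which acts on any central extension without need of a chosen lift. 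One then checks the resulting identification of the two functors is independent of $g$: a second choice differs by a section of $N_G(T)$, whose effect on $\vartheta_G^{\super}(\Lambda)$ and $\vartheta_{G, +}^{\super}(\Lambda)$ factors through the Weyl-group action, which is trivial on Weyl-invariant data. As any two maximal tori of $G$ are $G$-conjugate \'etale-locally, the functors $\int_{(\mathring D, \omega^{1/2})}$ for varying $T$ are thus canonically and coherently identified.

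Finally I would assemble the general case. A maximal torus of $G$ exists Zariski-locally on the base \cite[XIV, Corollaire 3.20]{SGA3}; the source $\Gamma_e(BG, \underline K{}_{[1, 2]}^{\super})$ satisfies \'etale descent by the corollary following Theorem \ref{thm-classification-super}; and the presheaf $S_1 \mapsto \Hom_{\fact}^{\tame}(\cal LG \times_S S_1, \Pic^{\super})$ is Zariski-locally equivalent, via \eqref{eq-super-central-extension-classification}, to $\vartheta_{G, +}^{\super}(\Lambda)$, which is an \'etale sheaf, so — these local equivalences being compatible on overlaps by the previous paragraph — it too is an \'etale sheaf. Gluing the Zariski-local functors $\int_{(\mathring D, \omega^{1/2})}$ along the identifications of the previous paragraph produces the equivalence \eqref{eq-poor-man-transgression} for arbitrary $G$. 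I expect the only genuinely non-formal point to be the coherence verification in the third paragraph: checking that the canonical $2$-isomorphisms $B\mathrm{inn}_g \Rightarrow \id$ and $\cal L\mathrm{inn}_g^* \Rightarrow \id$ respect the factorization and monoidal structures carried by the objects at hand, so that the comparisons for different tori satisfy the cocycle condition on triple overlaps; everything else is bookkeeping built from results already established.
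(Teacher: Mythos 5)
Your overall strategy---define the functor as the composite of the three classification equivalences, prove independence of the maximal torus via inner automorphisms, and glue \'etale-locally---is the same as the paper's, and the ``Furthermore'' clause indeed holds by definition. But there is a genuine gap in the independence argument, exactly at the point you yourself flag as the only non-formal one.

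You reduce the ambiguity in the conjugating element $g$ to a section of $N_G(T)$ and assert that its effect ``factors through the Weyl-group action, which is trivial on Weyl-invariant data.'' This is not correct: the Weyl group acts nontrivially on the groupoid $\vartheta_G^{\super}(\Lambda)$---only the bilinear form $b$ in a triple $(b, \widetilde{\Lambda}, \varphi)$ is Weyl-invariant, not the central extension $\widetilde{\Lambda}$ or the splitting $\varphi$---and even for $t$ a section of $T$ itself the functor $\inner_t^*$ on $\vartheta_G^{\super}(\Lambda)$ is not the identity: it sends $(b, \widetilde{\Lambda}, \varphi)$ to $(b, \widetilde{\Lambda}, \varphi\cdot t^b)$, and the $2$-isomorphism identifying it with the identity functor is the automorphism $t^b$ of $\widetilde{\Lambda}$. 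The substance of the proof is to compute these $2$-isomorphisms explicitly on both sides of the diagram (via \cite[Proposition 3.13]{MR1896177} on the $\tn K$-theory side and the commutator description of \S\ref{void-tori-commutator} on the loop-group side) and check that they agree, which is what gives $\alpha_t = \id$ and hence the independence of $\alpha_g$ on $g$ and the cocycle condition. The paper moreover sidesteps the Weyl-group issue entirely by rigidifying with Borel subgroups: one compares only pairs $(T_i, B_i)$, so that two conjugating elements differ by a section of $T_1$ rather than of $N_G(T_1)$, and the $T$-computation above suffices (see the remark that the comparison isomorphism genuinely depends on the Borels). Without the Borels---or an actual computation of $\alpha_n$ for $n \in N_G(T)$ covering a nontrivial Weyl element---your coherence claim is not established.
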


\begin{void}
In an ideal world, we would define \eqref{eq-poor-man-transgression} by a ``transgression'' on $\tn K$-theory and verify the commutativity of \eqref{eq-poor-man-transgression-compatibility}, but we do not know how to do so.

In what follows, we offer a poor man's substitute: we first fix a maximal torus contained in Borel subgroup $T\subset B\subset G$ (which exists \'etale locally on $X$) and \emph{define} \eqref{eq-poor-man-transgression} as the composition of the three equivalences in \eqref{eq-poor-man-transgression-compatibility}. We denote this functor by $\Phi_{(T, B)}$ to emphasize its \emph{a priori} dependence on $(T, B)$.

Then we prove that $\Phi_{(T, B)}$ is canonically independent of $(T, B)$, \emph{i.e.}~given two pairs $T_i\subset B_i\subset G$ (for $i = 1, 2$) of maximal tori contained in Borel subgroups, there is a canonical isomorphism of functors:
\begin{equation}
\label{eq-killing-pair-comparison}
\alpha_{(T_1, B_1), (T_2, B_2)} : \Phi_{(T_1, B_1)} \xrightarrow{\simeq} \Phi_{(T_2, B_2)}.
\end{equation}
satisfiying the cocycle condition for three such pairs $(T_i, B_i)$ ($i = 1, 2, 3$).

These canonical isomorphisms allow us to glue the functors $\Phi_{(T, B)}$ over an \'etale cover of $X$, which yields the equivalence \eqref{eq-poor-man-transgression}.
\end{void}

\begin{rem}
The definition of $\Phi_{(T, B)}$ uses only $T$. However, the isomorphism \eqref{eq-killing-pair-comparison} depends on $B_1$ and $B_2$, so we prefer keep the Borel subgroups in the notation.
\end{rem}

\begin{proof}[Proof of Proposition \ref{prop-trangression-torus-independence}]
It suffices to construct \eqref{eq-killing-pair-comparison} satisfying the cocycle condition.

Suppose that $t$ is an $X$-point of $T$. The inner automorphism $\inner_t : G\rightarrow G$, $g\mapsto tgt^{-1}$ preserves $T\subset G$ and induces a commutative diagram:
\begin{equation}
\label{eq-interior-automorphism-k-theory}
\begin{tikzcd}[column sep = 1.5em]
	\Gamma_e(\deloop G, \Ktheory{}_{[1, 2]}^{\super}) \ar[d, "\inner_t^*"]\ar[r, "\eqref{eq-classification-super}"] & \vartheta^{\super}_G(\Lambda)  \ar[d, "\inner_t^*"] \\
	\Gamma_e(\deloop G, \Ktheory{}_{[1, 2]}^{\super}) \ar[r, "\eqref{eq-classification-super}"] & \vartheta^{\super}_G(\Lambda)
\end{tikzcd}
\end{equation}
Rigidified sections of $\Ktheory{}_{[1, 2]}^{\super}$ over $BG$ are equivalent to monoidal functors $G \rightarrow \Omega(\Ktheory{}_{[1, 2]}^{\super})$. Since the target has a \emph{symmetric} monoidal structure, $\inner_t^*$ acts as the identity on the groupoid of such monoidal functors.

On the other hand, the right vertical functor $\inner_t^*$ in \eqref{eq-interior-automorphism-k-theory} carries a triple $(b, \widetilde{\Lambda}, \varphi)$ to the triple $(b, \widetilde{\Lambda}, \varphi\cdot t^b)$, where $t^b : \Lambda \rightarrow \mathbb G_m$ denotes the character sending $\lambda \in \Lambda$ to the character $b(\lambda, -) : T \rightarrow \mathbb G_m$ evaluated at $t$, and $\varphi\cdot t^b$ is the sum of $\varphi$ with the restriction of $t^b$ to $\Lambda_{\mathrm{sc}}$. The $2$-isomorphism rendering \eqref{eq-interior-automorphism-k-theory} commutative evalutes to the isomorphism:
$$
(b, \widetilde{\Lambda}, \varphi) \xrightarrow{\simeq} \inner_t^*(b, \widetilde{\Lambda}, \varphi) \xrightarrow{\simeq} (b, \widetilde{\Lambda}, \varphi\cdot t^b),
$$
induced by the automorphism of the central extension $\widetilde{\Lambda}$ defined by $t^b$. These calculations are performed using the description of the commutator in \cite[Proposition 3.13]{MR1896177}.

The situation is parallel for the functor \eqref{eq-super-central-extension-classification}: an $X$-point $t$ of $T$ induces a commutative diagram:
\begin{equation}
\label{eq-interior-automorphism-super-central-extension}
\begin{tikzcd}[column sep = 1.5em]
	\Hom_{\fact}^{\tame}(\cal LG, \Pic^{\super}) \ar[d, "\inner_t^*"]\ar[r, "\eqref{eq-super-central-extension-classification}"] & \vartheta_{G, +}^{\super}(\Lambda)  \ar[d, "\inner_t^*"] \\
	\Hom_{\fact}^{\tame}(\cal LG, \Pic^{\super}) \ar[r, "\eqref{eq-super-central-extension-classification}"] & \vartheta_{G, +}^{\super}(\Lambda)
\end{tikzcd}
\end{equation}
where the left vertical functor is isomorphic to the identity, the right vertical functor sends $(b, \widetilde{\Lambda}_+, \varphi)$ to $(b, \widetilde{\Lambda}_+, \varphi\cdot t^b)$, and the $2$-isomorphism rendering \eqref{eq-interior-automorphism-super-central-extension} commutative is given by the automorphism $t^b$ of $\widetilde{\Lambda}_+$. These calculations are performed using the description of the commutator in \S\ref{void-tori-commutator}.

Suppose now that $T_i \subset B_i \subset G$ (for $i = 1, 2$) are a pair of maximal tori contained in Borel subgroups and $g$ is an $X$-point of $G$ with $\inner_g(T_2) = T_1$, $\inner_g(B_2) = B_1$. Denote by $\Lambda_i$ the sheaf of cocharacters of $T_i$.

The inner automorphism $\inner_g$ gives rise to a commutative diagram:
\begin{equation}
\label{eq-interior-automorphism-altogether}
\begin{tikzcd}[column sep = 1.5em]
	\Gamma_e(BG, \Ktheory{}_{[1, 2]}^{\super}) \ar[d, "\inner_g^*"]\ar[r, "\eqref{eq-classification-super}"] & \vartheta^{\super}_G(\Lambda_1)  \ar[d, "\inner_g^*"] \ar[r, phantom, "\simeq"] & \vartheta_{G, +}^{\super}(\Lambda_1)\ar[d, "\inner_g^*"] & \Hom_{\fact}^{\tame}(\cal LG, \Pic^{\super}) \ar[l, swap, "\eqref{eq-super-central-extension-classification}"]\ar[d, "\inner_g^*"] \\
	\Gamma_e(BG, \Ktheory{}_{[1, 2]}^{\super}) \ar[r, "\eqref{eq-classification-super}"] & \vartheta^{\super}_G(\Lambda_2) \ar[r, phantom, "\simeq"] & \vartheta_{G, +}^{\super}(\Lambda_1) & \Hom_{\fact}^{\tame}(\cal LG, \Pic^{\super}) \ar[l, swap, "\eqref{eq-super-central-extension-classification}"]
\end{tikzcd}
\end{equation}
where the middle equivalences are defined by $\omega^{1/2}$-shift \eqref{eq-super-theta-data-omega-shift}. Since the outer vertical functors are equivalent to the identity, the $2$-isomorphism rendering \eqref{eq-interior-automorphism-altogether} commutative defines an isomorphism of functors:
\begin{equation}
\label{eq-change-of-torus-isomorphism}
\alpha_g : \Phi_{(T_1, B_1)} \xrightarrow{\simeq} \Phi_{(T_2, B_2)}.
\end{equation}

\emph{Claim}: $\alpha_g$ depends only on the pairs $(T_1, B_1)$ and $(T_2, B_2)$ (as opposed to $g$). Indeed, any other choice of an $X$-point of $G$ conjugating $(T_2, B_2)$ into $(T_1, B_1)$ differs from $g$ by an $X$-point of $T_1$, so the claim is equivalent to the following assertion: for $(T_1, B_1) = (T_2, B_2) = (T, B)$, the isomorphism $\alpha_t$ is the identity on $\Phi_{(T, B)}$. However, this follows from the description of the $2$-isomorphisms in \eqref{eq-interior-automorphism-k-theory} and \eqref{eq-interior-automorphism-super-central-extension}.

Finally, we set \eqref{eq-killing-pair-comparison} to be the isomorphism \eqref{eq-change-of-torus-isomorphism} for any $X$-point $g$ conjugating $(T_2, B_2)$ into $(T_1, B_1)$, which exists \'etale locally over $X$---these choices glue thanks to the independence of $\alpha_g$ on $g$. The cocycle condition follows from the equality $\alpha_{g_1g_2} = \alpha_{g_1}\cdot\alpha_{g_2}$.
\end{proof}

\bibliographystyle{amsalpha}
\bibliography{../biblio_mathscinet.bib}

\end{document}